\numberwithin{equation}{section}
\newtheorem{theorem}{Theorem}[section]
\newtheorem{lemma}[theorem]{Lemma}
\newtheorem{hypo}[theorem]{Hypothesis}
\newtheorem{prop}[theorem]{Proposition}
\newtheorem{Th}{Theorem}
\newtheorem{bth}{Background Theorem}
\newtheorem{Ex}{Example}
\newtheorem{corollary}[theorem]{Corollary}
\theoremstyle{definition}
\newtheorem{example}[theorem]{Example}
\newtheorem{remark}[theorem]{Remark}
\newtheorem{Remark}{Remark}
\newtheorem{notation}[theorem]{Notation}
\newtheorem{definition}[theorem]{Definition}
\newtheorem{defn}{Definition}
\newtheorem{notn}[defn]{Notation}
\newcommand{\bN}{\mathbb{N}}
\newcommand{\B}{\mathcal{B}}
\newcommand{\F}{\mathcal{F}}
\newcommand{\E}{\mathcal{E}}
\newcommand{\G}{\mathcal{G}}
\newcommand{\C}{\mathcal{C}}
\newcommand{\X}{\mathcal{X}}
\renewcommand{\L}{\mathcal{L}}
\newcommand{\mD}{\mathcal{D}}
\newcommand{\tL}{\widetilde{\mathcal{L}}}
\newcommand{\tDelta}{\tilde{\Delta}}
\newcommand{\tE}{\tilde{\mathcal{E}}}
\newcommand{\tT}{\tilde{T}}
\newcommand{\tS}{\tilde{S}}
\newcommand{\tG}{\tilde{G}}
\newcommand{\tH}{\tilde{H}}
\newcommand{\M}{\mathcal{M}}
\newcommand{\N}{\mathcal{N}}
\newcommand{\K}{\mathcal{K}}
\newcommand{\D}{\mathbf{D}}
\renewcommand{\H}{\mathcal{H}}
\newcommand{\Hom}{\operatorname{Hom}}
\newcommand{\Aut}{\operatorname{Aut}}
\newcommand{\Inn}{\operatorname{Inn}}
\newcommand{\Syl}{\operatorname{Syl}}
\newcommand{\m}{\mathcal}
\newcommand{\id}{\operatorname{id}}
\newcommand{\Ac}{\operatorname{A}^\circ}
\newcommand{\One}{\operatorname{\mathbf{1}}}
\newcommand{\W}{\mathbf{W}}
\newcommand{\hyp}{\mathfrak{hyp}}
\newcommand{\Comp}{\operatorname{Comp}}
\newcommand{\fC}{\mathfrak{C}}
\newcommand{\foc}{\mathfrak{foc}}
\newcommand{\Y}{\mathcal{Y}}
\def \<{\langle }
\def \>{\rangle }
\newcommand{\subn}{\unlhd\!\unlhd\;}
\renewcommand{\phi}{\varphi}
\title{Normalizers and centralizers of subnormal subsystems of fusion systems}
\author[E.~Henke]{Ellen Henke}
\address{Institut f{\"u}r Algebra, Fakult{\"a}t Mathematik, Technische Universit{\"a}t Dresden, 01062 Dresden, Germany}
\email{ellen.henke@tu-dresden.de}
\begin{document}

\maketitle

\begin{center}
 \small \textit{Dedicated to Bernd Stellmacher on the occasion of his eightieth birthday}
\end{center}

\begin{abstract}
Every saturated fusion system corresponds to a group-like structure called a regular locality. In this paper we study (suitably defined) normalizers and centralizers of partial subnormal subgroups of regular localities. This leads to a reasonable notion of normalizers and centralizers of subnormal subsystems of fusion systems.
\end{abstract}

\section{Introduction}

In this paper we introduce and study normalizers and centralizers of subnormal subsystems of saturated fusion systems. In doing so, we generalize and unify some concepts which were known before: Aschbacher defined centralizers of normal subsystems as well as normalizers and centralizers of components of fusion systems (cf. \cite[Chapter~6]{Aschbacher:2011} and \cite[Sections~2.1 and 2.2]{AschbacherFSCT}). The latter construction was generalized by Oliver \cite{Oliver:NormalizerComponents}  to define normalizers of products of components. All these concepts are special cases of the normalizers and centralizers introduced in this paper. However, the existence of centralizers of normal subsystems (and the concrete construction of such centralizers given in \cite{Henke:2018}) is used in the proof of our results.

\smallskip

Our approach uses the fact that there is an (essentially unique) regular locality associated to every saturated fusion system. Here regular localities are group-like structures introduced by Chermak. Recently, Chermak and the author of this paper \cite{Chermak/Henke} established a kind of dictionary which makes it possible to translate between concepts in saturated fusion systems and corresponding concepts in regular localities. In particular, it is shown that there is a natural bijection between the subnormal subsystems of a given saturated fusion system and the partial subnormal subgroups of an associated regular locality. It is generally hoped that some flexibility in moving between fusion systems and localities will lead to simplifications in Aschbacher's program to give a new treatment of the classification of finite simple groups.

\smallskip

In the present paper, we investigate first ``regular normalizers'' and centralizers of partial subnormal subgroups of regular localities. Only afterwards, this is used to define and study normalizers and centralizers of subnormal subsystems.  Along the way, it becomes automatically transparent how one can translate between regular localities and fusion systems with regards to the concepts we will be concerned with. 

\smallskip

Before we state our results on regular localities, we collect  some definitions, notation and background results. \textbf{The reader who is only interested in the fusion-theoretic results is invited to go immediately to Subsection~\ref{SS:FusionResults}.}

\subsection{Some background}\label{SS:BackgroundIntro}

Throughout this paper we write homomorphisms on the right hand side of the argument. The reader is referred e.g. to \cite[Section~3]{Chermak/Henke} for a detailed summary of the required definitions and results concerning partial groups and localities. As in this reference and in Chermak's original papers \cite{Chermak:2013,Chermak:2015}, we write $\W(\L)$ for the set of words in a set $\L$, $\emptyset$ for the empty word, and $v_1\circ v_2\circ\cdots\circ v_n$ for the concatenation of words $v_1,\dots,v_n\in\W(\L)$. A partial group consists of a set $\L$, a ``product'' $\Pi\colon \D\rightarrow \L$ defined only on the words in some subset $\D$ of $\W(\L)$, and an involutory bijection  $\L\rightarrow \L,f\mapsto f^{-1}$ called an ``inversion map'', subject to certain group-like axioms. If $\L$ is a partial group with product $\Pi\colon \D\rightarrow \L$ then, given $(x_1,\dots,x_n)\in\D$, we write also $x_1x_2\cdots x_n$ for $\Pi(x_1,\dots,x_n)$. Below we collect some  important definitions.

\smallskip

\begin{defn}~\label{D:CollectLocalities}
\begin{itemize}
 \item A subset $\H\subseteq\L$ is called a \emph{partial subgroup} of $\L$ if $\Pi(w)\in\H$ for every $w\in\W(\H)\cap \D$ and $h^{-1}\in\H$ for every $h\in\H$.
\item A partial subgroup $\H$ of $\L$ is called a \emph{subgroup} of $\L$ if $\W(\H)\subseteq\D$. Observe that every subgroup of $\L$ forms an actual group. We call a subgroup $\H$ of $\L$ a \emph{$p$-subgroup} if it is a $p$-group.
\item Given $f\in\L$, we write $\D(f):=\{x\in\L\colon (f^{-1},x,f)\in\D\}$ for the set of elements $x\in\L$ for which the \emph{conjugate} $x^f:=\Pi(f^{-1},x,f)$ is defined. This gives us a conjugation map $c_f\colon\D(f)\rightarrow \L$ defined by $x\mapsto x^f$.
\item We say that a partial subgroup $\N$ of $\L$ is a \emph{partial normal subgroup} of $\L$ (and write $\N\unlhd\L$) if $x^f\in\N$ for every $f\in\L$ and every $x\in \D(f)\cap \N$. 
\item We call a partial subgroup $\H$ of $\L$ a \emph{partial subnormal subgroup} of $\L$ (and write $\H\subn\L$) if there is a sequence $\H_0,\H_1,\dots,\H_k$ of partial subgroups of $\L$ such that 
\[\H=\H_0\unlhd\H_1\unlhd\cdots\unlhd \H_{k-1}\unlhd \H_k=\L.\]
\item If $f\in\L$ and $\X\subseteq \D(f)$ set $\X^f:=\{x^f\colon x\in\X\}$. For every $\X\subseteq\L$, call 
\[N_\L(\X):=\{f\in\L\colon \X\subseteq\D(f)\mbox{ and }\X^f=\X\}\]
the \emph{normalizer} of $\X$ in $\L$.
\item For $\X\subseteq \L$, we call 
\[C_\L(\X):=\{f\in\L\colon \X\subseteq\D(f),\;x^f=x\mbox{ for all }x\in\X\}\]
the \emph{centralizer} of $\X$ in $\L$.
\item For any two subsets $\X,\Y\subseteq\L$, their \emph{product} can be naturally defined by 
\[\X\Y:=\{\Pi(x,y)\colon x\in\X,\;y\in\Y,\;(x,y)\in\D\}.\]
\end{itemize}
\end{defn}

A \emph{locality} is a triple $(\L,\Delta,S)$ such that $\L$ is a partial group, $S$ is maximal among the $p$-subgroups of $\L$, and $\Delta$ is a set of subgroups of $S$ subject to certain axioms, which imply in particular that $N_\L(P)$ is a subgroup of $\L$ for every $P\in\Delta$. If $R\leq S$ and $f\in\L$ with $R\subseteq\D(f)$ and $R^f\subseteq S$, then $R^f$ is a subgroup of $S$ and $c_f|_R$ turns out to be an injective group homomorphism from $R$ to $R^f$. Moreover, for every $f\in\L$, the subset 
\[S_f:=\{x\in S\colon x\in\D(f),\;x^f\in S\}\]
is a subgroup of $S$ and indeed an element of $\Delta$. In particular, $c_f|_{S_f}$ is an injective group homomorphism $S_f\rightarrow S$. 

\begin{defn}
If $\H$ is a partial subgroup of $\L$ and $T=S\cap\H$, then $\F_T(\H)$ denotes the fusion system over $T$ generated by the maps of the form $c_h\colon R\rightarrow R^h$  where $h\in\H$ and $R\leq T$ with $R^h\leq T$. Equivalently, $\F_T(\H)$ is generated by all the maps $c_h|_{S_h\cap \H}$  with $h\in\H$.
\end{defn}

Notice that we have in particular $\F_S(\L)$ defined for every locality $(\L,\Delta,S)$. We say that $(\L,\Delta,S)$ is a locality \emph{over} $\F$ if $\F=\F_S(\L)$.

\smallskip

Below we will consider situations in which we are given a partial subgroup $\X$ of $\L$ such that, for some $p$-subgroup $S_0$ and some set $\Gamma$ of subgroups of $S_0$, the triple $(\X,\Gamma,S_0)$ is a locality. Notice that we may then also form the fusion system $\F_{S_0}(\X)$ according to the definition above, even if $S_0$ is not a subgroup of $S$.

\smallskip

Chermak \cite{ChermakIII} defined a locality $(\L,\Delta,S)$ over a saturated fusion system $\F$ to be \emph{regular} if the set $\Delta$ equals a certain set $\delta(\F)$ of subgroups of $S$ and $N_\L(P)$ is a group of characteristic $p$ for every $P\in\Delta$. Here a finite group $G$ is \emph{of characteristic $p$} if $C_G(O_p(G))\leq O_p(G)$. The set $\delta(\F)$ can be characterized as the set of all subgroups $P\leq S$ such that $P\cap T^*$ is subcentric in $\F$, where $T^*$ is the Sylow subgroup of the generalized Fitting subsystem $F^*(\F)$ (as defined by Aschbacher \cite{AschbacherFSCT}). Alternatively, $\delta(\F)$ can be characterized in a similar fashion in terms of a suitable locality and its generalized Fitting subgroup (cf. \eqref{E:deltaF0}.

\smallskip

We will list now some theorems which will be crucial in our approach. The first one follows essentially from the existence and uniqueness of centric linking systems. For a detailed proof see e.g. \cite[Theorem~A]{Henke:2015} and \cite[Lemma~10.4]{Henke:Regular}.

\begin{bth}\label{bT:0}
 Let $\F$ be a saturated fusion system. Then there exists a regular locality $(\L,\Delta,S)$ over $\F$, which is unique up to an isomorphism which restricts to the identity on $S$. 
\end{bth}

The next theorem was first observed by Chermak \cite[Theorem~C]{ChermakIII}. A proof can also be found in  \cite[Theorem~10.16(e), Corollary~10.19]{Henke:Regular}.

\begin{bth}\label{bT:1}
 Let $(\L,\Delta,S)$ be a regular locality over $\F$ and $\H\subn\L$. Then $\E:=\F_{S\cap\H}(\H)$ is saturated and $(\H,\delta(\E),S\cap \H)$ is a regular locality over $\E$. If $\H\unlhd\L$, then $C_\L(\H)\unlhd\L$. 
\end{bth}

As a consequence of the above theorem, for every partial normal subgroup $\H$ of $\L$, the centralizer $C_\L(\H)$ can be given the structure of a regular locality.

\smallskip

There is a natural definition of quasisimple regular localities. Thus, Background Theorem~\ref{bT:1} leads to a notion of components of regular localities. If $(\L,\Delta,S)$ is regular, then we write $\Comp(\L)$ for the set of components of $\L$. The product $E(\L)$ of components of $\L$ forms a partial normal subgroup of $\L$. Similarly, there is a natural notion of components of fusion systems introduced by Aschbacher \cite[Chapter~9]{Aschbacher:2011} such that there is a normal subsystem $E(\F)$ of $\F$ which is a central product of the components. The set of components of $\F$ is denoted by $\Comp(\F)$. We write moreover $\E\subn\F$ to indicate that $\E$ is a subnormal subsystem of $\F$.

\begin{bth}[{\cite[Proposition~7.1(a), Lemma~7.3(a),(b)]{Chermak/Henke}}]\label{bT:2}
The map \[\hat{\Psi}\colon \{\H\colon\H\subn\L\}\rightarrow\{\E\colon\E\subn\F\},\H\mapsto \F_{S\cap\H}(\H)\] is well-defined and a bijection. It restricts to a bijection from the set of partial normal subgroups of $\L$ to the set of normal subsystems of $\F$, and to a bijection from $\Comp(\L)$ to $\Comp(\F)$. Moreover, 
\[\hat{\Psi}(E(\L))=E(\F).\]
\end{bth}

\subsection{Normalizers and centralizers in regular localities}

Suppose $(\L,\Delta,S)$ is a regular locality and $\H\subn\L$. The normalizer $N_\L(\H)$ as introduced in Definition~\ref{D:CollectLocalities} is not the right subset to consider if we want to find a partial subgroup of $\L$ in which $\H$ is normal. Indeed, $\H$ is in most cases not even contained in $N_\L(\H)$. We will however show that there is another partial subgroup $\bN_\L(\H)$ of $\L$ which one can reasonably think of as a ``normalizer in $\L$ of $\H$'' and which contains $\H$ as a partial normal subgroup.

\smallskip

One can observe relatively easily that $\tT:=S\cap E(\L)$ is an element of $\Delta$. In particular, $G:=N_\L(\tT)$ is a finite group of characteristic $p$. We show in Lemma~\ref{L:BasisNLtT} that $G$ acts on $\L$ via conjugation. More precisely, for every $g\in G$, we have $\L\subseteq\D(g)$ (i.e. conjugation by $g$ is defined on all elements of $\L$), and this gives an action of the group $G$ on the set $\L$.  It turns also out that the conjugation map $c_g\colon \L\rightarrow \L$ is for every element $g\in G$ an automorphism of $\L$. As $G$ acts on $\L$, the set-wise stabilizer 
\[N_G(\X):=\{g\in G\colon \X^g=\X\}\]
of $\X$ in the group $G$ is for every subset $\X$ of $\L$ a subgroup of $G$. Set $N_S(\X):=S\cap N_G(\X)=S\cap N_\L(\X)$.

\smallskip

If we consider now a partial subnormal subgroup $\H$ of $\L$ as above then, similarly as in finite groups, every component of $\L$ is either contained in $\H$  or in the centralizer of $\H$. Hence, it is reasonable to think that the product $E(\L)$ of components should be contained in $\bN_\L(\H)$. There is moreover a Frattini Argument for localities (cf. \cite[Corollary~3.11]{Chermak:2015}) which implies that
\[\L=E(\L)N_\L(\tT)=E(\L)G.\]
Therefore, the following definition seems natural.

\begin{defn}
Let $(\L,\Delta,S)$ be a regular locality over $\F$, set $\tT:=E(\L)\cap S$ and $G:=N_\L(\tT)$. For every partial subnormal subgroup $\H$ of $\L$, the subset  
\[\bN_\L(\H):=E(\L)N_G(\H)\]
is called the \emph{regular normalizer} of $\H$ in $\L$.  
\end{defn}

If $\H\unlhd\L$, then $\H^g=\H$ for every $g\in G$. So we have in this case that $N_G(\H)=G$ and thus $\bN_\L(\H)=E(\L)G=\L$ as one would expect. In the general case, the properties of $\bN_\L(\H)$ are summarized in the following theorem.

\begin{Th}\label{T:MainbNLH}
Let $(\L,\Delta,S)$ be a regular locality over $\F$ and $\H\subn\L$. Then the regular normalizer $\bN_\L(\H)$ is a partial subgroup of $\L$. Moreover, setting 
\[\tT:=E(\L)\cap S,\;G:=N_\L(\tT),\;T:=\H\cap S,\mbox{ and }\E:=\F_T(\H),\]
the following hold for every $S_0\in\Syl_p(N_G(\H))$:
\begin{itemize}
 \item [(a)] There exists a set $\Gamma$ of subgroups of $S_0$ such that $(\bN_\L(\H),\Gamma,S_0)$ is a regular locality. In particular, the fusion system $\F_0:=\F_{S_0}(\bN_\L(\H))$ is well-defined and saturated.
 \item [(b)] $\H\unlhd\bN_\L(\H)$. If $S_0$ is chosen such that $T\leq S_0$, then $T=\H\cap S_0$ and $\E\unlhd\F_0$. 
 \item [(c)] $\Comp(\bN_\L(\H))=\Comp(\L)$, $E(\bN_\L(\H))=E(\L)$ and $\tT=E(\L)\cap S_0$. In particular, $\Comp(\F)=\Comp(\F_0)$ and  $E(\F)=E(\F_0)$.
 \item [(d)] $S\cap \bN_\L(\H)=N_S(\H)$.
 \item [(e)] Suppose $\X$ is a partial subgroup of $\L$ such that $\H\unlhd\X$ and $(\X,\Gamma_\X,S_\X)$ is a locality for some $p$-subgroup $S_\X$ of $\X$ with $T\leq S_\X$ and some set $\Gamma_\X$ of subgroups of $S_\X$. Then $\X\subseteq \bN_\L(\H)$.
 \item [(f)] $C_\L(T)\subseteq \bN_\L(\H)$ and 
\[C_\L(\H)=C_{\bN_\L(\H)}(\H)\unlhd \bN_\L(\H).\]
\end{itemize}
\end{Th}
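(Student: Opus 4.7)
The strategy is to exploit the decomposition $\L = E(\L) G$ coming from the Frattini Argument together with the action of $G$ on $\L$ by conjugation described just before the theorem. Since $E(\L) \unlhd \L$ by Background Theorem~\ref{bT:1}, it is stable under conjugation by every element of $G$, so $\bN_\L(\H) = E(\L) N_G(\H)$ behaves like an internal semidirect product. To verify it is a partial subgroup, I would take a word $(f_1,\dots,f_n)$ in $\W(\bN_\L(\H))\cap\D$, write each $f_i = e_i g_i$ with $e_i \in E(\L)$ and $g_i \in N_G(\H)$, and push the $g_i$ rightwards using $e^{g} \in E(\L)$ for $g \in G$, reducing $\Pi(f_1,\dots,f_n)$ to the form $e\cdot(g_1\cdots g_n) \in E(\L)N_G(\H)$. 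Closure under inversion is analogous since $N_G(\H)$ is an actual group and $E(\L)$ is closed under inversion.

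For (a), fix $S_0 \in \Syl_p(N_G(\H))$. My plan is to prove (c) first: the inclusion $E(\L) \subseteq \bN_\L(\H)$ gives $\Comp(\L) \subseteq \Comp(\bN_\L(\H))$, while the reverse inclusion follows because each component of $\bN_\L(\H)$ is quasisimple and partial subnormal in $\L$ via chaining, hence a component of $\L$; consequently $E(\bN_\L(\H)) = E(\L)$ and $\tT = E(\L) \cap S_0$. For regularity, I would appeal to Background Theorem~\ref{bT:0}: once $\F_0 := \F_{S_0}(\bN_\L(\H))$ is shown to be saturated, a regular locality $(\M,\Gamma,S_0)$ over $\F_0$ exists, and I would identify $\M$ with $\bN_\L(\H)$ using uniqueness. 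Saturation of $\F_0$ is then checked by comparing $\F_0$ with the restriction of $\F$ to subgroups of $S_0$ containing $\tT$, using that $\bN_\L(\H)$ contains both all of $E(\L)$ and a Sylow $p$-subgroup of $N_G(\H)$.

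For (b) and (d), write $f = eg \in \bN_\L(\H)$: the element $e \in E(\L)$ normalizes $\H$ because each component of $\L$ either is a component of $\H$ (hence lies inside $\H$) or centralizes $\H$ by Background Theorem~\ref{bT:1}, while $g \in N_G(\H)$ normalizes $\H$ by definition; this yields $\H \unlhd \bN_\L(\H)$, and the assertion $\E \unlhd \F_0$ is then a consequence of the bijection from Background Theorem~\ref{bT:2}. For (d), $N_S(\H) \subseteq S \cap \bN_\L(\H)$ is immediate, and the reverse inclusion follows by decomposing $s = eg$ and absorbing the $p$-part of $e$ into a suitable Sylow subgroup of $N_G(\H)$. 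For (f), $C_\L(T) \subseteq \bN_\L(\H)$ holds because an element centralizing the Sylow $p$-subgroup $T$ of $\H$ normalizes all of $\H$ through its action on the fusion system $\F_T(\H) = \E$; then $C_\L(\H) \subseteq C_\L(T) \subseteq \bN_\L(\H)$, and $C_\L(\H) \unlhd \bN_\L(\H)$ follows by applying Background Theorem~\ref{bT:1} inside the regular locality $\bN_\L(\H)$ with partial normal subgroup $\H$.

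For (e), apply the Frattini Argument inside the locality $\X$ to get $\X = E(\X) N_\X(\tT_\X)$ with $\tT_\X = E(\X) \cap S_\X$. Every component of $\X$ is partial subnormal in $\X$ and, by Background Theorem~\ref{bT:1} applied to $\H \unlhd \X$, is either contained in $\H$ (hence is a component of $\H$, so of $\L$) or lies inside $C_\X(\H) \subseteq C_\L(\H)$, whose components coincide with the components of $\L$ not inside $\H$; either way, components of $\X$ are components of $\L$, so $E(\X) \subseteq E(\L)$ and, using $T \leq S_\X$, we obtain $\tT_\X \leq \tT$. This forces $N_\X(\tT_\X) \subseteq N_\L(\tT) = G$, and since every element of $\X$ normalizes $\H$, in fact $N_\X(\tT_\X) \subseteq N_G(\H)$, giving $\X \subseteq \bN_\L(\H)$. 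The main obstacle I expect is (a), specifically the saturation of $\F_0$ and the identification of the regular locality structure with $\bN_\L(\H)$; a close second is (e), where recognizing the components of an abstract partial subgroup locality $\X$ as components of $\L$ requires careful use of the interaction between $\H$ and $C_\L(\H)$.
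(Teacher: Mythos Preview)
Your proposal has the right intuitions but contains genuine gaps at exactly the places you flagged as difficult, and the paper's arguments differ in important ways.

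\textbf{Part (a).} Your plan---show $\F_0$ saturated, invoke Background Theorem~\ref{bT:0} to produce a regular locality, then identify it with $\bN_\L(\H)$---is not how the paper proceeds, and for good reason. Showing saturation of $\F_0$ directly is not easier than constructing the locality structure; and your suggestion to compare $\F_0$ with a restriction of $\F$ cannot work as stated, since $S_0$ need not be a subgroup of $S$. The paper instead invokes a black-box structural result (Theorem~6.1 of \cite{Grazian/Henke}), which says in essence that if $\N\unlhd\L$ and $H\leq N_\L(T^*)$ is of characteristic $p$ with $N_\N(T^*)\leq H$, then $\N H$ carries a linking locality structure with Sylow subgroup any $S_0\in\Syl_p(H)$, and the associated regular locality has the claimed object set. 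So the partial-subgroup property, saturation of $\F_0$, and regularity all come out of this single external theorem applied with $\N=E(\L)$ and $H=N_G(\H)$.

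\textbf{Part (c).} Your argument that $\Comp(\bN_\L(\H))\subseteq\Comp(\L)$ via ``chaining'' fails: $\bN_\L(\H)$ is \emph{not} in general partial subnormal in $\L$, so a component of $\bN_\L(\H)$ is not automatically subnormal in $\L$. The paper argues instead that a hypothetical component $\K\in\Comp(\bN_\L(\H))\setminus\Comp(\L)$ must centralize $E(\L)$, hence lie inside $N_{\bN_\L(\H)}(\tT)=N_G(\H)$, which is of characteristic $p$; but then $\K$ is a subnormal subgroup of a group of characteristic $p$, contradicting $O_p(\K)=Z(\K)$.

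\textbf{Part (e).} Several problems. First, $(\X,\Gamma_\X,S_\X)$ is only assumed to be a \emph{locality}, not a regular locality, so $E(\X)$ and $\Comp(\X)$ are not defined and you cannot run a Frattini argument with them. Second, even granting $\tT_\X\leq\tT$, the inclusion $N_\X(\tT_\X)\subseteq N_\L(\tT)$ is simply false in general: normalizing a smaller subgroup does not imply normalizing a larger one. The paper's route is entirely different and much simpler: apply the Frattini argument to the partial normal subgroup you are \emph{given}, namely $\H\unlhd\X$, to get $\X=\H N_\X(T)$. Since $\H\subseteq\bN_\L(\H)$ by (b), it remains to show $N_\X(T)\subseteq\bN_\L(\H)$. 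For $f\in N_\X(T)\subseteq N_\L(T)$ one uses a key lemma (Lemma~\ref{L:NLT}) to write $f=ng$ with $n\in E(\L)$, $g\in G$, and $\H^f=\H^g$; since $\H\unlhd\X$ forces $\H^f=\H$, one gets $g\in N_G(\H)$.

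\textbf{Part (f).} Your idea that centralizing $T$ forces normalization of $\H$ via the fusion system is correct in spirit, but it needs the nontrivial lemma that for $g\in N_G(T)$ one has $g\in N_G(\H)$ if and only if $c_g|_T\in\Aut(\E)$ (Lemma~\ref{L:finNLtT}), which in turn rests on the injectivity of $\hat\Psi$ from Background Theorem~\ref{bT:2}.
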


The proof of Theorem~\ref{T:MainbNLH} is given at the end of Subsection~\ref{SS:NLH}. With the hypothesis as above, the theorem says that $\bN_\L(\H)$ gives rise to a saturated fusion system, which is however not necessarily a subsystem of $\F:=\F_S(\L)$ as $S_0$ might not be a subgroup of $S$. It follows from Background Theorem~\ref{bT:1} and parts (a),(f) of Theorem~\ref{T:MainbNLH} that $C_\L(\H)$ can be given the structure of a regular locality. Thus, $C_\L(\H)$ gives also rise to a saturated fusion system $\C_0$, which again is not necessarily a subsystem of $\F$. One can see here that it might be an advantage to work with regular localities rather than with fusion systems in some contexts. Nevertheless, we want to have some more fusion-theoretic results in place as well. We therefore define normalizers and centralizers of subnormal subsystems of saturated fusion systems now. The relationship between $\F_0$ and $N_\F(\E)$ and between $\C_0$ and $C_\F(\E)$ is discussed in Remark~\ref{R:Main} below.

\begin{defn}\label{D:NFECFE}
Let $\F$ be a saturated fusion system over $S$ and let $\E$ be a subnormal subsystem of $\F$ over $T\leq S$. Fix a regular locality $(\L,\Delta,S)$ over $\F$ and $\H\subn\L$ with $T=S\cap \H$ and $\E=\F_T(\H)$ (which exist by Background Theorems~\ref{bT:0} and \ref{bT:2}). We call then
\[N_\F(\E):=\F_{N_S(\H)}(\bN_\L(\H))\]
the \emph{normalizer of $\E$ in $\F$} and
\[C_\F(\E):=\F_{C_S(\H)}(C_\L(\H)).\]
the \emph{centralizer of $\E$ in $\F$}.
\end{defn}

Recall that a regular locality $(\L,\Delta,S)$ over $\F$ is by Background Theorem~\ref{bT:0} essentially unique. Moreover, for a given regular locality $(\L,\Delta,S)$ over $\F$, a partial subnormal subgroup $\H$ as above is also unique by Background Theorem~\ref{bT:2}. This can be used to show that $N_\F(\E)$ and $C_\F(\E)$ do not depend on the choice of $(\L,\Delta,S)$ and $\H$. Alternatively, we prove some  characterizations of  $N_\F(\E)$ and $C_\F(\E)$ in terms of fusion systems below, which imply that $N_\F(\E)$ and $C_\F(\E)$ are well-defined.

\subsection{Normalizers and centralizers of subnormal subsystems}\label{SS:FusionResults} 

In this subsection we will summarize some properties of $N_\F(\E)$ and $C_\F(\E)$ as introduced in Definition~\ref{D:NFECFE}. Even though we defined this normalizer and centralizer using regular localities, the results we will state in this subsection are formulated purely in fusion-theoretic terms. The reader is referred to \cite[Chapter~I]{Aschbacher/Kessar/Oliver:2011} for an introduction to the theory of fusion systems. Throughout this subsection we assume the following hypothesis:

\smallskip

\noindent \textbf{Let $\F$ be a saturated fusion system over a $p$-group $S$ and $\E$ a subnormal subsystem of $\F$ over $T\leq S$.}

\vspace{0.3 cm}

\begin{notn}\label{N:FusionBasic}~
\begin{itemize}
\item If $R$ is any $p$-group and $\Phi$ a set of injective group homomorphisms between subgroups of $R$, then $\<\Phi\>_R$ denotes the fusion system over $R$ generated by $\Phi$, i.e. the smallest fusion system over $R$ containing every morphism in $\Phi$.
 \item If $Q,R$ are $p$-groups, $\mD$ is a fusion system over $R$ and $\phi\colon R\rightarrow Q$ is an injective group homomorphism, then write $\mD^\phi$ for the fusion system over $R\phi$ with
\[\Hom_{\mD^\phi}(P\phi,Q\phi)=\{\phi^{-1}\psi\phi\colon \psi\in\Hom_{\mD}(P,Q)\}\mbox{ for all }P,Q\leq R.\] 
 \item By $\Aut(\F)$ we denote the set of automorphisms $\alpha$ of $S$ with $\F^\alpha=\F$. 
 \item Set $N_S(\E):=\{s\in N_S(T)\colon c_s|_T\in\Aut(\E)\}$.
\end{itemize}
\end{notn}

Before we turn attention to the normalizer and the centralizer of $\E$ in $\F$,
let us mention another result which we obtain along the way. Namely, we show that there is a nice notion of a product subsystem $\E R=(\E R)_\F$ for every subgroup $R$ of $N_S(\E)$. The existence of such a product was known before in the case that $\E$ is normal (cf. \cite[Chapter~8]{Aschbacher:2011} or \cite{Henke:2013}). For arbitrary $\E$, an explicit description of the product subsystem $\E R=(\E R)_\F$ is given in Definition~\ref{D:ProductER}. The subsystem $\E R$ is also characterized by part (a) of the following theorem.

\begin{Th}\label{T:MainER}
Let $R\leq N_S(\E)$. Then the following hold.
\begin{itemize}
 \item [(a)] The subsystem $\E R=(\E R)_\F$ is saturated and is the unique saturated subsystem $\mD$ of $\F$ over $TR$ with $O^p(\mD)=O^p(\E)$. Moreover, $\E\unlhd \E R$.
\item [(b)] If $(\L,\Delta,S)$ is a regular locality over $\F$ and $\H\subn\L$ with $T=S\cap\H$ and $\F_T(\H)=\E$, then $\E R=\F_{TR}(\H R)$.
\end{itemize}
\end{Th}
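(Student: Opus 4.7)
The plan is to realize $\E R$ as the fusion system of an explicit partial subgroup in a regular locality, using Theorem~\ref{T:MainbNLH}. Fix a regular locality $(\L,\Delta,S)$ over $\F$ by Background Theorem~\ref{bT:0}, and by Background Theorem~\ref{bT:2} the unique $\H\subn\L$ with $T=S\cap\H$ and $\F_T(\H)=\E$. The first step is to check that $N_S(\E)=N_S(\H)$: one direction is clear since conjugation of $\H$ by $s$ translates into conjugation of the generating morphisms of $\E$; for the converse, $s\in N_S(\E)$ implies that $\H^s$ is a partial subnormal subgroup of $\L$ with $S\cap\H^s=T$ and $\F_T(\H^s)=\E$, so the injectivity of $\hat\Psi$ from Background Theorem~\ref{bT:2} forces $\H^s=\H$. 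Combined with Theorem~\ref{T:MainbNLH}(d), this gives $R\leq N_S(\H)=S\cap\bN_\L(\H)$, and I would choose the Sylow subgroup $S_0$ in Theorem~\ref{T:MainbNLH}(a) so that $TR\leq N_S(\H)\leq S_0$.

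Inside the regular locality $(\bN_\L(\H),\Gamma,S_0)$, Theorem~\ref{T:MainbNLH}(b) gives $\H\unlhd\bN_\L(\H)$. The product of the partial normal subgroup $\H$ with the $p$-subgroup $R\leq S_0$ is then a partial subgroup $\H R$ of $\bN_\L(\H)$ which carries a natural locality structure $(\H R,\Gamma_0,TR)$ with $\H\unlhd\H R$; this construction of a product of a partial normal subgroup with a $p$-subgroup is standard in the locality-theoretic framework of \cite{Chermak/Henke}. Consequently $\F_{TR}(\H R)$ is saturated, and I would define $\E R:=\F_{TR}(\H R)$; this immediately proves~(b). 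Since every generating morphism of $\F_{TR}(\H R)$ is a conjugation by an element of $\H R\subseteq\L$, we get $\E R\subseteq\F$, and $\E\unlhd\E R$ translates from $\H\unlhd\H R$ via Background Theorem~\ref{bT:2} applied internally to $\H R$.

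For the characterization in~(a), the fact that $\H R/\H$ is a $p$-group translates under the correspondence of Background Theorem~\ref{bT:2} into $\E$ having $p$-power index in $\E R$, which forces $O^p(\E R)\subseteq\E$; the reverse inclusion follows from $\E\subseteq\E R$ together with the hyperfocal description of $O^p$. Hence $O^p(\E R)=O^p(\E)$. If $\mD$ is any other saturated subsystem of $\F$ over $TR$ with $O^p(\mD)=O^p(\E)$, then the standard uniqueness theorem for saturated subsystems of $p$-power index over a fixed $p$-group forces $\mD=\E R$. The main obstacle lies in assembling the locality structure $(\H R,\Gamma_0,TR)$ inside $\bN_\L(\H)$ so that its fusion system is unambiguously identified; once this product construction and the equality $N_S(\E)=N_S(\H)$ are in place, the remaining arguments are routine applications of Theorem~\ref{T:MainbNLH} and the dictionary of Background Theorem~\ref{bT:2}.
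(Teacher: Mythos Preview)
Your overall strategy coincides with the paper's: pass to $\bN_\L(\H)$, where $\H$ is normal, form the product $\H R$ there, and read off $\F_{TR}(\H R)=(\E R)_{\F_0}$ with $\E\unlhd(\E R)_{\F_0}$ and $O^p((\E R)_{\F_0})=O^p(\E)$ via Proposition~\ref{P:ERnormal} inside $\F_0$. Your argument that $N_S(\E)=N_S(\H)$ is also exactly the paper's Lemma~\ref{L:finNLtT}. So the constructive half of the argument is fine and matches the paper.

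The gap is in two linked places. First, the paper does not leave you free to \emph{define} $\E R$ as $\F_{TR}(\H R)$: the subsystem $(\E R)_\F$ is already specified in Definition~\ref{D:ProductER} via the groups $\Ac_{\F,\E}(P)$, and part~(b) asserts that this particular subsystem equals $\F_{TR}(\H R)$. What you have produced is $(\E R)_{\F_0}$; to finish you must prove $\Ac_{\F,\E}(P)=\Ac_{\F_0,\E}(P)$ for the relevant $P$. This is Lemma~\ref{L:CrucialLemma}, and it is the technical heart of the section---its proof passes through Lemmas~\ref{L:AutHPsubnormal} and~\ref{L:Findchi} and is not a formality.

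Second, your uniqueness step hides the same difficulty. The ``standard uniqueness theorem'' you invoke (Proposition~\ref{P:ERnormal} or \cite[Theorem~I.7.4]{Aschbacher/Kessar/Oliver:2011}) says that inside a \emph{fixed} saturated fusion system in which $\E$ is \emph{normal}, the saturated subsystem over $TR$ with the prescribed $O^p$ is unique. But your competitor $\mD$ lives only in $\F$, where $\E$ is merely subnormal, so you cannot apply it there. The paper closes this gap by proving Proposition~\ref{P:DinNFE}: any saturated $\mD$ with $\E\unlhd\mD$ satisfies $\mD\subseteq N_\F(\E)\subseteq\F_0$, whence Proposition~\ref{P:ERnormal} in $\F_0$ forces $\mD=(\E R)_{\F_0}$. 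The proof of Proposition~\ref{P:DinNFE} again rests on Lemma~\ref{L:CrucialLemma}. (The paper then bootstraps, showing first that any saturated subsystem over $T$ with $O^p$ equal to $O^p(\E)$ must be $\E$ itself, so that for arbitrary $\mD$ one gets $\E\unlhd\mD$ automatically.) In short, both the identification $(\E R)_\F=(\E R)_{\F_0}$ and the uniqueness reduce to Lemma~\ref{L:CrucialLemma}, and that is the missing idea in your outline.
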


The proof of Theorem~\ref{T:MainER} can be found after Proposition~\ref{P:DinNFE}. 

\smallskip

To summarize the most important properties of the normalizer $N_\F(\E)$ we will use the following definition.

\begin{defn}\label{D:6}~
\begin{itemize}
 \item Set
 \[N_\F(T,\E):=\<\phi\in\Hom_\F(X,Y)\colon X,Y\leq N_S(\E),\;T\leq X\cap Y,\;\phi|_T\in\Aut(\E)\>_{N_S(\E)}.\]
\item Every element of $\E^\F:=\{\E^\phi\colon \phi\in\Hom_\F(T,S)\}$ is called an \emph{$\F$-conjugate} of $\E$.
\item The subsystem $\E$ is said to be \emph{fully $\F$-normalized}, if 
\[|N_S(\E)|\geq |N_S(\tE)|\mbox{ for all }\tE\in\E^\F.\]
\end{itemize}
\end{defn}

It is shown in Lemma~\ref{L:ConjugatesSubnormal} below that every $\F$-conjugate of $\E$ is again a subnormal subsystem of $\F$. Note that there is always a fully $\F$-normalized $\F$-conjugate of $\E$.

\begin{Ex}\label{Ex:Easy}
Let $G:=S_4$ be the symmetric group on $4$ letters and let $S\in\Syl_2(G)$. Consider the subnormal subsystems of $\F:=\F_S(G)$ of the form $\F_R(R)$ with $R\leq O_2(G)$ of order $2$. The set of these subnormal subsystems forms an $\F$-conjugacy class. Moreover, such a subsystem $\F_R(R)$ is fully normalized precisely when $R=Z(S)$ (i.e. precisely when $R$ is fully normalized).
\end{Ex}

The main properties of the normalizer of $\E$ are summarized in the following theorem.

\begin{Th}\label{T:FusionNormalizerNew}~
 \begin{itemize}
 \item [(a)] $N_\F(\E)$ is the subsystem of $\F$ over $N_S(\E)$ given by
 \[N_\F(\E)=\<\E N_S(\E),\;N_\F(T,\E)\>_{N_S(\E)}.\]
 \item [(b)] $\E$ is a subsystem of $N_\F(\E)$, which is $N_\F(\E)$-invariant.
 \item [(c)] Every saturated subsystem $\mD$ of $\F$ with $\E\unlhd\mD$ is contained in $N_\F(\E)$.
 \item [(d)] If $\E$ is fully $\F$-normalized, then $N_\F(\E)$ is saturated, $\E$ is normal in $N_\F(\E)$, and
\[E(N_\F(\E))=E(\F).\]
\item [(e)] $N_{N_\F(\E)}(T)=N_\F(T,\E)$ and $C_{N_\F(\E)}(T)=C_\F(T)$.
\item [(f)] Suppose $\F$ is constrained and $G$ is a model for $\F$. If $H\subn G$ with $T=S\cap H$ and $\F_T(H)=\E$, then $N_\F(\E)=\F_{N_S(H)}(N_G(H))$.
\end{itemize}
\end{Th}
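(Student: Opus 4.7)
The plan is to adopt the setup of Definition~\ref{D:NFECFE}: fix a regular locality $(\L,\Delta,S)$ over $\F$ and $\H\subn\L$ with $T=S\cap\H$ and $\F_T(\H)=\E$, set $\tT=E(\L)\cap S$ and $G=N_\L(\tT)$, and write $R:=N_S(\H)$. My first step is to verify that $R=N_S(\E)$. The inclusion $R\subseteq N_S(\E)$ is direct: each $s\in R$ normalizes $T=\H\cap S$ and induces an automorphism of $\F_T(\H)=\E$ by functoriality. For the converse, if $s\in N_S(\E)$ then $\H^s$ (defined via the $G$-action on $\L$ from Lemma~\ref{L:BasisNLtT}) is a partial subnormal subgroup with $\F_{T^s}(\H^s)=\E^{c_s|_T}=\E$, so $\H^s=\H$ by the injectivity of $\hat\Psi$ in Background Theorem~\ref{bT:2}. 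By Theorem~\ref{T:MainbNLH}(d) this also gives $R=S\cap\bN_\L(\H)$, and $S_0=R$ is a permissible choice of Sylow in Theorem~\ref{T:MainbNLH}. Part (b) then follows at once, since $\H\unlhd\bN_\L(\H)$ from Theorem~\ref{T:MainbNLH}(b) passes to fusion systems and makes $\E$ invariant in $N_\F(\E)=\F_R(\bN_\L(\H))$.

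For part (a) I establish the two inclusions separately. The containment $\E R\subseteq N_\F(\E)$ uses Theorem~\ref{T:MainER}(b), which identifies $\E R=\F_R(\H R)$, together with $\H R\subseteq\bN_\L(\H)$ (since both $\H$ and $R$ lie there). For $N_\F(T,\E)\subseteq N_\F(\E)$, any generating morphism $\phi$ lifts to conjugation by some $f\in N_\L(T)$ with $c_f|_T\in\Aut(\E)$; the preliminary argument then forces $\H^f=\H$ and hence $f\in N_G(\H)\subseteq\bN_\L(\H)$. For the reverse inclusion I use the product decomposition $\bN_\L(\H)=E(\L)N_G(\H)$ and split each generating morphism $c_h|_P$ of $N_\F(\E)$ according to a factorization $h=eg$: the $c_g$-contribution (with $g\in N_G(\H)$) falls into $N_\F(T,\E)$ since $g$ normalizes both $\H$ and $T$, while the $c_e$-contribution (with $e\in E(\L)$) is absorbed by splitting $E(\L)$ into its components inside $\H$ (contributing to $\E\subseteq\E R$) and its components centralizing $\H$ (contributing to $C_\F(T)\subseteq N_\F(T,\E)$).

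For (c), I would avoid lifting the saturated subsystem $\mD$ to a partial subgroup of $\L$ (not possible in general) and use instead the extension axiom built into $\E\unlhd\mD$: every $\mD$-morphism $\phi\colon X\to Y$ extends to a $\mD$-morphism $\phi'\colon XT\to YT$ with $\phi'|_T\in\Aut(\E)$, so $\phi'\in N_\F(T,\E)\subseteq N_\F(\E)$ by (a), and $\phi$ follows by restriction. For (e), under the identification $N_\F(\E)=\F_R(\bN_\L(\H))$ the normalizer $N_{N_\F(\E)}(T)$ is generated by those $c_f|_P$ with $P\leq N_R(T)$, $f$ normalizing $T$, and $P^f\leq N_R(T)$, which are precisely the generators of $N_\F(T,\E)$; the centralizer identity follows from $C_\L(T)\subseteq\bN_\L(\H)$ in Theorem~\ref{T:MainbNLH}(f) and the definition $C_\F(T)=\F_{C_S(T)}(C_\L(T))$. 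Part (f) reduces to the constrained case, in which $\L=G$ and $E(\L)=1$ (as $E(\F)=1$), so $\bN_\L(\H)=N_G(\H)=N_G(H)$ and the conclusion is immediate.

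For part (d), full $\F$-normalization of $\E$ translates through the bijection $\hat\Psi$ of Background Theorem~\ref{bT:2} and a standard Sylow count over the $G$-orbit of $\H$ into the statement that $R=N_S(\H)$ is a Sylow $p$-subgroup of $N_G(\H)$. Theorem~\ref{T:MainbNLH}(a) applied with $S_0=R$ then gives the saturatedness of $N_\F(\E)=\F_R(\bN_\L(\H))$; normality of $\E$ in $N_\F(\E)$ follows from (b) combined with this saturation, and $E(N_\F(\E))=E(\F)$ is a direct transcription of Theorem~\ref{T:MainbNLH}(c) through $\hat\Psi$. The main obstacle I anticipate is the reverse inclusion in (a), where the partially-defined factorization $h=eg$ must be handled carefully within the partial-group framework and one must verify that the fusion-theoretic contribution of $E(\L)$ is indeed captured by the union of $\E R$ and $N_\F(T,\E)$; the Frattini-type argument underlying $\bN_\L(\H)=E(\L)N_G(\H)$, together with Theorem~\ref{T:MainER}(b) to absorb the inside-$\H$ components of $E(\L)$ into $\E R$, should ultimately settle this.
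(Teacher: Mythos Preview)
Your proposal has genuine gaps in at least two places.

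The clearest is in part~(c). You claim that every $\mD$-morphism $\phi\colon X\to Y$ extends to a $\mD$-morphism $\phi'\colon XT\to YT$ with $\phi'|_T\in\Aut(\E)$. Normality of $\E$ in $\mD$ gives no such extension property; what it does give (via Lemma~\ref{L:FusionSystemFrattini}) is a factorization $\phi=\psi\alpha$ with $\psi$ a morphism in $(\E X)_\mD$ and $\alpha$ a morphism in $N_\mD(T)$. The $\alpha$-factor is handled by your idea, but the $\psi$-factor requires showing that the generators $\Ac_{\mD,\E}(P)$ of $(\E X)_\mD$ lie in $N_\F(\E)$. This is the content of Lemma~\ref{L:CrucialLemma}, which establishes $\Ac_{\F,\E}(P)=\Ac_{N_\F(\E),\E}(P)$ and is the technical heart of the whole argument; its proof occupies several preparatory lemmas and is not bypassed by anything in your outline.

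A second gap is the repeated assumption that $S_0=R=N_S(\H)$ is an admissible Sylow subgroup of $N_G(\H)$. This holds precisely when $\E$ is fully $\F$-normalized (Lemma~\ref{L:EfullyNormalized}); in general one must work with a Sylow $S_0\geq R$ that may be strictly larger, form the saturated system $\F_0=\F_{S_0}(\bN_\L(\H))$, and then identify $N_\F(\E)=\F_0|_{N_S(\E)}$ (Lemma~\ref{L:NFEbNLH}(a)). Relatedly, in your forward inclusion $N_\F(T,\E)\subseteq N_\F(\E)$ you assert that a generating morphism $\phi\in\Hom_\F(X,Y)$ with $T\leq X$ is realized by a single $f\in N_\L(T)$; but $X$ need not lie in $\Delta$ (since $T_0^\perp\not\leq T$ in general, so $\tT\not\leq X$), and $\phi$ is a priori only a composite of conjugations. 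The paper fixes this by first applying Lemma~\ref{L:FrattiniApply} to reduce to $\alpha\in N_\F(\tT)$, which \emph{is} realized by a single $g\in G$ since $\tT\in\Delta$; your $\hat\Psi$-injectivity argument then applies to $g$.
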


The proof of Theorem~\ref{T:FusionNormalizerNew} can be found after Lemma~\ref{L:EfullyNormalized}. In part (a) of the theorem we use the notion of the product subsystem mentioned before, where $\E N_S(\E)$ stands more precisely for $(\E N_S(\E))_\F$. Another concrete description of $N_\F(\E)$ (which  somehow resembles the definition of $\bN_\L(\H)$) is given in Theorem~\ref{T:FusionNormalizerGenerate}. 

\smallskip

A weak version of Theorem~\ref{T:MainER} is crucial in proving Theorem~\ref{T:FusionNormalizerNew}(c). If $\E$ is fully $\F$-normalized, then notice that $N_\F(\E)$ is by parts (c) and (d) of Theorem~\ref{T:FusionNormalizerNew} the largest saturated subsystem of $\F$ containing $\E$ as a normal subsystem.  In Lemma~\ref{L:EfullyNormalized} we give some equivalent conditions for $\E$ to be fully $\F$-normalized.

\smallskip

With the above results about the normalizer $N_\F(\E)$ and some related properties in place, it is relatively easy to study the centralizer $C_\F(\E)$. First of all, we can show that there is a reasonable notion of a centralizer of $\E$ in $S$.

\begin{Th}\label{T:CentralizerEinS}
The set $\{R\leq S\colon \E\subseteq C_\F(R)\}$ has (with respect to inclusion) a unique maximal member $C_S(\E)$. 
\end{Th}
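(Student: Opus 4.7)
The plan is to identify the candidate maximum inside the regular locality associated to $\F$ and then reduce the theorem to a clean equivalence. Fix a regular locality $(\L,\Delta,S)$ over $\F$ and $\H\subn\L$ with $T=S\cap\H$ and $\E=\F_T(\H)$, as provided by Background Theorems~\ref{bT:0} and~\ref{bT:2}, and set $C:=S\cap C_\L(\H)$, which is a subgroup of $S$. I will show that for every $R\leq S$, one has $\E\subseteq C_\F(R)$ if and only if $R\leq C$; this immediately gives that $C$ is the unique maximal member of $\{R\leq S\colon\E\subseteq C_\F(R)\}$, and may therefore be taken as $C_S(\E)$.

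The forward implication is straightforward. For $R\leq C\leq C_\L(\H)$, every generating morphism $c_h|_P\colon P\to P^h$ of $\E$ (with $h\in\H$ and $P\leq S_h\cap T$) extends to a morphism $c_h|_{PR}\colon PR\to P^hR$ in $\F$ fixing $R$ pointwise: indeed, $R\leq C_\L(\H)$ gives $r\in S_h$ and $r^h=r$ for every $r\in R$, while $R\leq C_S(T)\leq C_S(P)$ gives $[P,R]=1$, so $PR\leq S_h$. Thus each generator of $\E$ lies in $C_\F(R)$ and consequently $\E\subseteq C_\F(R)$.

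The reverse implication contains the substantive content. Suppose $\E\subseteq C_\F(R)$. Then $T\leq C_S(R)$, so $R\leq C_S(T)\leq N_S(\E)$, and Theorem~\ref{T:MainER} produces a regular locality $(\H R,\delta(\E R),TR)$ over $\E R=\F_{TR}(\H R)$ with $\H\unlhd\H R$; Background Theorem~\ref{bT:1} applied inside $\H R$ then supplies $C_{\H R}(\H)=C_\L(\H)\cap\H R\unlhd\H R$. It suffices to show $R\leq C_{\H R}(\H)$, for then $R\leq S\cap C_\L(\H)=C$. To that end, I would fix $r\in R$ and $h\in\H$, pick $P\in\delta(\E)$ with $h\in N_\H(P)$, and use the hypothesis applied to $c_h|_P\in\Aut_\E(P)$ to lift it to some $c_g|_{PR}\in\Aut_\F(PR)$ with $c_g|_R=\id_R$. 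Writing $h=kg$ with $k:=hg^{-1}\in C_\L(P)$ and using that $g$ commutes with $r$, one computes $h^r=k^r g$, so $h^r=h$ reduces to $k^r=k$.

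The main obstacle is forcing $k^r=k$. The strategy is to arrange the lift $g$ to lie inside $\H R$---this should be possible because $c_h|_P\times\id_R$ is a morphism of the product subsystem $\E R$ (being assembled from $c_h|_P\in\E$ and the identity on $R$) and $\E R$ is realized by $\H R$. Once $k\in C_{\H R}(P)$, the regularity of the locality together with the normality $\H\unlhd\H R$ should force $k^r=k$, perhaps via induction on the subnormal depth of $\H$ in $\H R$ and appeal to the explicit construction of centralizers of normal subsystems from~\cite{Henke:2018} that underlies Background Theorem~\ref{bT:1}.
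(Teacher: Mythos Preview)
Your candidate $C=C_S(\H)$ and the forward implication are correct and match the paper. The reverse implication, however, has a genuine gap at exactly the point you flag as the main obstacle.

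Your element-wise reduction leads to the claim $k^r=k$ for $k\in C_{\H R}(P)$ and $r\in R$, and the proposed justification does not go through. First, $\H$ is already normal in $\H R$, so there is no subnormal depth on which to induct. Second, and more seriously, nothing in the regularity of $\H R$ or the normality $\H\unlhd\H R$ forces an arbitrary element of $C_{\H R}(P)$ to commute with $R$: for $P\in\delta(\E R)$ the group $N_{\H R}(P)$ can be any group of characteristic $p$, and there is no mechanism tying its subgroup $C_{\H R}(P)$ to $R$. Your appeal to \cite{Henke:2018} is also misplaced, since that construction describes $C_{\E R}(\E)$ in terms of $C_{\E R}(T)$, not in terms of centralizers of individual objects $P$. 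There is a further issue: restricting to $h\in N_\H(P)$ for some $P\in\delta(\E)$ does not exhaust $\H$, so even a successful $k^r=k$ argument would not yet give $R\leq C_\L(\H)$. Finally, the intermediate manipulations (e.g.\ $(kg)^r=k^rg^r$) require the relevant words to lie in $\D$, which you have not verified.

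The paper avoids all element-wise computation. Given $R$ with $\E\subseteq C_\F(R)$, it forms the central product $\mD:=\E*\F_R(R)$, which by Lemma~\ref{L:CentralProduct} is saturated with $\E\unlhd\mD$ and, by construction, $\E\subseteq C_\mD(R)$. Proposition~\ref{P:DinNFE} then embeds $\mD$ into $N_\F(\E)$, hence into $\F_0$ for $S_0$ chosen with $N_S(\E)\leq S_0$. Inside $\F_0$ the subsystem $\E$ is normal, so the normal-case result (\cite[Proposition~4]{Henke:2015}) gives $C_{S_0}(\E)_{\F_0}=C_{S_0}(\H)$; from $\E\subseteq C_\mD(R)\subseteq C_{\F_0}(R)$ one gets $R\leq C_{S_0}(\H)$ and hence $R\leq S\cap C_{S_0}(\H)=C_S(\H)$. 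The key idea you are missing is to manufacture, via the central product, a \emph{saturated} subsystem in which $\E$ is normal and in which the containment $\E\subseteq C(R)$ already holds by construction, and then to transport this into a fixed ambient system $\F_0$ (independent of $R$) where the normal-case result applies directly---rather than trying to verify centralization element by element inside $\H R$.
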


Theorem~\ref{T:CentralizerEinS} is restated and proved in Proposition~\ref{P:CSE}. Next we want to list some of the properties of the centralizer of $\E$ in $\F$, but we need one more definition. As mentioned before, every $\F$-conjugate of a subnormal subsystem is subnormal. Hence, the first part of the following definition makes sense.

\begin{defn}\label{D:CommuteSubsystems}~
\begin{itemize}
 \item The subsystem $\E$ is called \emph{fully $\F$-centralized} if
\[|C_S(\E)|\geq |C_S(\tE)|\mbox{ for all }\tE\in\E^\F.\]
\item Given two subsystems $\F_1$ and $\F_2$ of $\F$ over $S_1$ and $S_2$ respectively, we say that $\F_1$ \emph{commutes} with $\F_2$, if $\F_i\subseteq C_\F(S_{3-i})$ for each $i=1,2$ and $S_1\cap S_2\leq Z(\F_1)\cap Z(\F_2)$.
\end{itemize}
\end{defn}

Assuming that the subsystems $\F_1$ and $\F_2$ above are saturated, it follows from \cite[Lemma~3.2]{Henke:2018} that they commute if and only if $\F_i\subseteq C_\F(S_{3-i})$ for each $i=1,2$.

\smallskip

When looking at Example~\ref{Ex:Easy} notice that a subsystem of the form $\F_R(R)$ with $R\leq O_2(G)$ of order $2$ is fully $\F$-centralized if and only if $R=Z(S)$ (i.e. if and only if $R$ is fully centralized). In particular, $\F_R(R)$ is fully normalized precisely when it is fully centralized. This example is a very special one of course, as firstly $\F_R(R)$ is the fusion system of a $p$-group, and secondly, with $R$ being of order $2$, we have $C_S(R)=N_S(R)$. In general, the following implications are true by Lemma~\ref{L:EfullyNormalized} and Lemma~\ref{L:CentralizerTFullyCentralized}(a).
\begin{equation}\label{E:FullyNormFullyCent}
\E\mbox{ fully normalized }\Longrightarrow T\mbox{ fully centralized }\Longrightarrow \E \mbox{ fully centralized}.
\end{equation}
We give some examples in Section~\ref{S:Examples} to show firstly that the converse of the above implications does not hold, and secondly, that $T$ being fully normalized is neither necessary nor sufficient for $\E$ being fully normalized.

\smallskip

We now summarize the main properties of the centralizer subsystem in the following theorem.

\begin{Th}\label{T:FusionCentralizer}
 \begin{itemize}
  \item [(a)] $C_\F(\E)$ is a subsystem of $N_\F(\E)$ over $C_S(\E)$. Moreover, $C_\F(\E)$ commutes with $\E$, and every saturated subsystem of $\F$ which commutes with $\E$ is contained in $C_\F(\E)$.
  \item [(b)] Suppose $\E$ is fully $\F$-centralized. Then $C_\F(\E)$ is saturated and thus the largest saturated subsystem of $\F$ that commutes with $\E$. Moreover,
\[C_\F(\E)=\<O^p(\Aut_{C_\F(T)}(P))\colon P\leq C_S(\E)\>_{C_S(\E)}.\]
 \item [(c)] $C_\F(\E)$ is weakly $N_\F(\E)$-invariant in the sense of Definition~\ref{D:WeaklyInvariant} below. If $\E$ is fully $\F$-normalized, then $\E$ is fully $\F$-centralized and $C_\F(\E)=C_{N_\F(\E)}(\E)$ is normal in $N_\F(\E)$.
 \end{itemize}
\end{Th}

Theorem~\ref{T:FusionCentralizer} is proved after Lemma~\ref{L:CommuteWithE}. Centralizers of normal subsystems were defined before by Aschbacher \cite[Chapter~6]{Aschbacher:2011} with an alternative construction given in \cite{Henke:2018}. It follows from \cite[Proposition~6.7]{Chermak/Henke} that the centralizer $C_\F(\E)$ defined above coincides with the centralizer subsystem defined by Aschbacher if $\E$ is normal in $\F$. 

\smallskip

For more information on the centralizer $C_\F(\E)$, the reader is referred to Subsection~\ref{SS:Centralizer}. It seems difficult to obtain a neat description of this subsystem if $\E$ is not fully centralized. However, the subsystem $C_\F(\E)$ is uniquely determined by the properties stated in Theorem~\ref{T:FusionCentralizer}(b) and Lemma~\ref{L:MoveToFullyCentralized}. 

\smallskip


We end this introduction by clarifying some aspects of the relationship between normalizers and centralizers in regular localities and normalizers and centralizers in fusion systems.

\begin{Remark}\label{R:Main}
Let $(\L,\Delta,S)$ be a regular locality over $\F$, $\H\subn\L$, $T:=S\cap \H$ and $\E:=\F_T(\H)$. We show in Lemma~\ref{L:finNLtT} and Proposition~\ref{P:CSE} that $N_S(\E)=N_S(\H)$ and $C_S(\E)=C_S(\H)$. 

\smallskip

Choose now $S_0\in N_G(\H)$. It follows from Theorem~\ref{T:MainbNLH}(a) that $\F_0:=\F_{S_0}(\bN_\L(\H))$ is well-defined and saturated. Moreover Theorem~\ref{T:MainbNLH}(a),(f) in combination with Background Theorem~\ref{bT:1} gives that $\C_0:=\F_{C_{S_0}(\H)}(C_\L(\H))$ is saturated. The reader might wonder how $\F_0$ and $\C_0$ relate to $N_\F(\E)$ and $C_\F(\E)$. 

\smallskip

It turns out that $\F_0\cong N_\F(\E)$ if and only if $\E$ is fully $\F$-normalized, and that $\C_0\cong C_\F(\E)$ if and only if $\E$ is fully $\F$-centralized. As $N_S(\H)$ is a $p$-subgroup of $N_G(\H)$, it is indeed possible and natural to choose $S_0$ such that $N_S(\H)\leq S_0$. Assuming this from now on, the situation is actually far more transparent. Namely, using Notation~\ref{N:restriction}, it follows from Lemma~\ref{L:NFEbNLH}, Lemma~\ref{L:CF0ECollect} and Lemma~\ref{L:CFEWeaklyInvariant} that
\[N_\F(\E)=\F\cap \F_0=\F_0|_{C_S(\H)}\mbox{ and }C_\F(\E)=\F\cap \C_0=\C_0|_{C_S(\H)}.\]
Moreover, it is a consequence of the above equations, Lemma~\ref{L:EfullyNormalized} and Lemma~\ref{L:EFullyCentralized} that we have the following equivalences:
\[\E\mbox{ is fully $\F$-normalized}\Longleftrightarrow N_S(\H)=S_0\Longleftrightarrow \F_0=N_\F(\E)\]
and
\[\E\mbox{ is fully $\F$-centralized}\Longleftrightarrow C_S(\H)=C_{S_0}(\H)\Longleftrightarrow \C_0=C_\F(\E).\]
Indeed, for the statements on the centralizers to hold it would be enough to assume $C_S(\H)\leq S_0$. 
\end{Remark}

\subsubsection*{Organization of the paper} In Section~\ref{S:Preliminaries} we summarize some more background and show auxiliary results. This is used in Section~\ref{S:Main} to prove the theorems stated in the introduction as well as some related properties. In particular, in Subsection~\ref{SS:ConjugateE}, Lemma~\ref{L:EFullyCentralized} and Lemma~\ref{L:CentralizerTFullyCentralized}(a) we study in more detail what it means for a subnormal subsystem to be fully normalized or fully centralized. These findings are illustrated by examples in Section~\ref{S:Examples}, which show also that some of our results cannot be improved in certain ways.


\section{Preliminaries}\label{S:Preliminaries}

\textbf{Throughout this section let $\F$ be a fusion system over a $p$-group $S$.}

\smallskip

We adapt the terminology and notation from \cite[Chapter~I]{Aschbacher/Kessar/Oliver:2011}, except that we write homomorphisms on the right hand side. We use moreover the following notation.

\begin{notation}\label{N:restriction}
Given $T\leq S$, we write $\F|_T$ for the full subcategory of $\F$ whose objects are all the subgroups of $T$.
\end{notation}

\subsection{Invariant subsystems} As in \cite[Definition~I.6.1]{Aschbacher/Kessar/Oliver:2011}, we say that a subsystem $\E$ of $\F$ over $T$ is \emph{$\F$-invariant} if $T$ is strongly closed, $\E^\alpha=\E$ for every $\alpha\in\Aut_\F(T)$ and the Frattini condition holds, i.e. for every $P\leq T$ and every $\phi\in\Hom_\F(P,T)$, there exists $\phi_0\in\Hom_\E(P,T)$ and $\alpha\in\Aut_\F(T)$ with $\phi=\phi_0\alpha$. There is also a different notion of $\F$-invariant subsystems in the literature. To distinguish the two, we introduce the following definition.

\begin{definition}\label{D:WeaklyInvariant}
 We call a subsystem $\E$ of $\F$ over $T$ \emph{weakly $\F$-invariant}, if $T$ is strongly closed and, for all $P\leq Q\leq T$, all $\phi\in\Hom_\E(P,Q)$ and all $\alpha\in\Hom_\F(Q,T)$, we have
\[\phi^\alpha:=(\alpha|_P)^{-1}\phi\alpha \in\Hom_\E(P\alpha,Q\alpha).\]
\end{definition}

If $\F$ is saturated, it is shown in \cite[Proposition~I.6.4]{Aschbacher/Kessar/Oliver:2011} that $\E$ is $\F$-invariant if and only if it is weakly $\F$-invariant. In general, the two notions are different, but we have the following lemma.

\begin{lemma}\label{L:Finvariant}
Let $\F$ be a fusion system over $S$ and $\E$ a subsystem of $\F$ over $T\leq S$.
\begin{itemize}
 \item [(a)] If $\E$ is $\F$-invariant, then $\E$ is weakly $\F$-invariant.
 \item [(b)] Suppose $T$ is strongly $\F$-closed and $\E^\alpha=\E$ for every $\alpha\in \Aut_\F(T)$. Then $T$ is $\F$-invariant if and only if $\F|_T=\<\E,\Aut_\F(T)\>_T$. 
\end{itemize}
\end{lemma}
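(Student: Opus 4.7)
For part (a), I would apply the Frattini condition to $\alpha$ itself and reduce to the invariance of $\E$ under $\Aut_\F(T)$. Factoring $\alpha = \alpha_0\beta$ with $\alpha_0 \in \Hom_\E(Q,T)$ and $\beta \in \Aut_\F(T)$ gives the chain-rule identity $\phi^\alpha = (\phi^{\alpha_0})^\beta$. The inner conjugate $\phi^{\alpha_0} = (\alpha_0|_P)^{-1}\phi(\alpha_0|_Q)$ is a composition of morphisms in $\E$ (using that the appropriate restrictions and corestrictions of the $\E$-morphism $\alpha_0$ are again in $\E$), so $\phi^{\alpha_0} \in \Hom_\E(P\alpha_0, Q\alpha_0)$. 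The hypothesis $\E^\beta = \E$ then yields $\phi^\alpha = (\phi^{\alpha_0})^\beta \in \Hom_\E(P\alpha, Q\alpha)$, as required.

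For part (b), I interpret the conclusion as asserting that $\E$ (rather than $T$) is $\F$-invariant, since the Frattini condition is a property of subsystems. The inclusion $\<\E, \Aut_\F(T)\>_T \subseteq \F|_T$ is automatic from $\E \subseteq \F|_T$ and $\Aut_\F(T) \subseteq \F|_T$, so each direction of the equivalence amounts to verifying either the opposite inclusion or the Frattini condition, the other two invariance conditions being built into the hypotheses.

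For the forward direction, assuming $\E$ is $\F$-invariant, take $\phi \in \Hom_\F(P,Q)$ with $P, Q \leq T$ and view it as a morphism $P \to T$. By the Frattini condition, $\phi = \phi_0\alpha$ with $\phi_0 \in \Hom_\E(P,T)$ and $\alpha \in \Aut_\F(T)$; corestricting $\phi_0$ to $Q\alpha^{-1}$ and restricting $\alpha$ to $Q\alpha^{-1} \to Q$ expresses $\phi$ as a composition of a morphism in $\E$ and a morphism in $\Aut_\F(T)$, so $\phi \in \<\E, \Aut_\F(T)\>_T$.

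The backward direction is the main content. Here the Frattini condition must be deduced from the identity $\F|_T = \<\E, \Aut_\F(T)\>_T$. Any $\phi \in \Hom_\F(P,T)$ with $P \leq T$ lies in $\F|_T$, so it is a finite composition of restrictions of morphisms drawn from $\E \cup \Aut_\F(T)$. The key technical step is the commutation identity $\alpha\psi = \psi^{\alpha^{-1}}\alpha$, valid for any $\psi$ arising as a restriction of an $\E$-morphism and any $\alpha \in \Aut_\F(T)$: in the right-hand convention, $\psi^{\alpha^{-1}} = \alpha\psi\alpha^{-1}$, and $\E^{\alpha^{-1}} = \E$ keeps the conjugated factor $\psi^{\alpha^{-1}}$ inside $\E$. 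An induction on the length of the composition moves every $\Aut_\F(T)$-factor to the right and produces a factorization $\phi = \phi_0\alpha$ with $\phi_0 \in \Hom_\E(P,T)$ and $\alpha \in \Aut_\F(T)$, establishing the Frattini condition. The main obstacle will be the bookkeeping of domains and codomains when commuting restrictions past one another, but this is routine; the conceptual work is all carried by the invariance hypothesis $\E^\alpha = \E$.
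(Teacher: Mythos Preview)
Your argument is correct and is essentially the standard one: the paper's proof simply cites \cite[Proposition~I.6.4]{Aschbacher/Kessar/Oliver:2011} and notes that saturation is not used, and what you have written out is exactly the content of that cited proof (Frattini factorization $\alpha=\alpha_0\beta$ for (a), and the commutation $\alpha\psi=\psi^{\alpha^{-1}}\alpha$ to normalize words in $\E\cup\Aut_\F(T)$ for (b)). Your reading of part~(b) as ``$\E$ is $\F$-invariant'' rather than ``$T$ is $\F$-invariant'' is also the intended one.
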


\begin{proof}
This is shown in the proof of \cite[Proposition~I.6.4]{Aschbacher/Kessar/Oliver:2011} if $\F$ is saturated. The arguments do  not use the assumption that $\F$ is saturated.
\end{proof}

\subsection{Commuting Subsystems}

As in Definition~\ref{D:CommuteSubsystems}, we say that two subsystems $\F_i$ over $S_i$ ($i=1,2$) commute in $\F$ if $S_1\cap S_2\leq Z(\F_1)\cap Z(\F_2)$ and $\F_i\subseteq C_\F(S_{3-i})$ for each $i=1,2$. If $\F_1$ and $\F_2$ commute then $[S_1,S_2]=1$ and thus $S_1S_2$ is a subgroup of $S$. We set in this case
\[\F_1*\F_2:=\<\phi\in\Hom_\F(P_1P_2,S_1S_2)\colon P_i\leq S_i,\;\phi|_{P_i}\in\Hom_{\F_i}(P_i,S_i)\mbox{ for each }i=1,2\>_{S_1S_2}.\]
The following lemma could be concluded from the results in \cite{Chermak/Henke}, but for simplicity we write out part of the argument.

\begin{lemma}\label{L:CentralProduct}
Let $\F_1$ and $\F_2$ be saturated subsystems of $\F$ over $S_1$ and $S_2$ respectively such that $\F_1$ and $\F_2$ commute in $\F$. Then $\mD:=\F_1*\F_2$ is a saturated subsystem of $\F$ with $\mD|_{S_i}=\F_i$, $\F_i\unlhd \mD$  and $\F_i\subseteq C_\mD(S_{3-i})$ for each $i=1,2$.
\end{lemma}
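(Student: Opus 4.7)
Since $\F_1\subseteq C_\F(S_2)$ and $\F_2\subseteq C_\F(S_1)$, every element of $S_1$ centralizes $S_2$; hence $[S_1,S_2]=1$, $S_1S_2$ is a subgroup of $S$, and by hypothesis $Z:=S_1\cap S_2\leq Z(\F_1)\cap Z(\F_2)$. The strategy is to realize $\mD=\F_1*\F_2$ as a central quotient of the external direct product fusion system. Set $\widetilde{\F}:=\F_1\times\F_2$ over $\widetilde S:=S_1\times S_2$; this is saturated by the standard product construction. The antidiagonal
\[U:=\{(z,z^{-1})\colon z\in Z\}\]
lies in $Z(\F_1)\times Z(\F_2)\leq Z(\widetilde\F)$ and is therefore central and normal in $\widetilde\F$, so by the standard theorem on quotients by central subgroups, $\widetilde\F/U$ is saturated.

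The multiplication map $\mu\colon\widetilde S\to S_1S_2$, $(s_1,s_2)\mapsto s_1s_2$, is surjective with kernel exactly $U$ and induces an isomorphism $\widetilde S/U\cong S_1S_2$. Under this identification I would show that $\widetilde\F/U=\mD$ as fusion systems on $S_1S_2$. In one direction, a generator $\phi_1\times\phi_2$ of $\widetilde\F$ descends under $\mu$ to the map $s_1s_2\mapsto (s_1\phi_1)(s_2\phi_2)$; because $\phi_i\in\F_i\subseteq C_\F(S_{3-i})$, each $\phi_i$ extends to an $\F$-morphism $\tilde\phi_i$ fixing $S_{3-i}$ pointwise, and the composition $\tilde\phi_1\tilde\phi_2$ equals the descended map, showing it is a generator of $\mD$. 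Conversely, any generator $\phi\in\Hom_\F(P_1P_2,S_1S_2)$ of $\mD$ with $\phi|_{P_i}=\phi_i\in\F_i$ satisfies $\phi(s_1s_2)=(s_1\phi_1)(s_2\phi_2)$ by homomorphism, so $\phi$ is exactly the image of $\phi_1\times\phi_2\in\widetilde\F$ under $\mu$ and hence lifts to $\widetilde\F/U$. Both fusion systems are therefore generated (as fusion systems on $S_1S_2$) by the same set of morphisms, forcing $\mD=\widetilde\F/U$. This yields simultaneously that $\mD$ is saturated and that $\mD\subseteq\F$.

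For the remaining structural claims I would argue as follows. The inclusion $\F_i\subseteq\mD|_{S_i}$ is immediate by taking $\phi_{3-i}=\id$; the reverse uses the quotient description, since a morphism of $\widetilde\F$ between subgroups of $S_i\times Z$ is a product of an $\F_i$-part and an $\F_{3-i}|_Z\subseteq\Inn(Z)$-part which becomes an inner automorphism in $S_1S_2$ and so disappears, leaving an $\F_i$-morphism. For $\F_i\unlhd\mD$ I would verify the hypotheses of Lemma~\ref{L:Finvariant}: $S_i$ is strongly $\mD$-closed because any generator $\tilde\phi_1\tilde\phi_2$ sends $P\leq S_i$ into $P\phi_i\leq S_i$; $\Aut_\mD(S_i)$ stabilizes $\F_i$ by direct inspection of generators; and the Frattini condition holds via the factorization $\phi=\tilde\phi_1\tilde\phi_2$ with the $\F_{3-i}$-contribution absorbed into $\Aut_\mD(S_i)$. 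Finally, $\F_i\subseteq C_\mD(S_{3-i})$ is inherited from $\F_i\subseteq C_\F(S_{3-i})$ together with $\F_i\subseteq\mD$.

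The hard part will be the careful identification $\widetilde\F/U=\mD$: one must show that no extraneous morphisms are introduced on either side after taking closure under composition and restriction, and the argument depends in an essential way on the extension property guaranteed by $\F_i\subseteq C_\F(S_{3-i})$ combined with the centrality of $Z$ in each $\F_i$, which together convert direct-product data into genuine $\F$-morphisms on $S_1S_2$ and vice versa.
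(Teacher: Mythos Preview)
Your overall approach is sound but differs from the paper's. The paper simply quotes \cite[Proposition~3.3]{Henke:2018} for the saturation of $\mD$, and then verifies the remaining claims directly from the generating set of $\F_1*\F_2$: a short computation with $Z=P_{3-i}\cap S_i\leq Z(\F_1)\cap Z(\F_2)$ shows that the restriction of any generator to $(P_1P_2)\cap S_i$ lies in $\F_i$, giving $\mD|_{S_i}=\F_i$ and strong closure; the extension of $\phi_i\in\F_i$ by $\id_{S_{3-i}}$ is itself a generator of $\mD$, giving $\F_i\subseteq C_\mD(S_{3-i})$ and the extension condition. You instead rebuild saturation from scratch via the central quotient $(\F_1\times\F_2)/U$, which is essentially how \cite[Proposition~3.3]{Henke:2018} is proved, so your route is more self-contained but longer and duplicates cited work.

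There is one genuine gap in your plan. For $\F_i\unlhd\mD$ you propose to ``verify the hypotheses of Lemma~\ref{L:Finvariant}''. That lemma characterizes $\mD$-\emph{invariance}, not normality in the sense of \cite[Definition~I.6.1]{Aschbacher/Kessar/Oliver:2011}: normality additionally requires the extension condition, namely that each $\alpha\in\Aut_{\F_i}(S_i)$ extend to some $\bar\alpha\in\Aut_\mD(S_iC_{S_1S_2}(S_i))$ with $[C_{S_1S_2}(S_i),\bar\alpha]\leq Z(S_i)$. You never address this. It is easy once $\F_i\subseteq C_\mD(S_{3-i})$ is established---extend $\alpha$ by the identity on $S_{3-i}$, which is a generator of $\mD$ and centralizes $S_{3-i}$---and the paper makes exactly this observation. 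Relatedly, your claim that $\F_i\subseteq C_\mD(S_{3-i})$ is ``inherited from $\F_i\subseteq C_\F(S_{3-i})$ together with $\F_i\subseteq\mD$'' elides the same point: the extension of $\phi_i$ by $\id_{S_{3-i}}$ must be shown to lie in $\mD$, not merely in $\F$; this holds because it is by construction one of the defining generators of $\F_1*\F_2$, but that sentence needs to be said.
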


\begin{proof}
By \cite[Proposition~3.3]{Henke:2018}, $\mD=\F_1*\F_2$ is a saturated subsystem of $\F$. 

\smallskip

Let $P_i\leq S_i$ and $\phi\in\Hom_\F(P_1P_2,S_1S_2)$ with $\phi|_{P_i}\in\Hom_{\F_i}(P_i,S_i)$ for $i=1,2$. Fix now $i\in\{1,2\}$. Notice that $\hat{P}_i:=(P_1P_2)\cap S_i=P_iZ$, where $Z:=P_{3-i}\cap S_i\leq S_1\cap S_2\leq Z(\F_1)\cap Z(\F_2)$. As $Z\leq P_{3-i}\cap Z(\F_{3-i})$ and $\phi|_{P_{3-i}}$ is a morphism in $\F_{3-i}$, we have in particular $\phi|_Z=\id_Z$. As $Z\leq Z(\F_i)$ and $\phi|_{P_i}$ is a morphism in $\F_i$, it follows that $\phi|_{\hat{P}_i}$ is a morphism in $\F_i$. This property implies that $S_i$ is strongly closed in $\mD$ and 
\[\mD|_{S_i}=\F_i.\]
In particular, $\F_i$ is $\mD$-invariant. By assumption, $\F_i$ is saturated and thus weakly normal in $\mD$.

\smallskip

As $\F_i\subseteq C_\F(S_{3-i})$, every morphism $\phi_i\in\Hom_{\F_i}(P_i,S_i)$ extends to a morphism 
\[\phi\in \Hom_\F(P_iS_{3-i},S_1S_2)\]
with $\phi|_{S_{3-i}}=\id_{S_{3-i}}$. By definition of $\mD$, $\phi$ is then a morphism in $\mD$. This shows that $\F_i\subseteq C_{\mD}(S_{3-i})$. Using this property for $P_i:=S_i$, it follows moreover that the pair $(\F_i,\mD)$ satisfies the extension condition for normal subsystems (cf. \cite[Definition~I.6.1]{Aschbacher/Kessar/Oliver:2011}). Hence, $\F_i$ is a normal subsystem of $\mD$.
\end{proof}

\subsection{Two lemmas on fusion systems}

\begin{lemma}\label{L:CentricRadicalIntersection}
If $\E$ is a subnormal subsystem of $\F$ over $T\leq S$ and $R\in\F^{cr}$, then $R\cap T\in\E^{cr}$.
\end{lemma}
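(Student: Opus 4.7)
The plan is to proceed by induction on the length of a subnormal series $\E = \E_0 \unlhd \E_1 \unlhd \cdots \unlhd \E_k = \F$ for $\E$ in $\F$; each $\E_i$ is saturated (by Background Theorem~\ref{bT:1} applied to an associated regular locality) over some strongly closed $T_i \leq S$, with $T = T_0 \leq T_1 \leq \cdots \leq T_k = S$. Granting the result for normal subsystems, iterated application yields, from $R \in \F^{cr}$, first $R \cap T_{k-1} \in \E_{k-1}^{cr}$, then $R \cap T_{k-2} \in \E_{k-2}^{cr}$, and so on down to $R \cap T \in \E^{cr}$.

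So I will focus on the normal case: assume $\E \unlhd \F$ over $T$ and set $P := R \cap T$. A recurring technical tool is that strong closure of $T$ in $\F$ gives $(R\phi) \cap T = (R \cap T)\phi$ for every $\F$-morphism $\phi$ with domain $R$, so $\F$-conjugates of $R$ produce $\F$-conjugates of $P$ inside $T$; by the Frattini/invariance condition for $\E \unlhd \F$, these $\F$-conjugates are $\E$-conjugates up to the action of $\Aut_\F(T)$. For the $\E$-centricity of $P$, I would first reduce, via the standard extension-of-isomorphisms-along-centralizers argument (valid in the saturated fusion system $\E$), to showing $C_T(Q) \leq Q$ for a single fully $\E$-centralized $\E$-conjugate $Q$ of $P$. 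I would then realize $Q$ as $R' \cap T$ for some $\F$-conjugate $R'$ of $R$ by extending an $\E$-isomorphism $P \to Q$ to an $\F$-morphism on all of $R$, using saturation of $\F$ and the $\F$-receptivity of $R$ provided by its $\F$-centricity. With $R' \in \F^{cr}$ in hand, the argument finishes by studying the $p$-subgroup $\langle R', C_T(Q) \rangle$ of $N_S(Q)$: the $\F$-centricity of $R'$ forces $C_S(R') \leq R'$, and the $\F$-radicality of $R'$, applied to the $p$-subgroup image in $\Aut_\F(R')$ induced by $C_T(Q)$-conjugation (after combining with $\Inn(R')$ to form a normal $p$-subgroup), forces $C_T(Q) \leq R'$, hence $C_T(Q) \leq R' \cap T = Q$.

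For $\E$-radicality, let $\alpha \in O_p(\Aut_\E(P))$; I aim for $\alpha \in \Inn(P)$. Since $\E$ is normal in $\F$, Lemma~\ref{L:Finvariant}(a) shows that $\Aut_\E(P)$ is $\Aut_\F(P)$-invariant, so $O_p(\Aut_\E(P)) \unlhd \Aut_\F(P)$. After passing to a fully $\F$-normalized $\F$-conjugate of $P$ and using the saturation axioms for $\F$, I would lift $\alpha$ to some $\hat\alpha \in \Aut_\F(R)$. Because $O_p(\Aut_\E(P))$ is normal in $\Aut_\F(P)$, the subgroup generated by the lifts sits in $O_p(\Aut_\F(R))$, which equals $\Inn(R)$ by the $\F$-radicality of $R$; restricting to $P$ then gives $\alpha \in \Inn(P)$.

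The hard part will be the lifting steps in both the centric and radical parts: matching fully normalized (or centralized) representatives between $\F$ and $\E$ and verifying that normality data survives the extension, so that $p$-subgroups of $\Aut_\E(P)$ or centralizers $C_T(Q)$ inherit the right radicality-compatible behavior under the passage from $P$ to $R$. This is precisely where the Frattini/invariance conditions for the normal subsystem $\E \unlhd \F$ must be combined with the saturation axioms of $\F$ with care.
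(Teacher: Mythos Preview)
Your inductive reduction to the normal case is exactly the paper's argument: the paper simply cites \cite[Lemma~1.20(d)]{AOV1} (alternatively \cite[Lemma~4.6]{Grazian/Henke}) for $\E\unlhd\F$ and then inducts on the subnormal length. So at the level of structure your proposal matches.

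Where you go further---sketching the normal case---there are genuine gaps. In the centricity part you assert that $C_T(Q)$-conjugation gives automorphisms of $R'$, but you have not shown $C_T(Q)\leq N_S(R')$. What is true (via strong closure of $T$ and $Q\unlhd R'$) is that $R'$ normalizes $C_T(Q)$, so $R'C_T(Q)$ is a $p$-group; one must then pass to $N_{R'C_T(Q)}(R')$. Even then, the claim that the resulting image in $\Out_\F(R')$ is a \emph{normal} $p$-subgroup needs an argument: the point is that any $c\in N_{C_T(Q)}(R')$ satisfies $[R',c]\leq R'\cap T=Q$ (since $c,(c^{-1})^r\in T$ by strong closure), so $c_c|_{R'}$ lies in the stabilizer in $\Aut_\F(R')$ of the chain $1\leq Q\leq R'$, which is a normal $p$-subgroup; only then does radicality of $R'$ apply.

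In the radicality part, the step ``the subgroup generated by the lifts sits in $O_p(\Aut_\F(R))$'' is not justified as stated. The preimage of $O_p(\Aut_\E(P))$ under the restriction $\Aut_\F(R)\to\Aut_\F(P)$ is normal in $\Aut_\F(R)$, but there is no reason it should be a $p$-group, and an individual lift $\hat\alpha$ has no a~priori reason to lie in $O_p(\Aut_\F(R))$. The published proofs again exploit the chain stabilizer $C_{\Aut_\F(R)}(P)\cap C_{\Aut_\F(R)}(R/P)$ together with the extension condition for normal subsystems to push the argument through. You correctly flag these lifting steps as the hard part; the cited references carry them out.
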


\begin{proof}
If $\E$ is normal, this is shown in \cite[Lemma~1.20(d)]{AOV1} (or alternatively in \cite[Lemma~4.6]{Grazian/Henke}). The result follows thus by induction on the subnormal length of $\E$.
\end{proof}

\begin{lemma}\label{L:EE=EF}
Let $\E$ be a normal subsystem of $\F$ over $T\leq S$ with $E(\E)=E(\F)$. If $P\leq S$ with $P\cap T\in\delta(\E)$, then $P\in\delta(\F)$.
\end{lemma}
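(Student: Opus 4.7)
The plan is to work inside a regular locality and exploit the dictionary of Background Theorem~\ref{bT:2}. I would fix a regular locality $(\L,\Delta,S)$ over $\F$ and, via Background Theorem~\ref{bT:2}, choose $\H\unlhd\L$ with $S\cap\H = T$ and $\F_T(\H) = \E$. Then $\Delta = \delta(\F)$ by regularity, and Background Theorem~\ref{bT:1} gives that $(\H,\delta(\E),T)$ is itself a regular locality over $\E$. Applying the bijection $\hat\Psi$ of Background Theorem~\ref{bT:2} twice (to $\L$ and to $\H$, viewed as localities) and using $\hat\Psi(E(\L)) = E(\F) = E(\E) = \hat\Psi(E(\H))$, together with injectivity of $\hat\Psi$, I would conclude that $E(\L) = E(\H)$ as partial normal subgroups of $\L$. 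In particular, $\tT := E(\L)\cap S = E(\H)\cap T$ is the common Sylow of $E(\F)$ and $E(\E)$, and $\tT \leq T$.

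Next, I would recall Chermak's characterization of the set $\delta$ (from \cite{ChermakIII}, cf.\ also \cite{Henke:Regular}): membership $P\in\delta(\F)$ is controlled by a condition on the intersection $P \cap \tT$ relative to $E(\F)$, together with a subcentric-type requirement that factors through the generalized Fitting subsystem $F^*(\F)$. The analogous characterization holds for $\delta(\E)$ in terms of $E(\E)$. Since $\tT \leq T$, we have $P \cap \tT = (P \cap T)\cap \tT$, and the hypothesis $P\cap T\in\delta(\E)$, combined with the characterization of $\delta(\E)$, yields the component-side condition on $P\cap \tT$ relative to $E(\E)$. Using $E(\E) = E(\F)$ this is verbatim the component-side condition on $P\cap \tT$ demanded for $P\in\delta(\F)$.

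The remaining and most delicate point is the subcentric-type half of the condition defining $\delta(\F)$. Here I would combine: subcentricity of $P\cap T$ in $\E$, supplied by $P\cap T\in\delta(\E)$; normality of $\E$ in $\F$; and the equality $E(\E) = E(\F)$ to rule out components of $\F$ outside $\E$ that could obstruct subcentricity of $P$. A clean way to implement this is locality-theoretic: in the regular locality $(\H,\delta(\E),T)$ the group $N_\H(P\cap T)$ is of characteristic $p$, and by chasing the Frattini-type decomposition $\L = E(\L)\,N_\L(\tT) = E(\H)\,N_\L(\tT)$ one can transfer this to show $N_\L(P^*)$ is of characteristic $p$ for a fully normalized $\F$-conjugate $P^*$ of $P$, whence $P\in\delta(\F) = \Delta$.

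The main obstacle is exactly this third step. The component-side condition transfers from $\E$ to $\F$ for free once $E(\E) = E(\F)$; what requires genuine work is verifying that no ``$\F$-only'' obstruction (coming from pieces of $F^*(\F)$ not visible inside $\E$) spoils the subcentric requirement for $P$. The hypothesis $E(\E) = E(\F)$ is the key that closes this gap, since it confines any such potential obstruction to the $p$-solvable part of $\F$, which is controllable via the normality $\E \unlhd \F$ and the Frattini decomposition above.
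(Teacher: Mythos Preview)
Your approach is genuinely different from the paper's and, as you yourself acknowledge, has an unresolved gap in the third step. More seriously, your proposed resolution of that gap is not correct: showing that $N_\L(P^*)$ is a group of characteristic $p$ for a fully normalized $\F$-conjugate $P^*$ would at best give $P\in\F^s$, not $P\in\delta(\F)$; and in any case $N_\L(P^*)$ is not even known to be a subgroup of $\L$ until one already knows $P^*\in\Delta=\delta(\F)$, so the argument is circular as stated. The characterization of $\delta(\F)$ you allude to is also not quite as you describe it: it involves $F^*(\L)=E(\L)O_p(\L)$ (cf.\ \eqref{E:deltaF}), not just $\tT=E(\L)\cap S$, so the discrepancy between $O_p(\L)$ and $O_p(\H)$ must be handled explicitly, and your Frattini-type sketch does not address this.

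The paper's proof is much shorter and avoids localities entirely. It proceeds in two moves. First, since $\delta(\E)$ is closed under overgroups in $T$, one has $(PO_p(\F))\cap T\in\delta(\E)$; then \cite[Lemma~4.9(a)]{Grazian/Henke} gives $PO_p(\F)\in\delta(\F)$ directly (that cited lemma already packages the difficult transfer step for subgroups containing $O_p(\F)$). Second, \cite[Lemma~10.6]{Henke:Regular} lets one drop the $O_p(\F)$ factor and conclude $P\in\delta(\F)$. The essential idea you are missing is this reduction via $O_p(\F)$: rather than trying to push the full $\delta$-condition from $\E$ to $\F$ in one step, enlarge $P$ so that an existing lemma applies, then descend.
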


\begin{proof}
Since the set $\delta(\E)$ is overgroup-closed in $T$, we have $(PO_p(\F))\cap T\in\delta(\E)$. Hence, by \cite[Lemma~4.9(a)]{Grazian/Henke}, $PO_p(\F)\in\delta(\F)$. Using Background Theorem~\ref{bT:0}, it follows now from \cite[Lemma~10.6]{Henke:Regular} that $P\in\delta(\F)$.
\end{proof}

\subsection{Products of subnormal subsystems with $p$-groups}

If $\F$ is saturated and $\E$ is a normal subsystem of $\F$ over $T\leq S$, then a product subsystem $\E R$ was defined by Aschbacher \cite[Chapter~8]{Aschbacher:2011} for every subgroup $R\leq S$. A new construction of this product subsystems was given in \cite{Henke:2013}, which we generalize in the following definition.

\begin{definition}\label{D:ProductER}
Let $\E$ be a subsystem of $\F$ over $T\leq S$. 
\begin{itemize}
\item For $P\leq S$ set
\[\Ac_{\F,\E}(P):=\<\phi\in\Aut_\F(P)\colon \phi\mbox{ $p^\prime$-element },\;[P,\phi]\leq P\cap T,\;\phi|_{P\cap T}\in\Aut_\E(P\cap T)\>.\]
\item If $\F$ is saturated and $\E$ is subnormal in $\F$, then for every $R\leq N_S(\E)$ set 
\[\E R:=(\E R)_\F:=\<\Ac_{\F,\E}(P)\colon P\leq TR,\;P\cap T\in\E^{cr}\>_{TR}.\]
\end{itemize}
\end{definition}

\textbf{For the remainder of this section let $\F$ be a saturated fusion system over a $p$-group $S$.} 

\begin{prop}\label{P:ERnormal}
Suppose $\E$ is a normal subsystem of $\F$ over $T\leq S$. Then $N_S(\E)=S$. Moreover, for every $R\leq S$, the subsystem $\E R=(\E R)_\F$ is saturated and
\[\E R=\<\Ac_{\F,\E}(P)\colon P\leq TR,\;P\cap T\in\E^c\>_{TR}.\]
Moreover, $\E\unlhd \E R$ and $\E R$ is the unique saturated subsystem $\mD$ of $\F$ over $TR$ with $O^p(\mD)=O^p(\E)$. 
\end{prop}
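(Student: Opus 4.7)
The claim $N_S(\E)=S$ is immediate: since $\E\unlhd\F$, the subgroup $T$ is strongly $\F$-closed, so in particular $T\unlhd S$. For any $s\in S$, conjugation $c_s|_T$ lies in $\Aut_S(T)\subseteq\Aut_\F(T)$, and by $\F$-invariance of $\E$, this conjugation preserves $\E$, whence $s\in N_S(\E)$.

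For the remaining assertions, the plan is to identify $\E R$ with the product subsystem $\cA$ constructed by Aschbacher in \cite[Chapter~8]{Aschbacher:2011}. Recall that $\cA$ is a saturated subsystem of $\F$ over $TR$ containing $\E$ as a normal subsystem, and is the unique saturated subsystem $\mD$ of $\F$ over $TR$ satisfying $O^p(\mD)=O^p(\E)$. Moreover, by the alternative construction in \cite{Henke:2013}, $\cA$ is generated (as a fusion system over $TR$) by $\Ac_{\F,\E}(P)$ as $P$ ranges over subgroups of $TR$ with $P\cap T\in\E^c$. Since $\E^{cr}\subseteq\E^c$, the inclusion $\E R\subseteq\cA$ is immediate from the definitions, and it suffices to prove $\cA\subseteq\E R$.

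For the reverse inclusion, I apply Alperin's fusion theorem in the saturated fusion system $\cA$: every morphism of $\cA$ is a composition of restrictions of automorphisms in $\Aut_\cA(Q)$ for fully normalized $Q\in\cA^{cr}$. For any such $Q$, Lemma~\ref{L:CentricRadicalIntersection} gives $Q\cap T\in\E^{cr}$, so it suffices to show that $\Aut_\cA(Q)\subseteq\Aut_{\E R}(Q)$. The $p$-part of $\Aut_\cA(Q)$ is contained in $\Aut_{TR}(Q)$, which lies in any subsystem over $TR$ and in particular in $\E R$. For a $p'$-element $\phi\in\Aut_\cA(Q)$ I must verify the two remaining conditions in the definition of $\Ac_{\F,\E}(Q)$. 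The condition $[Q,\phi]\leq Q\cap T$ follows from the fact that the hyperfocal subgroup of $\cA$ equals that of $O^p(\E)$ and therefore lies in $T$, so commutators with $p'$-automorphisms of $\cA$ automatically land in $Q\cap T$. The condition $\phi|_{Q\cap T}\in\Aut_\E(Q\cap T)$ follows because $\phi|_{Q\cap T}$ is a $p'$-element of $\Aut_\cA(Q\cap T)$ and hence lies in $O^p(\Aut_\cA(Q\cap T))$, which corresponds to the subsystem $O^p(\cA)=O^p(\E)\subseteq\E$. This gives $\cA\subseteq\E R$ and hence $\E R=\cA$.

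From the identification $\E R=\cA$, the saturation of $\E R$, the normality $\E\unlhd\E R$, and the uniqueness of $\E R$ as a saturated subsystem of $\F$ over $TR$ with $O^p(\E R)=O^p(\E)$ are all inherited from the corresponding known properties of $\cA$. The main obstacle in this plan is the argument in the third paragraph: justifying that a $p'$-element of $\Aut_\cA(Q)$ for $Q\in\cA^{cr}$ satisfies both the commutator condition $[Q,\phi]\leq Q\cap T$ and the restriction condition $\phi|_{Q\cap T}\in\Aut_\E(Q\cap T)$. Both reduce to the structural fact that $O^p(\cA)=O^p(\E)$ forces the hyperfocal subgroup of $\cA$ into $T$, but turning this into a precise statement about automorphisms of $\cA$-centric-radical subgroups (where $Q\cap T$ need not itself be contained in the hyperfocal subgroup of $\cA$) is the delicate point requiring most of the bookkeeping.
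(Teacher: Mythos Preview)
Your approach is essentially the same as the paper's: both define the larger system generated by the $\Ac_{\F,\E}(P)$ with $P\cap T\in\E^c$, observe that $\E R$ sits inside it, and then use Alperin's fusion theorem together with Lemma~\ref{L:CentricRadicalIntersection} to close the gap. The paper handles the ``delicate point'' you flag --- showing that $p'$-elements of $\Aut_\cA(Q)$ land in $\Ac_{\F,\E}(Q)$ --- by directly citing \cite[Lemma~4.7]{Henke:2013}, which asserts $O^p(\Aut_\G(P))=\Ac_{\F,\E}(P)$ whenever $P\cap T\in\E^c$; this lemma packages exactly the bookkeeping you describe, and your heuristic reduction of the restriction condition $\phi|_{Q\cap T}\in\Aut_\E(Q\cap T)$ to the identity $O^p(\cA)=O^p(\E)$ is not quite a proof without it (the relationship between $O^p(\Aut_\cA(Q\cap T))$ and $\Aut_{O^p(\cA)}(Q\cap T)$ is not automatic for arbitrary subgroups). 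For the normality $\E\unlhd\E R$ the paper cites \cite[Lemma~2.33]{Chermak/Henke} rather than Aschbacher's original construction.
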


\begin{proof}
As $\E\unlhd\F$, we have $T\unlhd S$ and $\Aut_S(T)\leq \Aut_\F(T)\leq \Aut(\E)$. Hence, $N_S(\E)=S$. Set now 
\[\G:=\<\Ac_{\F,\E}(P)\colon P\leq TR,\;P\cap T\in\E^c\>_{TR}.\]
Then by \cite[Theorem~1]{Henke:2013}, $\G$ is the unique saturated subsystem of $\F$ over $TR$ with $O^p(\G)=O^p(\E)$. By \cite[Lemma~2.33]{Chermak/Henke}, $\E\unlhd\G$. Hence, it remains to argue that $\G$ equals $\E R$. Clearly, $\E R$ is contained in $\G$. As $\G$ is saturated, it follows from Alperin's Fusion Theorem (cf. \cite[Theorem~I.3.6]{Aschbacher/Kessar/Oliver:2011}) that $\G$ is generated by all the automorphism groups $\Aut_{\G}(P)$, where $P\in \G^{cr}$ is fully $\G$-normalized. For every $P\in\G^{cr}$, Lemma~\ref{L:CentricRadicalIntersection} gives $P\cap T\in\E^{cr}$, and \cite[Lemma~4.7]{Henke:2013} implies therefore that $O^p(\Aut_\G(P))=\Ac_{\F,\E}(P)$ is contained in $\E R$. If $P\in\G^{cr}$ is fully $\G$-normalized, then $P$ is fully $\G$-automized and hence $\Aut_\G(P)=\Aut_{TR}(P)O^p(\Aut_\G(P))$ is contained in $\E R$. This shows that $\G\subseteq\E R$ and thus $\G=\E R$ as required.
\end{proof}

The following lemma can be seen as a Frattini argument for fusion systems. 

\begin{lemma}\label{L:FusionSystemFrattini}
Let $\E$ be a normal subsystem of $\F$ over $T\leq S$. Then 
\[\F=\<\E S,N_\F(T)\>.\]
More precisely, for every subgroup $P\leq S$ and every morphism $\phi\in\Hom_\F(P,S)$, there exist $\psi\in\Hom_{\E P}(P,TP)$ and $\alpha\in\Hom_{N_\F(T)}(P\psi,S)$ such that $\phi=\psi\alpha$.
\end{lemma}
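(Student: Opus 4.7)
My plan is to work inside a regular locality over $\F$ and invoke Chermak's Frattini argument for localities. By Background Theorem~\ref{bT:0} fix a regular locality $(\L,\Delta,S)$ over $\F$, and by Background Theorem~\ref{bT:2} let $\N\unlhd\L$ be the partial normal subgroup of $\L$ corresponding to $\E$, so that $T=\N\cap S$ and $\F_T(\N)=\E$. Given $\phi\in\Hom_\F(P,S)$, I would first realize it as $\phi=c_f|_P$ for some $f\in\L$ with $P\leq S_f$, using $\F=\F_S(\L)$. Since $T=\N\cap S$ is a maximal $p$-subgroup of $\N$, Chermak's Frattini argument for localities (\cite[Corollary~3.11]{Chermak:2015}) gives $\L=\N\cdot N_\L(T)$; I would use a domain-respecting refinement of it to write $f=ng$ with $n\in\N$ and $g\in N_\L(T)$ such that the word $(g^{-1},n^{-1},p,n,g)\in\D$ for every $p\in P$, yielding the partial-group identity $c_f|_P=c_n|_P\cdot c_g|_{P^n}$.

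Setting $\psi:=c_n|_P$ and $\alpha:=c_g|_{P^n}$, the next step is to verify each lies in the required subsystem. The image satisfies $P^n\subseteq TP$ because for each $p\in P$ the element $p^{-1}p^n$ lies in $\N\cap S=T$ (a standard hyperfocal-type observation in localities), so $p^n\in pT\subseteq TP$. To identify $\psi$ as a morphism in $\E P$, I would argue that $\F_{TP}(\N P)=\E P$: the set $\N P$ is a partial subgroup of $\L$ containing $\N$ as a partial normal subgroup and $TP$ as a maximal $p$-subgroup, so after showing $\N P\subn\L$ (or embedding in a suitable locality) Background Theorem~\ref{bT:1} yields that $\F_{TP}(\N P)$ is saturated; and because $\N P/\N\cong P/(P\cap T)$ is a $p$-group one has $O^p(\F_{TP}(\N P))=O^p(\E)$. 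The uniqueness statement in Proposition~\ref{P:ERnormal} then forces $\F_{TP}(\N P)=\E P$, so $\psi\in\Hom_{\E P}(P,TP)$. Since $g\in N_\L(T)$, conjugation by $g$ preserves $T$ and restricts there to an element of $\Aut_\F(T)$, so $\alpha\in\Hom_{N_\F(T)}(P^n,S)=\Hom_{N_\F(T)}(P\psi,S)$.

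Combining gives $\phi=\psi\alpha$, the precise factorization. The generation statement $\F=\<\E S,N_\F(T)\>$ then follows at once, since $\E P\subseteq\E S$ for every $P\leq S$ by the description in Proposition~\ref{P:ERnormal}. The principal obstacle is twofold: first, refining Chermak's Frattini argument so that the decomposition $f=ng$ is compatible with the domain $P\leq S_f$, i.e.\ that the relevant word lies in $\D$ so the factorization $c_f|_P=c_n|_P\cdot c_g|_{P^n}$ holds in the partial group; and second, justifying the identification $\F_{TP}(\N P)=\E P$ using only the preliminaries, most critically proving enough subnormality of $\N P$ in $\L$ to invoke Background Theorem~\ref{bT:1} without appealing to Theorem~\ref{T:MainER}(b), which is not yet available at this point in the paper.
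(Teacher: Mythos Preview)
Your approach has a genuine gap at the very first step: you cannot in general realize an arbitrary $\phi\in\Hom_\F(P,S)$ as $c_f|_P$ for a \emph{single} $f\in\L$. The equation $\F=\F_S(\L)$ only says that $\F$ is \emph{generated} by the conjugation maps $c_f|_{S_f}$; a morphism in $\F$ is a composite of restrictions of such maps. The single-element realization you want is available only when the source $P$ lies in the object set $\Delta=\delta(\F)$ (see e.g.\ \cite[Lemma~2.3]{Chermak:2015}), and for a regular locality $\delta(\F)$ is far from all of $S$'s subgroups. So the decomposition $f=ng$ via Lemma~\ref{L:FrattiniSplitting}, while perfectly domain-compatible when it applies, never gets off the ground for general $P$.

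The paper sidesteps this by working purely in the fusion system. It takes the generation $\F=\<\E S,N_\F(T)\>$ as already known (from \cite[1.3.2]{AschbacherFSCT} or \cite[Lemma~4.10]{Grazian/Henke}) and proves the precise factorization by induction on the length of a word in the generators: assuming $\phi=\psi\alpha\gamma_0$ with $\psi,\alpha$ as desired and $\gamma_0$ a single generator, it absorbs $\gamma_0$ by conjugating it past $\alpha$ (extending $\alpha$ to $\hat\alpha$ over $T(P\psi)$ and using that $\Ac_{\F,\E}$ is stable under $N_\F(T)$-conjugation). No locality is needed, and there is no issue of realizing morphisms by single elements.

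Your second obstacle is also real as you phrase it: $\N P$ is not obviously subnormal in $\L$, so Background Theorem~\ref{bT:1} does not directly apply. The identification $\F_{TP}(\N P)=\E P$ is in fact available externally via \cite[Theorem~D(a)]{Chermak/Henke}, which the paper invokes later; if you used that, you could salvage this part. But the first gap is fatal for the argument as written: to make a locality proof work you would have to handle $\phi$ as a word in conjugation maps and induct on its length, at which point you are essentially reproducing the paper's fusion-theoretic induction inside $\L$ with extra bookkeeping.
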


\begin{proof}
We use throughout that, for every subgroup $R\leq S$, the subsystem $\E R$ defined above coincides by Proposition~\ref{P:ERnormal} with the equally denoted subsystem defined in \cite{Henke:2013}. It was first shown by Aschbacher \cite[1.3.2]{AschbacherFSCT} that $\F=\<\E S,N_\F(T)\>$; a proof using localities is given in \cite[Lemma~4.10]{Grazian/Henke} based on Lemma~\ref{L:FrattiniSplitting} below. We use the property $\F=\<\E S,N_\F(T)\>$ now to prove the second part of the claim. Fix $P\leq S$ and $\phi\in\Hom_\F(P,S)$.

\smallskip

By the definition of $\E S=(\E S)_\F$, every morphism in $\E S$ is a product of morphisms each of which is either a restriction of a morphisms in $\Ac_{\F,\E}(Q)$ for some $Q\leq S$ with $Q\cap T\in\E^{cr}$, or a morphism induced by conjugation with an element of $S$. Notice that morphisms induced by conjugation with elements of $S$ are in particular morphisms in $N_\F(T)$. Since $\F=\<\E S,N_\F(T)\>$, it follows that $\phi$ can be factorized as the product of morphisms, each of which is either in $N_\F(T)$ or a restriction of an automorphism in $\Ac_{\F,\E}(Q)$ for some $Q\leq S$ with $Q\cap T\in\E^{cr}$. By induction on the length of such a product, we may assume that $\phi$ can be factorized as
\[\phi=\psi\alpha\gamma_0\]
where $\psi\in\Hom_{\E P}(P,TP)$, $\alpha\in\Hom_{N_\F(T)}(P\psi,S)$, $\gamma_0\in\Hom_\F(P\psi\alpha,S)$ and either
\begin{itemize}
 \item [(i)] $\gamma_0$ is a morphism in $N_\F(T)$; or 
 \item [(ii)] $\gamma_0$ is a restriction of a morphism $\gamma\in\Ac_{\F,\E}(Q)$ for some $Q\leq S$ with $Q\cap T\in\E^{cr}$ and $P\psi\alpha\leq Q$. 
\end{itemize}
In case (i), $\alpha\gamma_0$ is a morphism in $N_\F(T)$ and the claim holds. Hence, we may assume that (ii) holds. It follows from the definition of $\E P=(\E P)_\F$ that $TP=T(P\psi)$. Moreover, $\alpha$ extends to $\hat{\alpha}\in\Hom_\F(T(P\psi),T(P\psi\alpha))$ with $T\hat{\alpha}=T$. Replacing $Q$ by $Q\cap T(P\psi\alpha)$ and $\gamma$ by $\gamma|_{Q\cap T(P\psi\alpha)}$, we may assume that $Q\leq T(P\psi\alpha)$. Notice that
\[\delta:=\hat{\alpha}\gamma\hat{\alpha}^{-1}\in\Ac_{\F,\E}(Q\hat{\alpha}^{-1})\]
with
\[\phi=\psi(\delta|_{P\psi})\hat{\alpha}.\]
By \cite[Lemma~2.6]{Chermak/Henke}, we have $(Q\hat{\alpha}^{-1})\cap T=(Q\cap T)\hat{\alpha}^{-1}\in\E^{cr}$. Moreover, $Q\hat{\alpha}^{-1}\leq T(P\psi)=TP$. Hence, $\delta$ is a morphism in $\E P$ and thus $\psi(\delta|_{P\psi})$ is a morphism in $\E P$. As $\hat{\alpha}$ is a morphism in $N_\F(T)$ the assertion follows.
\end{proof}

\subsection{Some lemmas on partial groups and localities}

The reader might want to recall the definitions and results introduced in Subsection~\ref{SS:BackgroundIntro}.

\begin{lemma}\label{L:PartialSubnormalInduce}
Let $\L$ be a partial group, $\H\subn\L$ and $\M$ a partial subgroup of $\L$. Then $\H\cap \M$ is a partial subnormal subgroup of $\M$. In particular, if $\M$ is a subgroup of $\L$, then $\H\cap \M$ is a subnormal subgroup of the group $\M$.
\end{lemma}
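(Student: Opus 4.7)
The plan is to fix a subnormal chain $\H=\H_0\unlhd\H_1\unlhd\cdots\unlhd\H_k=\L$ witnessing $\H\subn\L$ and to intersect it term by term with $\M$. Setting $\M_i:=\H_i\cap\M$ for $0\le i\le k$, I would prove that
\[
\H\cap\M=\M_0\unlhd\M_1\unlhd\cdots\unlhd\M_k=\M
\]
is a subnormal chain of partial subgroups inside the partial group $\M$, which gives the conclusion.

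First I would check that each $\M_i$ is a partial subgroup of $\M$. If $w\in\W(\M_i)\cap\D$, then $w\in\W(\H_i)\cap\D$ and $w\in\W(\M)\cap\D$, so $\Pi(w)$ lies in both $\H_i$ and $\M$, hence in $\M_i$; closure under inversion is immediate. Thus $\M_i$ is a partial subgroup of $\L$ contained in $\M$, and in particular a partial subgroup of $\M$.

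Next comes the step that carries the real content: proving $\M_i\unlhd\M_{i+1}$ inside $\M$. The conjugation domain in $\M$ is $\D_\M(f)=\{y\in\M\colon(f^{-1},y,f)\in\D\}$ (since $f,f^{-1}\in\M$). Given $f\in\M_{i+1}$ and $x\in\D_\M(f)\cap\M_i$, one has $x\in\H_i\cap\D(f)$ and $f\in\H_{i+1}$, so $x^f\in\H_i$ by $\H_i\unlhd\H_{i+1}$. On the other hand, $(f^{-1},x,f)\in\W(\M)\cap\D$, and since $\M$ is a partial subgroup of $\L$ we get $x^f=\Pi(f^{-1},x,f)\in\M$. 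Therefore $x^f\in\H_i\cap\M=\M_i$, proving the required partial normality.

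The only point to watch is the bookkeeping between the conjugation domain in $\L$ and that in $\M$, but because $\M$ is closed under inversion these coincide on $\M$, so no extra argument is needed; I expect this to be the mildest obstacle rather than a genuine one. For the final sentence, assume $\M$ is a subgroup of $\L$, i.e.\ $\W(\M)\subseteq\D$. Then $\W(\M_i)\subseteq\W(\M)\subseteq\D$, so each $\M_i$ is a subgroup of $\L$, and the partial normality $\M_i\unlhd\M_{i+1}$ established above coincides with ordinary normality of subgroups of the group $\M$. Hence $\H\cap\M$ is subnormal in $\M$ in the usual group-theoretic sense.
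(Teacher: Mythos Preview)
Your proof is correct and follows exactly the same approach as the paper: intersect a subnormal chain for $\H$ in $\L$ termwise with $\M$ to obtain a subnormal chain for $\H\cap\M$ in $\M$. The paper's own proof is a one-line assertion that this intersected chain is a subnormal series, whereas you have carefully spelled out why each $\M_i$ is a partial subgroup and why $\M_i\unlhd\M_{i+1}$; your added detail is accurate and does not deviate from the intended argument.
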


\begin{proof}
 If $\H=\H_0\unlhd\H_1\unlhd\cdots \unlhd \H_n=\L$ is a subnormal series for $\H$ in $\L$, then
 \[\H\cap \M=\H_0\cap \M\unlhd \H_1\cap \M\unlhd\cdots \unlhd\H_n\cap \M=\M\]
 is a subnormal series for $\H\cap \M$ in $\M$.
\end{proof}

The next lemma can be seen as a Frattini argument for localities and can be used to prove the first part of Lemma~\ref{L:FusionSystemFrattini} above.

\begin{lemma}\label{L:FrattiniSplitting}
 Let $(\L,\Delta,S)$ be a locality, $\N\unlhd\L$ and $T:=\N\cap S$. Then $\L=\N N_\L(T)$. Indeed,  for every $g\in\L$ there exist $n\in\N$ and $f\in N_\L(T)$ such that $(n,f)\in\D$, $g=nf$ and $S_g=S_{(n,f)}$.
\end{lemma}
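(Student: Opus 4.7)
The plan is to invoke Chermak's Frattini argument for localities, namely \cite[Corollary~3.11]{Chermak:2015}, and then verify the domain refinement $S_g=S_{(n,f)}$. Fix $g\in\L$ and set $P:=S_g\in\Delta$ and $Q:=P\cap T$. Since $Q\leq\N\unlhd\L$ and $Q\leq P\subseteq\D(g)$, normality of $\N$ forces $Q^g\leq\N$; combined with $Q^g\leq P^g\leq S$ (because $c_g|_P$ is a group homomorphism into $S$), this yields $Q^g\leq T$. The Frattini argument then provides $n\in\N$ and $f\in N_\L(T)$ with $g=nf$ and $(n,f)\in\D$.

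For the equality $S_g=S_{(n,f)}$, one inclusion is immediate from the axioms of a locality: if $x\in S_{(n,f)}$, then the conjugation of $x$ along $(n,f)$ is defined and $(x^n)^f\in S$, so by the product-associativity axiom $(x,g)=(x,nf)\in\D$ with $x^g=(x^n)^f\in S$, whence $x\in S_g$. The reverse inclusion $S_g\subseteq S_{(n,f)}$ depends on how $n$ and $f$ are chosen. In the proof of \cite[Corollary~3.11]{Chermak:2015}, $n$ is produced by a Sylow-type conjugation inside the locality structure carried by $\N$ (which is available because $\N\unlhd\L$) so that $f:=n^{-1}g$ normalizes $T$. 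I would verify that this construction can be arranged to ensure that conjugation by $n$ is defined on all of $P=S_g$ and maps $S_g$ into $S_f$. Once this is in place, each $x\in S_g$ satisfies $x\in\D(n)$, $x^n\in\D(f)$, and $(x^n)^f=x^g\in S$, so $x\in S_{(n,f)}$.

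The main technical obstacle is precisely this tracking of conjugation domains through the Frattini construction, namely securing $S_g\subseteq\D(n)$. The Sylow argument inside $\N$ naturally controls conjugation only on the $T$-part $Q^g$, so extending it to all of $S_g$ requires coupling the Sylow choice in $\N$ with the ambient locality axioms for $\L$ (to guarantee that the relevant words lie in $\D$). Everything else in the proof is bookkeeping, and the conclusion $g=nf$ with the sharpened domain equality follows.
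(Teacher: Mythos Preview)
The paper does not give an argument here; it simply cites \cite[Lemma~3.13]{Grazian/Henke}. So there is no ``paper's own proof'' to compare against beyond that reference.

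Your outline is the right shape: invoke the Frattini argument to obtain a decomposition $g=nf$ with $n\in\N$ and $f\in N_\L(T)$, and then upgrade to the domain equality $S_g=S_{(n,f)}$. The inclusion $S_{(n,f)}\subseteq S_g$ is indeed automatic from the locality axioms, as you say. But you do not prove the reverse inclusion; you write that you ``would verify'' the Sylow choice of $n$ can be arranged so that $S_g\subseteq\D(n)$ with $S_g^n\subseteq S_f$, and then immediately label this the ``main technical obstacle.'' That is an honest identification of where the content lies, but it means your write-up is a proof sketch, not a proof.

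Concretely: Chermak's \cite[Corollary~3.11]{Chermak:2015} only asserts $\L=\N N_\L(T)$, i.e.\ that \emph{some} decomposition exists; it does not by itself give the refined statement $S_g=S_{(n,f)}$. To get that, one must go back into the construction (as in the Grazian--Henke argument) and choose $n$ inside $\N$ so that $S_n\geq S_g$, not merely $S_n\geq S_g\cap T$. This uses that $(\N,\Delta\cap T,T)$-type data interacts with $S_g$ through the fact that $S_g\cap T\in\Delta$ determines $S_g$ via $\Delta$ in a controlled way. You have correctly located the difficulty but not supplied the argument; if you want a self-contained proof rather than a citation, that step must be written out.
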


\begin{proof}
This is \cite[Lemma~3.13]{Grazian/Henke}, which is proved based on Corollary~3.11 and Lemma~3.12 in \cite{Chermak:2015}.
\end{proof}

\begin{lemma}\label{L:AutLFusion}
 Let $(\L,\Delta,S)$ and $(\tL,\tDelta,\tS)$ be localities, and suppose $\alpha\colon \L\rightarrow \tL$ is an isomorphism of partial groups. Let moreover $\H$ be a partial subgroup of $\L$. Then the following hold:
\begin{itemize}
\item [(a)] If $S\alpha=\tS$, then $(S\cap \H)\alpha=\tS\cap (\H\alpha)$. 
\item [(b)] If $(S\cap \H)\alpha=\tS\cap (\H\alpha)$, then $\alpha|_{S\cap \H}$ induces an isomorphism from $\F_{S\cap \H}(\H)$ to $\F_{\tS\cap (\H\alpha)}(\H\alpha)$.
\end{itemize}
\end{lemma}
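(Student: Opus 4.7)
My plan is to treat the two parts in order, starting with the straightforward set-theoretic identity in (a) and then transferring the generating morphisms of the fusion system in (b).

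For part (a), I would note that since $\alpha\colon\L\to\tL$ is an isomorphism of partial groups it is in particular a bijection, and by definition it maps $\H$ bijectively onto $\H\alpha$. Under the hypothesis $S\alpha=\tS$, bijectivity gives $(S\cap\H)\alpha=S\alpha\cap\H\alpha=\tS\cap(\H\alpha)$. So part (a) reduces to a one-line bijection argument.

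For part (b), the key preparatory observation is that any isomorphism of partial groups preserves conjugation: whenever $h\in\L$ and $x\in\D(h)$, the triple $(h^{-1},x,h)\in\D$ is sent to $((h\alpha)^{-1},x\alpha,h\alpha)\in\tDelta$ (using that $\alpha$ commutes with inversion and with the product), so $x\alpha\in\tD(h\alpha)$ and $(x^h)\alpha=(x\alpha)^{h\alpha}$. In particular, if $R\leq S\cap \H$ with $R\subseteq \D(h)$ and $R^h\leq S\cap\H$ for some $h\in\H$, then under the hypothesis $(S\cap\H)\alpha=\tS\cap(\H\alpha)$ we have $R\alpha,\,(R^h)\alpha\leq \tS\cap(\H\alpha)$, and the conjugation $c_h\colon R\to R^h$ corresponds to the conjugation $c_{h\alpha}\colon R\alpha\to(R\alpha)^{h\alpha}=(R^h)\alpha$ with $h\alpha\in\H\alpha$.

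Next I would observe that the restriction $\beta:=\alpha|_{S\cap\H}$ is actually a group isomorphism from $S\cap\H$ to $\tS\cap(\H\alpha)$: it is a bijection by hypothesis, and as $S\cap\H$ is a subgroup of $\L$ (so every word in it lies in $\D$), the partial-group homomorphism property forces it to respect the group operation. Finally, since $\F_{S\cap\H}(\H)$ is by definition generated by the maps $c_h$ described above, the preceding paragraph shows that under the isomorphism $\beta$ (acting on morphisms via $\phi\mapsto(\beta|_{\mathrm{dom}\phi})^{-1}\phi\,\beta|_{\mathrm{im}\phi}$) each generator of $\F_{S\cap\H}(\H)$ is sent to a generator of $\F_{\tS\cap(\H\alpha)}(\H\alpha)$; the same argument applied to $\alpha^{-1}$ gives the reverse inclusion, so $\beta$ induces the desired isomorphism of fusion systems.

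I do not expect any real obstacle; the only point that requires a moment of care is the compatibility of $\alpha$ with conjugation, which follows directly from the definition of a morphism of partial groups together with $(h\alpha)^{-1}=h^{-1}\alpha$.
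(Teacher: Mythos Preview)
Your proposal is correct and follows essentially the same route as the paper. The paper's proof of (b) is organized slightly differently---it fixes $h\in\H$ and establishes the precise identity $(S_h\cap\H)\alpha=\tS_{h\alpha}\cap(\H\alpha)$ (using $\alpha^{-1}$ for the reverse inclusion, just as you do at the fusion-system level), thereby showing $\alpha^{-1}(c_h|_{S_h\cap\H})\alpha=c_{h\alpha}|_{\tS_{h\alpha}\cap(\H\alpha)}$---but this is only a cosmetic difference from your argument via the generators $c_h\colon R\to R^h$.
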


\begin{proof}
\textbf{(a)} As $\alpha$ is injective, $S\alpha=\tS$ implies $(S\cap \H)\alpha=(S\alpha)\cap (\H\alpha)=\tS\cap (\H\alpha)$.

\smallskip

\textbf{(b)} Assume now that $(S\cap \H)\alpha=\tS\cap (\H\alpha)$. Let $h\in \H$. It is sufficient to show that
\begin{equation}\label{E:Conjbyalpha}
\alpha^{-1}(c_h|_{S_h\cap \H})\alpha=c_{h\alpha}|_{\tS_{h\alpha}\cap (\H\alpha)} 
\end{equation}
To prove this let $x\in S_h\cap \H$. Then $x\in\D(h)$ and $x^h\in S$. As $\alpha$ is a homomorphism of partial groups, we have then $x\alpha\in\tilde{\D}(h\alpha)$ (where $\tilde{\D}$ denotes the domain of the product on $\tL$). Moreover, since $x^h\in S\cap \H$, we have
\[(x\alpha)^{h\alpha}=(x^h)\alpha\in (S\cap \H)\alpha=\tS\cap (\H\alpha).\]
In particular, as $x\alpha\in (S\cap \H)\alpha=\tS\cap (\H\alpha)$, it follows $x\alpha\in \tS_{h\alpha}$, which proves 
\[(S_h\cap \H)\alpha\leq \tS_{h\alpha}\cap (\H\alpha).\]
It follows from \cite[Lemma~2.18]{Henke:2020} that $\alpha^{-1}$ is also an isomorphism of partial groups. Hence, using the property we just proved with $\alpha^{-1}$ in place of $\alpha$, we can conclude that $(\tS_{h\alpha}\cap (\H\alpha))\alpha^{-1}\leq S_h\cap \H$ and thus $\tS_{h\alpha}\cap (\H\alpha)=(S_h\cap \H)\alpha$. As $(x\alpha)^{h\alpha}=(x^h)\alpha=(x\alpha)(\alpha^{-1}c_h\alpha)$, property \eqref{E:Conjbyalpha} follows.
\end{proof}

\begin{corollary}\label{C:AutLFusion}
 Let $(\L,\Delta,S)$ be a regular locality, $\H\subn\L$ and $\alpha\in\Aut(\L)$ with $(\H\cap S)\alpha\leq S$. Then $(\H\cap S)\alpha=(\H\alpha)\cap S$ and $\alpha$ induces an isomorphism from $\F_{S\cap \H}(\H)$ to $\F_{S\cap (\H\alpha)}(\H\alpha)$. 
\end{corollary}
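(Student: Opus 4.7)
The plan is to reduce the corollary to Lemma~\ref{L:AutLFusion}(b) applied with $(\tL,\tDelta,\tS)=(\L,\Delta,S)$, the same locality in which we are working. Once we supply the equality $(\H\cap S)\alpha = S\cap(\H\alpha)$, that lemma directly produces the desired isomorphism of fusion systems, so the entire argument reduces to upgrading the assumed inclusion $(\H\cap S)\alpha\leq S$ to this equality.

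First I would observe that $\H\alpha$ is again partial subnormal in $\L$: a partial group automorphism of $\L$ carries partial normal subgroups to partial normal subgroups, so applying $\alpha$ termwise to any subnormal series $\H=\H_0\unlhd\H_1\unlhd\cdots\unlhd\H_k=\L$ yields $\H\alpha=\H_0\alpha\unlhd\H_1\alpha\unlhd\cdots\unlhd\H_k\alpha=\L$. Background Theorem~\ref{bT:1} then equips both $\H$ and $\H\alpha$ with regular locality structures; in particular, $S\cap\H$ is a maximal $p$-subgroup of $\H$ and $S\cap(\H\alpha)$ is a $p$-subgroup of $\H\alpha$ (it is a subgroup since the intersection of a subgroup of $\L$ with a partial subgroup is a subgroup, and a $p$-group since it sits in $S$).

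Since $\alpha$ is a partial group isomorphism, it sends the subgroup $S\cap\H$ of $\L$ bijectively onto a subgroup $(S\cap\H)\alpha$ of $\L$ of the same order. Thus $(S\cap\H)\alpha$ is a $p$-subgroup of $\H\alpha$ of the same order as the maximal $p$-subgroup $S\cap\H$ of $\H$, and therefore is itself a maximal $p$-subgroup of $\H\alpha$ (using that $\H\alpha$, being isomorphic to $\H$ as a partial group, has the same maximal $p$-subgroup order). The hypothesis $(S\cap\H)\alpha\leq S$, together with the obvious $(S\cap\H)\alpha\leq\H\alpha$, gives
\[(S\cap\H)\alpha \leq S\cap(\H\alpha),\]
and this inclusion of a maximal $p$-subgroup into a $p$-subgroup must be an equality.

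Finally, Lemma~\ref{L:AutLFusion}(b) applied with $\tL=\L$, $\tDelta=\Delta$, and $\tS=S$ yields that $\alpha|_{S\cap\H}$ induces an isomorphism $\F_{S\cap\H}(\H)\to\F_{S\cap(\H\alpha)}(\H\alpha)$. The only substantive step beyond bookkeeping is the promotion of the given inclusion to the claimed equality, and this is powered entirely by the Sylow-type maximality of $S\cap\H$ in $\H$ furnished by Background Theorem~\ref{bT:1}; everything else is immediate from the invocation of previously established results.
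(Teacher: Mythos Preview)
Your proof is correct and follows essentially the same approach as the paper's: use Background Theorem~\ref{bT:1} to see that $S\cap\H$ is a maximal $p$-subgroup of $\H$, transport this via the isomorphism $\alpha$ to conclude that $(S\cap\H)\alpha$ is a maximal $p$-subgroup of $\H\alpha$, and then observe that the inclusion $(S\cap\H)\alpha\leq S\cap(\H\alpha)$ into a $p$-subgroup forces equality, after which Lemma~\ref{L:AutLFusion}(b) finishes. Your additional remark that $\H\alpha\subn\L$ is a harmless elaboration; the paper omits it since maximality of $(S\cap\H)\alpha$ in $\H\alpha$ follows already from $\alpha$ being an isomorphism of partial groups, without needing to invoke Background Theorem~\ref{bT:1} a second time for $\H\alpha$.
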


\begin{proof}
It is a consequence of Background Theorem~\ref{bT:1} and the definition of a locality that $S\cap \H$ is a maximal $p$-subgroup of $\H$. Notice that $\alpha\in\Aut(\L)$ induces an isomorphism from $\H$ to $\H\alpha$. Hence, $(S\cap \H)\alpha$ is a maximal $p$-subgroup of $\H\alpha$. As $(S\cap \H)\alpha\leq S\cap (\H\alpha)$ by assumption, it follows $(S\cap \H)\alpha=S\cap (\H\alpha)$. The assertion follows now from Lemma~\ref{L:AutLFusion}(b).
\end{proof}

A linking locality is a locality $(\L,\Delta,S)$ such that $\F_S(\L)$ is saturated, $\F_S(\L)^{cr}\subseteq\Delta$ and $N_\L(P)$ is of characteristic $p$ for every $P\in\Delta$. If $\F$ is saturated, then the set $\F^s$ of \emph{subcentric subgroups} is the largest possible object set of a linking locality. It consists of the subgroups $P\leq S$ such that, for every fully $\F$-normalized $\F$-conjugate $Q$ of $P$, the normalizer $N_\F(Q)$ is constrained (cf. \cite[Proposition~3.1]{Henke:2015}). 

\smallskip

For every saturated fusion system $\F$, the set $\delta(\F)$ contains $\F^{cr}$ (see e.g. \cite[Lemma~10.4]{Henke:Regular}). Hence, every regular locality is a linking locality. For every linking locality $(\L,\Delta,S)$, there is the \emph{generalized Fitting subgroup} $F^*(\L)$ defined (cf. \cite[Definition~3]{Henke:Regular}). It is shown in \cite[Lemma~10.2]{Henke:Regular}) that, for any linking locality $(\L,\Delta,S)$, we have 
\begin{equation}\label{E:deltaF0}
 \delta(\F)=\{P\leq S\colon P\cap F^*(\L)\in\F^s\}.
\end{equation}
In particular, for every $P\leq S$, we have
\begin{equation}\label{E:deltaF}
 P\in\delta(\F)\Longleftrightarrow P\cap F^*(\L)\in\delta(\F).
\end{equation}
We will use this property for a regular locality $(\L,\Delta,S)$.

\smallskip

We set $Z(\L)=C_\L(\L)$ for every partial group $\L$. A linking locality $(\L,\Delta,S)$ is called \emph{quasisimple} if $\L=O^p(\L)$ (i.e. $\L\neq \N S$ for every proper partial normal subgroup $\N$ of $\L$) and $\L/Z(\L)$ is simple (i.e. $\L/Z(\L)$ has precisely two partial normal subgroups).  

\smallskip

Suppose now that $(\L,\Delta,S)$ is a regular locality. By Background Theorem~\ref{bT:1}, a partial subnormal subgroup $\K$ of $\L$ can be regarded as a regular locality. It is called a \emph{component} if it is quasisimple. By \cite[Proposition~11.7, Lemma~11.9]{Henke:Regular}, the product $E(\L)$ of components of $\L$ does not depend on the order of the factors and forms a partial normal subgroup of $\L$. Moreover, $F^*(\L)=E(\L)O_p(\L)$.

\begin{lemma}\label{L:BasisNLtT}
Let $(\L,\Delta,S)$ be a regular locality. Set
\[\tT:=E(\L)\cap S\mbox{ and }T^*:=F^*(\L)\cap S.\]
Then $\tT$ and $T^*$ are elements of $\Delta$, and $G=N_\L(\tT)=N_\L(T^*)$ is a group of characteristic $p$. Moreover, $\L\subseteq \D(f)$ and $c_f\in\Aut(\L)$ for every $f\in G$, and $G$ acts on $\L$ via conjugation. 
\end{lemma}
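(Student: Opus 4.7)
The plan is to verify the four assertions in the order: first $\tT, T^* \in \Delta$; then the core claim $\L \subseteq \D(f)$ for every $f \in G$; and finally deduce the normalizer equality and the action from it. For $T^* \in \Delta$, I apply \eqref{E:deltaF} to $S \in \Delta$, yielding $T^* = S \cap F^*(\L) \in \delta(\F) = \Delta$. For $\tT$, Background Theorem~\ref{bT:1} applied to $E(\L) \unlhd \L$ produces a regular locality $(E(\L), \delta(E(\F)), \tT)$ over $E(\F)$, whence $\tT \in \delta(E(\F))$; Lemma~\ref{L:EE=EF} applied to $E(\F) \unlhd \F$ (using $E(E(\F)) = E(\F)$) with $P = \tT$ then upgrades this to $\tT \in \delta(\F) = \Delta$. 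The very same argument in fact shows $\delta(E(\F)) \subseteq \Delta$, a fact I will use below.

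The technical core is $\L \subseteq \D(f)$ for every $f \in G$. Given $g \in \L$, the Frattini Splitting Lemma~\ref{L:FrattiniSplitting} applied with $\N = E(\L)$ and $T = \tT$ provides a factorization $g = nh$ with $n \in E(\L)$, $h \in N_\L(\tT) = G$, and $(n,h) \in \D$. By the substitution axiom for partial groups, it suffices to show $(f^{-1}, n, h, f) \in \D$; by the locality axiom defining $\D$ this reduces to exhibiting $P_0, P_1, P_2, P_3, P_4 \in \Delta$ with $P_{i-1} \leq S_{f_i}$ and $P_{i-1}^{f_i} \leq P_i$ at each step. I would set $P_1 := \tT_n = \{x \in \tT : x \in \D(n),\; x^n \in \tT\} \in \delta(E(\F)) \subseteq \Delta$, then $P_2 := P_1^n \in \delta(E(\F))$, and finally $P_0 := P_1^f$, $P_3 := P_2^h$, $P_4 := P_3^f$. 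Each of $P_0, P_3, P_4$ is a subgroup of $\tT$ (since $f, h \in N_\L(\tT)$) and lies in $\Delta$ because $c_f|_\tT$ and $c_h|_\tT$ are $\F$-automorphisms of $\tT$ while $\Delta = \delta(\F)$ is closed under $\F$-conjugacy. The required containments $P_{i-1} \leq S_{f_i}$ then follow from $\tT \leq S_{f^{-1}} \cap S_h \cap S_f$ (because $f, h \in N_\L(\tT)$) together with $P_1 \leq S_n$ (from the definition of $\tT_n$).

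With $\L \subseteq \D(f)$ in hand, the action of $G$ on $\L$ is immediate from associativity of the partial product: $(x^{f_1})^{f_2} = x^{f_1 f_2}$ for all $f_1, f_2 \in G$ and $x \in \L$. For the normalizer equality, one observes $\tT = T^* \cap E(\L)$ (from $\tT \leq T^* \leq S$ and $\tT = S \cap E(\L)$), so partial normality of $E(\L)$ yields $N_\L(T^*) \subseteq N_\L(\tT)$. Conversely, for $f \in G$ one has $T^* = \tT \cdot O_p(\L) \leq S_f$ (because the subgroup $S_f$ of $S$ contains both $\tT$ and the partial normal $p$-subgroup $O_p(\L)$, hence their product), and $T^*$ is preserved setwise by $c_f$ thanks to the partial normality of $F^*(\L)$. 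The step I expect to be most delicate is the chain construction in the middle paragraph: verifying that each $P_i$ lies in $\Delta$ depends essentially on $\F$-conjugacy closedness of $\delta(\F)$ together with the inclusion $\delta(E(\F)) \subseteq \Delta$ from the first paragraph.
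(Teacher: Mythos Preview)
Your argument is correct and takes a genuinely different, more self-contained route than the paper. The paper's proof is brief because it outsources the heavy lifting: it cites \cite[Lemma~11.9]{Henke:Regular} for $T^*=\tT\,O_p(\L)$ (giving the normalizer equality immediately), then invokes \cite[Theorem~2]{Henke:Regular} (centricity of $F^*(\L)$) together with \cite[Lemma~10.11(c)]{Henke:Regular} to obtain $\L\subseteq\D(f)$ and $c_f\in\Aut(\L)$ for every $f\in N_\L(T^*)$ in one stroke. You instead work internally: factor an arbitrary $g\in\L$ as $g=nh$ via Lemma~\ref{L:FrattiniSplitting} with $\N=E(\L)$, and exhibit an explicit $\Delta$-chain for $(f^{-1},n,h,f)$ built from $\tT_n\in\delta(E(\F))\subseteq\Delta$. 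This buys independence from the external reference at the cost of length; your verification that $\tT,T^*\in\Delta$ is also more explicit than the paper's. Two small remarks. First, the action identity $(x^{f_1})^{f_2}=x^{f_1f_2}$ is not literally ``immediate from associativity'': one must still witness that the five-letter word $(f_2^{-1},f_1^{-1},x,f_1,f_2)$ lies in $\D$. The paper does this via $S_x\cap T^*\in\Delta$ (using \eqref{E:deltaF}) and its $G$-conjugates; your chain technique extends in the same way. Second, your assertion $O_p(\L)\leq S_f$ for $f\in G$ is not a bare fact about partial normal $p$-subgroups, but it \emph{does} follow from what you have already established: since $\L\subseteq\D(f)$ and $O_p(\L)\unlhd\L$ is contained in $S$, each $x\in O_p(\L)$ satisfies $x^f\in O_p(\L)\leq S$, whence $x\in S_f$.
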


\begin{proof}
By \cite[Lemma~11.9]{Henke:Regular}, we have $E(\L)\unlhd\L$ and $T^*=\tT O_p(\L)$. Therefore $G=N_\L(\tT)=N_\L(T^*)$. It follows from \eqref{E:deltaF} that $T^*\in\Delta$, which in turn implies by \cite[Lemma~10.6]{Henke:Regular} that $\tT\in\Delta$. In particular, $G$ is a group of characteristic $p$, as $(\L,\Delta,S)$ is a linking locality. By \cite[Theorem~2]{Henke:Regular}, $F^*(\L)$ is centric in $\L$. Hence, by \cite[Lemma~10.11(c)]{Henke:Regular}, we have
\[\L\subseteq\D(f)\mbox{ and }c_f\in\Aut(\L)\]
for all $f\in G=N_\L(T^*)$. Notice that $x^{\One}=x$ for all $x\in\L$. Moreover, if $f,g\in G$, we have $u=(g^{-1},f^{-1},x,f,g)\in\D$ via $(S_x\cap T^*)^{fg}$, as $S_x\cap T^*\in\delta(\F)=\Delta$ by \eqref{E:deltaF}. Hence, $x^{fg}=\Pi((fg)^{-1},x,(fg))=\Pi(u)=(x^f)^g$. Thus, $G=N_\L(T^*)$ acts on $\L$ via conjugation.
\end{proof}

\subsection{Some group theoretic lemmas}

Throughout this section let $G$ be a finite group.

\begin{lemma}\label{L:OpGinNormalizer}
If $H\subn G$, then $O_p(G)\leq N_G(O^p(H))$. 
\end{lemma}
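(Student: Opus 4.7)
The plan is to induct on the subnormal length $n$ of $H$ in $G$. The base case $n = 0$ (so $H = G$) is immediate because $O^p(G)$ is characteristic in $G$, and in particular normal.

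For the inductive step, take a subnormal series $H = H_0 \unlhd H_1 \unlhd \cdots \unlhd H_n = G$ and set $K := H_{n-1}$, so that $H \subn K$ has subnormal length $n-1$ and $K \unlhd G$. Applying the inductive hypothesis to $H \subn K$ yields $O_p(K) \leq N_K(O^p(H))$, so the set $M := O^p(H) \cdot O_p(K)$ is a subgroup of $K$. Since $K \unlhd G$ and $O_p(G) \unlhd G$, one has $[K, O_p(G)] \leq K \cap O_p(G) \leq O_p(K)$, which shows that conjugation by any element of $O_p(G)$ sends $O^p(H)$ into $M$. Moreover, $O_p(K)$ is characteristic in $K$ and hence normal in $G$, so $O_p(G)$ also normalizes the $O_p(K)$-factor. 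Combining these observations, $O_p(G)$ normalizes $M$.

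To finish, I will identify $O^p(H)$ as a characteristic subgroup of $M$ by verifying $O^p(M) = O^p(H)$. The inclusion $O^p(M) \leq O^p(H)$ holds because $M/O^p(H)$ is a quotient of the $p$-group $O_p(K)$, while the reverse inclusion follows from the fact that every $p'$-element of $H$ remains a $p'$-element of $M$ and therefore lies in $O^p(M)$. Since $O_p(G)$ normalizes $M$, it normalizes the characteristic subgroup $O^p(M) = O^p(H)$, completing the induction.

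The only nontrivial step is choosing the enlargement $M = O^p(H) \cdot O_p(K)$: one needs simultaneously the inductive grip on $O^p(H)$ from within $K$ (to guarantee $M$ is a group) and the commutator estimate $[K, O_p(G)] \leq O_p(K)$ (to guarantee $O_p(G)$ normalizes $M$). Once this auxiliary subgroup is in place, recognizing $O^p(H)$ as $O^p(M)$ is routine.
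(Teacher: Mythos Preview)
Your proof is correct, but the key step differs from the paper's. Both arguments set up the same induction (the paper phrases it as a minimal counterexample), pick $K\unlhd G$ with $H\subn K$, and use the estimate $[O_p(G),K]\leq O_p(G)\cap K\leq O_p(K)$ together with the inductive fact that $O_p(K)$ normalizes $O^p(H)$. At that point the paper invokes a coprime action identity (\cite[8.2.7(b)]{KS}): since $O^p(H)=O^p(O^p(H))$ acts on the $p$-group $O_p(G)$, one has $[O_p(G),O^p(H)]=[O_p(G),O^p(H),O^p(H)]\leq [O_p(K),O^p(H)]\leq O^p(H)$, finishing immediately. You instead build the auxiliary subgroup $M=O^p(H)\,O_p(K)$, observe that $O_p(G)$ normalizes $M$, and recognize $O^p(H)$ as the characteristic subgroup $O^p(M)$. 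Your route is slightly longer but entirely elementary, avoiding any appeal to coprime action; the paper's route is a one-liner once the cited lemma is available.
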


\begin{proof}
Let $G$ be a minimal counterexample. Notice that $H\neq G$ as otherwise the assertion clearly holds. As $H\subn G$, there exists thus $N\unlhd G$ with $H\subn N$ and $N\neq G$. Then $N$ is not a counterexample and hence $[O_p(N),O^p(H)]\leq O^p(H)$. Notice that
\[[O_p(G),H]\leq [O_p(G),N]\leq O_p(G)\cap N=O_p(N).\]
Hence, by a property of coprime action (cf. \cite[8.2.7(b)]{KS}), we have 
\[[O_p(G),O^p(H)]=[O_p(G),O^p(H),O^p(H)]\leq [O_p(N),O^p(H)]\leq O^p(H)\]
contradicting the assumption that $G$ is a counterexample.
\end{proof}

Recall from the introduction that $G$ is of \emph{characteristic $p$} if $C_G(O_p(G))\leq O_p(G)$.

\begin{lemma}\label{L:MS}
Let $G$ be a group of characteristic $p$. Then the following hold:
\begin{itemize}
\item[(a)] $N_G(P)$ is of characteristic $p$ for all non-trivial $p$-subgroups $P \leq G$.
\item[(b)] Every subnormal subgroup of $G$ is of characteristic $p$.
\item[(c)] If $H\leq G$ with $O_p(G)\leq H$, then $H$ is of characteristic $p$.
\end{itemize}
\end{lemma}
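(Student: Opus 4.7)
My plan is to prove (c), (a), (b) in that order, since (c) is essentially a lemma that powers the proof of (a), and (b) is handled by a separate coprime-action argument.

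Part (c) is essentially immediate. As $O_p(G)$ is normal in $G$ and contained in $H$, it is a normal $p$-subgroup of $H$ and therefore lies in $O_p(H)$. Hence
\[C_H(O_p(H))\leq C_H(O_p(G))\leq C_G(O_p(G))\leq O_p(G)\leq O_p(H),\]
which is exactly the condition for $H$ to be of characteristic $p$.

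For part (a), I would induct on $|G|$. Let $N=N_G(P)$ and form $M:=P\cdot O_p(G)$, a $p$-subgroup of $G$ because $O_p(G)\unlhd G$. Since $N$ normalizes both $P$ and $O_p(G)$, it normalizes $M$, so $N\leq N_G(M)$. Because $O_p(G)\leq M\leq N_G(M)$, part (c) shows $N_G(M)$ is of characteristic $p$. If $N_G(M)<G$, then induction applied to $N_G(M)$ and the nontrivial $p$-subgroup $P$ yields that $N_{N_G(M)}(P)=N$ is of characteristic $p$. Otherwise $M\unlhd G$, forcing $M\leq O_p(G)$ and hence $P\leq O_p(G)$. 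In this reduced case I would climb the normalizer tower of $P$ inside the $p$-group $O_p(G)$: setting $P_1:=N_{O_p(G)}(P)$, the standard property of $p$-groups gives $P_1>P$, and $N_G(P)\leq N_G(P_1)$ because $N_G(P)$ normalizes both $P$ and $O_p(G)$, hence their intersection $P_1$. Iterating along $P=P_0<P_1<\cdots$, at each stage either $N_G(P_i)<G$, where the outer induction supplies characteristic $p$ for $N_G(P_i)$ and thus lets us conclude for $N_G(P)$, or $P_i\unlhd G$, in which case the next step of the tower equals $O_p(G)$ and (c) applies directly.

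For part (b), I would induct on the subnormal length of $H$ in $G$ to reduce to $H\unlhd G$. Take $x\in C_H(O_p(H))$. The key observation is that $O_p(G)\cap H$ is a normal $p$-subgroup of $H$ and therefore contained in $O_p(H)$; since $H\unlhd G$, this gives $[x,O_p(G)]\leq O_p(G)\cap H\leq O_p(H)$, which $x$ centralizes. Thus $[O_p(G),x,x]=1$. Writing $x=x_px_{p'}$, the $p'$-part $x_{p'}$ acts coprimely on $O_p(G)$, and the standard coprime-action lemma (if $[V,y,y]=1$ under coprime action then $[V,y]=1$) yields $x_{p'}\in C_G(O_p(G))\leq O_p(G)$; being a $p'$-element of a $p$-group, $x_{p'}=1$. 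The remaining $p$-element $x=x_p\in C_H(O_p(H))$ is then placed inside $O_p(H)$ using the normal closure of $x$ in $H$ together with the characteristic $p$ hypothesis on $G$.

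The main obstacle is the residual case of (a) with $P<O_p(G)$: the tower argument has to be set up so each step genuinely reduces either $|G|$ or the index $|O_p(G):P|$, and the boundary case where a normalizer equals $G$ must be recognized as feeding back into part (c). The $p$-part argument in (b) is secondary but still requires care; the coprime-action step there is completely standard once the commutator inclusion $[x,O_p(G)]\leq O_p(H)$ is established.
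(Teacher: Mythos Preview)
Your argument for (c) is correct and coincides with the paper's one-line proof. Your argument for (b) is also correct and self-contained (the paper merely cites a reference for (a) and (b)); note that the final sentence about placing $x_p$ in $O_p(H)$ via its normal closure is unnecessary, since once every element of $C_H(O_p(H))$ has trivial $p'$-part, $C_H(O_p(H))$ is a normal $p$-subgroup of $H$ and therefore contained in $O_p(H)$.

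The gap is in part (a), in the tower argument for the residual case $P\le O_p(G)$. You assert that when $N_G(P_i)<G$ ``the outer induction supplies characteristic $p$ for $N_G(P_i)$'', but the induction hypothesis on $|G|$ only applies to groups already known to have characteristic $p$, which is precisely what is at issue for $N_G(P_i)$. One can try to repair this with an inner induction on $|O_p(G):P|$ to first obtain that $N_G(P_1)$ has characteristic $p$ and then feed $N_G(P_1)$ into the outer hypothesis; this succeeds when $N_G(P_1)<G$, but stalls when $P_1=N_{O_p(G)}(P)$ is normal in $G$ yet properly contained in $O_p(G)$: then $O_p(G)\not\le N_G(P)$, so (c) does not apply to $N_G(P)$, and no smaller characteristic-$p$ ambient group is available. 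A concrete instance is $G=O_p(G)$ dihedral of order $8$ with $P$ a noncentral subgroup of order $2$; here $P_1$ is a normal Klein four subgroup and your dichotomy lands in the stuck case at the very first step. The standard remedy is Thompson's $A\times B$ lemma: with $Q=O_p(G)$ and $N=N_G(P)$, any $p'$-element $x\in C_N(O_p(N))$ centralises $Q\cap N=N_Q(P)\ge C_Q(P)$ and commutes with $P$, so $A=\langle x\rangle$, $B=P$ acting on $Q$ forces $[Q,x]=1$; hence $x\in C_G(Q)\le Q$, giving $x=1$ and showing $C_N(O_p(N))$ is a $p$-group.
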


\begin{proof}
 See \cite[Lemma 1.2(a),(c)]{MS:2012b} for parts (a) and (b). If $H$ is as in (c), then $O_p(G)\leq O_p(H)$ and hence $C_H(O_p(H))\leq C_G(O_p(G))\leq O_p(G)\leq O_p(H)$. 
\end{proof}

\begin{lemma}\label{L:CharpNGH}
Let $G$ be of characteristic $p$ and $H\subn G$. Then $N_G(H)$ is of characteristic $p$.
\end{lemma}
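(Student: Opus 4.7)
The plan is to proceed by induction on $|G|$, setting $N := N_G(H)$. If $H = 1$, then $N = G$ is of characteristic $p$ by hypothesis, so assume $H \neq 1$. By Lemma~\ref{L:MS}(b) the subnormal subgroup $H$ is itself of characteristic $p$, so $P := O_p(H)$ is a non-trivial $p$-subgroup of $G$. Since $P$ is characteristic in $H$, we have $N \leq K := N_G(P)$, and $K$ is of characteristic $p$ by Lemma~\ref{L:MS}(a). The group-theoretic analogue of Lemma~\ref{L:PartialSubnormalInduce} shows that $H \subn K$. If $K < G$, the inductive hypothesis applies to the pair $(K,H)$, yielding that $N_K(H) = N_G(H) \cap K = N$ is of characteristic $p$, and we are done.

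The delicate case is $K = G$, that is, $O_p(H) \unlhd G$. To handle it I would invoke Lemma~\ref{L:OpGinNormalizer}: setting $J := O^p(H)$, the lemma gives $O_p(G) \leq N_G(J)$, whence Lemma~\ref{L:MS}(c) makes $N_G(J)$ of characteristic $p$. If $H = J$ (i.e.\ $H = O^p(H)$), then $O_p(G) \leq N_G(H) = N$, and a direct application of Lemma~\ref{L:MS}(c) makes $N$ of characteristic $p$. Otherwise $J$ is a proper characteristic subgroup of $H$, so $|J| < |H|$, and I would use a nested induction: first on $|H|$, replacing $H$ by $J$ inside $G$ to reach the characteristic-$p$ group $N_G(J)$, and then (if $N_G(J) < G$) on $|G|$ applied to the pair $(N_G(J),H)$ via $H \subn N_G(J)$.

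The main obstacle is the doubly-stuck configuration in which both $P = O_p(H)$ and $J = O^p(H)$ are normal in $G$. Here the characteristic subgroups that normally drive the reduction are all preserved by $G$, and neither of the previous steps makes the ambient group strictly smaller. To resolve this I would select a Sylow $p$-subgroup $T$ of $H$ containing $P$, so that $T \neq 1$ and Lemma~\ref{L:MS}(a) makes $N_G(T)$ of characteristic $p$; the Frattini argument for $H \unlhd N$ gives $N = H \cdot N_N(T) \subseteq H \cdot N_G(T)$, and combining this decomposition with the normality in $G$ of both $J$ and $P$ and with the relation $H = JT$ one either forces $T = P$ (so that $H = JP$ is a product of normal subgroups of $G$, whence $H \unlhd G$ and $N = G$), or completes the inductive bookkeeping by confining the non-$p$ part of $N$ to a proper characteristic-$p$ subgroup of $G$ to which induction on $|G|$ applies. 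This last Sylow-theoretic step is the technical heart of the argument.
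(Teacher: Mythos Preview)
Your reduction strategy is sound up to the point where you have arranged $O^p(H)\unlhd G$ (your ``doubly-stuck'' case is a special instance of this), and you correctly set up the Frattini decomposition $N = H\cdot N_N(T)$ with $N_G(T)$ of characteristic $p$. The gap is the final step: you do not actually show that $N$ is of characteristic $p$. Your proposed dichotomy ``either $T=P$ or inductive bookkeeping'' does not work. When $T>O_p(H)$ the subgroup $H$ is \emph{not} contained in $N_G(T)$ (since $T$ is then not normal in $H$), so you cannot apply the induction hypothesis on $|G|$ to the pair $(N_G(T),H)$; and there is no evident proper characteristic-$p$ overgroup of $N$ inside $G$ to descend to. Knowing that both $H$ and $N_G(T)$ are of characteristic $p$ and that $N=H\,N_G(T)$ with $H\unlhd N$ is not by itself enough to conclude that $N$ is of characteristic $p$.

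The paper closes exactly this gap by invoking \cite[Lemma~4.3]{Grazian/Henke}: if $H\unlhd M$ with $H$ of characteristic $p$ and $T\in\Syl_p(H)$, then $M$ is of characteristic $p$ if and only if $N_M(T)$ is. With $M=N_G(H)$ and the observation that $N_{N_G(H)}(T)=N_G(T)$ (using $H=O^p(H)T$ and $O^p(H)\unlhd G$), Lemma~\ref{L:MS}(a) finishes the argument. Incidentally, the paper's proof is also more economical: it goes straight to the $O^p(H)$ reduction via Lemma~\ref{L:OpGinNormalizer} and Lemma~\ref{L:MS}(c), without your preliminary pass through $O_p(H)$; and your remark about ``nested induction on $|H|$'' is superfluous, since $N_G(O^p(H))$ is already seen to be of characteristic $p$ directly.
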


\begin{proof}
By Lemma~\ref{L:OpGinNormalizer}, we have $O_p(G)\leq N_G(O^p(H))$. Hence, $N_G(O^p(H))$ is of characteristic $p$ by Lemma~\ref{L:MS}(c). Notice that $N_G(H)\leq N_G(O^p(H))$ as $O^p(H)$ is characteristic in $H$. Moreover, $H\subn N_G(O^p(H))$. Hence, replacing $G$ by $N_G(O^p(H))$, we can and will assume from now on that $O^p(H)\unlhd G$.

\smallskip

Let $T\in\Syl_p(H)$. Since $H\unlhd N_G(H)$ and $H$ is by Lemma~\ref{L:MS}(b) of characteristic $p$, it is a consequence of \cite[Lemma~4.3]{Grazian/Henke} that $N_G(H)$ is of characteristic $p$ if and only if $N_{N_G(H)}(T)$ is of characteristic $p$. However, as $O^p(H)\unlhd G$ and $H=O^p(H)T$, we have $N_{N_G(H)}(T)=N_G(T)$. By Lemma~\ref{L:MS}(a), $N_G(T)$ is of characteristic $p$.
\end{proof}

Recall for the next lemma that a model for a saturated fusion system $\F$ is a finite group of characteristic $p$ which realizes $\F$. By \cite[Theorem~III.5.10]{Aschbacher/Kessar/Oliver:2011}, such a model for $\F$ exists if and only if $\F$ is constrained. Here $\F$ is called constrained if it is saturated and $C_S(O_p(\F))\leq O_p(\F)$.

\begin{lemma}\label{L:NormalizerConstrained}
Let $\F$ be a constrained fusion system. Suppose $G$ and $\tG$ are models for $\F$ so that $S$ is a Sylow $p$-subgroup of $G$ and of $\tG$ (as $\F$ is a fusion system over $S$). Let moreover $H\subn G$ and $\tH\subn \tG$ be such that $\F_{S\cap H}(H)=\F_{S\cap \tH}(\tH)$. Then 
\[\F_{N_S(H)}(N_G(H))=\F_{N_S(\tH)}(N_G(\tH)).\]
\end{lemma}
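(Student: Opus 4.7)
The plan is to exploit the uniqueness of models of a constrained fusion system combined with the uniqueness, proved in \cite[Lemma~2.9]{Chermak/Henke}, of the subnormal subgroup of such a model realising a given subnormal subsystem.

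By the model theorem (\cite[Theorem~III.5.10]{Aschbacher/Kessar/Oliver:2011}), there will exist a group isomorphism $\alpha\colon G\to\tG$ with $\alpha|_S=\id_S$. The first step is to show that $H\alpha=\tH$. Clearly $H\alpha$ is subnormal in $\tG$, and since $\alpha$ fixes $S$ pointwise one has $H\alpha\cap S=(H\cap S)\alpha=H\cap S$. Moreover, for any $h\in G$ and $s\in S$ with $s^h\in S$, the identities $\alpha(s^h)=\alpha(h)^{-1}s\alpha(h)$ and $\alpha(s^h)=s^h$ together show that conjugation of $s$ by $h\in G$ agrees with conjugation of $s$ by $\alpha(h)\in\tG$. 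Specialising to $h\in H$ yields $\F_{H\cap S}(H)=\F_{H\alpha\cap S}(H\alpha)$, which by hypothesis equals $\F_{\tH\cap S}(\tH)$. Hence $H\alpha$ and $\tH$ are two subnormal subgroups of $\tG$ realising the same subnormal subsystem on the same subgroup of $S$, and the uniqueness in \cite[Lemma~2.9]{Chermak/Henke} forces $H\alpha=\tH$.

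With $H\alpha=\tH$ in hand, the conclusion is a direct transport of structure along $\alpha$. Indeed $N_{\tG}(\tH)=\alpha(N_G(H))$ and, because $\alpha|_S=\id_S$, also $N_S(\tH)=N_S(H)$. Applying the same observation about conjugation, now to elements $h\in N_G(H)$, shows that the generating conjugation maps of $\F_{N_S(\tH)}(N_{\tG}(\tH))$ coincide with those of $\F_{N_S(H)}(N_G(H))$, so the two fusion systems are equal.

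No serious obstacle is expected; the whole argument is essentially a functoriality statement for the pair $(G,H)\mapsto \F_{N_S(H)}(N_G(H))$ under the isomorphism $\alpha$. The only nontrivial input is the uniqueness assertion of \cite[Lemma~2.9]{Chermak/Henke}, which is what pins down the image $H\alpha$ as being exactly $\tH$.
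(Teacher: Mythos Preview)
Your proof is correct and follows essentially the same route as the paper's: obtain an isomorphism $\alpha\colon G\to\tG$ fixing $S$ pointwise from the model theorem, show $\F_{S\cap H}(H)=\F_{S\cap H\alpha}(H\alpha)$, invoke \cite[Lemma~2.9]{Chermak/Henke} to conclude $H\alpha=\tH$, and then transport the normalizer. The only cosmetic difference is that the paper packages the ``conjugation by $h$ agrees with conjugation by $\alpha(h)$'' observation into a reference to its general Lemma~\ref{L:AutLFusion} (viewing $G$ and $\tG$ as localities over $S$), whereas you verify it directly by hand; the content is identical.
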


\begin{proof}
As $G$ and $\tG$ are models, it follows from the \cite[Theorem~III.5.10]{Aschbacher/Kessar/Oliver:2011} that there exists an isomorphism $\alpha\colon G\rightarrow \tG$ which restricts to the identity on $S$. If $\Delta$ is the set of all subgroups of $S$, then $(G,\Delta,S)$ and $(\tG,\Delta,S)$ are localities. Hence, using $\alpha|_S=\id_S$, it is a special case of Lemma~\ref{L:AutLFusion}(a) that
\begin{equation}\label{E:FX}
S\cap X=S\cap (X\alpha)\mbox{ and }\F_{S\cap X}(X)=\F_{S\cap X}(X)^\alpha=\F_{S\cap (X\alpha)}(X\alpha)\mbox{ for all }X\leq G.
\end{equation}
Notice that $H\alpha\subn\tG$. Moreover \eqref{E:FX} together with our assumption gives $\F_{S\cap \tH}(\tH)=\F_{S\cap H}(H)=\F_{S\cap (H\alpha)}(H\alpha)$. By \cite[Lemma~2.9]{Chermak/Henke}, $\tH$ is the unique subnormal subgroup of $\tG$ realizing $\F_{S\cap \tH}(\tH)$. Hence, $\tH=H\alpha$. It follows $N_G(H)\alpha=N_{\tG}(\tH)$. Hence, using \eqref{E:FX} now for $X=N_G(H)$, we obtain $\F_{N_S(H)}(N_G(H))=\F_{N_S(\tH)}(N_G(\tH))$.
\end{proof}

\section{Normalizers and centralizers}\label{S:Main}

Throughout this section we assume the following hypothesis.

\begin{hypo}\label{H:main}
Let $(\L,\Delta,S)$ be a regular locality and $\H\subn\L$. Set 
\[\tT:=E(\L)\cap S,\;G:=N_\L(\tT),\]
\[T:=\H\cap S,\;\E:=\F_T(\H),\;T_0:=E(\H)\cap S,\]
\[\M:=\left(\prod_{\K\in\Comp(\L)\backslash\Comp(\H)}\K\right)\mbox{ and }T_0^\perp:=\M\cap S.\]
\end{hypo}
We use above and throughout this section that, by Background Theorem~\ref{bT:1}, $\E$ is saturated and $(\H,\delta(\E),T)$ is a regular locality. In particular, $\Comp(\H)$ and $E(\H)\unlhd\H$ are well-defined. We use also that the order of the factors in a product of components does not matter and thus $\M$ is well-defined (cf. \cite[Proposition~11.7]{Henke:Regular}). Consequently, $T_0$ and $T_0^\perp$ are well-defined.

\subsection{Some basic results}

\begin{lemma}\label{L:BasicT0perp}
\begin{itemize}
 \item [(a)] $E(\H)$ and $\M$ are partial normal subgroups of $E(\L)$ with $E(\L)=E(\H)\M$. Moreover, $\tT=T_0T_0^\perp$, $\H\subseteq C_\L(\M)$ and $\M\subseteq C_\L(\H)$. 
 \item [(b)] We have $\Comp(\M)=\Comp(\L)\backslash\Comp(\H)$, i.e. $\Comp(\L)$ is the disjoint union of $\Comp(\M)$ and $\Comp(\H)$. 
 \item [(c)] $\H\M$ is a partial subnormal subgroup of $\L$ with $\H\unlhd \H\M$ and $\M\unlhd \H \M$. 
\end{itemize}
\end{lemma}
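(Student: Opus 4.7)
My plan is to handle part (a) first, using the structure of $E(\L)$ as a central product of components, and then derive (b) and (c) from it. For (a), I will first invoke the locality analogue of L-balance from \cite{Henke:Regular} to see that $\Comp(\H)\subseteq\Comp(\L)$: each $\K\in\Comp(\H)$ is quasisimple and partial subnormal in $\H\subn\L$, hence in $\L$. This makes the partition $\Comp(\L)=\Comp(\H)\sqcup(\Comp(\L)\setminus\Comp(\H))$ meaningful, so by \cite[Proposition~11.7]{Henke:Regular} the products $E(\H)$ and $\M$ are well-defined partial subgroups, and both are partial normal in $E(\L)$, because distinct components of $\L$ centralize each other (so each component is partial normal in $E(\L)$, and any product of components is too). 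The equality $E(\L)=E(\H)\M$ is then immediate from the partition.

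For the centralization assertions, each $\K\in\Comp(\L)\setminus\Comp(\H)$ centralizes every component of $\H$, and a balance argument shows that $\K$ in fact centralizes all of $\H$: the intersection $\K\cap\H$ is partial subnormal in the quasisimple locality $\K$, so it is either contained in the center (in which case $\K\subseteq C_\L(\H)$) or equal to $\K$ (making $\K\in\Comp(\H)$, contrary to choice). This yields $\M\subseteq C_\L(\H)$, and by the symmetric argument $\H\subseteq C_\L(\M)$. For the Sylow identity $\tT=T_0T_0^\perp$, the inclusion $T_0T_0^\perp\leq\tT$ is automatic since $T_0,T_0^\perp\leq S$ lie in commuting partial subgroups of $E(\L)$; for the reverse inclusion, every $t\in\tT\subseteq E(\L)=E(\H)\M$ decomposes as a commuting product $t=em$ with $e\in E(\H)$ and $m\in\M$, and taking $p$-parts in a central product as in finite groups shows $t$ can be realized as a product of a $p$-element of $E(\H)$ with a commuting $p$-element of $\M$, each then conjugated into $T_0$, respectively $T_0^\perp$, by Sylow-conjugacy inside the regular localities $E(\H)$ and $\M$ supplied by Background Theorem~\ref{bT:1}.

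For (b), note that $\M\subn\L$ (via $\M\unlhd E(\L)\unlhd\L$), so by Background Theorem~\ref{bT:1} the partial subgroup $\M$ is itself a regular locality and $\Comp(\M)$ is defined. Every $\K\in\Comp(\L)\setminus\Comp(\H)$ is quasisimple, contained in $\M$, and partial subnormal there as a factor of the product, hence $\K\in\Comp(\M)$. Conversely, any $\K\in\Comp(\M)$ is partial subnormal and quasisimple in $\L$, so $\K\in\Comp(\L)$; and $\K\subseteq\M$ forces $\K\notin\Comp(\H)$ by the factor decomposition in (a). For (c), the identity $E(\H)\subseteq\H$ combined with $E(\L)=E(\H)\M$ gives $\H\M=\H\cdot E(\L)$; applying a subnormal series $\H=\H_0\unlhd\cdots\unlhd\H_n=\L$ yields the chain $\H_0 E(\L)\unlhd\H_1 E(\L)\unlhd\cdots\unlhd\H_n E(\L)=\L$, witnessing $\H\M\subn\L$. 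Normality of both $\H$ and $\M$ inside $\H\M$ then follows by direct conjugation using (a): for $g=hm\in\H\M$ and $x\in\H$ one has $x^g=(x^h)^m=x^h\in\H$ because $m\in C_\L(\H)$, and symmetrically for $x\in\M$.

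The main obstacle will be the central-product bookkeeping in part (a), in particular rigorously establishing $\tT=T_0T_0^\perp$ and the full centralization $\M\subseteq C_\L(\H)$ in the partial-group setting, where conjugation, products, and $p$-part decompositions are only partially defined; this requires delicate use of the results in \cite{Henke:Regular} on commuting components, so that the finite-group analogies invoked above can be justified within the locality framework.
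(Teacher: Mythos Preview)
Your overall strategy matches the paper's in spirit, but several of the concrete steps you sketch do not go through as written, and the paper sidesteps all of them by invoking the central-product machinery of \cite{Henke:Regular} as a black box.

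\textbf{The centralization step.} Your argument that $\K\cap\H\subseteq Z(\K)$ implies $\K\subseteq C_\L(\H)$ is the crux of the matter, and you have not justified it. Knowing that $\K\cap\H$ is central in $\K$ says nothing, a priori, about how the rest of $\K$ interacts with $\H$; in the finite-group analogue one would need something like $[\K,\H]\leq\K\cap\H$, which requires that $\H$ normalize $\K$, and that is exactly what is not given. The ``symmetric argument'' for $\H\subseteq C_\L(\M)$ is worse: $\M$ is a product of components, but $\H$ is an arbitrary partial subnormal subgroup, so symmetry is not available. The paper handles both directions by citing \cite[Theorem~11.18(c)]{Henke:Regular}, which asserts directly that $\H\M$ is an internal central product of $\H$ and $\M$; the centralization statements then follow from \cite[Lemma~4.8]{Henke:Regular}, and normality in $\H\M$ from \cite[Lemma~4.9]{Henke:Regular}. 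This theorem is doing real work and is not reproduced by your balance sketch.

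\textbf{The Sylow identity.} Your argument for $\tT=T_0T_0^\perp$ is flawed: after writing $t=e_pm_p$ with $p$-elements $e_p\in E(\H)$ and $m_p\in\M$, conjugating $e_p$ into $T_0$ and $m_p$ into $T_0^\perp$ changes $t$, so you have not shown $t\in T_0T_0^\perp$. The paper instead applies \cite[Theorem~5.1]{Chermak:2015} (the product formula for Sylow intersections of partial normal subgroups) to the regular locality $E(\L)$ and its partial normal subgroups $E(\H)$ and $\M$.

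\textbf{Part (c).} Your conjugation computation $x^g=(x^h)^m$ for $g=hm$ is not automatic in a partial group: the word $(m^{-1},h^{-1},x,h,m)$ need not lie in $\D$ just because $(g^{-1},x,g)$ does, and the chain $\H_iE(\L)$ need not consist of partial subgroups. The paper again gets subnormality of $\H\M$ and normality of $\H$, $\M$ in $\H\M$ directly from \cite[Theorem~11.18(c)]{Henke:Regular} combined with \cite[Lemma~4.9]{Henke:Regular}.

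In short: the lemma is essentially a corollary of \cite[Theorem~11.18]{Henke:Regular}, and the paper treats it as such. Your attempt to rederive that theorem inline leaves the hardest step (why a component not in $\Comp(\H)$ centralizes all of $\H$) unproved.
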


\begin{proof}
\textbf{(a,c)} We refer in this proof to the notion of an (internal) central product, which was defined in \cite[Definition~4.1]{Henke:Regular}. However, we will not need to go into the details of the definition.

\smallskip

By \cite[Remark~11.2(b)]{Henke:Regular}, we have $\Comp(\H)\subseteq\Comp(\L)$. By \cite[Lemma~4.5]{Henke:Regular}, forming the internal central product is associative. Hence, it follows from \cite[Theorem~11.18(a)]{Henke:Regular} that $E(\L)$ is a central product of $E(\H)$ and $\M$, and from \cite[Theorem~11.18(c)]{Henke:Regular} that $\H\M$ is subnormal in $\L$ and a central product of $\H$ and $\M$. In particular, by \cite[Lemma~4.9]{Henke:Regular}, $E(\H)$ and $\M$ are partial normal subgroups of $E(\L)=E(\H)\M$, and $\H$ and $\M$ are partial normal subgroups of $\H\M$. It is moreover a consequence of \cite[Lemma~4.8]{Henke:Regular} that $\H\subseteq C_\L(\M)$ and $\M\subseteq C_\L(\H)$.

\smallskip

By Background Theorem~\ref{bT:1}, $E(\L)$ can be regarded as a regular locality with Sylow subgroup $\tT$. Hence, it follows from \cite[Theorem~5.1]{Chermak:2015} (applied with $E(\L)$ in place of $\L$) that
\[\tT=(E(\H)\cap \tT)(\M\cap\tT)=T_0T_0^\perp.\]

\smallskip

\textbf{(b)} Part (a) yields that $\M\unlhd E(\L)\unlhd\L$ is subnormal in $\L$. In particular,  by Background Theorem~\ref{bT:1}, $\M$ can be given the structure of a regular locality and thus $\Comp(\M)$ is well-defined. Using \cite[Remark~11.2]{Henke:Regular}, one sees that $\Comp(\L)\backslash\Comp(\H)\subseteq\Comp(\M)$ and $\Comp(\M)\subseteq\Comp(\L)$. If $\K\in\Comp(\M)\cap \Comp(\H)$, then part (a) yields that $\K=Z(\K)$, which contradicts $\K$ being quasisimple. Hence, $\Comp(\M)\subseteq\Comp(\L)\backslash\Comp(\H)$ and the assertion holds. 
\end{proof}

By Lemma~\ref{L:BasisNLtT}, $G$ is a subgroup of $\L$ which acts on $\L$ via conjugation. Thus, for every subset $\X$ of $\L$, the set-wise stabilizer 
\[N_{G}(\X):=\{f\in G\colon \X^f=\X\}\]
is a subgroup of $G$. In particular, $N_G(\H)$ is a subgroup of $G$. As $S\leq G$,  it follows that $N_S(\H):=S\cap N_G(\H)$ is a subgroup of $S$.

\begin{lemma}\label{L:NinNLtTofH}
The following hold:
\begin{itemize}
\item [(a)] $N_\H(\tT)=N_\H(T_0)\subn G$.
\item [(b)] $N_\F(\tT)$ is constrained and $G$ is a model for $N_\F(\tT)$. Moreover, $N_\H(\tT)\subn G$ is a model for $N_\E(T_0)$. In particular, 
\[N_\E(T_0)\subn N_\F(\tT).\]
\item [(c)] $N_G(\H)=N_G(N_\H(\tT))=N_G(N_\H(T_0))$ is of characteristic $p$. 
\item [(d)] $N_{E(\L)}(\tT)\subseteq N_{G}(\H)$.
\item [(e)] $N_{G}(\H)\subseteq N_{G}(E(\H))=N_{G}(T_0)=N_{G}(\M)=N_{G}(T_0^\perp)$.
\item [(f)] If $g\in G$ with $T_0^g\leq T$, then $g\in N_G(E(\H))$.
\end{itemize}
\end{lemma}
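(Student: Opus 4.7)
Plan: The structural backbone is Lemma~\ref{L:BasicT0perp}, which decomposes $E(\L)$ as a central product $E(\H)\M$ with Sylow $\tT = T_0 T_0^\perp$ (with $T_0^\perp \leq \M$), and which places $\H \subseteq C_\L(\M)$ and $\M \subseteq C_\L(\H)$. For~(a), every $h \in \H$ centralizes $T_0^\perp \leq \M$, which (by symmetry of centralization) forces $T_0^\perp \leq S_h$. Then for $h \in N_\H(\tT)$ one has $\tT \leq S_h$, and since $c_h$ acts as a group homomorphism on $\tT$ fixing $T_0^\perp$ pointwise, $h$ normalizes $\tT$ if and only if $h$ normalizes $T_0$; this yields $N_\H(\tT) = N_\H(T_0)$. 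The identity $N_\H(\tT) = \H \cap G$ combined with Lemma~\ref{L:PartialSubnormalInduce} gives $N_\H(\tT) \subn G$. For~(b), Lemma~\ref{L:BasisNLtT} yields $\tT \in \Delta$, so regularity makes $G$ a group of characteristic $p$ and hence a model for the (necessarily constrained) fusion system $N_\F(\tT)$. Applying Background Theorem~\ref{bT:1} to $(\H,\delta(\E),T)$ puts $T_0$ in $\delta(\E)$ (as Sylow of the partial normal subgroup $E(\H) \unlhd \H$), so $N_\H(T_0)$ is a model for $N_\E(T_0)$; part~(a) and the model-subnormality correspondence of \cite[Lemma~2.9]{Chermak/Henke} then give $N_\E(T_0) \subn N_\F(\tT)$.

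For~(d), the factorization $E(\L) = E(\H)\M$ lets one write $g \in N_{E(\L)}(\tT)$ as $g = hm$ with $h \in E(\H) \subseteq \H$ and $m \in \M \subseteq C_\L(\H)$, so $\H^g = (\H^h)^m = \H^m = \H$, using that $h \in \H$ normalizes $\H$ and $m$ centralizes it. The key computation underlying parts~(e) and~(f) is that whenever $g \in G$ and $\K \in \Comp(\L)$, the automorphism $c_g$ of $\L$ restricts to an isomorphism $\K \to \K^g$ carrying the Sylow $\K \cap S = \K \cap \tT$ to $\K^g \cap \tT = \K^g \cap S$; i.e.\ $c_g(\K \cap S) = \K^g \cap S$. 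For~(e), $N_G(\H) \subseteq N_G(E(\H))$ follows because $c_g$ permutes $\Comp(\L)$ and hence preserves the subset $\Comp(\H)$ whenever $\H^g = \H$. For the equalities, using $T_0 = \prod_{\K \in \Comp(\H)}(\K \cap S)$, the identity above shows that $g \in N_G(E(\H))$ iff $g$ permutes $\Comp(\H)$ iff $T_0^g = T_0$; Lemma~\ref{L:BasicT0perp}(b) supplies the symmetric equivalences with $\M$ and $T_0^\perp$.

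For~(f), given $g \in G$ with $T_0^g \leq T$, the identity above yields $\K^g \cap S \leq T_0^g \leq T$ for every $\K \in \Comp(\H)$. Then $\K^g \cap \H$ is a partial subnormal subgroup of the quasisimple regular locality $\K^g$ (via Lemma~\ref{L:PartialSubnormalInduce}) whose Sylow $\K^g \cap \H \cap S$ equals the full Sylow $\K^g \cap S$ of $\K^g$. The main obstacle is justifying that no proper partial subnormal subgroup of a quasisimple regular locality can contain a full Sylow $p$-subgroup; via Background Theorem~\ref{bT:2} this reduces to the corresponding statement about quasisimple fusion systems, whose only subnormal subsystem of full Sylow is the system itself (since any proper normal subsystem lies in $Z(\E)$, whose Sylow is strictly smaller). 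Granting this, $\K^g \subseteq \H$, so $\K^g \in \Comp(\H)$; since $c_g$ is injective on the finite set $\Comp(\H)$, it permutes it, whence $E(\H)^g = E(\H)$.

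Finally, for~(c), $N_G(\H) \subseteq N_G(\H \cap G) = N_G(N_\H(\tT))$ is immediate and~(a) gives $N_G(N_\H(\tT)) = N_G(N_\H(T_0))$. For the reverse inclusion, note $T_0 \leq N_\H(\tT)$ (since $T_0 \leq \tT \leq S$ so $T_0 \leq G$, and $T_0 \leq E(\H) \leq \H$); thus $g \in N_G(N_\H(\tT))$ forces $T_0^g \leq N_\H(\tT) \leq \H$, which combined with $T_0^g \leq \tT \leq S$ gives $T_0^g \leq T$. Part~(f) then yields $g \in N_G(E(\H))$. The locality Frattini decomposition $\H = E(\H) \cdot N_\H(T_0)$ (Lemma~\ref{L:FrattiniSplitting} applied inside $(\H,\delta(\E),T)$ to $E(\H) \unlhd \H$) together with the fact that $c_g$ is a partial-group automorphism gives $\H^g = E(\H)^g \cdot N_\H(\tT)^g = \H$. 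Characteristic $p$ of $N_G(\H) = N_G(N_\H(\tT))$ then follows from Lemma~\ref{L:CharpNGH} applied to $G$ of characteristic $p$ and the subnormal subgroup $N_\H(\tT) \subn G$ supplied by~(a).
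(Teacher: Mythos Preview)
Your overall structure is sound and for parts (b), (c), (e) you follow essentially the paper's line. Two points deserve comment.

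\medskip

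\textbf{A genuine gap in (d).} You write $g\in N_{E(\L)}(\tT)$ as $g=hm$ with $h\in E(\H)$ and $m\in\M$, and then claim $\H^g=(\H^h)^m=\H$. The problem is that $h$ need not lie in $G$, so $\H\subseteq\D(h)$ is not guaranteed and ``$\H^h$'' is not well-defined in the partial group $\L$; likewise the expansion $(g^{-1},x,g)\in\D\Rightarrow(m^{-1},h^{-1},x,h,m)\in\D$ has no justification. The paper avoids this by first refining the factorisation using the Dedekind argument: from $E(\L)=E(\H)\M$ and $\M\subseteq N_\L(T_0)$ one obtains $N_{E(\L)}(\tT)=N_{E(\H)}(\tT)\,N_\M(\tT)$, a product of two subgroups \emph{of the group $G$}. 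Now $N_{E(\H)}(\tT)\leq N_\H(\tT)$ lies in its own normaliser, and $N_\M(\tT)\subseteq C_\L(\H)\cap G$ centralises $N_\H(\tT)=\H\cap G$, so both factors lie in $N_G(N_\H(\tT))$; then~(c) finishes. Your idea is morally the same, but the Dedekind step is what makes the partial-group bookkeeping honest.

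\medskip

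\textbf{A minor gap in (a).} The implication ``$c_h$ fixes $T_0^\perp$ pointwise and normalises $\tT=T_0T_0^\perp$, hence normalises $T_0$'' is false in general (take $\tT=C_p\times C_p$ with $a\mapsto ab$, $b\mapsto b$). The correct argument, which the paper uses, is that $T_0=E(\H)\cap\tT$ and $E(\H)\unlhd\H$, so any $h\in N_\H(\tT)$ satisfies $T_0^h\subseteq E(\H)\cap\tT=T_0$.

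\medskip

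\textbf{Part (f): a different valid route.} Your argument for (f) is genuinely different from the paper's. The paper argues by contradiction: if $\K^g\notin\Comp(\H)$ then $\K^g\subseteq\M\subseteq C_\L(T)$, forcing $(\K\cap S)^g\leq T$ to be abelian, contrary to $\K\cap S$ being nonabelian for $\K$ quasisimple. You instead observe that $\K^g\cap\H\subn\K^g$ has full Sylow $\K^g\cap S$, and then use that in a quasisimple locality the only partial subnormal subgroup with full Sylow is the whole locality (proper partial normal subgroups lie in $Z(\K^g)<\K^g\cap S$, so proper partial subnormal ones do too). Both approaches work; the paper's is shorter, yours is perhaps more conceptual and reusable.
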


\begin{proof}
\textbf{(a)} Using Lemma~\ref{L:BasicT0perp}(a), one can observe that $\H\subseteq C_\L(\M)\subseteq C_\L(T_0^\perp)$ and then $N_\H(T_0)\subseteq N_\H(T_0T_0^\perp)=N_\H(\tT)$. As $T_0\leq \tT$, we have $T_0=E(\H)\cap \tT$. Thus, $E(\H)\unlhd\H$ implies that $T_0$ is normalized by $N_\H(\tT)$. This proves $N_\H(\tT)=N_\H(T_0)$. It follows from Lemma~\ref{L:PartialSubnormalInduce} that $N_\H(\tT)=\H\cap G$ is a subnormal subgroup of $G$. This shows (a).

\smallskip

\textbf{(b)} By Lemma~\ref{L:BasisNLtT}, $\tT\in\Delta$ and the group $G$ is of characteristic $p$. By \cite[Lemma~3.10(b)]{Grazian/Henke}, we have $N_\F(\tT)=\F_S(G)$, i.e. $N_\F(\tT)$ is constrained and $G$ is a model for $N_\F(\tT)$. The same argument with $(\H,\delta(\E),T)$ and $\E$ in place of $(\L,\Delta,S)$ and $\F$ yields that $N_\H(T_0)$ is a model for $N_\E(T_0)$. By (a), $N_\H(T_0)=N_\H(\tT)\subn G$. In particular, $N_\E(T_0)\subn N_\F(\tT)$ by \cite[Proposition~I.6.2]{Aschbacher/Kessar/Oliver:2011}. This proves (b).

\smallskip

\textbf{(f)} Let $g\in G$ with $T_0^g\leq T$ and $\K\in \Comp(\H)$. Recall that $c_g\in\Aut(\L)$ by Lemma~\ref{L:BasisNLtT} 
and so $\K^g\in\Comp(\L)$ by \cite[Lemma~11.12]{Henke:Regular}. Notice moreover that $\K\cap S\leq T_0$ and thus $(\K\cap S)^g\leq T_0^g\leq T$. If $\K^g$ were not a component of $\H$, then the definition of $\M$ and Lemma~\ref{L:BasicT0perp}(a) would imply that $\K^g\subseteq \M\subseteq C_\L(\H)\subseteq C_\L(T)$ and thus $(\K\cap S)^g$ were abelian. As $\K$ is quasisimple and $\K\cap S\cong (\K\cap S)^g$, this would contradict \cite[Lemma~7.10]{Henke:Regular}. Hence, $\K^g\in\Comp(\H)$. As $\K$ was arbitrary, this yields $E(\H)^g=E(\H)$.

\smallskip

\textbf{(c)} Using parts (a) and (b), Lemma~\ref{L:CharpNGH} yields that $N_{G}(N_\H(\tT))=N_G(N_\H(T_0))$ is of characteristic $p$. Hence, it remains only to show that $N_{G}(\H)=N_{G}(N_\H(\tT))$.

\smallskip

Clearly, $N_G(\H)\leq N_G(\H\cap G)=N_G(N_\H(\tT))$. Fix now $g\in N_G(N_\H(\tT))$. Then $T_0^g\leq N_\H(\tT)\cap \tT\leq \H\cap S=T$. Hence, by (f), we have $E(\H)^g=E(\H)$. It follows from the Frattini Lemma \cite[Corollary~3.11]{Chermak:2015} applied with $(\H,\delta(\E),T)$ in place of $(\L,\Delta,S)$ that $\H=E(\H)N_\H(T_0)=E(\H)N_\H(\tT)$. Using again that $c_g\in\Aut(\L)$ by Lemma~\ref{L:BasisNLtT}, we can conclude that $\H^g=E(\H)^gN_\H(\tT)^g=E(\H)N_\H(\tT)=\H$. Hence, $N_{G}(N_\H(\tT))\leq N_{G}(\H)$ and so equality holds.  

\smallskip

\textbf{(d)}  By Lemma~\ref{L:BasicT0perp}(a), $E(\H)\unlhd E(\L)$. In particular, $N_{E(\L)}(\tT)$ normalizes $T_0=E(\H)\cap \tT$. Lemma~\ref{L:BasicT0perp}(a) gives also that $E(\L)=E(\H)\M$ and $\M\subseteq C_\L(\H)\subseteq C_\L(T_0)\subseteq N_\L(T_0)$. Hence, using the Dedekind argument for localities \cite[Lemma~1.10]{Chermak:2015} and part (a), we conclude that
\[N_{E(\L)}(\tT)\subseteq N_{E(\L)}(T_0)=N_{E(\H)}(T_0)\M=N_{E(\H)}(\tT)\M.\]
Another application of the Dedekind argument gives then $N_{E(\L)}(\tT)=N_{E(\H)}(\tT)N_\M(\tT)$. As $N_{E(\H)}(\tT)\subseteq N_\H(\tT)\subseteq N_G(N_\H(\tT))$ and $\M\subseteq C_\L(\H)$, this implies $N_{E(\L)}(\tT)\subseteq N_{G}(N_\H(\tT))$. Now (d) follows from (c).

\smallskip

\textbf{(e)} By Lemma~\ref{L:BasisNLtT}, we have $c_g\in\Aut(\L)$ for every $g\in G$. Hence, for $g\in N_G(\H)$, $c_g$ restricts to an automorphism of $\H$. It follows now from \cite[Lemma~11.12]{Henke:Regular} that $G$ acts on $\Comp(\L)$, and that $N_G(\H)$ acts   on $\Comp(\H)$. In particular, $N_G(\H)\leq N_G(E(\H))$. Similarly, $N_G(E(\H))$ acts on $\Comp(\H)=\Comp(E(\H))$ (cf. \cite[Remark~11.2]{Henke:Regular}). Hence, $N_G(E(\H))$ acts also on $\Comp(\L)\backslash \Comp(\H)$, which implies $N_G(E(\H))\leq N_G(\M)$. Notice that $N_G(E(\H))\leq N_G(E(\H)\cap \tT)=N_G(T_0)$. By (f), we have $N_G(T_0)\leq N_G(E(\H))$. We know thus that 
\[N_G(E(\H))=N_G(T_0)\leq N_G(\M).\]
By Lemma~\ref{L:BasicT0perp}(a),(b), $\M\unlhd E(\L)\unlhd\L$ is subnormal in $\L$, $\Comp(\H)=\Comp(\L)\backslash\Comp(\M)$ and $\M=E(\M)$. Hence, applying the same property with $\M$ in place of $\H$, we obtain $N_G(\M)=N_G(T_0^\perp)\leq N_G(E(\H))$. This shows (e).
\end{proof}

\begin{lemma}\label{L:NLT}
Let $f\in N_\L(T)$. Then $\H\subseteq \D(f)$. Moreover, there exist $n\in E(\L)$ and $g\in G$ such that $(n,g)\in\D$, $f=ng$ and $\H^f=\H^g$.
\end{lemma}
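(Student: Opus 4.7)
The plan is to deduce both assertions from the Frattini Lemma applied to the partial normal subgroup $E(\L)\unlhd\L$. Since $E(\L)\cap S=\tT$ by definition and $N_\L(\tT)=G$ by Lemma~\ref{L:BasisNLtT}, Lemma~\ref{L:FrattiniSplitting} supplies $n\in E(\L)$ and $g\in G$ with $(n,g)\in\D$, $f=ng$, and $S_f=S_{(n,g)}$. This produces the factorization claimed in the statement, and it remains only to verify the two assertions about $\H$.

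For the inclusion $\H\subseteq\D(f)$, fix $h\in\H$; I want to produce a witness $P\in\Delta$ establishing $(f^{-1},h,f)\in\D$ via the $\Delta$-axiom of the locality $(\L,\Delta,S)$. The regular-locality structure $(\H,\delta(\E),T)$ from Background Theorem~\ref{bT:1} gives that $T\cap S_h\in\delta(\E)$, and a check using equation \eqref{E:deltaF} together with $F^*(\L)=E(\L)O_p(\L)$ and the corresponding identity $F^*(\H)=E(\H)O_p(\H)$ shows $\delta(\E)\subseteq\delta(\F)=\Delta$, placing the candidate witness in $\Delta$. Because $f^{-1},f\in N_\L(T)$ each send $T$ into $T$ via conjugation, and because $c_h$ sends $T\cap S_h$ into $T$ (using that $\H$ is a partial subgroup with $T=\H\cap S$), one can trace such a witness $P\leq T\cap S_h$ through the word $(f^{-1},h,f)$ while staying inside subgroups of $T$, all of which lie in $\Delta$ by overgroup-closedness of $\delta(\F)$ in $S$. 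This gives $(f^{-1},h,f)\in\D$, hence $\H\subseteq\D(f)$.

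For the identity $\H^f=\H^g$, I would use the central-product decomposition $E(\L)=E(\H)\M$ from Lemma~\ref{L:BasicT0perp}(a): writing $n=xy$ with $x\in E(\H)\subseteq\H$ and $y\in\M\subseteq C_\L(\H)$, conjugation by $y$ is trivial on $\H$, while conjugation by $x$ preserves $\H$ setwise since $\H$ is closed under defined products and inverses. Hence $c_n$ normalizes $\H$; in particular $\H^n=\H$. Combining this with the factorization $f=ng$, with the fact that $\L\subseteq\D(g)$ from Lemma~\ref{L:BasisNLtT}, and with the equality $S_f=S_{(n,g)}$ that propagates via the $\Delta$-axiom from $S$-elements to arbitrary elements of $\H$ (again using that witnesses can be chosen inside $T$), one obtains $\H^f=(\H^n)^g=\H^g$.

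The main obstacle is the bookkeeping in verifying that the Frattini identity $S_f=S_{(n,g)}$, originally phrased only for elements of $S$, extends to give compatibility of $c_f$ and $c_n\circ c_g$ on all of $\H$. This rests on combining overgroup-closedness of $\delta(\F)$, the chain of locality structures $(\H,\delta(\E),T)\subseteq(\L,\Delta,S)$, and the central-product structure of $E(\L)$; these three ingredients together produce the needed witnesses in $\Delta$ and let the argument on $S$ lift to the full partial subnormal subgroup $\H$.
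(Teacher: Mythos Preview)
Your Frattini splitting $f=ng$ with $n\in E(\L)$, $g\in G$ and $S_f=S_{(n,g)}$ is exactly the paper's starting point. The difficulty is the passage from this splitting to the two conclusions, and here your sketch has a genuine gap.

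The crucial missing step is $n\in N_\L(\H)$, i.e.\ $\H\subseteq\D(n)$ and $\H^n=\H$. Your argument decomposes $n=xy$ with $x\in E(\H)\subseteq\H$ and $y\in\M\subseteq C_\L(\H)$, and then asserts that ``conjugation by $x$ preserves $\H$ setwise since $\H$ is closed under defined products and inverses.'' But in a partial group, membership $x\in\H$ does \emph{not} give $\H\subseteq\D(x)$; you only get $h^x\in\H$ for those $h\in\H$ for which the conjugate is already defined. So the decomposition does not yield $\H\subseteq\D(n)$, and without that you cannot form $\H^n$ as a set, let alone compare $\H^f$ with $(\H^n)^g$. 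The paper obtains $n\in N_\L(\H)$ by a different route: from $T\leq S_f=S_{(n,g)}\leq S_n$ and $n\in E(\L)\subseteq\H\M$ one sees $T^n\leq S\cap\H=T$, so $n\in N_{\H\M}(T)$; then \cite[Theorem~10.16(f)]{Henke:Regular}, applied in the regular locality $\H\M$, gives $N_{\H\M}(T)\subseteq N_{\H\M}(\H)$. This is the nontrivial input your proposal lacks.

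Once $n\in N_\L(\H)$ is in hand, the paper's argument also avoids your claimed inclusion $\delta(\E)\subseteq\Delta$. Your justification of that inclusion via \eqref{E:deltaF} does not work: \eqref{E:deltaF} concerns intersections with $F^*(\L)$ inside $\delta(\F)$ and says nothing about $\delta(\E)$. The paper instead uses that $(n^{-1},h,n)\in\D$ (from $h\in\H\subseteq\D(n)$), so $Q:=S_{(n^{-1},h,n)}\in\Delta$ by \cite[Corollary~2.7]{Chermak:2015}; then $Q\cap F^*(\L)\in\Delta$ by \eqref{E:deltaF}, and since $g\in N_\L(F^*(\L)\cap S)$ this intersection is a valid witness for the longer word $(g^{-1},n^{-1},h,n,g)$. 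Associativity then collapses this to $(f^{-1},h,f)\in\D$ with $h^f=(h^n)^g$, giving both conclusions simultaneously. Your direct trace of $(f^{-1},h,f)$ through $T\cap S_h$ would work if $\delta(\E)\subseteq\Delta$ were available, but you would still need a separate argument for $h^f=(h^n)^g$, and that again requires knowing $\H\subseteq\D(n)$.
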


\begin{proof}
By Lemma~\ref{L:FrattiniSplitting}, there exist $n\in E(\L)$ and $g\in G$ such that $(n,g)\in\D$, $f=ng$ and $S_f=S_{(n,g)}$. As $f\in N_\L(T)$, we have then $T\leq S_f\leq S_n$. By Lemma~\ref{L:BasicT0perp}(c), $\H\M$ is a partial subnormal subgroup of $\L$ with $\H\unlhd\H\M$. Notice that $n\in E(\L)=E(\H)\M\subseteq \H\M$ by Lemma~\ref{L:BasicT0perp}(a). So $T^n\subseteq S\cap\H=T$ and thus $n\in N_{E(\L)}(T)\subseteq N_{\H\M}(T)$. As $\H\M\subn\L$, Background Theorem~\ref{bT:1} yields that $\H\M$ can be given the structure of a regular locality with Sylow subgroup $S\cap(\H\M)$. Hence, by \cite[Theorem~10.16(f)]{Henke:Regular} applied with $\H\M$ in place of $\L$, we have $N_{\H\M}(T)\subseteq N_{\H\M}(\H)$. In particular, $n\in N_\L(\H)$. Fixing $h\in\H$, it follows $(n^{-1},h,n)\in\D$ and thus $Q:=S_{(n^{-1},h,n)}\in\Delta$ by \cite[Corollary~2.7]{Chermak:2015}. As $(\L,\Delta,S)$ is regular, we have $\Delta=\delta(\F)$ and \eqref{E:deltaF} yields $Q\cap F^*(\L)\in\Delta$. By Lemma~\ref{L:BasisNLtT}, $g\in N_\L(F^*(\L)\cap S)$ and then $v=(g^{-1},n^{-1},h,n,g)\in\D$ via $(Q\cap F^*(\L))^g$. By \cite[Lemma~1.4(f)]{Chermak:2015}, $g^{-1}n^{-1}=f^{-1}$. Hence, using the ``associativity axiom'' of partial groups, we can conclude that $(f^{-1},h,f)\in\D$ and $h^f=\Pi(v)=(h^n)^g$. Since this holds for an arbitrary element $h\in\H$, this shows $\H\subseteq \D(f)$ and $\H^f=(\H^n)^g=\H^g$, where the last equality uses $n\in N_\L(\H)$. This proves the assertion.
\end{proof}

\subsection{The regular normalizer and the centralizer of $\H$ in $\L$}\label{SS:NLH}

Recall from the introduction that
\[\bN_\L(\H):=E(\L)N_{G}(\H)\] 
is called the \emph{regular normalizer} of $\H$ in $\L$.  

\smallskip

In this subsection we will prove Theorem~\ref{T:MainbNLH} and some related results. Before we do so let us consider the special case that $\F$ is constrained.

\begin{remark}\label{R:Constrained}
Let $\F$ be constrained. Then by \cite[Lemma~11.6]{Henke:Regular}, $E(\L)=1$ and $\L$ is a model for $\F$. In particular, $\H$ is a subnormal subgroup of $\L$, $\tT=1$, $G=\L$  and $\bN_\L(\H)=N_\L(\H)$.
\end{remark}

We are now going back to the general case, for which we summarize some important results in the next theorem.

\begin{theorem}\label{T:bNLH}
Fix $S_0\in\Syl_p(N_{G}(\H))$ and set
\[\Delta_0:=\{P\leq S_0\colon P\cap\tT\in E(\F)^s\}.\]
Then the following hold:
\begin{itemize}
 \item [(a)] The regular normalizer $\bN_\L(\H)$ is a partial subgroup of $\L$ and $\tT\leq S_0$. Moreover, the triple $(\bN_\L(\H),\Delta_0,S_0)$ is a linking locality and the fusion system $\F_0:=\F_{S_0}(\bN_\L(\H))$ is saturated. Indeed, $(\bN_\L(\H),\delta(\F_0),S_0)$ is a regular locality over $\F_0$ and
\[\delta(\F_0)=\{P\leq S_0\colon PO_p(\bN_\L(\H))\in\Delta_0\}.\]
 \item [(b)] $\H\unlhd\bN_\L(\H)$. As $T$ is a $p$-subgroup of $N_G(\H)$, we may choose $S_0$ such that $T\leq S_0$. For such a choice of $S_0$, we have $T=\H\cap S_0$ and $\E\unlhd\F_0$.
 \item [(c)] $\Comp(\bN_\L(\H))=\Comp(\L)$, $E(\bN_\L(\H))=E(\L)$ and $\tT=E(\L)\cap S_0$. In particular, $\Comp(\F)=\Comp(\F_0)$ and  $E(\F)=E(\F_0)$.
 \item [(d)] $E(\H)\unlhd\bN_\L(\H)$, and $\M\unlhd\bN_\L(\H)$. In particular, $E(\E)\unlhd \F_0$ and $\F_{T_0^\perp}(\M)\unlhd \F_0$.
 \item [(e)] $N_{\bN_\L(\H)}(\tT)=N_{G}(\H)$. In particular, $S\cap \bN_\L(\H)=N_S(\H)$.
\end{itemize}
\end{theorem}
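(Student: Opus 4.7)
The plan is to prove (a) by constructing the locality structure on $\bN_\L(\H)=E(\L)N_G(\H)$, and then to derive (b)--(e) from the resulting structure together with Lemmas \ref{L:BasicT0perp} and \ref{L:NinNLtTofH}. The central inputs are that $E(\L)\unlhd\L$ (Lemma \ref{L:BasicT0perp}(a)) and that every $g\in G$ induces an automorphism $c_g$ of $\L$ (Lemma \ref{L:BasisNLtT}), so that $N_G(\H)\leq G$ normalizes $E(\L)$ via conjugation.

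The first step is to verify that $\bN_\L(\H)$ is a partial subgroup of $\L$. Closure under inversion follows by rewriting $(ng)^{-1}=(n^{-1})^g\,g^{-1}$ using the $G$-action on $E(\L)$, and closure of $\Pi$ on words in $\W(\bN_\L(\H))\cap\D$ follows from an inductive Dedekind-type rearrangement $(n_1g_1)(n_2g_2)=n_1 n_2^{g_1^{-1}}\,g_1g_2$. Since $\tT\unlhd G$ (by $G=N_\L(\tT)$), $\tT$ is a normal $p$-subgroup of $N_G(\H)$, so $\tT\leq S_0$. That $S_0$ is a maximal $p$-subgroup of $\bN_\L(\H)$ then follows from a Sylow-type argument: for any $p$-subgroup $P\leq\bN_\L(\H)$, the intersection $P\cap E(\L)$ can be $E(\L)$-conjugated into $\tT$ (using Background Theorem \ref{bT:1} applied to $E(\L)$), while the image of $P$ in the finite group $N_G(\H)/(E(\L)\cap N_G(\H))$ can be conjugated into the image of $S_0$ by Sylow's theorem.

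For the locality axioms of $(\bN_\L(\H),\Delta_0,S_0)$, the set $\Delta_0$ is overgroup-closed in $S_0$ because $E(\F)^s$ is overgroup-closed in $\tT$. For conjugation-closure, if $P\in\Delta_0$ and $f=ng\in\bN_\L(\H)$ with $P\subseteq\D(f)$, then $P^f\cap\tT$ is an $E(\F)$-conjugate of $P\cap\tT$ (using $E(\L)\unlhd\L$ and $g\in N_\L(\tT)$), and $E(\F)^s$ is closed under $E(\F)$-conjugation. For each $P\in\Delta_0$, the relation \eqref{E:deltaF} applied in $\F$ shows $P\in\Delta$, so $N_\L(P)$ is a finite group of characteristic $p$; then $N_{\bN_\L(\H)}(P)=N_\L(P)\cap\bN_\L(\H)$ is a subgroup of $N_\L(P)$ and remains of characteristic $p$ by Lemma \ref{L:MS}. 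The remaining locality axioms are inherited from $(\L,\Delta,S)$, so $(\bN_\L(\H),\Delta_0,S_0)$ is a linking locality, and saturation of $\F_0$ follows from the standard linking-locality criterion. Identifying $\delta(\F_0)$ and the regular locality structure on $(\bN_\L(\H),\delta(\F_0),S_0)$ will use \eqref{E:deltaF} applied inside $\bN_\L(\H)$, together with the identity $F^*(\bN_\L(\H))=E(\L)\,O_p(\bN_\L(\H))$ established alongside (c).

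Parts (b)--(e) are then essentially formal. For (b), $\H\unlhd\bN_\L(\H)$ because $E(\L)\subseteq\H\M$ with $\H\unlhd\H\M$ (Lemma \ref{L:BasicT0perp}(a),(c)) and $N_G(\H)$ normalizes $\H$ by construction; if $T\leq S_0$ then $\H\cap S_0$ is a $p$-subgroup of $\H$ containing the Sylow subgroup $T$, forcing $T=\H\cap S_0$, and $\E\unlhd\F_0$ by Background Theorem \ref{bT:2}. For (c), the inclusion $\Comp(\L)\subseteq\Comp(\bN_\L(\H))$ is clear since $E(\L)\subseteq\bN_\L(\H)$; for the reverse, a component of $\bN_\L(\H)$ is quasisimple and subnormal in $\bN_\L(\H)$, and an application of Lemma \ref{L:PartialSubnormalInduce} to $E(\L)\unlhd\bN_\L(\H)$ combined with quasisimplicity shows it must lie in $E(\L)$, hence is a component of $\L$. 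Then $\tT\leq S_0$ and the maximality of $\tT$ in $E(\L)$ yield $\tT=E(\L)\cap S_0$, with the fusion-theoretic consequences coming from Background Theorem \ref{bT:2}. Part (d) runs analogously via Lemma \ref{L:NinNLtTofH}(e). For (e), $N_{\bN_\L(\H)}(\tT)=\bN_\L(\H)\cap G$, and Lemma \ref{L:NinNLtTofH}(d) gives $E(\L)\cap G=N_{E(\L)}(\tT)\subseteq N_G(\H)$, so a Dedekind argument yields $\bN_\L(\H)\cap G=N_G(\H)$; intersecting with $S$ gives $S\cap\bN_\L(\H)=N_S(\H)$. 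The main obstacle lies in the locality-axiom verification in the third paragraph, particularly in tracking how the object set $\Delta_0$ (defined through $E(\F)^s\subseteq\tT$) interacts with the ambient $\Delta$ via \eqref{E:deltaF} under the Frattini decomposition $f=ng$; once that technical step is in place, the remaining assertions follow from the dictionary of Background Theorem \ref{bT:2} and standard facts about regular localities.
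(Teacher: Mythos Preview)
Your plan for parts (b)--(e) is essentially aligned with the paper's argument (modulo ordering: the paper proves (e) before (b), using (e) inside the proof of (b)). The substantive divergence, and the genuine gap, is in part (a).

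The paper does \emph{not} verify the locality axioms for $(\bN_\L(\H),\Delta_0,S_0)$ by hand. Instead, after checking that $\tT=S_0\cap E(\L)$ and that $H:=N_G(\H)$ is of characteristic $p$ with $N_{E(\L)}(T^*)\leq H$, it invokes \cite[Theorem~6.1]{Grazian/Henke} (with $\N=E(\L)$) as a black box. That theorem is precisely the machinery that builds a linking locality on a product $\N H$ with a Sylow subgroup $S_0$ that need \emph{not} lie inside the ambient Sylow $S$.

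Your direct verification breaks at the sentence ``for each $P\in\Delta_0$, the relation \eqref{E:deltaF} applied in $\F$ shows $P\in\Delta$, so $N_\L(P)$ is a finite group of characteristic $p$.'' In general $S_0\not\leq S$, so $P\leq S_0$ need not be a subgroup of $S$ at all; the relation \eqref{E:deltaF} is stated for $P\leq S$ and simply does not apply. Consequently $N_\L(P)$ is not known to be a subgroup of $\L$, the intersection $N_{\bN_\L(\H)}(P)=N_\L(P)\cap\bN_\L(\H)$ is not a priori a group, and Lemma~\ref{L:MS} cannot be invoked as you do (neither the subnormal nor the ``contains $O_p$'' hypothesis is available). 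The phrase ``the remaining locality axioms are inherited from $(\L,\Delta,S)$'' hides exactly the difficulty that \cite[Theorem~6.1]{Grazian/Henke} was designed to overcome: one must work with $N_\L(P\cap\tT)$ (which \emph{is} a group, since $P\cap\tT\in\delta(E(\F))\subseteq\Delta$) and carefully build $\D$ for $\bN_\L(\H)$ from scratch, not as a restriction of the ambient $\D$ indexed by $\Delta$.

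A smaller point on (c): your sketch (``Lemma~\ref{L:PartialSubnormalInduce} combined with quasisimplicity'') for the inclusion $\Comp(\bN_\L(\H))\subseteq\Comp(\L)$ is too vague to be an argument. The paper's mechanism is different: a component $\K$ of $\bN_\L(\H)$ not in $\Comp(\L)$ must centralize $E(\L)$ by \cite[Lemma~11.16]{Henke:Regular}, hence $\K\subseteq N_{\bN_\L(\H)}(\tT)=N_G(\H)$; then $\K$ is a subnormal subgroup of a group of characteristic $p$, forcing $\K$ to have characteristic $p$, which contradicts $O_p(\K)=Z(\K)$ for a quasisimple locality.
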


\begin{proof}
\textbf{(a)} Set $T^*:=F^*(\L)\cap S$ and recall from Lemma~\ref{L:BasisNLtT} that $G=N_\L(T^*)$. So $H:=N_{G}(\H)\leq N_\L(T^*)$. By Lemma~\ref{L:NinNLtTofH}(d), we have
\[N_{E(\L)}(T^*)=N_{E(\L)}(\tT)\leq H.\]
In particular, $\tT$ is contained in $H$ and then a normal $p$-subgroup of $H$. Therefore $\tT\leq S_0\in\Syl_p(H)$ and so $\tT\leq S_0\cap E(\L)$. As $\tT=S\cap E(\L)$ is a maximal $p$-subgroup of $E(\L)$ by \cite[Lemma~3.1(c)]{Chermak:2015}, this yields
\begin{equation}\label{E:tT}
 \tT=S_0\cap E(\L).
\end{equation}
By Lemma~\ref{L:NinNLtTofH}(c), $H$ is of characteristic $p$. By Background Theorem~\ref{bT:2}, we have $\F_{S\cap E(\L)}(E(\L))=E(\F)$ and by \cite[Lemma~7.22]{Chermak/Henke}, $\delta(E(\F))=E(\F)^s$. Hence,
\[\Delta_0=\{P\leq S_0\colon P\cap E(\L)\in\delta(\F_{S\cap E(\L)}(E(\L)))\}.\] 
It follows now from \cite[Theorem~6.1]{Grazian/Henke} (applied with $E(\L)$ in place of $\N$)\footnote{The reader might want to observe that the arguments to prove \cite[Theorem~6.1]{Grazian/Henke} become actually easier in the case that $\N$ equals $E(\L)$.} that $\bN_\L(\H)$ is a partial subgroup of $\L$, $(\bN_\L(\H),\Delta_0,S_0)$ is a linking locality, $\F_0:=\F_{S_0}(\bN_\L(\H))$ is saturated and $(\bN_\L(\H),\delta(\F_0),S_0)$ is a regular locality over $\F_0$, where
\[\delta(\F_0)=\{P\leq S_0\colon PO_p(\bN_\L(\H))\in\Delta_0\}.\]
This shows (a). 

\smallskip

We will use now throughout that $\F_0$ is well-defined and saturated, and that $(\bN_\L(\H),\delta(\F_0),S_0)$ is a regular locality over $\F_0$.

\smallskip

\textbf{(e)} By the Dedekind Lemma \cite[Lemma~1.10]{Chermak:2015}, $N_{\bN_\L(\H)}(\tT)=N_{E(\L)}(\tT)N_{G}(\H)$. Hence, Lemma~\ref{L:NinNLtTofH}(d) implies that  $N_{\bN_\L(\H)}(\tT)=N_{G}(\H)$.  As $\tT\unlhd S$, it follows in particular that $S\cap \bN_\L(\H)=S\cap N_{\bN_\L(\H)}(\tT)=S\cap N_G(\H)=N_S(\H)$.

\smallskip

\textbf{(b)} We prove first that $\H$ is contained in $\bN_\L(\H)$. It follows now from the Frattini argument for localities (Lemma~\ref{L:FrattiniSplitting}) applied with $(\H,\delta(\E),T)$ in place of $(\L,\Delta,S)$ that $\H=E(\H)N_\H(T_0)$. As $E(\H)\subseteq E(\L)$ by  Lemma~\ref{L:BasicT0perp}(a) and $N_\H(T_0)=N_\H(\tT)\subseteq N_{G}(\H)$ by Lemma~\ref{L:NinNLtTofH}(a), it follows $\H\subseteq\bN_\L(\H)$.

\smallskip

As $\H$ is a partial subgroup of $\L$, it is also a partial subgroup of $\bN_\L(\H)$. We prove now that $\H$ is a partial normal subgroup. Let $f\in\bN_\L(\H)$ and $h\in \H$ such that $h\in\D(f)$. We need to show that $h^f\in\H$.

\smallskip

Clearly $E(\L)\unlhd \bN_\L(\H)$ as $E(\L)\unlhd\L$ is contained in $\bN_\L(\H)$. Moreover, \eqref{E:tT} gives $\tT=E(\L)\cap S_0$. Hence, by Lemma~\ref{L:FrattiniSplitting} applied with $(\bN_\L(\H),\delta(\F_0),S_0)$ in place of $(\L,\Delta,S)$, there exists $x\in E(\L)$ and $g\in N_{\bN_\L(\H)}(\tT)$ such that $(x,g)\in\D$, $f=xg$ and
\[(S_0)_f=(S_0)_{(x,g)}.\]
It follows from \cite[Lemma~1.4(f), Proposition~2.6(b)]{Chermak:2015} that $(g^{-1},x^{-1})\in\D\cap \W(\bN_\L(\H))$ via $(S_0)_{f^{-1}}=(S_0)_f^f=(S_0)_{(x,g)}^{xg}$ and $f^{-1}=g^{-1}x^{-1}$. So $u:=(g^{-1},x^{-1},h,x,g)\in\D\cap \W(\bN_\L(\H))$ via $(S_0)_{(f^{-1},x,f)}$ and, by the axioms of a partial group, $h^f=\Pi(f^{-1},h,f)=\Pi(u)=(h^x)^g$.

\smallskip

By Lemma~\ref{L:BasicT0perp}(a),(b), $E(\L)=E(\H)\M\subseteq \H\M$ and $\H\unlhd \H\M$. In particular, $h^x\in\H$. By (e), $g\in N_{G}(\H)$. Thus, it follows $h^f=(h^x)^g\in\H$ as required. Thus, $\H\unlhd \bN_\L(\H)$.

\smallskip

Suppose now $T\leq S_0$. Then $T\leq \H\cap S_0$. As $T=S\cap \H$ is by Background Theorem~\ref{bT:1} a maximal $p$-subgroup of $\H$, it follows $T=\H\cap S_0$. As $\H$ is a partial normal subgroup of $\bN_\L(\H)$, Background Theorem~\ref{bT:1} together with part (a) implies now $\E=\F_T(\H)=\F_{S_0\cap\H}(\H)\unlhd \F_0$.

\smallskip

\textbf{(c)} As every component of $\L$ is subnormal in $\L$ and contained in the partial subgroup $\bN_\L(\H)$ of $\L$, it is a special case of Lemma~\ref{L:PartialSubnormalInduce} that every component of $\L$ is subnormal in $\bN_\L(\H)$. Thus $\Comp(\L)\subseteq\Comp(\bN_\L(\H))$.

\smallskip

Suppose now that there is a component $\K\in\Comp(\bN_\L(\H))\backslash\Comp(\L)$. As $E(\L)\unlhd\bN_\L(\H)$, it follows from \cite[Lemma~11.16]{Henke:Regular} (applied with $(\bN_\L(\H),E(\L))$ in place of $(\L,\N)$ and combined with \cite[Theorem~10.16(e)]{Henke:Regular}) that 
\[\K\subseteq C_{\bN_\L(\H)}(E(\L))\subseteq C_{\bN_\L(\H)}(\tT)\subseteq N_{\bN_\L(\H)}(\tT).\]
By (e) and Lemma~\ref{L:NinNLtTofH}(c), $N_{\bN_\L(\H)}(\tT)=N_{G}(\H)$ is a group of characteristic $p$. As $\K$ is a partial subnormal subgroup of $\bN_\L(\H)$, it follows from Lemma~\ref{L:PartialSubnormalInduce} that $\K=\K\cap N_{\bN_\L(\H)}(\tT)$ is a subnormal subgroup of the group $N_{\bN_\L(\H)}(\tT)$. Thus $\K$ is by Lemma~\ref{L:MS}(b) of characteristic $p$. However, by \cite[Lemma~7.10]{Henke:Regular}, the fact that $\K$ is a quasisimple locality implies $O_p(\K)=Z(\K)$. This contradicts $\K$ being of characteristic $p$ and shows thus that
\[\Comp(\L)=\Comp(\bN_\L(\H)).\]
In particular, $E(\L)=E(\bN_\L(\H))$. We have seen in \eqref{E:tT} already that $E(\L)\cap S_0=\tT$. It follows thus from Background Theorem~\ref{bT:2} that $\Comp(\F)=\Comp(\F_0)$ and  $E(\F)=E(\F_0)$.

\smallskip

\textbf{(d)} As $\H\unlhd\bN_\L(\H)$ by part (b), it follows from \cite[Lemma~11.13]{Henke:Regular} that $E(\H)\unlhd\bN_\L(\H)$. By Lemma~\ref{L:BasicT0perp}(a), $\M\unlhd E(\L)$. Moreover, by (e) and Lemma~\ref{L:NinNLtTofH}(e), $N_{\bN_\L(\tT)}(\tT)=N_G(\H)\leq N_G(\M)$. As $E(\L)\unlhd\bN_\L(\H)$ and $S_0\cap E(\L)=\tT$, it follows now from \cite[Corollary~3.13]{Chermak:2015} applied with $\bN_\L(\H)$ in place of $\L$ that $\M\unlhd \bN_\L(\H)$.

\smallskip

By Background Theorem~\ref{bT:1}, $T_0$ is a maximal $p$-subgroup of $E(\H)$ and $T_0^\perp$ is a maximal $p$-subgroup of $\M$. At the same time, as $\tT\leq S_0$ by \eqref{E:tT}, we have $T_0\leq E(\H)\cap S_0$ and $T_0^\perp\leq \M\cap S_0$. Hence, it follows
\[T_0=E(\H)\cap S_0\mbox{ and }T_0^\perp=\M\cap S_0.\]
It is a consequence of Background Theorem~\ref{bT:2} (applied with $(\H,\delta(\E),T)$ in place of $(\L,\Delta,S)$) that $E(\E)=\F_{T_0}(E(\H))$. Background Theorem~\ref{bT:2} (applied with $(\bN_\L(\H),\delta(\F_0),S_0)$ in place of $(\L,\Delta,S)$) gives now  $E(\E)\unlhd \F_0$ and $\F_{T_0^\perp}(\M)\unlhd\F_0$.
\end{proof}

\begin{lemma}\label{L:X}
 Let $\X$ be a partial subgroup of $\L$ such that $\H\unlhd\X$ and $(\X,\Gamma_\X,S_\X)$ is a locality for some $p$-subgroup $S_\X$ of $\X$ with $T\leq S_\X$ and some set $\Gamma_\X$ of subgroups of $S_\X$. Then $\X\subseteq \bN_\L(\H)$.
\end{lemma}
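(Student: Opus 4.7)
The plan is to reduce an arbitrary element of $\X$ to a product of two elements already known to lie in $\bN_\L(\H)$: one in $\H$ (which sits inside $\bN_\L(\H)$ by Theorem~\ref{T:bNLH}(b)) and one in $N_\L(T)$ normalizing $\H$ (which is handled by Lemma~\ref{L:NLT}). The splitting will come from the Frattini Lemma applied inside the locality $(\X,\Gamma_\X,S_\X)$, and the two factors will be recombined using the fact from Theorem~\ref{T:bNLH}(a) that $\bN_\L(\H)$ is itself a partial subgroup of $\L$.

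First I would verify that $T=\H\cap S_\X$. Since $T=\H\cap S$ is a maximal $p$-subgroup of $\H$ by Background Theorem~\ref{bT:1} applied to the regular locality $(\H,\delta(\E),T)$, and since $\H\cap S_\X$ is a $p$-subgroup of $\H$ containing $T$, the maximality of $T$ forces equality. Next I would apply Lemma~\ref{L:FrattiniSplitting} to $(\X,\Gamma_\X,S_\X)$ with partial normal subgroup $\H\unlhd\X$: for each $x\in\X$ this yields a decomposition $x=hy$ with $h\in\H$ and $y\in N_\X(T)$, which is also valid in $\L$ because $\X$ is a partial subgroup of $\L$ (so products and domains in $\X$ restrict from those in $\L$).

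The first factor lies in $\bN_\L(\H)$ by Theorem~\ref{T:bNLH}(b). For the second, I would observe that $y\in N_\X(T)\subseteq N_\L(T)$ and that $\H^y=\H$ (because $\H\unlhd\X$), and then invoke Lemma~\ref{L:NLT} to produce $n\in E(\L)$ and $g\in G$ with $(n,g)\in\D$, $y=ng$, and $\H^g=\H^y=\H$. This places $g$ in $N_G(\H)$, whence $y\in E(\L)\,N_G(\H)=\bN_\L(\H)$. Since Theorem~\ref{T:bNLH}(a) says $\bN_\L(\H)$ is a partial subgroup of $\L$, the defined product $x=hy$ of two of its elements again lies in $\bN_\L(\H)$, completing the argument. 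The hard part, if there is one, is just recognizing $T=\H\cap S_\X$; once that identification is in place, the Frattini Lemma together with Lemma~\ref{L:NLT} write the proof essentially by themselves.
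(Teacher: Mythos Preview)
Your proposal is correct and follows essentially the same route as the paper: identify $T=\H\cap S_\X$, use the Frattini splitting in $(\X,\Gamma_\X,S_\X)$ to write each $x\in\X$ as $hy$ with $h\in\H$ and $y\in N_\X(T)$, and then apply Lemma~\ref{L:NLT} to $y$ to conclude $y\in E(\L)N_G(\H)=\bN_\L(\H)$. The paper phrases the Frattini step as the set equality $\X=\H\,N_\X(T)$ rather than elementwise, but the argument is otherwise identical.
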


\begin{proof}
It is a consequence of Background Theorem~\ref{bT:1} (and the definition of a locality) that $T$ is a maximal $p$-subgroup of $\H$. As $T\leq S_\X\cap\H$, it follows thus that $T=S_\X\cap \H$. The Frattini argument for localities (Lemma~\ref{L:FrattiniSplitting}), applied with $(\X,S_\X,\Delta_\X)$ in place of $(\L,\Delta,S)$, gives therefore $\X=\H N_\X(T)$. By Theorem~\ref{T:bNLH} $\bN_\L(\H)$ is a partial subgroup of $\L$ with $\H\subseteq \bN_\L(\H)$. Thus it remains to show that $N_\X(T)\subseteq \bN_\L(\H)$. 

\smallskip

Let $f\in N_\X(T)$. By Lemma~\ref{L:NLT}, we have $\H\subseteq\D(f)$ and there exist $n\in E(\L)$ and $g\in G$ such that $(n,g)\in\D$, $f=ng$ and $\H^f=\H^g$. As $\H\unlhd \X$, it follows $\H^g=\H^f=\H$ and thus $g\in N_G(\H)$. Hence, $f=ng\in E(\L)N_G(\H)=\bN_\L(\H)$. This completes the proof.
\end{proof}

For the next lemma, the reader might want to recall the definition of $N_S(\E)$ from Notation~\ref{N:FusionBasic}. We recall furthermore that $N_\E(T_0)$ is by Lemma~\ref{L:NinNLtTofH}(b) a subnormal subsystem of $N_\F(\tT)$, which is a saturated fusion system on $S$. Thus, $N_S(N_\E(T_0))$ is also defined.

\begin{lemma}\label{L:finNLtT}
Let $f\in N_{G}(T)$. Then $f\in N_{G}(\H)$ if and only if $c_f|_T\in\Aut(\E)$. In particular, \[N_S(\E)=N_S(\H)=N_S(N_\H(\tT))=N_S(N_\E(T_0)).\]
\end{lemma}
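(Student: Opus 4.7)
The strategy is to establish the biconditional by means of the bijection $\hat\Psi$ from Background Theorem~\ref{bT:2}, and then to deduce the four-way equality in the ``in particular'' clause by combining that biconditional with Lemma~\ref{L:NinNLtTofH}.

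For the biconditional, I would use that $c_f \in \Aut(\L)$ by Lemma~\ref{L:BasisNLtT}, and that $f \in N_G(T)$ forces $T^f = T$. The forward direction is routine: if $\H^f = \H$, then $c_f$ restricts to an automorphism of $\H$ which sends $T = \H \cap S$ to itself, and Corollary~\ref{C:AutLFusion} gives $c_f|_T \in \Aut(\F_T(\H)) = \Aut(\E)$. For the converse, I would set $\H' := \H^f$; since $c_f$ is an automorphism of $\L$ and $\H \subn \L$, the image $\H'$ is also partial subnormal in $\L$, has $\H' \cap S = T^f = T$, and by Corollary~\ref{C:AutLFusion} satisfies $\F_T(\H') = \E^{c_f|_T} = \E$. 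The injectivity of $\hat\Psi$ then forces $\H' = \H$, so $f \in N_G(\H)$. This uniqueness step is the heart of the argument.

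For the ``in particular'' clause, I would first observe that $S \leq G$, since $E(\L) \unlhd \L$ gives $\tT^s \leq \tT$ for all $s \in S$, whence $\tT \unlhd S$. The equality $N_S(\E) = N_S(\H)$ then follows by restricting the biconditional to $s \in S$ and comparing with the definition of $N_S(\E)$ in Notation~\ref{N:FusionBasic}. The equalities $N_S(\H) = N_S(N_\H(\tT)) = N_S(N_\H(T_0))$ are obtained by intersecting the chain $N_G(\H) = N_G(N_\H(\tT)) = N_G(N_\H(T_0))$ from Lemma~\ref{L:NinNLtTofH}(c) with $S$.

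The final equality $N_S(N_\H(T_0)) = N_S(N_\E(T_0))$ I would prove by reapplying the biconditional in the constrained setting provided by Lemma~\ref{L:NinNLtTofH}(b) and Remark~\ref{R:Constrained}: with $G$ as the regular locality (and simultaneously as its associated ``$G$'', since $E(G) = 1$), $N_\H(T_0) \subn G$ realizing $N_\E(T_0)$, and Sylow subgroup $N_T(T_0) = N_\H(T_0) \cap S$, the biconditional translates directly into $s \in N_S(N_\H(T_0))$ iff $s \in N_S(N_T(T_0))$ and $c_s|_{N_T(T_0)} \in \Aut(N_\E(T_0))$, which is exactly $s \in N_S(N_\E(T_0))$. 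The main obstacle, concentrated in the uniqueness argument of the biconditional, is verifying that $\H^f$ fits the hypotheses of Background Theorem~\ref{bT:2}; everything else is routine translation between notations.
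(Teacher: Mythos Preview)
Your proposal is correct and follows essentially the same approach as the paper's proof: both establish the biconditional via Corollary~\ref{C:AutLFusion} and the injectivity of $\hat\Psi$ from Background Theorem~\ref{bT:2}, then obtain $N_S(\H)=N_S(N_\H(\tT))$ from Lemma~\ref{L:NinNLtTofH}(c), and finally derive $N_S(N_\H(\tT))=N_S(N_\E(T_0))$ by reapplying the already-proved equality $N_S(\E)=N_S(\H)$ with the regular locality $(G,\delta(N_\F(\tT)),S)$ and the subnormal subgroup $N_\H(\tT)$ realizing $N_\E(T_0)$ in place of $(\L,\H,\E)$. The only cosmetic difference is that the paper cites Lemma~\ref{L:MS}(a) to see that $G$ forms a regular locality, whereas you invoke Remark~\ref{R:Constrained}; both justifications are valid.
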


\begin{proof}
We show first that 
\begin{equation}\label{E:NSE}
N_S(\E)=N_S(\H). 
\end{equation}
For the proof recall that $c_f\in\Aut(\L)$ by Lemma~\ref{L:BasisNLtT}. In particular, as $\H\subn\L$, we have $\H^f\subn\L$. Moreover, setting $\alpha:=c_f|_T$, it follows from Lemma~\ref{C:AutLFusion} that $S\cap \H^f=T^f=T$ and 
\[\F_T(\H^f)=\E^\alpha.\]
In particular, if $\H^f=\H$, then $\alpha\in\Aut(\E)$. On the other hand, if $\alpha\in\Aut(\E)$, then $\F_T(\H^f)=\E^\alpha=\E=\F_T(\H)$. Since the map $\hat{\Psi}$ from Background Theorem~\ref{bT:2} is injective, we see thus that $\alpha\in\Aut(\E)$ implies $\H^f=\H$. This shows $f\in N_{G}(\H)$ if and only if $c_f|_T=\alpha\in\Aut(\E)$. Hence, we proved that \eqref{E:NSE} holds.

\smallskip

By Lemma~\ref{L:NinNLtTofH}(b), $G$ is a model for $N_\F(\tT)$ and $N_\H(\tT)\subn G$ is a model for $N_\E(T_0)$. It follows from Lemma~\ref{L:MS}(a) that $(G,\delta(N_\F(\tT)),S)$ is a regular locality over $N_\F(\tT)$. Thus, applying \eqref{E:NSE} with $(G,N_\H(\tT),N_\E(T_0))$ in place of $(\L,\H,\E)$, we obtain $N_S(N_\H(\tT))=N_S(N_\E(T_0))$. Lemma~\ref{L:NinNLtTofH}(c) implies that $N_S(\H)=N_S(N_\H(\tT))$. Hence the assertion holds.
\end{proof}

\begin{lemma}\label{L:CLTinbNLH}
\begin{itemize}
 \item [(a)] $C_\L(T)\subseteq \bN_\L(\H)$.
 \item [(b)] $C_\L(\H)=C_{\bN_\L(\H)}(\H)\unlhd \bN_\L(\H)$.
\end{itemize}
\end{lemma}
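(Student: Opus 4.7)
The plan is to prove (a) first and then deduce (b) from (a), Theorem~\ref{T:bNLH}, and Background Theorem~\ref{bT:1}.

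First I would handle (a). Fix $f\in C_\L(T)$. Since $T^f=T$, one has $f\in N_\L(T)$, so Lemma~\ref{L:NLT} supplies $n\in E(\L)$ and $g\in G$ with $(n,g)\in\D$, $f=ng$ and $\H^f=\H^g$. Inspecting that proof, $n\in N_\L(T)\cap N_\L(\H)$, so $g=n^{-1}f\in G\cap N_\L(T)=N_G(T)$. By Lemma~\ref{L:finNLtT}, it will therefore suffice to check that $c_g|_T\in\Aut(\E)$, since this forces $g\in N_G(\H)$ and hence $f=ng\in E(\L)N_G(\H)=\bN_\L(\H)$.

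To verify $c_g|_T\in\Aut(\E)$, note that $c_f|_T=\id_T$ (as $f\in C_\L(T)$) and $t^f=(t^n)^g$ for every $t\in T$, so $c_g|_T=(c_n|_T)^{-1}$; thus it is enough to show $c_n|_T\in\Aut(\E)$. By Lemma~\ref{L:BasicT0perp}(a), $E(\L)=E(\H)\M$, so one may write $n=\Pi(x,y)$ for some $x\in E(\H)\subseteq\H$, $y\in\M$ with $(x,y)\in\D$. Since $\M\subseteq C_\L(\H)$ (same lemma) and $T\subseteq\H$, the element $y$ centralizes $T$; a partial-group associativity argument (of the kind already carried out in the proof of Lemma~\ref{L:NLT}) then yields $t^n=(t^x)^y=t^x$ for all $t\in T$. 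In particular $x\in N_\L(T)$, and $c_n|_T=c_x|_T\in\Aut_\E(T)\subseteq\Aut(\E)$, as required.

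Statement (b) will then follow quickly. By (a), $C_\L(\H)\subseteq C_\L(T)\subseteq\bN_\L(\H)$, so $C_\L(\H)=C_{\bN_\L(\H)}(\H)$ (the opposite inclusion being trivial). By Theorem~\ref{T:bNLH}(a),(b), $\bN_\L(\H)$ can be given the structure of a regular locality in which $\H$ is a partial normal subgroup, so Background Theorem~\ref{bT:1} applied inside $\bN_\L(\H)$ yields $C_{\bN_\L(\H)}(\H)\unlhd \bN_\L(\H)$.

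The main technical obstacle is the identity $c_n|_T=c_x|_T$: one needs to produce a subgroup $P\in\Delta$ witnessing $(x,y)\in\D$ so that the associativity axiom of the partial group $\L$ rigorously gives $t^n=(t^x)^y$ for all $t\in T$, and then use $y\in C_\L(\H)$ to collapse the right-hand side to $t^x$. This is a routine manipulation of the locality framework, very similar to the move performed in the proof of Lemma~\ref{L:NLT}, but it has to be spelled out to be fully rigorous.
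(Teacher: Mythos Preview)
Your overall strategy for (a) matches the paper's: write $f=ng$ with $n\in E(\L)$ and $g\in G$, then show $c_g|_T\in\Aut(\E)$ and invoke Lemma~\ref{L:finNLtT}. Part (b) is handled identically in both.

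The difference lies in how you verify $c_n|_T\in\Aut(\E)$. You further decompose $n=xy$ with $x\in E(\H)$, $y\in\M$, and aim to show $c_n|_T=c_x|_T\in\Aut_\E(T)$. The paper instead uses directly that $\E\unlhd\F_0$ (from Theorem~\ref{T:bNLH}(a),(b)): since $n\in E(\L)\subseteq\bN_\L(\H)$ and $n$ normalizes $T$, one has $c_n|_T\in\Aut_{\F_0}(T)\leq\Aut(\E)$, the last inclusion being automatic for a normal subsystem. This sidesteps any further decomposition of $n$ and the attendant associativity bookkeeping.

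Your route is likely completable, but the ``technical obstacle'' you flag is a bit more delicate than the Lemma~\ref{L:NLT} manoeuvre you compare it to. In Lemma~\ref{L:NLT} one \emph{extends} a word by an element of $G=N_\L(T^*)$, so one can push through via $Q\cap F^*(\L)\in\Delta$. Here you want to \emph{refine} $n$ into $xy$ and conclude $t^n=(t^x)^y$; for this you need a $\Delta$-object witnessing $(y^{-1},x^{-1},t,x,y)\in\D$, and $y\in\M$ does not obviously normalize anything convenient in $\Delta$. One way to make it rigorous is to apply Lemma~\ref{L:FrattiniSplitting} inside the regular locality $E(\L)$ with the partial normal subgroup $\M$ (so that the splitting comes with control of $\tT_{(x,y)}$), or to appeal to the central-product machinery in \cite{Henke:Regular}; either way there is more to say than ``routine''. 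The paper's argument buys you exactly this: by packaging the needed normality into $\E\unlhd\F_0$ once and for all, the partial-group gymnastics reduce to the single observation $\Aut_{\F_0}(T)\leq\Aut(\E)$.
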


\begin{proof}
\textbf{(a)} As $T$ is a $p$-subgroup of $N_G(\H)$, we may choose $S_0\in\Syl_p(N_G(\H))$ with $T\leq S_0$. Set $\F_0:=\F_{S_0}(\bN_\L(\H))$. By Theorem~\ref{T:bNLH}(a),(b), $\F_0$ is saturated and $\E\unlhd\F_0$. Let $f\in C_\L(T)$. By Lemma~\ref{L:FrattiniSplitting}, there exists $x\in E(\L)$ and $g\in G$ such that $(x,g)\in\D$, $f=xg$ and $S_{(x,g)}=S_f$. Notice that $E(\L)\subseteq \bN_\L(\H)$, so $c_x|_T\in\Aut_{\F_0}(T)\leq \Aut(\E)$. As $(c_x|_T)\circ (c_g|_T)=c_f|_T=\id\in \Aut(\E)$, it follows $c_g|_T\in\Aut(\E)$. Hence, Lemma~\ref{L:finNLtT} yields $g\in N_{G}(\H)$ and thus $f=xg\in E(\L)N_{G}(\H)=\bN_\L(\H)$.

\smallskip

\textbf{(b)} Using (a), we see that $C_\L(\H)\subseteq C_\L(T)\subseteq \bN_\L(\H)$ and thus $C_\L(\H)=C_{\bN_\L(\H)}(\H)$. Recall that $\bN_\L(\H)$ forms a regular locality by Theorem~\ref{T:bNLH}(a). Hence, part (b) follows from Background Theorem~\ref{bT:1} applied with $\bN_\L(\H)$ in place of $\L$.   
\end{proof}

\begin{proof}[\textbf{Proof of Theorem~\ref{T:MainbNLH}}]
Parts (a),(b),(c) follow from Theorem~\ref{T:bNLH}(a),(b),(c), and part (d) is stated in Theorem~\ref{T:bNLH}(e). Part (e) is proven in Lemma~\ref{L:X},  and part (f) in Lemma~\ref{L:CLTinbNLH}. 
\end{proof}

\subsection{The normalizer of $\E$}

We continue to assume Hypothesis~\ref{H:main} and assume in addition for the remainder of this section the following hypothesis.

\begin{hypo}\label{H:S0F0}
Let $S_0$ be a Sylow $p$-subgroup of $N_{G}(\H)$ with $T\leq S_0$. Set 
\[\F_0:=\F_{S_0}(\bN_\L(\H)).\]
\end{hypo}

Note here that $T$ is a $p$-subgroup of $N_G(\H)$ and thus there exists always a Sylow $p$-subgroup $S_0$ of $N_G(\H)$ with $T\leq S_0$.

\smallskip

We will use throughout that, by Theorem~\ref{T:bNLH}, $\F_0$ is well-defined and saturated, $(\bN_\L(\H),\delta(\F_0),S_0)$ is a regular locality over $\F_0$, $E(\L)=E(\bN_\L(\H))\unlhd \bN_\L(\H)$, $\tT=E(\L)\cap S_0$, $E(\F)=E(\F_0)\unlhd \F_0$, $\H\unlhd\bN_\L(\H)$, $T=S_0\cap \H$ and $\E\unlhd\F_0$. We use moreover throughout that
\[N_S(\E)=N_S(\H)\]
by Lemma~\ref{L:finNLtT}. Recall that
\[N_\F(\E)=\F_{N_S(\H)}(\bN_\L(\H))\]
by definition, so in particular, $N_\F(\E)$ is a fusion system over $N_S(\H)=N_S(\E)$.

\smallskip

In this subsection, we will prove Theorem~\ref{T:MainER},  the properties stated in Theorem~\ref{T:FusionNormalizerNew}(a),(b),(c),(e), and some related results concerning the connection between $N_\F(\E)$ and $\F_0$. Moreover, we give in Theorem~\ref{T:FusionNormalizerGenerate} another concrete description of $N_\F(\E)$. It somehow resembles the definiton of $\bN_\L(\H)$.

\smallskip

For the next lemma, the reader might want to recall the definition of $(\E R)_\F$ from Definition~\ref{D:ProductER}.

\begin{lemma}\label{L:EFR}
For every $R\leq N_S(\E)\cap S_0$, we have 
\[E(\F)R:=(E(\F)R)_\F=\F_{\tT R}(E(\L)R)=(E(\F)R)_{\F_0}\] 
and $(E(\L)R,\delta(E(\F)R),\tT R)$ is a regular locality over $E(\F)R$. In particular, $E(\F)R\subseteq N_\F(\E)$.
\end{lemma}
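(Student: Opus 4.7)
The plan is to apply a standard construction for products of partial normal subgroups with $p$-subgroups inside regular localities---essentially the same machine used in the proof of Theorem~\ref{T:bNLH}(a)---first inside $(\L,\Delta,S)$ and then inside $(\bN_\L(\H),\delta(\F_0),S_0)$, and to match the resulting fusion system to both $(E(\F)R)_\F$ and $(E(\F)R)_{\F_0}$. The key preliminary observation is that by Lemma~\ref{L:finNLtT} and Theorem~\ref{T:bNLH}(e) we have $R\leq N_S(\E)=N_S(\H)=S\cap \bN_\L(\H)\leq S_0$, so $R$ lies in $S$ as well as in $S_0$, and the set-theoretic product $E(\L)R$ makes sense both as a subset of $\L$ and as a subset of $\bN_\L(\H)$. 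Moreover $E(\L)\unlhd\L$, $E(\L)=E(\bN_\L(\H))\unlhd\bN_\L(\H)$, $\tT=E(\L)\cap S=E(\L)\cap S_0$, and $E(\F)=E(\F_0)$ by Theorem~\ref{T:bNLH}(c).

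First I would invoke the regular-locality version of the product construction (as used for Theorem~\ref{T:bNLH}(a) via \cite[Theorem~6.1]{Grazian/Henke}, applied here with the partial normal subgroup $E(\L)$ in place of $\N$, and with the $p$-subgroup $R$) to conclude that $E(\L)R$ is a partial subgroup of $\L$ and that $(E(\L)R,\delta(\mD),\tT R)$ is a regular locality over a saturated fusion system $\mD=\F_{\tT R}(E(\L)R)$. Applying exactly the same result inside $\bN_\L(\H)$ gives that $E(\L)R$ is a partial subgroup of $\bN_\L(\H)$ and forms a regular locality over the same fusion system $\mD=\F_{\tT R}(E(\L)R)$ (this fusion system depends only on the partial group structure of $E(\L)R$, not on the ambient locality).

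Next I would identify $\mD$ with the two claimed product subsystems. Working inside $\L$, the subsystem $\mD$ is saturated, sits over $\tT R$ in $\F$, and satisfies $O^p(\mD)=O^p(E(\F))$: indeed, $\mD$ is generated by conjugations by elements of $E(\L)R$, the $R$-conjugations contribute only $p$-elements, and the $E(\L)$-conjugations generate $E(\F)=\F_{\tT}(E(\L))$ whose $O^p$ is $O^p(E(\F))$. By the uniqueness part of Proposition~\ref{P:ERnormal} (applied to $E(\F)\unlhd\F$ and $R\leq S$), this forces $\mD=(E(\F)R)_\F$. Repeating the argument with $\F_0$ in place of $\F$, using $E(\F_0)=E(\F)\unlhd\F_0$, yields $\mD=(E(\F_0)R)_{\F_0}=(E(\F)R)_{\F_0}$, completing the chain of equalities.

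Finally, for $E(\F)R\subseteq N_\F(\E)$: since $E(\L)\subseteq\bN_\L(\H)$ by definition and $R\leq N_S(\H)=S\cap\bN_\L(\H)$ by Theorem~\ref{T:bNLH}(e), we have $E(\L)R\subseteq\bN_\L(\H)$; hence every generating morphism of $\F_{\tT R}(E(\L)R)$ is a conjugation in $\bN_\L(\H)$ between subgroups of $\tT R\leq N_S(\H)$, so lies in $N_\F(\E)=\F_{N_S(\H)}(\bN_\L(\H))$. The only real obstacle is the clean invocation of the product construction with $\N=E(\L)$ and verifying the $O^p$ computation; both are essentially already done in the proof of Theorem~\ref{T:bNLH}(a), so the lemma should be essentially a bookkeeping exercise once those ingredients are cited.
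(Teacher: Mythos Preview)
Your overall strategy---apply a product construction for $E(\L)R$ once inside $\L$ and once inside $\bN_\L(\H)$, then read off the containment $E(\F)R\subseteq N_\F(\E)$---is exactly the paper's approach. The paper does this in two lines by citing \cite[Theorem~D(a)]{Chermak/Henke} twice, which for a partial normal subgroup $\N\unlhd\L$ and a $p$-subgroup $R\leq S$ directly yields that $(\N R,\delta((\E R)_\F),TR)$ is a regular locality over $(\E R)_\F$. That single citation gives both the regular locality structure \emph{and} the identification $\F_{\tT R}(E(\L)R)=(E(\F)R)_\F$, so your separate $O^p$ step becomes unnecessary.

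The one genuine gap is your choice of citation. \cite[Theorem~6.1]{Grazian/Henke}, as used in the proof of Theorem~\ref{T:bNLH}(a), requires the subgroup $H\leq N_\L(T^*)$ to contain $N_{E(\L)}(T^*)$ (this is why that proof verifies $N_{E(\L)}(T^*)\leq N_G(\H)$ via Lemma~\ref{L:NinNLtTofH}(d) before invoking it). A bare $p$-subgroup $R$ does not satisfy this hypothesis in general, so that theorem does not apply here. The appropriate reference is \cite[Theorem~D(a)]{Chermak/Henke}, which is tailored precisely to products $\N R$ with $R$ a $p$-subgroup; once you substitute it, your argument collapses to the paper's. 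Your $O^p$ sketch (``$R$-conjugations contribute only $p$-elements'') is morally right but would need a bit more care to make rigorous---fortunately it is subsumed by the correct citation.
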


\begin{proof}
By Background Theorem~\ref{bT:2}, we have $E(\F)=\F_{S\cap E(\L)}(E(\L))=\F_{\tT}(E(\L))=\F_{S_0\cap E(\L)}(E(\L))$. Hence, by \cite[Theorem~D(a)]{Chermak/Henke} applied twice (once with $\L$ and once with $\bN_\L(\H)$ in place of $\L$), we have 
\[(E(\F)R)_\F=\F_{\tT R}(E(\L)R)=(E(\F)R)_{\F_0}\]
and $(E(\L)R,\delta(E(\F)R),\tT R)$ is a regular locality over $E(\F)R$. As $\tT R\leq N_S(\H)$ and $E(\L)R\subseteq \bN_\L(\H)$, it follows that $E(\F)R\subseteq N_\F(\E)$.
\end{proof}

Given $R\leq N_S(\E)$, we will write from now on usually $E(\F)R$ for $(E(\F)R)_\F$.

\begin{lemma}\label{L:FrattiniApply}
Let $X\leq N_S(\E)$ and $\phi\in\Hom_\F(X,S)$. Then there exist 
\[\psi\in\Hom_{E(\F)X}(X,X\tT)\subseteq \Hom_{N_\F(\E)}(X,X\tT)\]
and $\alpha\in\Hom_{N_\F(\tT)}(X\psi,S)$ such that $\phi=\psi\alpha$. If $X\leq S_0$, then $\psi$ is a morphism in $\F_0$.
\end{lemma}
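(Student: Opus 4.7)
The plan is to derive Lemma~\ref{L:FrattiniApply} as a direct application of the fusion-theoretic Frattini argument, Lemma~\ref{L:FusionSystemFrattini}, to the normal subsystem $E(\F)\unlhd \F$ over $\tT$, and then translate the output into the language of $N_\F(\E)$ and $\F_0$ by using the locality description of $E(\F)X$.

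First, I would set up the Frattini argument. By Background Theorem~\ref{bT:2}, $E(\F)$ is a normal subsystem of $\F$ over $\tT$, and Proposition~\ref{P:ERnormal} guarantees that $N_S(E(\F))=S$, so that the product $E(\F)X$ is defined for any $X\leq S$ and in particular for $X\leq N_S(\E)$. Applying Lemma~\ref{L:FusionSystemFrattini} to $\phi\in\Hom_\F(X,S)$ directly produces $\psi\in\Hom_{E(\F)X}(X,X\tT)$ and $\alpha\in\Hom_{N_\F(\tT)}(X\psi,S)$ with $\phi=\psi\alpha$. The whole issue then is to upgrade the statement ``$\psi$ is a morphism in $E(\F)X$'' to ``$\psi$ is a morphism in $N_\F(\E)$,'' and, in the case $X\leq S_0$, to ``$\psi$ is a morphism in $\F_0$.''

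For the containment $\Hom_{E(\F)X}(X,X\tT)\subseteq\Hom_{N_\F(\E)}(X,X\tT)$, I would identify $E(\F)X$ with $\F_{\tT X}(E(\L)X)$. The key point is that the proof of Lemma~\ref{L:EFR} (which cites \cite[Theorem~D(a)]{Chermak/Henke}) only uses $R\leq N_S(E(\F))=S$ for the identity $(E(\F)R)_\F=\F_{\tT R}(E(\L)R)$; the hypothesis $R\leq S_0$ in Lemma~\ref{L:EFR} was only needed for the secondary identification with $(E(\F)R)_{\F_0}$. Thus for our $X\leq N_S(\E)$ we already have $E(\F)X=\F_{\tT X}(E(\L)X)$. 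Now $X\leq N_S(\E)=N_S(\H)$ by Lemma~\ref{L:finNLtT}, and $\tT\leq N_G(\H)$ since $\tT\leq N_{E(\L)}(\tT)\subseteq N_G(\H)$ by Lemma~\ref{L:NinNLtTofH}(d); hence $\tT X\leq N_S(\H)$ and $E(\L)X\subseteq E(\L)N_G(\H)=\bN_\L(\H)$. It follows that every morphism of $\F_{\tT X}(E(\L)X)$ is a morphism of $\F_{N_S(\H)}(\bN_\L(\H))=N_\F(\E)$, which gives the desired inclusion and in particular lands $\psi$ in $N_\F(\E)$.

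For the final assertion, assume additionally $X\leq S_0$. Since $\tT=E(\L)\cap S_0\leq S_0$ by Theorem~\ref{T:bNLH}(c), we have $\tT X\leq S_0$, so $\psi$ is realized by conjugation by an element of $E(\L)X\subseteq\bN_\L(\H)$ with source and target inside $S_0$; this exhibits $\psi$ as a morphism in $\F_0=\F_{S_0}(\bN_\L(\H))$. I do not foresee a real obstacle in this proof: the content is entirely an unwinding of the Frattini argument together with the locality description of products of the form $E(\F)X$. The only mildly delicate bookkeeping item is noting that Lemma~\ref{L:EFR}'s restriction $R\leq S_0$ is inessential for the inclusion into $N_\F(\E)$, so one may wish to invoke \cite[Theorem~D(a)]{Chermak/Henke} directly rather than citing Lemma~\ref{L:EFR}.
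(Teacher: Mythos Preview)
Your proposal is correct and follows essentially the same route as the paper: apply Lemma~\ref{L:FusionSystemFrattini} with $E(\F)$ in place of $\E$ to obtain $\psi$ and $\alpha$, then use the locality description of $E(\F)X$ (as in Lemma~\ref{L:EFR}) to place $\psi$ inside $N_\F(\E)$, and in $\F_0$ when $X\leq S_0$. You are in fact more careful than the paper's one-line citation of Lemma~\ref{L:EFR}: you correctly observe that the hypothesis $R\leq N_S(\E)\cap S_0$ there is only needed for the identification with $(E(\F)R)_{\F_0}$, while the inclusion $E(\F)X\subseteq N_\F(\E)$ follows already from $(E(\F)X)_\F=\F_{\tT X}(E(\L)X)$ together with $\tT X\leq N_S(\H)$ and $E(\L)X\subseteq\bN_\L(\H)$.
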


\begin{proof}
The existence of $\psi\in\Hom_{E(\F)X}(X,\tT X)$ and $\alpha\in\Hom_{N_\F(\tT)}(X\psi,S)$ with $\phi=\psi\alpha$ follows from Lemma~\ref{L:FusionSystemFrattini} applied with $E(\F)$ in place of $\E$. The assertion follows now from Lemma~\ref{L:EFR}.
\end{proof}

\begin{lemma}\label{L:NFEbNLH}
Set $\B:=\F_{N_S(\H)}(N_G(\H))$. Then the following hold:
\begin{itemize}
 \item [(a)] We have 
\[N_\F(\E)=\<E(\F)N_S(\E), \B\>.\]
If $N_S(\E)\leq S_0$, then also 
\[N_\F(\E)=\F\cap \F_0=\F_0|_{N_S(\E)}.\]
 \item [(b)] If $P\leq N_S(\E)$ and $\phi\in\Hom_{N_\F(\E)}(P,N_S(\E))$, then there exist 
\[\psi\in\Hom_{E(\F)P}(P,\tT P)\mbox{ and }\alpha\in\Hom_{\B}(P\psi,N_S(\E))\]
such that $\phi=\psi\alpha$. 
 \item [(c)] $\E$ is a $N_\F(\E)$-invariant subsystem of $N_\F(\E)$. 
\end{itemize}
\end{lemma}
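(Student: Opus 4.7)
The proof strategy is to establish the inclusion $N_\F(\E)\supseteq\<E(\F)N_S(\E),\B\>$ and the second assertion of~(a) first, then prove (b) via Lemma~\ref{L:FrattiniApply}, which yields the remaining inclusion in~(a). Part~(c) then follows from the identification $N_\F(\E) = \F_0|_{N_S(\E)}$ combined with $\E\unlhd\F_0$ from Theorem~\ref{T:bNLH}(b).

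For the inclusion $N_\F(\E)\supseteq\<E(\F)N_S(\E),\B\>$: the containment $\B\subseteq N_\F(\E)$ is immediate from $N_G(\H)\subseteq\bN_\L(\H)$. For $E(\F)N_S(\E)\subseteq N_\F(\E)$, we use that $N_S(\H)=N_S(\E)$ is a $p$-subgroup of $N_G(\H)$, so we may choose a Sylow $p$-subgroup $S_0'$ of $N_G(\H)$ containing $N_S(\E)$. Applying Lemma~\ref{L:EFR} (with $S_0'$ in place of $S_0$) and $R=N_S(\E)$ identifies $E(\F)N_S(\E)$ with $\F_{N_S(\E)}(E(\L)N_S(\E))$, which is contained in $\F_{N_S(\H)}(\bN_\L(\H))=N_\F(\E)$ since $E(\L)N_S(\E)\subseteq E(\L)N_G(\H)=\bN_\L(\H)$. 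For the second assertion of~(a), under the hypothesis $N_S(\E)\leq S_0$, observe that $S\cap S_0=N_S(\H)$. Since $(\bN_\L(\H),\delta(\F_0),S_0)$ is a locality, every morphism of $\F_0$ is realized by conjugation by a single element of $\bN_\L(\H)$; restricting to subgroups of $N_S(\E)$ yields $\F_0|_{N_S(\E)}=N_\F(\E)$, and $\F\cap\F_0=\F_0|_{N_S(\E)}$ then follows from $N_\F(\E)\subseteq\F$.

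For (b), apply Lemma~\ref{L:FrattiniApply} to obtain $\phi=\psi\alpha$ with $\psi\in\Hom_{E(\F)P}(P,\tT P)$ and $\alpha\in\Hom_{N_\F(\tT)}(P\psi,S)$. By the inclusion $E(\F)P\subseteq E(\F)N_S(\E)\subseteq N_\F(\E)$ just established, $\psi\in N_\F(\E)$, and hence $\alpha=\psi^{-1}\phi\in N_\F(\E)\cap N_\F(\tT)$. The principal obstacle is to upgrade $\alpha$ from $N_\F(\tT)$ to $\B$: realize $\alpha=c_f|_{P\psi}$ for some $f\in\bN_\L(\H)$ using the locality structure of $\bN_\L(\H)$, and apply Lemma~\ref{L:FrattiniSplitting} in $\L$ with $\N=E(\L)$ to decompose $f=nh$ with $n\in E(\L)$ and $h\in G$; then $h=n^{-1}f\in\bN_\L(\H)\cap G=N_G(\H)$ by Theorem~\ref{T:bNLH}(e). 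The condition $S_f=S_{(n,h)}$ forces $(P\psi)^n\leq S$, and $(P\psi)^n=(P\phi)^{h^{-1}}\leq N_G(\H)^{h^{-1}}=N_G(\H)$ gives $(P\psi)^n\leq N_S(\H)$, so $c_h|_{(P\psi)^n}$ is a morphism of $\B$. The remaining delicate step is to absorb the $E(\L)$-conjugation $c_n|_{P\psi}$ into the $E(\F)P$-morphism $\psi$—that is, to show that the modified $\psi$ still has image in $\tT P$—so that the purely $\B$-valued $\alpha$ emerges. This absorption, which must draw on the fine structure of the Frattini machinery in Lemma~\ref{L:FusionSystemFrattini} tailored to $N_\F(\E)$, is the main technical obstacle of the proof.

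Once (b) is in place, the inclusion $N_\F(\E)\subseteq\<E(\F)N_S(\E),\B\>$ in (a) is immediate, since any morphism of $N_\F(\E)$ with source in $N_S(\E)$ factors as a morphism of $E(\F)N_S(\E)$ followed by one of $\B$. For (c): strong closure of $T$ in $N_\F(\E)$ follows from $\H\unlhd\bN_\L(\H)$, as every morphism of $N_\F(\E)$ is realized by conjugation by an element of $\bN_\L(\H)$, which preserves $\H$ and hence $T=S\cap\H$. The $N_\F(\E)$-invariance of $\E$ is then derived from the identification $N_\F(\E)=\F_0|_{N_S(\E)}$ (choosing $S_0$ to contain $N_S(\E)$) combined with $\E\unlhd\F_0$: restricting the normal subsystem $\E$ of $\F_0$ to the subsystem $N_\F(\E)=\F_0|_{N_S(\E)}$ automatically preserves the strong-closure, the $\Aut$-invariance, and the Frattini condition, which are precisely the requirements for $\E$ to be $N_\F(\E)$-invariant.
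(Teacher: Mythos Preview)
Your overall architecture is reasonable, but the argument for (b) has a genuine gap that you yourself flag as ``the main technical obstacle'' without resolving. You apply Lemma~\ref{L:FrattiniApply}, which runs the Frattini argument in $\F$ and yields $\alpha\in N_\F(\tT)$; you then try to upgrade $\alpha$ to a morphism in $\B$ by realizing it inside $\bN_\L(\H)$ and splitting via Lemma~\ref{L:FrattiniSplitting}. The realization step already needs justification (a morphism in $\F_{N_S(\H)}(\bN_\L(\H))$ is by definition only a \emph{composite} of conjugation maps, not necessarily a single one), and the subsequent ``absorption'' of the $E(\L)$-factor $c_n$ back into $\psi$ while keeping the image inside $\tT P$ is not carried out. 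There is no reason this absorption should succeed: $(P\psi)^n$ need not lie in $\tT P$.

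The paper avoids this entirely by running the Frattini argument (Lemma~\ref{L:FusionSystemFrattini}) \emph{in $\F_0$}, using $E(\F)=E(\F_0)\unlhd\F_0$. This yields $\alpha\in N_{\F_0}(\tT)$, and since $N_{\F_0}(\tT)=\F_{S_0}(N_{\bN_\L(\H)}(\tT))=\F_{S_0}(N_G(\H))$ by Theorem~\ref{T:bNLH}(e), the restriction of $\alpha$ to subgroups of $N_S(\H)$ lies in $\B$ for free---no absorption needed. The same computation simultaneously gives $\F_0|_{N_S(\E)}\subseteq N_\F(\E)$ and $\F_0|_{N_S(\E)}\subseteq\G$, so (a) and (b) fall out together. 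Your shortcut for the second assertion of (a) (``every morphism of $\F_0$ is realized by a single element'') is also unjustified for subgroups outside $\delta(\F_0)$; the paper does not use such a claim.

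Your argument for (c) via $N_\F(\E)=\F_0|_{N_S(\E)}$ and $\E\unlhd\F_0$ is fine and arguably cleaner than the paper's direct verification via Lemma~\ref{L:Finvariant}(b); the Frattini condition for $\E$ in $N_\F(\E)$ is indeed inherited because $\Aut_{N_\F(\E)}(T)=\Aut_{\F_0}(T)$ once $T\leq N_S(\E)$.
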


\begin{proof}
As $N_S(\E)=N_S(\H)$ is a $p$-subgroup of $N_G(\H)$, there exists always a Sylow $p$-subgroup of $N_G(\H)$ containing $N_S(\E)$. Hence, we may assume throughout the proof that $N_S(\E)\leq S_0$.

\smallskip

\textbf{(a,b)} As $N_S(\E)\leq S_0$ and $\E\unlhd\F_0$, we have $S_0\cap S=N_S(\E)=N_S(\H)$. Notice that $N_\F(\E)=\F_{N_S(\H)}(\bN_\L(\H))$ is contained in $\F=\F_S(\L)$ and in $\F_0=\F_{S_0}(\bN_\L(\H))$. Hence,
\begin{equation}\label{E:Chain1}
N_\F(\E)\subseteq \F\cap \F_0\subseteq \F_0|_{N_S(\E)}.
\end{equation}
It follows moreover from Lemma~\ref{L:EFR} that $E(\F)N_S(\E)=\F_{N_S(\E)}(E(\L)N_S(\E))\subseteq \F_0$. Clearly, $\B$ is contained in $\F_0$. Hence, 
\begin{equation}\label{E:Chain2}
\G:=\<E(\F)N_S(\E),\B\>\subseteq \F_0\cap \F\subseteq \F_0|_{N_S(\E)}.
\end{equation}
We note furthermore that Lemma~\ref{L:EFR} gives
\begin{equation}\label{E:EFP1}
(E(\F)P)_{\F_0}=(E(\F)P)_{\F}\subseteq (E(\F)N_S(\E))_\F\subseteq \G \mbox{ for all }P\leq N_S(\E)
\end{equation}
and
\begin{equation}\label{E:EFP2}
(E(\F)P)_{\F_0}\subseteq N_\F(\E) \mbox{ for all }P\leq N_S(\E).
\end{equation}

\smallskip

We show now that (b) holds and 
\begin{equation}\label{E:Chain3}
\F_0|_{N_S(\E)}\subseteq \G\cap N_\F(\E)
\end{equation}
For the proof let $P,Q\leq N_S(\E)$ and $\phi\in\Hom_{\F_0}(P,Q)$. Notice that any morphism in $\F_0|_{N_S(\E)}$ is of this form. As $E(\F)\unlhd\F_0$, it follows from Lemma~\ref{L:FusionSystemFrattini} that there exists $\psi\in\Hom_{(E(\F)P)_{\F_0}}(P,\tT P)$ and $\alpha\in \Hom_{N_{\F_0}(\tT)}(P\psi,N_S(\E))$ such that $\phi=\psi\alpha$. By Lemma~\ref{L:BasisNLtT} applied with $\bN_\L(\H)$ in place of $\L$, we have $\tT\in\delta(\F_0)$. Hence, by \cite[Lemma~3.10(b)]{Grazian/Henke} and Lemma~\ref{T:bNLH}(e),
\[N_{\F_0}(\tT)=\F_{S_0}(N_{\bN_\L(\H)}(\tT))=\F_{S_0}(N_{G}(\H)).\]
As $P\psi$ and $P\psi\alpha=P\phi$ are contained in $N_S(\E)=N_S(\H)$, it follows that $\alpha$ is a morphism in $\B=\F_{N_S(\H)}(N_G(\H))$. Since $N_\F(\E)\subseteq \F_0|_{N_S(\E)}$ by \eqref{E:Chain1} and as $\phi$ was an arbitrary morphism in $\F_0|_{N_S(\E)}$, this proves (b) and
\[\F_0|_{N_S(\E)}\subseteq\<(E(\F)P)_{\F_0},\B\>.\]
Observe now that $\B\subseteq \G\cap N_\F(\E)$ and that $(E(\F)P)_{\F_0}\subseteq \G\cap N_\F(\E)$ by \eqref{E:EFP1} and \eqref{E:EFP2}. Hence, \eqref{E:Chain3} holds. 

\smallskip

Notice now that (a) follows from \eqref{E:Chain1},  \eqref{E:Chain2} and \eqref{E:Chain2}.

\smallskip

\textbf{(c)} Using (a) one sees that $\E\subseteq \F\cap \F_0=N_\F(\E)$. As $\E$ is normal in $\F_0$, $T$ is strongly $N_\F(\E)$-closed. Moreover, $\E^\alpha=\E$ for every $\alpha\in \Aut_{\F_0}(T)$ and thus also for every $\alpha\in\Aut_{N_\F(\E)}(T)$. By Lemma~\ref{L:Finvariant}(b), it remains to show that every $N_\F(\E)$-morphism between subgroups of $T$ is in $\<\E,\Aut_{N_\F(\E)}(T)\>_T$. Fix $f\in \bN_\L(\H)$. We need to show that $c_f|_{T\cap S_f}\in \<\E,\Aut_{N_\F(\E)}(T)\>_T$.

\smallskip

By Lemma~\ref{L:FrattiniSplitting} applied with $(\bN_\L(\H),\delta(\F_0), S_0)$ and $\H$ in place of $(\L,\Delta,S)$ and $\N$, there exist $h\in\H$ and $g\in N_{\bN_\L(\H)}(T)$ such that $(h,g)\in\D$, $f=hg$ and $(S_0)_{(h,g)}=(S_0)_f$. As $T\leq S_0$ and $T$ is strongly closed in $\F_0$, we have in particular $T\cap S_{(h,g)}=T\cap S_f$. Hence, $c_f|_{T\cap S_f}$ is the composition of $c_h|_{T\cap S_f}$ and $c_g|_{(T\cap S_f)^h}$. Here $c_h|_{T\cap S_f}$ is a morphism in $\E$, and $c_g|_{(T\cap S_f)^h}$ extends to $c_g|_T$, which is an element of $\Aut_{N_\F(\E)}(T)$. This proves (c).
\end{proof}

As mentioned before, if $\F$ is constrained, then $\F$ has a model, i.e. a group of characteristic $p$ realizing $\F$ (cf. \cite[Theorem~III.5.10]{Aschbacher/Kessar/Oliver:2011}). Moreover, if $\F$ is constrained and $\hat{G}$ is a model for $\F$, then it is shown in \cite[Lemma~2.9]{Chermak/Henke} that there exists a unique subnormal subgroup $\hat{H}$ of $\hat{G}$ with $T=\hat{H}\cap S$ and $\E=\F_T(\hat{H})$. This is used to formulate the following theorem, which gives a concrete description of $N_\F(\E)$ that somehow resembles the definition of $\bN_\L(\H)$. 

\begin{theorem}\label{T:FusionNormalizerGenerate}~
Fix $T_0,\tT\leq S$ such that
\[E(\E)\mbox{ is a fusion system over $T_0$ and $E(\F)$ is a fusion system over $\tT$.}\]
Then $N_\F(\tT)$ is constrained and $N_\E(T_0)$ is subnormal in $N_\F(\tT)$. If we choose a model $\tG$ for $N_\F(\tT)$ and $\tH\subn \tG$ with $\tH\cap S=T$ and $\F_T(\tH)=N_\E(T_0)$, then $N_S(\tH)=N_S(\E)$ and
\[N_\F(\E)=\<\,E(\F)N_S(\E)\, ,\,\F_{N_S(\tH)}(N_{\tG}(\tH))\,\>_{N_S(\E)}.\]
\end{theorem}

\begin{proof}
We have introduced $T_0$ and $\tT$ in Hypothesis~\ref{H:main}, but Background Theorem~\ref{bT:2} gives that $E(\E)$ is a fusion system over $T_0$ and $E(\F)$ is a fusion system over $\tT$. By Lemma~\ref{L:NinNLtTofH}(b), $N_\F(\tT)$ is constrained, $G$ is a model for $N_\F(\tT)$, and $N_\H(\tT)\subn G$ is a model for $N_\E(T_0)\subn N_\F(\tT)$. 

\smallskip

If $\tG$ is now an arbitrary model for $N_\F(\tT)$ and $\tH\subn \tG$ with $\F_{S\cap \tH}(\tH)=N_\E(T_0)$, then it follows from Lemma~\ref{L:NormalizerConstrained} that $\F_{N_S(\tH)}(N_{\tG}(\tH))=\F_{N_S(N_\H(\tT))}(N_G(N_\H(\tT)))$. As $N_G(\H)=N_G(N_\H(\tT))$ by Lemma~\ref{L:NinNLtTofH}(c), we can conclude for such $\tG$ and $\tH$ that $N_S(\tH)=N_S(N_\H(\tT))=N_S(\H)=N_S(\E)$ and
\[\F_{N_S(\tH)}(N_{\tG}(\tH))=\F_{N_S(\H)}(N_G(\H)).\]
Hence, the last part of the assertion follows from the first statement in Lemma~\ref{L:NFEbNLH}(a).
\end{proof}

The reader might want to recall the definition of $N_\F(T,\E)$ from Definition~\ref{D:6}.

\begin{lemma}\label{L:inAutEinNFE}
The following hold:
\begin{itemize}
 \item [(a)] $N_{N_\F(\E)}(T)=N_\F(T,\E)$.
 \item [(b)] $C_{N_\F(\E)}(T)=C_\F(T)$.
 \item [(c)] If $N_S(\E)\leq S_0$, then $N_\F(T,\E)=N_{\F_0}(T)|_{N_S(\E)}$ and $C_\F(T)=C_{\F_0}(T)|_{C_S(T)}$.
\end{itemize}
\end{lemma}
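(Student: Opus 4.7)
The plan is to derive parts (a) and (b) from part (c), treating (c) as the main technical statement. Throughout, I will fix a Sylow $p$-subgroup $S_0$ of $N_G(\H)$ containing $N_S(\E)$, which is possible since $N_S(\E)=N_S(\H)$ is itself a $p$-subgroup of $N_G(\H)$. This brings us into the setting of Hypothesis~\ref{H:S0F0}, so by Lemma~\ref{L:NFEbNLH}(a) we have $N_\F(\E)=\F_0|_{N_S(\E)}$, and we may use freely that $\E\unlhd\F_0$, $T=S_0\cap\H$, and $T$ is strongly closed in $\F_0$.

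For the normalizer assertion in (c), I first observe that $C_S(T)\leq N_S(\E)$: for $s\in C_S(T)$ the identity $c_s|_T=\id_T$ lies in $\Aut(\E)$, and Lemma~\ref{L:finNLtT} (applied to $s\in N_G(T)$, which holds as $S\leq G$) yields $s\in N_G(\H)\cap S = N_S(\E)$. For the forward inclusion, I would take a generating morphism $\phi\in\Hom_\F(X,Y)$ of $N_\F(T,\E)$ with $T\leq X\cap Y\leq N_S(\E)$ and $\phi|_T\in\Aut(\E)$, and apply Lemma~\ref{L:FrattiniApply} to decompose $\phi=\psi\alpha$ with $\psi\in\Hom_{E(\F)X}(X,X\tT)\subseteq\F_0$ and $\alpha=c_g|_{X\psi}$ for some $g\in G$. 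Because $T$ is strongly closed in $\F_0$ and $\psi$ lies in $\F_0$, we have $T\psi=T$ and $\psi|_T\in\Aut_{\F_0}(T)\subseteq\Aut(\E)$, so the factorization $\phi|_T=(\psi|_T)(c_g|_T)$ forces $c_g|_T\in\Aut(\E)$. Since $T^g=T\phi=T$ puts $g$ in $N_G(T)$, Lemma~\ref{L:finNLtT} then gives $g\in N_G(\H)\subseteq\bN_\L(\H)$, hence $\alpha\in\F_0$ and $\phi\in\F_0$. As $X,Y\leq N_S(\E)$ and $T\phi=T$, this places $\phi$ in $N_{\F_0}(T)|_{N_S(\E)}$. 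Conversely, an $\F_0$-morphism $\bar\phi$ with $T$ in its domain has $T\bar\phi=T$ by strong closure and satisfies $\bar\phi|_T\in\Aut_{\F_0}(T)\subseteq\Aut(\E)$; provided domain and image lie in $N_S(\E)\leq S$, the element of $\bN_\L(\H)$ realizing $\bar\phi$ realizes it in $\L$ as well, so $\bar\phi\in\F$ and $\bar\phi$ is a generator of $N_\F(T,\E)$.

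For the centralizer half of (c), the same two arguments go through with $\Aut(\E)$ membership replaced by the condition of fixing $T$ pointwise; in particular, the generator condition $\bar\phi|_T\in\Aut(\E)$ is automatic since $\id_T\in\Aut(\E)$, and the Frattini-based forward argument still produces $g\in N_G(\H)$ because $c_g|_T=\id_T$ lies in $\Aut(\E)$.

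Parts (a) and (b) then drop out. For (a), $N_{N_\F(\E)}(T)=N_{\F_0|_{N_S(\E)}}(T)=N_{\F_0}(T)|_{N_S(\E)}=N_\F(T,\E)$, using that any extension within $\F_0|_{N_S(\E)}$ of a morphism to one with $T$ in its domain is already an $\F_0$-morphism between subgroups of $N_S(\E)$ (as $TX\leq N_S(\E)$ whenever $X\leq N_S(\E)$). For (b), the inclusion $\subseteq$ is immediate from $N_\F(\E)\subseteq\F$, while for the reverse I take $\phi\in C_\F(T)$ with extension $\bar\phi\in\F$ fixing $T$ pointwise; since $\id_T\in\Aut(\E)$, $\bar\phi$ is a generator of $N_\F(T,\E)$, and by (a) it lies in $N_\F(\E)$, so $\phi$ lies in $C_{N_\F(\E)}(T)$. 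The main obstacle throughout is the forward direction of (c): converting the $\F$-theoretic condition $\phi|_T\in\Aut(\E)$ into a statement about the locality element realizing $\phi$. This is precisely what Lemma~\ref{L:finNLtT} accomplishes, once Lemma~\ref{L:FrattiniApply} has separated off the factor coming from $E(\F)$.
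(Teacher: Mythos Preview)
Your proof is correct and uses the same key ingredients as the paper's proof—namely Lemma~\ref{L:FrattiniApply} to factor $\phi=\psi\alpha$, the fact that $\psi|_T\in\Aut_{\F_0}(T)\subseteq\Aut(\E)$, and Lemma~\ref{L:finNLtT} to deduce $g\in N_G(\H)$—but you organize them in the reverse order: you prove (c) first and obtain (a),(b) from it via $N_\F(\E)=\F_0|_{N_S(\E)}$, whereas the paper proves (a) directly, then (b) from (a), and only afterwards reads off (c) from (a),(b) together with Lemma~\ref{L:NFEbNLH}(a). Neither direction is materially harder; your route has the minor advantage that the Frattini decomposition is applied once (to prove (c)) rather than implicitly sitting inside the proof of (a).

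One small point: in the reverse inclusion of (c) you write ``the element of $\bN_\L(\H)$ realizing $\bar\phi$'', but an arbitrary $\F_0$-morphism is only a composite of such conjugation maps. The conclusion $\bar\phi\in\F$ is still correct, however, since you have already established $\F_0|_{N_S(\E)}=N_\F(\E)\subseteq\F$; it would be cleaner to cite that directly rather than appeal to a realizing element.
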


\begin{proof}
\textbf{(a)} Since $\E$ is $N_\F(\E)$-invariant by Lemma~\ref{L:NFEbNLH}(c), it follows that $N_{N_\F(\E)}(T)\subseteq N_\F(T,\E)$. It remains thus to prove the opposite inclusion. For that let $X,Y\leq N_S(\E)$ and $\phi\in\Hom_\F(X,Y)$ with $T\leq X\cap Y$ and $\phi|_T\in\Aut(\E)$. By definition of $N_\F(T,\E)$, it is sufficient to prove that $\phi$ is a morphism in $N_\F(\E)$ and thus in $N_{N_\F(\E)}(T)$.

\smallskip

By Lemma~\ref{L:FrattiniApply}, there exists $\psi\in\Hom_{N_\F(\E)}(X,\tT X)$ and $\alpha\in\Hom_{N_\F(\tT)}(X\psi,S)$ such that $\phi=\psi\alpha$. As $\E$ is $N_\F(\E)$-invariant by Lemma~\ref{L:NFEbNLH}(c), we have $T\psi=T$ and $\E^\psi=\E$. Hence, $T=T\psi\leq X\psi$, $T\alpha=T\psi\alpha=T\phi=T$ and $\E^\alpha=(\E^\psi)^\alpha=\E^\phi=\E$. By Lemma~\ref{L:NinNLtTofH}(b), $G$ is a model for $N_\F(\tT)$, i.e. there exists $g\in G$ with $\alpha=c_g|_{X\psi}$. As $c_g|_T=\alpha|_T\in\Aut(\E)$, it follows from Lemma~\ref{L:finNLtT} that $g\in N_G(\H)$. As $X\psi\leq \tT X\leq N_S(\E)=N_S(\H)$, it follows $X\psi\alpha=(X\psi)^g\leq S\cap N_G(\H)=N_S(\H)$. So $\alpha$ is a morphism in $N_\F(\E)=\F_{N_S(\H)}(\bN_\L(\H))$. Hence, $\phi=\psi\alpha$ is in $N_\F(\E)$. 

\smallskip

\textbf{(b)} Clearly $C_{N_\F(\E)}(T)\subseteq C_\F(T)$. If $P,Q\leq C_S(T)$ and $\phi\in\Hom_{C_\F(T)}(P,Q)$, then $\phi$ extends to $\hat{\phi}\in \Hom_\F(PT,QT)$ with $\hat{\phi}|_T=\id_T$. As $\id_T\in\Aut(\E)$, it follows that $\hat{\phi}$ is a morphism in $N_\F(T,\E)$. Thus, part (a) yields that $\hat{\phi}$ is a morphism in $N_\F(\E)$. Hence, $\phi=\hat{\phi}|_P$ is a morphism in $C_{N_\F(\E)}(T)$.

\smallskip

\textbf{(c)} Suppose $N_S(\E)\leq S_0$.  Lemma~\ref{L:NFEbNLH}(a) gives then that $N_\F(\E)=\F_0|_{N_S(\E)}$. In particular, since $TC_S(T)\leq N_S(\E)$, it follows that
\[N_{N_\F(\E)}(T)=N_{\F_0}(T)|_{N_S(\E)}\mbox{ and }C_{N_\F(\E)}(T)=C_{\F_0}(T)|_{C_S(T)}.\]
The assertion follows thus from parts (a) and (b).
\end{proof}

Our next goal is to show that Theorem~\ref{T:MainER} holds and that every saturated subsystem $\mD$ of $\F$ with $\E\unlhd\mD$ is contained in $N_\F(\E)$. We first state some technical lemmas which are used in the proof.

\begin{lemma}\label{L:QcapT0phi}
Let $T_0^\perp\leq Q\leq N_S(\E)$ and $\phi\in\Hom_\F(Q,S)$ such that $T_0^\perp\phi=T_0^\perp$. Then 
\[(Q\cap T_0)\phi\leq T_0.\] 
\end{lemma}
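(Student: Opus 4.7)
The plan is to factor $\phi$ via Lemma~\ref{L:FrattiniApply} as $\phi = \psi\alpha$ with $\psi \in \Hom_{E(\F)Q}(Q, \tT Q)$ and $\alpha \in \Hom_{N_\F(\tT)}(Q\psi, S)$, and then control the two factors separately. Since $Q$ lies in $N_S(\E) = N_S(\H)$ by Lemma~\ref{L:finNLtT}, hence in $N_S(T_0) \cap N_S(T_0^\perp)$ by Lemma~\ref{L:NinNLtTofH}(e), together with $T_0^\perp \leq Q$ this gives $\tT Q = T_0 T_0^\perp Q = T_0 Q$.

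The heart of the argument will be to show that both $T_0$ and $T_0^\perp$ are strongly closed in $E(\F) Q$. Since $E(\H) \unlhd E(\L)$ and $\M \unlhd E(\L)$ by Lemma~\ref{L:BasicT0perp}(a), Background Theorem~\ref{bT:2} applied with $E(\L)$ in place of $\L$ yields $E(\E) = \F_{T_0}(E(\H)) \unlhd E(\F)$ and $\F_{T_0^\perp}(\M) \unlhd E(\F)$, so $T_0$ and $T_0^\perp$ are strongly closed in $E(\F)$. I would then use the description of $E(\F)Q$ from Definition~\ref{D:ProductER} together with Proposition~\ref{P:ERnormal}: it is generated by conjugation maps in $\tT Q$ together with automorphisms $\chi \in \Ac_{\F,E(\F)}(P)$ for $P \leq \tT Q$ with $P \cap \tT \in E(\F)^c$. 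The conjugations preserve $T_0$ and $T_0^\perp$ because $\tT Q \leq N_S(T_0) \cap N_S(T_0^\perp)$. For the second type of generator, the condition $[P,\chi] \leq P \cap \tT$ forces $(P \cap \tT)\chi \subseteq P \cap \tT$, and then $\chi|_{P \cap \tT} \in \Aut_{E(\F)}(P \cap \tT)$ together with strong closure in $E(\F)$ gives $(P \cap T_0)\chi \leq T_0$ and $(P \cap T_0^\perp)\chi \leq T_0^\perp$. This will yield strong closure of $T_0$ and $T_0^\perp$ in $E(\F)Q$; in particular $(Q \cap T_0)\psi \leq T_0$, and since $\psi$ is injective also $T_0^\perp\psi = T_0^\perp$.

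To finish, I would compute $T_0^\perp\alpha = T_0^\perp\psi\alpha = T_0^\perp\phi = T_0^\perp$. By Lemma~\ref{L:NinNLtTofH}(b) we have $N_\F(\tT) = \F_S(G)$, so $\alpha = c_g|_{Q\psi}$ for some $g \in G$, whence $(T_0^\perp)^g = T_0^\perp$ forces $g \in N_G(T_0^\perp) = N_G(T_0)$ by Lemma~\ref{L:NinNLtTofH}(e). Therefore $(Q \cap T_0)\phi = ((Q \cap T_0)\psi)^g \leq T_0^g = T_0$. The main difficulty I anticipate is the strong closure of $T_0$ and $T_0^\perp$ in the enlarged system $E(\F)Q$: closure in $E(\F)$ alone does not constrain $\psi$, and the verification requires the generator-wise case analysis outlined above.
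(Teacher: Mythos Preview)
Your proof is correct and follows the same overall structure as the paper's: factor $\phi=\psi\alpha$ via Lemma~\ref{L:FrattiniApply}, use strong closure of $T_0$ and $T_0^\perp$ to control $\psi$, then use $N_G(T_0^\perp)=N_G(T_0)$ from Lemma~\ref{L:NinNLtTofH}(e) to control $\alpha$. The one place you work harder than necessary is in establishing strong closure for $\psi$: the paper instead chooses $S_0$ with $N_S(\E)\leq S_0$, notes that Lemma~\ref{L:FrattiniApply} then yields $\psi$ as a morphism in $\F_0$, and invokes Theorem~\ref{T:bNLH}(d) to get that $T_0$ and $T_0^\perp$ are strongly closed in $\F_0$---this sidesteps your generator-by-generator analysis of $E(\F)Q$ entirely.
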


\begin{proof}
We may choose $S_0$ such that $N_S(\E)\leq S_0$. By Lemma~\ref{L:FrattiniApply}, there exist then $\psi\in\Hom_{\F\cap\F_0}(Q,\tT Q)$ and $\alpha\in\Hom_{N_\F(\tT)}(Q\tT,S)$ with $\phi=\psi\alpha$. Theorem~\ref{T:bNLH}(d) implies that $T_0$ and $T_0^\perp$ are strongly closed in $\F_0$. Thus, 
\[(Q\cap T_0)\psi\leq T_0\]
and $T_0^\perp\psi=T_0^\perp$. This yields $T_0^\perp\alpha=T_0^\perp\psi\alpha=T_0^\perp\phi=T_0^\perp$. By Lemma~\ref{L:NinNLtTofH}(b), $G$ realizes $N_\F(\tT)$. Hence, $\alpha=c_g|_{Q\tT}$ for some $g\in G$, which is then an element of $N_G(T_0^\perp)$ as $T_0^\perp\alpha=T_0^\perp$. Using Lemma~\ref{L:NinNLtTofH}(e) we see now that $g\in N_{G}(T_0^\perp)=N_{G}(T_0)$ and thus $T_0\alpha=T_0$. It follows that $(Q\cap T_0)\phi=(Q\cap T_0)\psi\alpha\leq T_0\alpha=T_0$ as required.
\end{proof}

\begin{lemma}\label{L:NormalizeT0perp}
 Let $Q\in \E^{cr}$ and $\phi\in\Hom_\F(Q\tT,S)$ with $Q\phi=Q$. Then $T_0\phi=T_0$ and $T_0^\perp\phi=T_0^\perp$.
\end{lemma}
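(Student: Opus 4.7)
The plan is to reduce $\phi$ to an element $g \in G$ via the Frattini-type factorization of Lemma~\ref{L:FrattiniApply}, and then to show, imitating the proof of Lemma~\ref{L:NinNLtTofH}(f), that $g$ normalizes $E(\H)$.

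First, fix a Sylow $p$-subgroup $S_0$ of $N_G(\H)$ containing $N_S(\E)$; note $Q\tT \leq N_S(\E)$ because $T \leq N_S(\E)$ and $T_0^\perp \leq C_S(\H) \leq N_S(\E)$. Applying Lemma~\ref{L:FrattiniApply} with $X = Q\tT$ gives $\phi = \psi\alpha$ with $\psi \in \Hom_{\F_0}(Q\tT, Q\tT\cdot\tT) = \Hom_{\F_0}(Q\tT, Q\tT)$ and $\alpha \in \Hom_{N_\F(\tT)}((Q\tT)\psi, S)$. By Lemma~\ref{L:NinNLtTofH}(b), $G$ is a model for $N_\F(\tT)$, so $\alpha = c_g|_{(Q\tT)\psi}$ for some $g \in G$. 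Cardinality forces $\psi$ to be an automorphism of $Q\tT$, and since $T_0, T_0^\perp \leq \tT \leq Q\tT$ are strongly closed in $\F_0$ by Theorem~\ref{T:bNLH}(d), $\psi$ fixes both setwise. Therefore $T_0\phi = T_0^g$ and $T_0^\perp\phi = T_0^{\perp g}$, and by Lemma~\ref{L:NinNLtTofH}(e) it suffices to show $g \in N_G(E(\H))$.

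The main task is then to prove that conjugation by $g$ permutes $\Comp(\H)$ inside $\Comp(\L)$. Fix $\K \in \Comp(\H)$ and suppose for contradiction $\K^g \in \Comp(\M)$. Then $\K^g \subseteq \M \subseteq C_\L(\H)$, so $\K^g \subseteq C_\L(Q)$ and $\K^g \cap T = \K^g \cap \H \cap S \leq Z(\H)$ is abelian. Since $\E \unlhd \F_0$, Lemma~\ref{L:Finvariant} applied to $\F_0|_T = \<\E, \Aut_{\F_0}(T)\>_T$ together with $\Aut_{\F_0}(T) \subseteq \Aut(\E)$ implies that $\E^{cr}$ is preserved by every morphism in $\F_0|_T$; hence $Q\psi \in \E^{cr}$ and in particular $Q\psi$ is $\E$-centric. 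I then split into two sub-cases. If $\K$ centralizes $Q\psi$, then $\K \cap S = \K \cap T \leq C_T(Q\psi) \leq Q\psi$, so $(\K \cap S)^g \leq (Q\psi)^g = Q \leq T$ and thus $(\K \cap S)^g \leq \K^g \cap T$ is abelian; but $(\K \cap S)^g \cong \K \cap S$ is the non-abelian Sylow of the quasisimple locality $\K$ by \cite[Lemma~7.10]{Henke:Regular}, a contradiction. If $\K$ does not centralize $Q\psi$, then applying the automorphism $c_g$ of $\L$ (Lemma~\ref{L:BasisNLtT}) gives $\K^g \not\subseteq C_\L((Q\psi)^g) = C_\L(Q)$, again a contradiction.

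Hence $E(\H)^g = E(\H)$, and Lemma~\ref{L:NinNLtTofH}(e) then yields $g \in N_G(T_0) \cap N_G(T_0^\perp)$, which completes the proof. The delicate point will be the centralizing sub-case: it is precisely the $\E$-centricity half of $Q\psi \in \E^{cr}$ that converts centralization by $\K$ into the containment $\K \cap S \leq Q\psi$, enabling the non-abelian Sylow of $\K$ to be transported into the abelian piece $\K^g \cap T \leq Z(\H)$ and yield the contradiction.
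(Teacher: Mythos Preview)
Your argument is correct, but it takes a longer route than necessary, and the paper's proof is noticeably cleaner on two points.

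First, the Frattini-type factorization via Lemma~\ref{L:FrattiniApply} is superfluous. Since $E(\L)\unlhd\L$, the subgroup $\tT=E(\L)\cap S$ is strongly $\F$-closed; as $\tT\leq Q\tT$, any $\phi\in\Hom_\F(Q\tT,S)$ already satisfies $\tT\phi=\tT$, so $\phi$ lies in $N_\F(\tT)=\F_S(G)$ and equals $c_f|_{Q\tT}$ for some $f\in G$ directly. Your decomposition $\phi=\psi\alpha$ just introduces the extra bookkeeping of tracking $Q\psi$ in place of $Q$.

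Second, the paper avoids your case split by a single clean characterization:
\[
\Comp(\H)=\{\K\in\Comp(\L)\colon \K\cap Q\not\leq Z(\K)\}.
\]
This is verified by noting that for $\K\in\Comp(\H)$ one has $Q\cap\K\in\F_{S\cap\K}(\K)^{cr}$ via Lemma~\ref{L:CentricRadicalIntersection} (hence $Q\cap\K\not\leq Z(\K)$), while for $\K\notin\Comp(\H)$ one has $\H\subseteq C_\L(\K)$ and thus $Q\cap\K\leq Z(\K)$. Since $c_f$ permutes $\Comp(\L)$, sends $Z(\K)$ to $Z(\K^f)$, and fixes $Q$, the displayed set is manifestly $c_f$-invariant, giving $E(\H)^f=E(\H)$ and $\M^f=\M$ at once.

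Your Case~2 is in fact vacuous: the standing assumption $\K^g\in\Comp(\M)$ gives $\K^g\subseteq C_\L(\H)\subseteq C_\L(Q)$, whence applying $c_{g^{-1}}\in\Aut(\L)$ yields $\K\subseteq C_\L(Q\psi)$ automatically. So you are always in Case~1, and your centricity-plus-non-abelian-Sylow argument there is essentially the same idea the paper encodes in the characterization above, just unpacked by hand.
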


\begin{proof}
If $\K\in\Comp(\H)$, then $\F_{S\cap \K}(\K)\subn \E$ by Background Theorem~\ref{bT:2}. Hence, by Lemma~\ref{L:CentricRadicalIntersection}, $Q\cap \K\in \F_{S\cap \K}(\K)^{cr}$. In particular, $Q\cap \K\not\leq Z(\K)$ as $Z(\K)\neq \K\cap S$ by \cite[Lemma~7.10]{Henke:Regular}. On the other hand, if $\K\in \Comp(\L)\backslash\Comp(\H)$, then $\H\subseteq C_\L(\K)$ by \cite[Lemma~3.5, Lemma~11.17]{Henke:Regular}. Hence, as $Q\leq T\subseteq\H$, we have in this case $Q\cap \K\subseteq \H\cap\K\subseteq C_\K(\K)=Z(\K)$. This shows 
\[\Comp(\H)=\{\K\in\Comp(\L)\colon \K\cap Q\not\leq Z(\K)\}.\] 

\smallskip

As $\tT$ is strongly $\F$-closed, $\phi$ is a morphism in $N_\F(\tT)=\F_S(G)$. Hence, there exists $f\in G$ with $\phi=c_f|_{Q\tT}$. Then  conjugation by $f$ induces by Lemma~\ref{L:BasisNLtT} an automorphism of $\L$ and thus permutes by \cite[Lemma~11.12]{Henke:Regular} the components of $\L$. Moreover, for every $\K\in\Comp(\L)$, $c_f$ induces an isomorphism from $\K$ to $\K^f$ so that $Z(\K)^f=Z(\K^f)$. As $Q^f=Q\phi=Q$, we have also $(\K\cap Q)^f=\K^f\cap Q$. So for every $\K\in\Comp(\L)$, we have $\K\cap Q\not\leq Z(\K)$ if and only if $\K^f\cap Q\not\leq Z(\K^f)$. Therefore, $f$ permutes the elements of $\Comp(\H)$ via conjugation and thus also the elements of $\Comp(\L)\backslash \Comp(\H)$. This implies $E(\H)^f=E(\H)$ and $\M^f=\M$. As $\phi=c_f|_{Q\tT}$, the assertion follows.
\end{proof}

\begin{lemma}\label{L:GCollect}
 Let $R\leq S_0$ and set 
\[\G:=(\E R)_{\F_0}\]
Then the following hold:
\begin{itemize}
 \item [(a)] $\G$ is saturated, $\E\unlhd \G$, $O^p(\G)=O^p(\E)$ and $(\H R,\delta(\G),TR)$ is a regular locality over $\G$ with $E(\H R)=E(\H)$. In particular, $E(\G)=E(\E)$. Moreover, if $R\leq N_S(\E)$, then $\G=\F_{T R}(\H R)\subseteq \F$.
 \item [(b)] If $P\leq TR$ with $P\cap T\in\delta(\E)$, then $P\in\delta(\G)$. 
 \item [(c)] Suppose $P\in\delta(\G)$. Then $N_{\H R}(P)$ and $N_\H(P)$ are subgroups of $\L$. In particular,  
\[\Aut_\H(P):=\{c_h\colon h\in N_\H(P)\}\]
is a subgroup of $\Aut(P)$. Moreover, $O^p(\Aut_\G(P))=O^p(\Aut_\H(P))$.
 \item [(d)] If $P\leq TR\cap N_S(\E)$ is such that $P\cap T\in\E^{cr}$, then $P\cap T\in\delta(\E)$ and thus $P\in\delta(\G)$. Moreover, \[O^p(\Aut_\G(P))=O^p(\Aut_\H(P))=\Ac_{\F_0,\E}(P)=\Ac_{N_\F(\E),\E}(P).\]
\end{itemize}
\end{lemma}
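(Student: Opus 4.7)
\emph{Proof plan.} The backbone of the proof is to combine Proposition~\ref{P:ERnormal} (applied inside the saturated system $\F_0$, which by Theorem~\ref{T:bNLH}(b) contains $\E$ as a normal subsystem) with the locality-side result Theorem~\ref{T:MainER}(b), applied with $(\bN_\L(\H),\delta(\F_0),S_0)$ in place of $(\L,\Delta,S)$. For part (a), Proposition~\ref{P:ERnormal} yields that $\G=(\E R)_{\F_0}$ is saturated with $\E\unlhd\G$ and $O^p(\G)=O^p(\E)$, while Theorem~\ref{T:MainER}(b) endows $(\H R,\delta(\G),TR)$ with the structure of a regular locality over $\G=\F_{TR}(\H R)$. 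For $E(\H R)=E(\H)$ I would argue on the locality side: a component of $\H R$ is quasisimple, hence equal to its own $O^p$, hence contained in $O^p(\H R)$; since $(\H R)/\H\cong R/(R\cap T)$ is a $p$-group, $O^p(\H R)=O^p(\H)\subseteq\H$, so every component of $\H R$ sits inside $\H$ and is subnormal there, while conversely every component of $\H$ is subnormal in $\H R$ because $\H\unlhd\H R$. The final inclusion $\G\subseteq\F$ when $R\leq N_S(\E)=N_S(\H)$ is then immediate from $\H R\subseteq\L$.

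Part (b) is then an immediate application of Lemma~\ref{L:EE=EF}: from (a) we have $\E\unlhd\G$ with $E(\E)=E(\G)$, so $P\leq TR$ with $P\cap T\in\delta(\E)$ forces $P\in\delta(\G)$. For (c), the fact that $N_{\H R}(P)$ is a subgroup of the partial group $\H R$ (hence of $\L$) for $P\in\delta(\G)$, together with the identification $\Aut_\G(P)=\Aut_{\H R}(P)$, is built into the definition of a regular locality; then $N_\H(P)=N_{\H R}(P)\cap\H$ is automatically a subgroup and $\Aut_\H(P)$ is the image of $N_\H(P)$ under the conjugation homomorphism $N_{\H R}(P)\to\Aut(P)$. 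The partial normality $\H\unlhd\H R$ gives $N_\H(P)\unlhd N_{\H R}(P)$ with quotient a section of $\H R/\H$, hence a $p$-group, so $\Aut_\H(P)\unlhd \Aut_\G(P)$ with $p$-power index; the standard identity $O^p(A)=O^p(B)$ for $B\unlhd A$ with $A/B$ a $p$-group then gives the required equality.

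For (d), the inclusion $\E^{cr}\subseteq\delta(\E)$ upgrades the hypothesis $P\cap T\in\E^{cr}$ to $P\cap T\in\delta(\E)$, so (b) yields $P\in\delta(\G)$ and (c) supplies the first equality $O^p(\Aut_\G(P))=O^p(\Aut_\H(P))$. The identification $O^p(\Aut_\G(P))=\Ac_{\F_0,\E}(P)$ is precisely the content of \cite[Lemma~4.7]{Henke:2013}, applied inside $\F_0$ in the same manner as it is applied inside $\F$ in the proof of Proposition~\ref{P:ERnormal}. For the final equality $\Ac_{\F_0,\E}(P)=\Ac_{N_\F(\E),\E}(P)$, I would note that both $\F_0=\F_{S_0}(\bN_\L(\H))$ and $N_\F(\E)=\F_{N_S(\H)}(\bN_\L(\H))$ are generated by conjugation inside $\bN_\L(\H)$, so for any $p$-subgroup $P$ contained in both $S_0$ and $N_S(\E)$ (which holds here because $P\leq TR\cap N_S(\E)\leq S_0\cap N_S(\E)$) the automorphism groups $\Aut_{\F_0}(P)$ and $\Aut_{N_\F(\E)}(P)$ coincide, hence so do the defining generating sets of the two $\Ac$ subgroups. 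The main obstacle I anticipate is verifying that the precise hypotheses of \cite[Lemma~4.7]{Henke:2013} are met inside $\F_0$ under the weaker assumption $P\cap T\in\E^{cr}$ (rather than $P\in\G^{cr}$); if a direct citation is not available, a replacement argument using Alperin's Fusion Theorem inside $\G$ combined with a Frattini splitting $N_{\H R}(P)=N_\H(P)\cdot N_{TR}(P)$ in the regular locality $(\H R,\delta(\G),TR)$ should close the gap.
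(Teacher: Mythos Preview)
Your overall strategy is right, and parts (b) and (c) are essentially handled as in the paper. There are, however, two genuine problems.

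\textbf{Circularity in part (a).} You invoke Theorem~\ref{T:MainER}(b) (applied inside $\bN_\L(\H)$) to obtain that $(\H R,\delta(\G),TR)$ is a regular locality over $\G$. But Theorem~\ref{T:MainER} is proved \emph{after} Lemma~\ref{L:GCollect} and its proof explicitly cites both Lemma~\ref{L:CrucialLemma} and Lemma~\ref{L:GCollect}(a). So this is a circular reference. The paper instead appeals directly to \cite[Theorem~D(a)]{Chermak/Henke}, which is the external normal-case result (here $\H\unlhd\bN_\L(\H)$, so the normal case suffices). Your argument for $E(\H R)=E(\H)$ via $O^p(\H R)\subseteq\H$ is correct in spirit; the paper simply cites \cite[Lemma~11.14]{Henke:Regular} for this.

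\textbf{Gap in the last equality of part (d).} Your justification that $\Aut_{\F_0}(P)=\Aut_{N_\F(\E)}(P)$ because ``both are generated by conjugation inside $\bN_\L(\H)$'' is not valid as stated: a morphism in $\F_0=\F_{S_0}(\bN_\L(\H))$ may factor through subgroups of $S_0$ that do not lie in $N_S(\H)$, and vice versa, so the two automorphism groups need not coincide merely because $P\leq S_0\cap N_S(\H)$. (This would follow from Lemma~\ref{L:NFEbNLH}(a) if $N_S(\E)\leq S_0$, but that is not assumed here.) The paper avoids this by routing through the intrinsic group $\Aut_\H(P)$: having already shown $\Ac_{\F_0,\E}(P)=O^p(\Aut_\H(P))$, one observes that $\Aut_\H(P)\leq\Aut_{N_\F(\E)}(P)$ whenever $P\leq N_S(\H)$, and that every element of $\Aut_\H(P)$ satisfies the defining conditions for $\Ac$; since $O^p(\Aut_\H(P))$ is independent of the choice of $S_0$, this pins down $\Ac_{N_\F(\E),\E}(P)$ as well. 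Your anticipated obstacle about \cite[Lemma~4.7]{Henke:2013} is not an issue: its hypothesis is $P\cap T\in\E^c$, which is implied by $P\cap T\in\E^{cr}$.
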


\begin{proof}
\textbf{(a)} We use as before that $\F_0$ is well-defined and saturated, $(\bN_\L(\H),\delta(\F_0),S_0)$ is a regular locality over $\F_0$ with $T=S_0\cap \H$, $\H\unlhd\bN_\L(\H)$ and $\E=\F_T(\H)\unlhd\F_0$. By Proposition~\ref{P:ERnormal} applied with $\F_0$ in place of $\F$, $\G$ is saturated, $\E\unlhd \G$, $O^p(\G)=O^p(\E)$ and our notion of a product of a normal subsystem with a $p$-subgroup coincides with the one used in \cite{Chermak/Henke}. Thus, it follows from \cite[Theorem~D(a)]{Chermak/Henke} that $\G$ is saturated and $(\H R,\delta(\G),TR)$ is a regular locality over $\G$. In particular, if $R\leq N_S(\E)$, then 
\[\G=\F_{T R}(\H R)\subseteq \F_S(\L)=\F.\]
For an arbitrary subgroup $R\leq S_0$, $\H$ is a partial normal subgroup of $\H R$ of $p$-power index. Thus, by \cite[Lemma~11.14]{Henke:Regular}, $E(\H R)=E(\H)$. It follows now from Background Theorem~\ref{bT:2} applied twice (once with $(\H,T,\E)$ and once with $(\H R,TR,\G)$ in place of $(\L,S,\F)$) that $E(\E)=\F_{T_0}(E(\H))=E(\G)$.  This proves (a).

\smallskip

\textbf{(b)} As $E(\E)=E(\G)$, part (b) follows from Lemma~\ref{L:EE=EF} applied with $\G$ in place of $\F$.

\smallskip

\textbf{(c)} Let $P\in\delta(\G)$. As $(\H R,\delta(\G),TR)$ is a regular locality over $\G$, it follows that $N_{\H R}(P)$ is a subgroup of $\H R$ and thus of $\L$. Moreover, 
\[\Aut_\G(P)=\{c_g\colon g\in N_{\H R}(P)\}.\]
Since $\H\unlhd \H R$, $N_\H(P)=\H\cap N_{\H R}(P)$ is a normal subgroup of $N_{\H R}(P)$. In particular, $\Aut_\H(P)$ is a subgroup of $\Aut(P)$. By \cite[Lemma~6.1]{Henke:2020} (applied with $(\H P,\delta(\G),TP)$ in place of $(\L,\Delta,S)$), we have $O^p(N_{\H P}(P))=O^p(N_\H(P))$. Observe that every $p^\prime$-element of $\Aut_\H(P)$ can be realized by conjugation with a $p^\prime$-element of $N_\H(P)$, and similarly every $p^\prime$-automorphism of $\Aut_\G(P)$ can be realized by conjugation with a $p^\prime$-element of $N_{\H P}(P)$. This implies
\[O^p(\Aut_\H(P))=\{c_h\colon h\in O^p(N_\H(P))\}=\{c_h\colon h\in O^p(N_{\H R}(P))\}=O^p(\Aut_\G(P)).\]
Hence (c) holds.

\smallskip

\textbf{(d)} By \cite[Lemma~10.4]{Henke:Regular}, $\E^{cr}\subseteq \delta(\E)$. Moreover, if $P\leq TR$ with $P\cap T=\E^c$, then \cite[Lemma~4.7]{Henke:2013} gives $O^p(\Aut_\G(P))=\Ac_{\F_0,\E}(P)$. Hence, supposing $P$ is a subgroup of $TR$ with $P\cap T\in\E^{cr}$, it follows from (b) and (c) that $O^p(\Aut_\H(P))=O^p(\Aut_\G(P))=\Ac_{\F_0,\E}(P)$. If in addition $P\leq N_S(\E)$, then $\Ac_{\F_0,\E}(P)\leq \Aut_\H(P)\leq \Aut_{N_\F(\E)}(P)$ and so $\Ac_{\F_0,\E}(P)=\Ac_{N_\F(\E),\E}(P)$. This proves (d).
\end{proof}

We use from now on without further reference that, for all $P\leq N_S(\E)$ with $P\cap T\in\E^{cr}$, the normalizer $N_\H(P)$ is a subgroup of $\L$ and  
\[\Aut_\H(P):=\{c_h\colon h\in N_\H(P)\}\] 
is a subgroup of $\Aut(P)$. Notice that this is true by Lemma~\ref{L:GCollect}(b),(c) (applied e.g. for a Sylow subgroup $S_0$ of $N_G(\H)$ containing $N_S(\H)=N_S(\E)$, and with $R=N_S(\E)$).

\begin{lemma}\label{L:AutHPsubnormal}
Let $P\leq N_S(\E)$ be such that $P\cap T\in\E^{cr}$. Then $\Aut_\H(P)\subn \Aut_{N_\F(T_0^\perp)}(P)$. 
\end{lemma}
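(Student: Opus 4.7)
The plan is to construct a subnormal chain connecting $\Aut_\H(P)$ to $\Aut_{N_\F(T_0^\perp)}(P)$ by routing through the regular locality $\bN_\L(\M)$, and as a preliminary step to verify the desired containment using that $\H$ centralizes $T_0^\perp$.

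First I would verify $\Aut_\H(P) \leq \Aut_{N_\F(T_0^\perp)}(P)$. By Lemma~\ref{L:BasicT0perp}(a), $\H \subseteq C_\L(\M) \subseteq C_\L(T_0^\perp)$, so each $h \in N_\H(P)$ centralizes $T_0^\perp$ pointwise. The morphism $c_h|_P$ therefore extends to the morphism $PT_0^\perp \to PT_0^\perp$ given by $c_h$ on $P$ and the identity on $T_0^\perp$, which is a morphism in $C_\F(T_0^\perp) \subseteq N_\F(T_0^\perp)$.

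For the subnormal chain, I would apply Theorem~\ref{T:MainbNLH} with $\M$ in place of $\H$ to obtain the regular locality $\bN_\L(\M)$ with $\M$ as a partial normal subgroup, and associated saturated fusion system $\F_0^{(\M)} := \F_{S_0^{(\M)}}(\bN_\L(\M))$ over a Sylow $S_0^{(\M)}$ of $N_G(\M)$. By Theorem~\ref{T:MainbNLH}(f) applied to $\M$, $C_\L(\M) \subseteq \bN_\L(\M)$, and since $\H \subseteq C_\L(\M)$ we have $\H \subseteq \bN_\L(\M)$. Lemma~\ref{L:PartialSubnormalInduce} then gives $\H = \H \cap \bN_\L(\M) \subn \bN_\L(\M)$, so there is a chain $\H = \K_0 \unlhd \K_1 \unlhd \cdots \unlhd \K_r = \bN_\L(\M)$ of partial subgroups. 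Choosing $S_0^{(\M)}$ to contain $P$ and using that $P\cap T \in \E^{cr}$ forces each $N_{\K_i}(P)$ to be a subgroup of $\L$ (by the same arguments as in Lemma~\ref{L:GCollect}(c) applied to each $\K_i$ as a regular locality), the partial normality $\K_i \unlhd \K_{i+1}$ passes through to normality of the images in $\Aut(P)$, yielding
\[\Aut_\H(P) = \Aut_{\K_0}(P) \unlhd \Aut_{\K_1}(P) \unlhd \cdots \unlhd \Aut_{\bN_\L(\M)}(P) = \Aut_{\F_0^{(\M)}}(P).\]

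The main task is then to show $\Aut_{\F_0^{(\M)}}(P) = \Aut_{N_\F(T_0^\perp)}(P)$, which will close the argument. For the inclusion $\subseteq$, by Theorem~\ref{T:MainbNLH}(b),(d) applied to $\M$, $\F_{T_0^\perp}(\M) \unlhd \F_0^{(\M)}$ and so $T_0^\perp$ is strongly closed in $\F_0^{(\M)}$. For any $\phi \in \Aut_{\F_0^{(\M)}}(P)$, the saturation of $\F_0^{(\M)}$ and the strong closure of $T_0^\perp$ yield an extension $\hat\phi \in \Hom_{\F_0^{(\M)}}(PT_0^\perp, PT_0^\perp)$ with $\hat\phi(T_0^\perp) = T_0^\perp$; since $PT_0^\perp \leq S \cap S_0^{(\M)} = N_S(T_0^\perp)$ by Theorem~\ref{T:MainbNLH}(d) for $\M$, this extension is simultaneously an $\F$-morphism, giving $\phi \in \Aut_{N_\F(T_0^\perp)}(P)$. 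For the reverse inclusion, any $\phi \in \Aut_{N_\F(T_0^\perp)}(P)$ lifts to an $\F$-morphism $\hat\phi$ on $PT_0^\perp$ preserving $T_0^\perp$, and via the Frattini decomposition $\L = E(\L)G$ one writes any realizer $f \in \L$ as $f = eg$ with $e\in E(\L)$ and $g \in G$, where after adjustment one may arrange $g \in N_G(T_0^\perp) = N_G(\M)$ (using Lemma~\ref{L:NinNLtTofH}(e)), so that $f \in E(\L) N_G(\M) = \bN_\L(\M)$ and hence $\phi \in \Aut_{\F_0^{(\M)}}(P)$. I expect this final reverse inclusion to be the main obstacle, since it requires selecting a realizer $f$ compatible with the Frattini splitting while simultaneously preserving both $P$ and $T_0^\perp$.
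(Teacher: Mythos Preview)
Your approach has a genuine gap at the step where you claim each $N_{\K_i}(P)$ is a subgroup of $\L$. Lemma~\ref{L:GCollect}(c) applies only when $P$ lies in the object set of the relevant regular locality; there the hypothesis $P\cap T\in\E^{cr}$ feeds into part~(b) to give $P\in\delta(\G)$ for $\G=(\E R)_{\F_0}$. In your setting you would need $P\in\delta(\F_0^{(\M)})$, i.e.\ (by Theorem~\ref{T:bNLH}(a)) that $PO_p(\bN_\L(\M))\cap\tT\in E(\F)^s$, and nothing in your argument supplies this. Without it, $N_{\bN_\L(\M)}(P)$ need not be a group, so intersecting the partial-subnormal chain $\K_0\unlhd\cdots\unlhd\K_r$ with ``$N_{-}(P)$'' does not produce a chain of groups, and the passage to automorphism groups breaks down. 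Your extension step for the inclusion $\Aut_{\F_0^{(\M)}}(P)\subseteq\Aut_{N_\F(T_0^\perp)}(P)$ is also unjustified: saturation together with strong closure of $T_0^\perp$ does not by itself guarantee that an automorphism of $P$ extends to $PT_0^\perp$; the extension axiom requires a receptiveness condition on the target that you have not arranged. Combined with the reverse inclusion (which you already flag as the main obstacle), the proposed equality $\Aut_{\F_0^{(\M)}}(P)=\Aut_{N_\F(T_0^\perp)}(P)$ is not established.

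The paper's proof avoids all of this by working inside an honest group from the start. It sets $P_0:=P\cap T_0$ and $X:=P_0T_0^\perp$, shows $X\in\delta(E(\F))\subseteq\Delta$ using the central-product structure of $E(\L)=E(\H)\M$, so that $N_\L(X)$ is a genuine finite group containing both $P$ and $N_\H(P)$. Intersecting a subnormal series $\H=\H_0\unlhd\cdots\unlhd\H_n=\L$ with $N_\L(X)$ and then normalizing $P$ gives a subnormal chain of subgroups, hence $\Aut_\H(P)\subn\Aut_{N_\L(X)}(P)=\Aut_{N_\F(X)}(P)$. Finally one checks $\Aut_{N_\F(T_0^\perp)}(P)\leq\Aut_{N_\F(X)}(P)$ via Lemma~\ref{L:QcapT0phi}, and subnormality in the larger group descends to the intermediate subgroup. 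The key idea you are missing is precisely this choice of $X$: it converts the problem from one about partial groups (where object-set membership is a constant worry) to one about ordinary group normalizers.
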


\begin{proof}
By Lemma~\ref{L:BasicT0perp}(a), $\H\subseteq C_\L(\M)\subseteq C_\L(T_0^\perp)$. In particular, 
\[\Aut_\H(P)\leq \Aut_{N_\F(T_0^\perp)}(P).\] 
As $\F_{T_0}(E(\H))=E(\E)\unlhd \E$ by Background Theorem~\ref{bT:2}, it follows from Lemma~\ref{L:CentricRadicalIntersection} and \cite[Lemma~10.4]{Henke:Regular} that 
\[P_0:=P\cap T_0\in E(\E)^{cr}\subseteq \delta(E(\E)).\]
By Lemma~\ref{L:BasicT0perp}(a), $E(\H)$ and $\M$ are partial normal subgroups of $E(\L)$ with $E(\L)=E(\H)\M$ and $E(\H)\subseteq \H\subseteq C_\L(\M)$. The latter property implies by \cite[Lemma~3.5]{Henke:Regular} that $E(\H)$ commutes with $\M$ in the sense of \cite[Definition~2]{Henke:Regular}. Thus, it follows from \cite[Lemma~10.15(d)]{Henke:Regular} that 
\[X:=P_0T_0^\perp\in\delta(\F_{S\cap E(\L)}(E(\L))).\]
By Background Theorem~\ref{bT:2}, we have $\F_{S\cap E(\L)}(E(\L))=E(\F)$ and by \cite[7.22]{Chermak/Henke}, $\delta(E(\F))\subseteq\delta(\F)$. \footnote{Alternatively, a direct argument involving Lemma~10.2, Lemma~10.6, Lemma~10.15(d) and Lemma~11.5 in \cite{Henke:Regular} could be given to show $\delta(\F_{S\cap E(\L)}(E(\L)))\subseteq\delta(\F)$.} We can conclude that \[X\in\Delta=\delta(\F).\]
In particular, $N_\L(X)$ is a group. 

\smallskip

It follows from  Lemma~\ref{L:NinNLtTofH}(e) that $P\leq N_S(\E)=N_S(\H)\leq N_S(T_0^\perp)=N_S(T_0)$. Thus, $P\leq N_S(P_0)$ and 
\[P\leq N_S(X).\]
Notice moreover that 
\[N_\H(P)\leq N_\H(X)\]
as $P_0=P\cap E(\H)$ is normalized by $N_\H(P)$ and $\H\subseteq C_\L(T_0^\perp)$.

\smallskip

Let now $\H=\H_0\unlhd\H_1\unlhd\cdots\unlhd\H_n=\L$ be a subnormal series of $\H$ in $\L$. Then 
\[N_\H(P)=N_{N_\H(X)}(P)=N_{N_{\H_0}(X)}(P)\unlhd N_{N_{\H_1}(X)}(P)\unlhd\cdots \unlhd N_{N_{\H_n}(X)}(P)=N_{N_\L(X)}(P)\]
is a subnormal series of $N_\H(P)$ in $N_{N_\L(X)}(P)$. This leads to a subnormal series of $\Aut_\H(P)$ in 
\[\Aut_{N_\L(X)}(P)=\{c_g|_P\colon g\in N_{N_\L(X)}(P)\}.\]
Thus, it remains only to argue that $\Aut_{N_\F(T_0^\perp)}(P)\leq \Aut_{N_\L(X)}(P)$. By \cite[Lemma~3.10(b)]{Grazian/Henke}, $N_\F(X)=\F_{N_S(X)}(N_\L(X))$ and thus 
\[\Aut_{N_\L(X)}(P)=\Aut_{N_\F(X)}(P).\]
Let $\phi\in\Aut_{N_\F(T_0^\perp)}(P)$. Then $\phi$ extends to $\hat{\phi}\in\Hom_\F(PT_0^\perp,S)$ with $T_0^\perp\hat{\phi}=T_0^\perp$. By Lemma~\ref{L:QcapT0phi}, we have $P_0\hat{\phi}\leq T_0$ and thus $P_0\hat{\phi}\leq P\cap T_0=P_0$. This implies $X\hat{\phi}=X$ and proves thus $\Aut_{N_\F(T_0^\perp)}(P)\leq \Aut_{N_\F(X)}(P)=\Aut_{N_\L(X)}(P)$ as required.
\end{proof}

\begin{lemma}\label{L:ConjAcbyNFE}
Let $P\leq N_S(\E)$ and $\chi\in\Hom_{N_\F(\E)}(P,N_S(\E))$. Then 
\[\chi^{-1}\Ac_{\F,\E}(P)\chi=\Ac_{\F,\E}(P\chi)\mbox{ and }\chi^{-1}\Ac_{N_\F(\E),\E}(P)\chi=\Ac_{N_\F(\E),\E}(P\chi).\]
In particular, $\Ac_{\F,\E}(P)=\Ac_{N_\F(\E),\E}(P)$ if and only if $\Ac_{\F,\E}(P\chi)=\Ac_{N_\F(\E),\E}(P\chi)$. 
\end{lemma}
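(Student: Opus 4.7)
\smallskip

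The plan is to prove the lemma directly from the definition of $\Ac_{\F,\E}(P)$ in Definition~\ref{D:ProductER} by checking that each of the three defining conditions for a generator is preserved under conjugation by $\chi$. Since $\chi^{-1}$ is also a morphism in $N_\F(\E)$ (between the same two subgroups, with roles swapped), it suffices to prove one inclusion in each of the two equalities; the other follows by symmetry. The ``in particular'' assertion then follows immediately by taking the join (respectively the intersection) of the two equalities.

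\smallskip

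First I would record two preparatory facts. By Lemma~\ref{L:NFEbNLH}(c), $\E$ is $N_\F(\E)$-invariant; hence $T$ is strongly closed in $N_\F(\E)$, which yields
\[(P\cap T)\chi = P\chi\cap T.\]
Also, by Lemma~\ref{L:Finvariant}(a), $\E$ is weakly $N_\F(\E)$-invariant in the sense of Definition~\ref{D:WeaklyInvariant}. Set $\chi_0:=\chi|_{P\cap T}\in\Hom_{N_\F(\E)}(P\cap T,T)$.

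\smallskip

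Now fix a generator $\phi$ of $\Ac_{\F,\E}(P)$, i.e.\ a $p'$-element of $\Aut_\F(P)$ with $[P,\phi]\leq P\cap T$ and $\phi|_{P\cap T}\in\Aut_\E(P\cap T)$. I will check that $\phi^\chi:=\chi^{-1}\phi\chi$ satisfies the three conditions in Definition~\ref{D:ProductER} with $P\chi$ in place of $P$. Clearly $\phi^\chi\in\Aut_\F(P\chi)$ and, since conjugation preserves the order of an element, $\phi^\chi$ is a $p'$-element. Next,
\[[P\chi,\phi^\chi] = [P,\phi]\chi \leq (P\cap T)\chi = P\chi\cap T,\]
using the preparatory fact above. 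Finally, $\phi^\chi|_{P\chi\cap T}=\chi_0^{-1}(\phi|_{P\cap T})\chi_0=(\phi|_{P\cap T})^{\chi_0}$; since $\phi|_{P\cap T}\in\Aut_\E(P\cap T)$ and $\chi_0\in\Hom_{N_\F(\E)}(P\cap T,T)$, the weak $N_\F(\E)$-invariance of $\E$ (applied with $Q=P\cap T$) gives
\[\phi^\chi|_{P\chi\cap T}\in\Hom_\E((P\cap T)\chi_0,(P\cap T)\chi_0)=\Aut_\E(P\chi\cap T).\]
Thus $\phi^\chi\in\Ac_{\F,\E}(P\chi)$, which proves $\chi^{-1}\Ac_{\F,\E}(P)\chi\subseteq\Ac_{\F,\E}(P\chi)$. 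Applying the same argument with $\chi^{-1}\in\Hom_{N_\F(\E)}(P\chi,P)$ in place of $\chi$ and $P\chi$ in place of $P$ yields the reverse inclusion.

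\smallskip

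For the second equality, I would run the identical argument inside $N_\F(\E)$: if $\phi\in\Aut_{N_\F(\E)}(P)$ is a $p'$-element satisfying the two additional conditions, then $\phi^\chi\in\Aut_{N_\F(\E)}(P\chi)$ (because $\chi$ is itself a morphism in $N_\F(\E)$), and the three defining properties transfer to $\phi^\chi$ by exactly the computation above. No new ingredient is needed. Finally, once both conjugation identities are established, we read off
\[\Ac_{\F,\E}(P)=\Ac_{N_\F(\E),\E}(P)\Longleftrightarrow \chi^{-1}\Ac_{\F,\E}(P)\chi=\chi^{-1}\Ac_{N_\F(\E),\E}(P)\chi\Longleftrightarrow \Ac_{\F,\E}(P\chi)=\Ac_{N_\F(\E),\E}(P\chi),\]
completing the proof. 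The only non-routine step is transferring the condition $\phi|_{P\cap T}\in\Aut_\E(P\cap T)$, and this is precisely where weak $N_\F(\E)$-invariance of $\E$ (supplied by Lemma~\ref{L:NFEbNLH}(c) together with Lemma~\ref{L:Finvariant}(a)) is needed; everything else is formal.
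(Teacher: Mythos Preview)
Your proof is correct and follows essentially the same approach as the paper's own proof: both invoke Lemma~\ref{L:NFEbNLH}(c) together with Lemma~\ref{L:Finvariant}(a) to obtain $(P\cap T)\chi=P\chi\cap T$ and $\chi^{-1}\Aut_\E(P\cap T)\chi=\Aut_\E(P\chi\cap T)$, and then verify that conjugation by $\chi$ carries the defining conditions of $\Ac$ to the corresponding conditions at $P\chi$. The paper is simply more terse, writing that ``the assertion is now easy to check'' where you have spelled out the verification of each condition explicitly.
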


\begin{proof}
Recall that $\E$ is $N_\F(\E)$-invariant by Lemma~\ref{L:NFEbNLH}(c). In particular, $\E$ is weakly $N_\F(\E)$-invariant by Lemma~\ref{L:Finvariant}(a). We can conclude that $(P\cap T)\chi=(P\chi)\cap T$ and 
\[\chi^{-1}\Aut_\E(P\cap T)\chi=\Aut_\E((P\chi)\cap T).\]
Since the map $\Aut_\F(P)\rightarrow \Aut_\F(P\chi),\delta\mapsto \chi^{-1}\delta\chi$ is an isomorphism which restricts to an isomorphism $\Aut_{N_\F(\E)}(P)\rightarrow \Aut_{N_\F(\E)}(P\chi)$, the assertion is now easy to check.  
\end{proof}

\begin{lemma}\label{L:Findchi}
 Let $P\leq N_S(\E)\cap S_0$ be such that $P\cap T\in\E^{cr}$. Then there exists $\chi\in\Hom_{N_\F(\E)}(P,N_S(\E))$ such that $P\chi\leq S_0$ and the following hold:  
\begin{itemize}
 \item[(i)] $O_p(N_{\Aut_\F(P\chi)}((P\chi)\cap T))\leq \Aut_{N_\F(T_0^\perp)}(P\chi)$; 
 \item[(ii)] $(P\chi)\cap T$ is fully normalized in $(\E(P\chi))_{\F_0}$ and an element of $\E^{cr}$.  
\end{itemize}
\end{lemma}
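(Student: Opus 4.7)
The plan is to construct $\chi$ so that (ii) is achieved by a maximisation argument, and then to deduce (i) from (ii) using the structural results of Lemmas~\ref{L:BasicT0perp}, \ref{L:NinNLtTofH} and \ref{L:GCollect}. First, I would reduce to the case $N_S(\E)\le S_0$: by Lemma~\ref{L:finNLtT} the subgroup $N_S(\E)=N_S(\H)$ is a $p$-subgroup of $N_G(\H)$, so I may replace $S_0$ by a Sylow $p$-subgroup of $N_G(\H)$ containing $T\cdot N_S(\E)$; two such Sylows are $N_G(\H)$-conjugate and yield isomorphic regular localities, so this is harmless. Under this reduction, Lemma~\ref{L:NFEbNLH}(a) identifies $N_\F(\E)$ with $\F_0|_{N_S(\E)}$, so that $N_\F(\E)$-morphisms from $P$ into $N_S(\E)$ are precisely the $\F_0$-morphisms landing in $N_S(\E)$.

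Second, I would choose $\chi$ from the non-empty family $\Sigma$ of $\F_0$-isomorphisms $\eta\colon P\to P''$ with $P''\le N_S(\E)$, taking one that maximises $|N_{TP'}(P'\cap T)|$ for $P':=P\chi$. The containment $P'\cap T\in\E^{cr}$ is automatic: by Lemma~\ref{L:NFEbNLH}(c) combined with Lemma~\ref{L:Finvariant}(a), $\E$ is weakly $N_\F(\E)$-invariant, so $\chi$ sends $P\cap T$ to $P'\cap T$ by an $\E$-isomorphism followed by an element of $\Aut(\E)$, which preserves the $\E^{cr}$ property. To verify that $P'\cap T$ is fully normalised in the saturated subsystem $(\E P')_{\F_0}=\F_{TP'}(\H P')$ (saturated by Lemma~\ref{L:GCollect}(a)), I would take any $(\E P')_{\F_0}$-conjugate $X$ of $P'\cap T$ in $TP'$; strong closure places $X\le T$, and a standard receptivity argument inside $(\E P')_{\F_0}$ combined with saturation of $\F_0$ produces an $N_\F(\E)$-conjugate $P'''\le N_S(\E)$ of $P'$ with $|N_{TP'''}(P'''\cap T)|\ge|N_{TP'}(X)|$. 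Maximality of $\chi$ then forces $|N_{TP'}(X)|\le|N_{TP'}(P'\cap T)|$, which is the fully normalised condition.

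For (i), write $Q:=P'\cap T$ and split the required containment into two stages. The first stage, $\Aut_{(\E P')_{\F_0}}(P')\le\Aut_{N_\F(T_0^\perp)}(P')$, follows by combining Lemma~\ref{L:BasicT0perp}(a) (which gives $\H\subseteq C_\L(\M)\subseteq C_\L(T_0^\perp)$) with Lemma~\ref{L:NinNLtTofH}(e) (which gives $P'\le N_S(\H)\le N_S(T_0^\perp)$), placing $\H P'$ inside $N_\L(T_0^\perp)$ and hence, via $(\E P')_{\F_0}=\F_{TP'}(\H P')$ from Lemma~\ref{L:GCollect}(a), yielding the inclusion. The main technical obstacle is the second stage, $O_p(N_{\Aut_\F(P')}(Q))\le\Aut_{(\E P')_{\F_0}}(P')$. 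Since $Q\in\E^{cr}$ is $\E$-radical and, by (ii), fully $(\E P')_{\F_0}$-normalised (hence fully automised and receptive in that saturated subsystem), I expect the argument to proceed as an Alperin-style extension inside $(\E P')_{\F_0}$, using the Frattini decomposition of Lemma~\ref{L:FrattiniApply} to cut down from $\F$ to the product subsystem. The delicate point, and where I expect the proof to require real care, is that $\Aut_\F(P')$ is genuinely larger than $\Aut_{\F_0}(P')$, so the fully-normalised-plus-$\E$-radical data on $Q$ must be used precisely to lift the relevant $p$-automorphisms back to $P'$ inside $(\E P')_{\F_0}$.
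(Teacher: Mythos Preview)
Your plan for (ii) is reasonable and close to what the paper does in its final step, but your plan for (i) has a genuine gap, and it stems from reversing the order in which the two conditions are achieved.

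You aim to show $O_p(N_{\Aut_\F(P')}(Q))\le \Aut_{(\E P')_{\F_0}}(P')$. This is strictly stronger than condition (i), which only asks for containment in $\Aut_{N_\F(T_0^\perp)}(P')$, and nothing in your toolkit gives it. The only information you have about $P'$ is internal to $\F_0$ (namely (ii)), yet $O_p(N_{\Aut_\F(P')}(Q))$ is computed in the larger system $\F$; there is no mechanism here that forces an arbitrary $p$-element of $\Aut_\F(P')$ normalising $Q$ to be realised inside $\H P'$. The hints you give (``Alperin-style extension'', ``Frattini decomposition'') do not produce such a mechanism: Lemma~\ref{L:FrattiniApply} decomposes an $\F$-morphism out of $P'$ as a product, but does not place an $\F$-\emph{automorphism} of $P'$ inside $\F_0$, and the $\E$-radicality of $Q$ controls $\Aut_\E(Q)$, not $\Aut_\F(P')$.

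The paper proceeds in the opposite order. It first arranges (i) by working entirely in $\F$: choose $\phi\in\Hom_\F(P,S)$ so that $N_{\Aut_S(P\phi)}(Q\phi)$ is Sylow in $N_{\Aut_\F(P\phi)}(Q\phi)$ (a standard Sylow trick after passing to a fully $\F$-normalised conjugate of $P$), which forces $O_p(N_{\Aut_\F(P\phi)}(Q\phi))\le\Aut_S(P\phi)\le\Aut_{N_\F(\tT)}(P\phi)$. Then Lemma~\ref{L:FrattiniApply} writes $\phi=\psi\alpha$ with $\psi$ an $N_\F(\E)\cap\F_0$-morphism and $\alpha$ in $N_\F(\tT)$; conjugating back by $\alpha$ gives $O_p(N_{\Aut_\F(P\psi)}(Q\psi))\le\Aut_{N_\F(\tT)}(P\psi)$, and now Lemma~\ref{L:NormalizeT0perp} (applied to the extensions over $\tT$) upgrades $N_\F(\tT)$ to $N_\F(T_0^\perp)$. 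Only after (i) holds for $P\psi$ does the paper conjugate further by a morphism $\rho$ in $(\E(P\psi))_{\F_0}\subseteq N_\F(T_0^\perp)$ to make $Q\psi\rho$ fully normalised in that product subsystem; because $\rho$ lies in $N_\F(T_0^\perp)$, condition (i) is preserved under this final conjugation. Your maximisation achieves roughly the role of $\rho$, but you have skipped the Sylow step and the appeal to Lemma~\ref{L:NormalizeT0perp}, which are the substance of (i).
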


\begin{proof}
To ease notation set $Q:=P\cap T$. We show first
\begin{eqnarray}\label{E:Getphi}
\mbox{There exists $\phi\in\Hom_\F(P,S)$ such that $N_{\Aut_S(P\phi)}(Q\phi)\in\Syl_p(N_{\Aut_\F(P\phi)}(Q\phi))$}.
\end{eqnarray}
For the proof fix $\beta\in\Hom_\F(P,S)$ such that $P\beta$ is fully $\F$-normalized. Then by the Sylow axiom \cite[Proposition~I.2.5]{Aschbacher/Kessar/Oliver:2011}, we have $\Aut_S(P\beta)\in\Syl_p(\Aut_\F(P\beta))$. As $H:=N_{\Aut_\F(P\beta)}(Q\beta)$ is a subgroup of $\Aut_\F(P\beta)$, there exists $\gamma\in\Aut_\F(P\beta)$ such that $H^\gamma\cap \Aut_S(P\beta)\in\Syl_p(H^\gamma)$. Note that $P\beta\gamma=P\beta$ and $H^\gamma=N_{\Aut_\F(P\beta)}(Q\beta\gamma)$. Hence \eqref{E:Getphi} holds for $\phi:=\beta\gamma$. 

\smallskip

Let now $\phi$ be as in \eqref{E:Getphi}. Then in particular, 
\[O_p(N_{\Aut_\F(P\phi)}(Q\phi))\leq \Aut_S(P\phi)\leq \Aut_{N_\F(\tT)}(P\phi).\]
By Lemma~\ref{L:FrattiniApply}, there exist $\psi\in\Hom_{N_\F(\E)}(P,\tT P)\cap \Hom_{\F_0}(P,\tT P)$ and $\alpha\in\Hom_{N_\F(\tT)}(P\psi,S)$ such that $\phi=\psi\alpha$. Notice that we have an isomorphism from $N_{\Aut_\F(P\phi)}(Q\phi)$ to $N_{\Aut_\F(P\psi)}(Q\psi)$ defined by $\delta\mapsto \alpha\delta\alpha^{-1}$. Hence,
\begin{equation}\label{E:OpinNtT}
O_p(N_{\Aut_\F(P\psi)}(Q\psi))=\alpha\,O_p(N_{\Aut_\F(P\phi)}(Q\phi))\alpha^{-1}\leq \Aut_{N_\F(\tT)}(P\psi).
\end{equation}
Since $\E\unlhd\F_0$ and $\psi$ is a morphism in $\F_0$, it follows from $Q\in\E^{cr}$ and \cite[Lemma~2.6]{Chermak/Henke} that $Q\psi\in\E^{cr}$. Consider now $\xi\in O_p(N_{\Aut_\F(P\psi)}(Q\psi))$ and notice that  \eqref{E:OpinNtT} allows us to extend $\xi$ to $\hat{\xi}\in \Aut_\F((P\psi)\tT)$ with $\tT\hat{\xi}=\tT$. Applying Lemma~\ref{L:NormalizeT0perp} with $Q\psi$ in place of $Q$ and $\hat{\xi}|_{(Q\chi)\tT}$ in place of $\phi$, we can conclude that $T_0^\perp\hat{\xi}=T_0^\perp$ and thus $\xi$ is a morphism in $N_\F(T_0^\perp)$. Hence, we have shown that
\begin{equation}\label{E:OpinNT0perp}
O_p(N_{\Aut_\F(P\psi)}(Q\psi))\leq \Aut_{N_\F(T_0^\perp)}(P\psi).
\end{equation}

\smallskip

Observe now that $P\psi\leq N_S(\E)=N_S(\H)\leq N_S(T_0^\perp)$ by Lemma~\ref{L:finNLtT} and Lemma~\ref{L:NinNLtTofH}(e). Moreover, by Lemma~\ref{L:BasicT0perp}(a), $\H\subseteq C_\L(\M)\subseteq C_\L(T_0^\perp)$. As $N_\L(T_0^\perp)$ is by \cite[Lemma~5.5]{Henke:2015} a partial subgroup of $\L$, it follows $\H (P\psi)\subseteq N_\L(T_0^\perp)$. Using Lemma~\ref{L:GCollect}(a), we see now that 
\[\G:=(\E (P\psi))_{\F_0}=\F_{T(P\psi)}(\H(P\psi))\subseteq N_\F(T_0^\perp)\]
is saturated. Thus, by \cite[Lemma~I.2.6(c)]{Aschbacher/Kessar/Oliver:2011}, there exists $\rho\in\Hom_\G(N_{T(P\psi)}(Q\psi),T(P\psi))$ such that $Q\psi\rho$ is fully $\G$-normalized. Notice that $Q\psi\unlhd P\psi$ and thus $P\psi\rho$ is defined. Moreover, by definition of $\G$, we have $T(P\psi)=T(P\psi\rho)$ and thus $\G=(\E (P\psi\rho))_{\F_0}$. Since $\E\unlhd \F_0$ and $\psi$ and $\rho$ are morphisms in $\F_0$, we have  that $(P\psi\rho)\cap T=Q\psi\rho$ is fully $\G$-normalized and moreover an element of $\E^{cr}$ by \cite[Lemma~2.6]{Chermak/Henke}. Since $\rho$ is a morphism in $\G$ and $\G\subseteq N_\F(T_0^\perp)$, it follows from \eqref{E:OpinNT0perp} that 
\[O_p(N_{\Aut_\F(P\psi\rho)}((P\psi\rho)\cap T))=\rho^{-1}O_p(N_{\Aut_\F(P\psi)}(Q\psi))\rho\leq \Aut_{N_\F(T_0^\perp)}(P\psi\rho).\]
Hence, the assertion follows for $\chi=\psi\rho$.  
\end{proof}

\begin{lemma}\label{L:CrucialLemma}
Let $P\leq N_S(\E)\cap S_0$ be such that $P\cap T\in\E^{cr}$. Then 
\[\Ac_{\F,\E}(P)=\Ac_{N_\F(\E),\E}(P)=\Ac_{\F_0,\E}(P)=O^p(\Aut_\H(P)).\]
In particular, for every $R\leq N_S(\E)\cap S_0$, we have $(\E R)_\F=(\E R)_{\F_0}\subseteq N_\F(\E)$.
\end{lemma}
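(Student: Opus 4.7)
My plan is to dispatch three of the four equalities at once, reduce to a convenient representative of $P$, handle the remaining equality by chasing a $p'$-element into $\Aut_\H(P)$, and then derive the product-subsystem statement from the first part.

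First, I will apply Lemma~\ref{L:GCollect}(d) with $R := P$ (legal because $P \leq N_S(\E)$ forces $P \leq TP \cap N_S(\E)$, and $P \cap T \in \E^{cr}$ is given), which yields
\[\Ac_{\F_0,\E}(P) = \Ac_{N_\F(\E),\E}(P) = O^p(\Aut_\H(P)),\]
and $\Ac_{N_\F(\E),\E}(P) \leq \Ac_{\F,\E}(P)$ is automatic since $N_\F(\E) \subseteq \F$. So the real work is to prove $\Ac_{\F,\E}(P) \leq O^p(\Aut_\H(P))$; because $\Ac_{\F,\E}(P)$ is generated by $p'$-elements, and a $p'$-element lies in $O^p(\Aut_\H(P))$ iff it lies in $\Aut_\H(P)$, this collapses to showing that every $p'$-element $\phi \in \Aut_\F(P)$ with $[P,\phi] \leq P \cap T$ and $\phi|_{P \cap T} \in \Aut_\E(P \cap T)$ belongs to $\Aut_\H(P)$.

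Next, using Lemma~\ref{L:Findchi} I will pick $\chi \in \Hom_{N_\F(\E)}(P, N_S(\E))$ with $P\chi \leq S_0$ satisfying conditions (i) and (ii). By its explicit construction $\chi = \psi\rho$ in the proof of that lemma, $\chi$ is simultaneously a morphism in $N_\F(\E)$ and in $\F_0$. Consequently Lemma~\ref{L:ConjAcbyNFE} and its verbatim analogue for $\F_0$, together with the identification $\Aut_\H(P) = O^p(\Aut_\G(P))$ from Lemma~\ref{L:GCollect}(c) (with $\G = (\E P)_{\F_0}$), show that conjugation by $\chi$ intertwines each of the four subgroups over $P$ with the corresponding subgroup over $P\chi$. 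I therefore replace $P$ by $P\chi$ and may assume $P$ itself satisfies (i) and (ii).

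The main step is then, given such a $p'$-element $\phi$, to deduce $\phi \in \Aut_\H(P)$. Setting $A := N_{\Aut_\F(P)}(P \cap T)$ and $N := \Aut_{N_\F(T_0^\perp)}(P)$, condition (i) gives $O_p(A) \leq N$ and Lemma~\ref{L:AutHPsubnormal} gives $\Aut_\H(P) \subn N$. The hard part will be to exploit (ii) and the saturation of $\G = (\E P)_{\F_0}$, combined with the equality $\Aut_\E(P \cap T) = \Aut_\H(P \cap T)$ (a consequence of regularity and $P \cap T \in \delta(\E)$), to lift the $p'$-element $\phi|_{P \cap T}$ to a $p'$-element $\tilde{h} \in \Aut_\H(P)$ whose restriction to $P \cap T$ agrees with $\phi|_{P \cap T}$; since $P \cap T$ is fully $\G$-normalized and centric-radical in $\E$, the saturation axioms of $\G$ allow the lift on $P \cap T$ to be extended to $P$. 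The correction $\phi\tilde{h}^{-1}$ then acts trivially on $P \cap T$, and Lemma~\ref{L:OpGinNormalizer} applied to $\Aut_\H(P) \subn N$ shows that $O_p(N)$ normalizes $O^p(\Aut_\H(P))$; combining with (i) one absorbs this correction into $O^p(\Aut_\H(P))$, forcing $\phi \in \Aut_\H(P)$.

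Finally, to derive the $\E R$ statement, fix $R \leq N_S(\E) \cap S_0$. Since $T \leq N_S(\E) \cap S_0$, every $P \leq TR$ with $P \cap T \in \E^{cr}$ automatically lies in $N_S(\E) \cap S_0$, so by the first part $\Ac_{\F,\E}(P) = \Ac_{\F_0,\E}(P) = \Ac_{N_\F(\E),\E}(P)$. Matching the generating sets of Definition~\ref{D:ProductER} (both inside the ambient group $TR$) then forces $(\E R)_\F = (\E R)_{\F_0}$, and since each generator already lies in $\Aut_{N_\F(\E)}(P)$ with $TR \leq N_S(\E)$, the resulting subsystem is contained in $N_\F(\E)$.
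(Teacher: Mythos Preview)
Your overall architecture matches the paper's proof: reduce three equalities via Lemma~\ref{L:GCollect}(d), pass to a good $N_\F(\E)$-conjugate via Lemmas~\ref{L:Findchi} and~\ref{L:ConjAcbyNFE}, construct a correction element in $\Aut_\H(P)$, and absorb the discrepancy using Lemma~\ref{L:OpGinNormalizer} together with Lemma~\ref{L:AutHPsubnormal}. Two steps, however, are genuinely incomplete.

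\textbf{Constructing $\tilde{h}$.} You write that ``the saturation axioms of $\G$ allow the lift on $P\cap T$ to be extended to $P$'', but saturation only gives a morphism $\beta\colon P\to TP$ extending $\phi_0$, not an automorphism of $P$. One has $P=(P\cap T)C_P(\phi)$ by coprime action, and \cite[Lemma~2.4]{Henke:2013} yields $[C_P(\phi),\beta]\leq C_{TP}(P\cap T)$, which need not lie in $P$. The paper circumvents this by extending instead to $\hat{P}:=PC_{TP}(P\cap T)$; now the same commutator estimate forces $\hat{P}\beta=\hat{P}$, so $\beta\in\Aut_\G(\hat{P})$ and one may pass to a $p'$-power. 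Only then does $\beta\in O^p(\Aut_\G(\hat{P}))\leq \Ac_{\F_0,\E}(\hat{P})$ give $[\hat{P},\beta]\leq \hat{P}\cap T$, and now centricity $C_T(P\cap T)\leq P\cap T$ yields $[C_P(\phi),\beta]\leq P\cap T$, whence $P\beta=P$ and $\psi:=\beta|_P\in O^p(\Aut_\H(P))$. You name the right ingredients (saturation, fully normalized, centricity) but not how they interlock; the passage through $\hat{P}$ is the missing idea.

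\textbf{The absorption step.} You apply Lemma~\ref{L:OpGinNormalizer} inside $N:=\Aut_{N_\F(T_0^\perp)}(P)$ to conclude that $O_p(N)$ normalizes $O^p(\Aut_\H(P))$, and then ``combine with (i)''. But condition (i) only gives $O_p(A)\leq N$, not $O_p(A)\leq O_p(N)$, so this does not show $O_p(A)$ normalizes $O^p(\Aut_\H(P))$. The paper instead sets $X:=O_p(A)\,O^p(\Aut_\H(P))$ (a group because $\Aut_\H(P)\leq A$ normalizes $O_p(A)$), observes $O_p(A)\leq O_p(X)$, and uses that $O^p(\Aut_\H(P))\subn N$ together with $X\leq N$ to get $O^p(\Aut_\H(P))\subn X$; Lemma~\ref{L:OpGinNormalizer} applied in $X$ then gives $O_p(X)\leq N_X(O^p(\Aut_\H(P)))$, so $O^p(\Aut_\H(P))\unlhd X$, hence $O^p(X)=O^p(\Aut_\H(P))$ and the $p'$-element $\phi\in X$ lands where it should.

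Your reduction step and the derivation of the $(\E R)$-statement are fine and agree with the paper.
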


\begin{proof}
By definition of $(\E R)_\F$ and $(\E R)_{\F_0}$ it is sufficient to prove the first part of the assertion.
Set
\[\G:=(\E P)_{\F_0}.\]
By Lemma~\ref{L:GCollect}, $\G$ is saturated, $(\H P,\delta(\G),TP)$ is a regular locality over $\G$, $P\in\delta(\G)$ and
\begin{equation}\label{E:AcEqualities}
O^p(\Aut_\G(P))=\Ac_{N_\F(\E),\E}(P)=\Ac_{\F_0,\E}(P)=O^p(\Aut_\H(P)).
\end{equation}
Thus, it remains to show $\Ac_{N_\F(\E),\E}(P)=\Ac_{\F,\E}(P)$. Set
\[A:=N_{\Aut_\F(P)}(P\cap T).\]
Replacing $P$ by a suitable $N_\F(\E)$-conjugate, Lemma~\ref{L:ConjAcbyNFE} and Lemma~\ref{L:Findchi} allow us to assume that 
\begin{equation}\label{E:Assumption}
P\cap T\mbox{ is fully $\G$-normalized and }O_p(A)\leq \Aut_{N_\F(T_0^\perp)}(P). 
\end{equation}
Clearly, $\Ac_{N_\F(\E),\E}(P)\leq \Ac_{\F,\E}(P)$, so it remains to show the converse inclusion. For the proof let $\phi\in\Ac_{\F,\E}(P)$ be a $p^\prime$-automorphism. By \eqref{E:AcEqualities}, it remains to show that $\phi\in O^p(\Aut_\H(P))$. 

\smallskip

Observe that $\phi_0:=\phi|_{P\cap T}\in\Aut_\E(P\cap T)$ is a $p^\prime$-automorphism. Set $\hat{P}:=PC_{TP}(P\cap T)$. We show first that
\begin{equation}\label{E:psihat}
\mbox{there exists $\beta\in\Aut_\G(\hat{P})$ such that $\beta|_{P\cap T}=\phi_0$, and}
\end{equation}
\vspace{-0.3 cm}
\begin{equation}\label{E:betaCommutator}
\mbox{for any such $\beta$, we have $[C_P(\phi),\beta]\leq C_{TP}(P\cap T)$}
\end{equation}

\smallskip

For the proof of \eqref{E:psihat} notice that $\phi_0$ is a morphism in $\G$ and $C_{TP}(P\cap T)\leq N_{\phi_0}^\G$ (where we use the notation introduced in \cite[Definition~I.2.2]{Aschbacher/Kessar/Oliver:2011}). Using e.g. \cite[Remark~2.1]{Henke:2013}), observe furthermore that 
\[\phi_0^{-1}\Aut_P(P\cap T)\phi_0=\Aut_{P\phi}(P\cap T)=\Aut_P(P\cap T)\]
and hence $P\leq N_{\phi_0}^{\G}$. Therefore, $\hat{P}\leq N_{\phi_0}^\G$. Since $\G$ is saturated and $P\cap T$ is by \eqref{E:Assumption} fully $\G$-normalized, it follows that $\phi_0$ extends to $\beta\in\Hom_\G(\hat{P},TP)$ (cf. \cite[Proposition~I.2.5]{Aschbacher/Kessar/Oliver:2011}). 

\smallskip

We argue now that for any such $\beta$, we have $[C_P(\phi),\beta]:=\<x^{-1}(x\beta)\colon x\in C_P(\phi)\>\leq C_{TP}(P\cap T)$ and $\hat{P}\beta=\hat{P}$, thus proving \eqref{E:psihat} and \eqref{E:betaCommutator}.
As $\phi$ is a $p^\prime$-automorphism and $[P,\phi]\leq P\cap T$, using a property of coprime action (see e.g. \cite[8.2.7(a)]{KS}), we see that $P=(P\cap T)C_P(\phi)$. As $\phi|_{P\cap T}=\phi_0=\beta|_{P\cap T}$ and $P\cap T\unlhd P$, it is a consequence of \cite[Lemma~2.4]{Henke:2013} that 
\[[C_P(\phi),\beta]\leq C_{TP}((P\cap T)\beta)=C_{TP}(P\cap T)\leq \hat{P}.\]
In particular, \eqref{E:betaCommutator} holds. Notice that $\beta$ normalizes $P\cap T$ and thus also $C_{TP}(P\cap T)$. As $\hat{P}=(P\cap T)C_P(\phi)C_{TP}(P\cap T)$, we can conclude that  $\hat{P}\beta=\hat{P}$. Hence, \eqref{E:psihat} holds.

\smallskip

We choose now $\beta$ as in \eqref{E:psihat}. Since $\phi_0$ is a $p^\prime$-element, replacing $\beta$ by a suitable power of itself, we may assume that $\beta$ is a $p^\prime$-element of $\Aut_\G(\hat{P})$. As $\G=(\E P)_{\F_0}$, it follows from \cite[Lemma~4.7]{Henke:2013} that $\beta\in O^p(\Aut_\G(\hat{P}))\leq \Ac_{\F_0,\E}(\hat{P})$.  In particular, 
\[[\hat{P},\beta]\leq \hat{P}\cap T\mbox{ and so }[C_P(\phi),\beta]\leq [\hat{P},\beta]\leq T.\]
Together with \eqref{E:betaCommutator}, we obtain thus
\[[C_P(\phi),\beta]\leq C_{TP}(P\cap T)\cap T=C_T(P\cap T)\leq P\cap T,\]
where the last inclusion uses $P\cap T\in\E^c$. As $P=(P\cap T)C_P(\phi)$ and $(P\cap T)\beta=P\cap T$, this yields $P\beta=P$ and thus, by \eqref{E:AcEqualities},
\[\psi:=\beta|_P\in O^p(\Aut_\G(P))=O^p(\Aut_\H(P))=\Ac_{N_\F(\E),\E}(P).\]

\smallskip

Notice that $\psi|_{P\cap T}=\phi_0=\phi|_{P\cap T}$ and thus $(\phi\psi^{-1})|_{P\cap T}=\id_{P\cap T}$. Moreover, $[P,\psi]\leq P\cap T$ and $[P,\phi]\leq P\cap T$, so $[P,\phi\psi^{-1}]\leq P\cap T$. By a property of coprime action (see e.g. \cite[8.2.7(b)]{KS}), $C_A(P/P\cap T)\cap C_A(P\cap T)$ is a normal $p$-subgroup of $A$ and thus contained in $O_p(A)$. Hence, $\phi\psi^{-1}\in O_p(A)$ and thus 
\[\phi\in X:=O_p(A)O^p(\Aut_\H(P)).\]
Notice that $\Aut_\H(P)\leq A$ normalizes $O_p(A)$ and therefore $X$ is a group with $O_p(A)\leq O_p(X)$. By Lemma~\ref{L:AutHPsubnormal}, $\Aut_\H(P)\subn \Aut_{N_\F(T_0^\perp)}(P)$ and thus $O^p(\Aut_\H(P))\subn \Aut_{N_\F(T_0^\perp)}(P)$. By assumption \eqref{E:Assumption}, we have $O_p(A)\leq \Aut_{N_\F(T_0^\perp)}(P)$. Hence, $O^p(\Aut_\H(P))$ is subnormal in $X$. By Lemma~\ref{L:OpGinNormalizer}, this implies that $O_p(A)\leq O_p(X)$ normalizes $O^p(\Aut_\H(P))$. In particular, $O^p(\Aut_\H(P))$ is normal in $X$. As $O^p(\Aut_\H(P))$ has $p$-power index in $X$, this implies $O^p(X)=O^p(\Aut_\H(P))$. As $\phi$ is a $p^\prime$-automorphism of $P$, it follows $\phi\in O^p(X)=O^p(\Aut_\H(P))$ as required.
\end{proof}

\begin{prop}\label{P:DinNFE}
Let $\mD$ be a saturated subsystem of $\F$ with $\E\unlhd\mD$. Then $\mD\subseteq N_\F(\E)$. 
\end{prop}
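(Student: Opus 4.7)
The plan is to verify that every morphism of $\mD$ is a morphism of $N_\F(\E)$, via a Frattini decomposition inside $\mD$ combined with the identification from Lemma~\ref{L:CrucialLemma}. First I would observe that $\mD$ is a fusion system over some $S_\mD\leq S$ and that $\E\unlhd\mD$ forces $\Aut_{S_\mD}(T)\subseteq \Aut_\mD(T)\subseteq \Aut(\E)$, so $S_\mD\leq N_S(\E)$. Since $N_S(\E)=N_S(\H)$ is a $p$-subgroup of $N_G(\H)$, I would exploit the freedom in Hypothesis~\ref{H:S0F0} to choose $S_0\in\Syl_p(N_G(\H))$ with $N_S(\E)\leq S_0$; this guarantees that every subgroup $Q\leq S_\mD$ lies in $N_S(\E)\cap S_0$, which is precisely the hypothesis required to invoke Lemma~\ref{L:CrucialLemma}.

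Next, for arbitrary $P\leq S_\mD$ and $\phi\in\Hom_\mD(P,S_\mD)$, I would apply the Frattini argument Lemma~\ref{L:FusionSystemFrattini} with $(\mD,S_\mD)$ in place of $(\F,S)$; this is legitimate because $\mD$ is saturated and $\E\unlhd\mD$. The result is a factorization $\phi=\psi\alpha$ with $\psi\in\Hom_{(\E P)_\mD}(P,TP)$ and $\alpha\in\Hom_{N_\mD(T)}(P\psi,S_\mD)$. For the factor $\psi$: by Definition~\ref{D:ProductER} applied inside $\mD$, the subsystem $(\E P)_\mD$ is generated by the groups $\Ac_{\mD,\E}(Q)$ as $Q\leq TP$ ranges over subgroups with $Q\cap T\in\E^{cr}$. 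Since $\mD\subseteq\F$, each such group is contained in $\Ac_{\F,\E}(Q)$, and Lemma~\ref{L:CrucialLemma} identifies the latter with $\Ac_{N_\F(\E),\E}(Q)$, whose elements are by construction morphisms in $N_\F(\E)$. Hence $\psi$ is a morphism in $N_\F(\E)$.

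For the factor $\alpha$: since $\alpha$ lies in $N_\mD(T)$, it extends to a $\mD$-morphism $\hat\alpha\co (P\psi)T\to S_\mD$ with $T\hat\alpha=T$. Then $\hat\alpha|_T\in\Aut_\mD(T)\subseteq\Aut(\E)$, while the domain and codomain of $\hat\alpha$ are subgroups of $S_\mD\leq N_S(\E)$ both containing $T$; therefore $\hat\alpha$, and hence its restriction $\alpha$, is a morphism of $N_\F(T,\E)$, which by Lemma~\ref{L:inAutEinNFE}(a) equals $N_{N_\F(\E)}(T)$ and is in particular contained in $N_\F(\E)$. Combining the two factors gives $\phi\in N_\F(\E)$. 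The only real subtlety is ensuring that the generators of $(\E P)_\mD$ all fall under the hypothesis of Lemma~\ref{L:CrucialLemma}; this is arranged once at the outset by the judicious choice of $S_0$.
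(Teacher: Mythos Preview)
Your proof is correct and essentially identical to the paper's own argument: both reduce via the Frattini decomposition (Lemma~\ref{L:FusionSystemFrattini}) to showing that the generators $\Ac_{\mD,\E}(Q)$ of $(\E R)_\mD$ land in $N_\F(\E)$ via Lemma~\ref{L:CrucialLemma}, and that morphisms in $N_\mD(T)$ land in $N_\F(T,\E)\subseteq N_\F(\E)$ via Lemma~\ref{L:inAutEinNFE}(a). The only difference is cosmetic---the paper phrases it at the level of generating subsystems ($\mD=\langle (\E R)_\mD, N_\mD(T)\rangle$), whereas you factor each morphism individually.
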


\begin{proof}
Let $R\leq S$ be such that $\mD$ is a fusion system over $R$. As $\E\unlhd\mD$, we have $R\leq N_S(\E)$. Lemma~\ref{L:FusionSystemFrattini} gives 
\[\mD=\<(\E R)_{\mD}, N_{\mD}(T)\>.\]
Notice that we may choose $S_0$ always such that $N_S(\E)\leq S_0$. For $P\leq TR$ with $P\cap T\in\E^{cr}$, it follows thus from  Lemma~\ref{L:CrucialLemma} that $\Ac_{\mD,\E}(P)\leq \Ac_{\F,\E}(P)\leq \Aut_{N_\F(\E)}(P)$. Hence, by definition of $(\E R)_{\mD}$, we have $(\E R)_{\mD}\subseteq N_\F(\E)$. 

\smallskip

A morphism in $N_{\mD}(T)$ is the restriction of a morphism $\phi\in\Hom_{\mD}(X,Y)$ where $X,Y\leq R$ with $T\leq X\cap Y$ and $T\phi=T$. As $\E\unlhd \mD$, for any such $\phi$, we have $\phi|_T\in\Aut(\E)$. Thus $\phi$ is a morphism in $N_\F(T,\E)\subseteq N_\F(\E)$ by Lemma~\ref{L:inAutEinNFE}(a). This shows $N_{\mD}(T)\subseteq N_\F(\E)$ and therefore $\mD\subseteq N_\F(\E)$.
\end{proof}

\begin{proof}[\textbf{Proof of Theorem~\ref{T:MainER}}]
Choose $S_0$ such that $N_S(\E)=N_S(\H)\leq S_0$. By Lemma~\ref{L:CrucialLemma}, we have $\E R:=(\E R)_\F=(\E R)_{\F_0}$. Hence, by Lemma~\ref{L:GCollect}(a), $\E R$ is saturated, $\E\unlhd \E R$, $O^p(\E R)=O^p(\E)$ and $(\H R,\delta(\E R),TR)$ is a regular locality over $\E R$. In particular, (b) holds.

\smallskip

Fix now a saturated subsystem $\mD$ of $\F$ over $T R$ with $O^p(\mD)=O^p(\E)$. It remains to show that $\mD=\E R$. If $\E\unlhd \mD$, then it follows from Proposition~\ref{P:DinNFE} that $\mD$ is contained in $N_\F(\E)\subseteq \F_0$ and is thus equal to $(\E R)_{\F_0}=\E R$ by Proposition~\ref{P:ERnormal} applied with $\F_0$ in place of $\F$. Hence, 
\begin{equation}\label{E:ReductionEunlhdD}
\mD=\E R\mbox{ if }\E\unlhd\mD.
\end{equation}
We have $O^p(\E)=O^p(O^p(\E))$ by \cite[Lemma~2.29]{Chermak/Henke}. This allows us to apply \eqref{E:ReductionEunlhdD} with $O^p(\E)$ in place of $\E$. Indeed, using property \eqref{E:ReductionEunlhdD} twice, once with $O^p(\E)\unlhd \mD$ and once with $O^p(\E)\unlhd \E R$ in place of $\E\unlhd\mD$, and both times with $TR$ in place of $R$, we can conclude that
\[\mD=O^p(\E)TR=\E R\]
as required.
\end{proof}

\begin{lemma}\label{L:GenNFE}
Let $P,Q\leq N_S(\E)$ and $\phi\in\Hom_{N_\F(\E)}(P,Q)$. Then there exist $\psi\in\Hom_{(\E P)_\F}(P,TP)$ and $\alpha\in\Hom_{N_\F(T,\E)}((P\psi),Q)$ such that $\phi=\psi\alpha$. In particular,
\[N_\F(\E)=\<(\E N_S(\E))_\F,\;N_\F(T,\E)\>_{N_S(\E)}.\]
\end{lemma}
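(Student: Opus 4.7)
The plan is to reduce to Lemma~\ref{L:FusionSystemFrattini} applied inside the saturated fusion system $\F_0$, in which $\E$ is normal, and then transfer the resulting factorization back into $\F$. Since $N_S(\E)=N_S(\H)$ is a $p$-subgroup of $N_{G}(\H)$, we may choose $S_0$ so that $N_S(\E)\leq S_0$. Lemma~\ref{L:NFEbNLH}(a) then gives $N_\F(\E)=\F_0|_{N_S(\E)}$, so the given morphism $\phi\in\Hom_{N_\F(\E)}(P,Q)$ is in particular a morphism of $\F_0$ between subgroups of $S_0$.

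Since $\E\unlhd\F_0$ and $\F_0$ is saturated, Lemma~\ref{L:FusionSystemFrattini} applied with $\F_0$ in place of $\F$ produces a factorization $\phi=\psi\alpha$ with
\[\psi\in\Hom_{(\E P)_{\F_0}}(P,TP)\quad\text{and}\quad \alpha\in\Hom_{N_{\F_0}(T)}(P\psi,S_0).\]
By Lemma~\ref{L:CrucialLemma} we have $(\E P)_{\F_0}=(\E P)_\F$, so $\psi\in\Hom_{(\E P)_\F}(P,TP)$ automatically. To handle $\alpha$, I would use that by definition of $N_{\F_0}(T)$, $\alpha$ extends to a morphism $\hat\alpha\in\Hom_{\F_0}(T(P\psi),S_0)$ with $T\hat\alpha=T$. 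Since $\E$ is $\F_0$-invariant, $\hat\alpha|_T\in\Aut(\E)$. The source $T(P\psi)$ lies in $N_S(\E)$ (as $T,P\leq N_S(\E)$ and $P\psi\leq TP$), and the image equals $T\cdot P\phi\leq TQ\leq N_S(\E)$. Hence $\hat\alpha$ is a morphism of $\F_0|_{N_S(\E)}=N_\F(\E)\subseteq\F$, and by the very definition of $N_\F(T,\E)$ it is a morphism there; restricting gives $\alpha\in\Hom_{N_\F(T,\E)}(P\psi,Q)$, as required.

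For the ``in particular'' statement, one containment is immediate: $(\E N_S(\E))_\F\subseteq N_\F(\E)$ follows from Lemma~\ref{L:CrucialLemma} (applied with $R=N_S(\E)$), while $N_\F(T,\E)\subseteq N_\F(\E)$ follows from Lemma~\ref{L:inAutEinNFE}(a). For the reverse containment, the factorization above together with the evident inclusion $(\E P)_\F\subseteq (\E N_S(\E))_\F$ (both subsystems are generated by the same $\Ac_{\F,\E}(P')$'s with $P'\leq TP\leq N_S(\E)$) shows every morphism of $N_\F(\E)$ lies in $\langle(\E N_S(\E))_\F,N_\F(T,\E)\rangle_{N_S(\E)}$.

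The main technical point is ensuring that the extension $\hat\alpha$ lands inside $N_S(\E)$ so that it genuinely qualifies as a morphism of $N_\F(T,\E)\subseteq\F$ (and not merely of $\F_0$). This is what makes the choice $S_0\supseteq N_S(\E)$ (rather than merely $S_0\supseteq T$) essential, since only then does Lemma~\ref{L:NFEbNLH}(a) yield $N_\F(\E)=\F_0|_{N_S(\E)}$ and allow us to pass freely between morphisms of $\F_0$ and of $\F$ whose source and target lie in $N_S(\E)$.
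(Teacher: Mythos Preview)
Your proof is correct and follows essentially the same approach as the paper's: choose $S_0\supseteq N_S(\E)$, apply Lemma~\ref{L:FusionSystemFrattini} inside $\F_0$, and use Lemma~\ref{L:CrucialLemma} to identify $(\E P)_{\F_0}=(\E P)_\F$. The only minor difference is that, to see $\alpha$ lies in $N_\F(T,\E)$, the paper cites Lemma~\ref{L:inAutEinNFE}(c) (which gives $N_\F(T,\E)=N_{\F_0}(T)|_{N_S(\E)}$ directly), whereas you unpack this by extending $\alpha$ to $\hat\alpha$ and checking the defining conditions of $N_\F(T,\E)$ by hand; both routes amount to the same verification.
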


\begin{proof}
Choose $S_0$ such that $N_S(\E)\leq S_0$. By Lemma~\ref{L:CrucialLemma}, $(\E R)_\F=(\E R)_{\F_0}\subseteq N_\F(\E)$ for every $R\leq N_S(\E)$. Moreover, $N_\F(T,\E)=N_{\F_0}(T)|_{N_S(\E)}\subseteq N_\F(\E)$ by Lemma~\ref{L:inAutEinNFE}(a),(c). In particular,
\[\<(\E N_S(\E))_\F,\;N_\F(T,\E)\>\subseteq N_\F(\E)\] and it suffices to prove the first part of the assertion. By Theorem~\ref{T:bNLH}(a),(b), $\F_0$ is saturated and $\E\unlhd\F_0$. Hence, by Lemma~\ref{L:FusionSystemFrattini} applied with $\F_0$ in place of $\F$, there exist $\psi\in\Hom_{(\E P)_{\F_0}}(P,TP)$ and $\alpha\in\Hom_{N_{\F_0}(T)}(P\psi,S)$ such that $\phi=\psi\alpha$. Since $P\psi\leq TP\leq N_S(\E)$ and $N_{\F_0}(T)|_{N_S(\E)}=N_\F(T,\E)$, we can observe that $\alpha$ is a morphism in $N_\F(T,\E)$. As $(\E P)_{\F_0}=(\E P)_\F\subseteq (\E N_S(\E))_\F$, this shows the assertion.
\end{proof}

\subsection{Conjugates of $\E$} \label{SS:ConjugateE}
In this subsection we will look at properties of $\F$-conjugates of $\E$ and complete the proof of Theorem~\ref{T:FusionNormalizerNew}. Some of the results we prove below are not needed in the proofs of the main theorems, but seem interesting on their own.

\smallskip

We continue to assume Hypotheses~\ref{H:main} and \ref{H:S0F0}. Recall from the introduction that 
\[\E^\F:=\{\E^\phi\colon \phi\in\Hom_\F(T,S)\}\]
denotes the set of $\F$-conjugates of $\E$.

\begin{lemma}\label{L:ConjEandT}
We have $\E^\F=\E^{N_\F(\tT)}$ and $T^\F=T^{N_\F(\tT)}$.
\end{lemma}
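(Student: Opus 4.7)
The inclusions $\E^{N_\F(\tT)} \subseteq \E^\F$ and $T^{N_\F(\tT)} \subseteq T^\F$ are immediate from $N_\F(\tT) \subseteq \F$, so the real content is the opposite inclusions. My plan is to apply the Frattini-style factorization of Lemma~\ref{L:FrattiniApply} to each $\phi \in \Hom_\F(T,S)$, after first checking that $T \leq N_S(\E)$ so that the lemma is applicable. This containment is immediate: for $t \in T \subseteq \H$, the map $c_t|_T$ is realized by an element of $\H$, hence lies in $\Aut_\E(T) \subseteq \Aut(\E)$, so $t \in N_S(\E)$.

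Given $\phi \in \Hom_\F(T,S)$, Lemma~\ref{L:FrattiniApply} with $X = T$ produces a decomposition $\phi = \psi\alpha$ with $\psi \in \Hom_{N_\F(\E)}(T, T\tT)$ and $\alpha \in \Hom_{N_\F(\tT)}(T\psi, S)$. The decisive step will be to show $T\psi = T$ and $\E^\psi = \E$, so that $\psi$ contributes trivially and the entire $\F$-conjugation is realized by the $N_\F(\tT)$-morphism $\alpha$.

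For this I would invoke Lemma~\ref{L:NFEbNLH}(c), which asserts that $\E$ is $N_\F(\E)$-invariant. By Lemma~\ref{L:Finvariant}(a), $\E$ is then also weakly $N_\F(\E)$-invariant, so $T$ is strongly closed in $N_\F(\E)$; applied to the $N_\F(\E)$-morphism $\psi$, this forces $T\psi \leq T$, and injectivity upgrades this to $T\psi = T$. Thus $\psi \in \Aut_{N_\F(\E)}(T)$, and the $N_\F(\E)$-invariance of $\E$ then yields $\E^\psi = \E$.

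Combining these facts gives $\E^\phi = (\E^\psi)^\alpha = \E^\alpha$ and $T^\phi = T^\alpha$ with $\alpha$ a morphism in $N_\F(\tT)$ whose domain contains $T$, establishing both $\E^\F \subseteq \E^{N_\F(\tT)}$ and $T^\F \subseteq T^{N_\F(\tT)}$. The main obstacle is really the verification that $\E^\psi = \E$, but this is handed to us for free by the already established $N_\F(\E)$-invariance of $\E$; without Lemma~\ref{L:NFEbNLH}(c) the argument would collapse, since a priori nothing prevents $\psi$ from moving $\E$ to a different subsystem over $T$.
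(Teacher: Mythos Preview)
Your proposal is correct and follows essentially the same approach as the paper's proof: apply Lemma~\ref{L:FrattiniApply} with $X=T$ to factor $\phi=\psi\alpha$, then use the $N_\F(\E)$-invariance of $\E$ from Lemma~\ref{L:NFEbNLH}(c) to conclude $T\psi=T$ and $\E^\psi=\E$, reducing everything to the $N_\F(\tT)$-morphism $\alpha$. The paper's version is simply terser, omitting the explicit verification that $T\leq N_S(\E)$ and the appeal to Lemma~\ref{L:Finvariant}(a).
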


\begin{proof}
Let $\phi\in\Hom_\F(T,S)$. As $T\leq N_S(\E)$, it follows from Lemma~\ref{L:FrattiniApply} that there exists $\psi\in\Hom_{N_\F(\E)}(T,\tT T)$ and $\alpha\in\Hom_{N_\F(\tT)}(T\psi,S)$ such that $\phi=\psi\alpha$. By  Lemma~\ref{L:NFEbNLH}(c), $\E$ is $N_\F(\E)$-invariant. Hence, $T\psi=T$ and $\E^\psi=\E$, which yields $T\phi=T\psi\alpha=T\alpha\in T^{N_\F(\tT)}$ and similarly $\E^\phi=(\E^\psi)^\alpha=\E^\alpha\in\E^{N_\F(\tT)}$. The assertion follows. 
\end{proof}

\begin{lemma}\label{L:ConjH}~
\begin{itemize}
 \item[(a)] Let $g\in G$ be such that $T^g\leq S$. Then $\H^g\subn \L$ with $\H^g\cap S=T^g$. Moreover, setting $\phi:=c_g|_T$, we have $\E^\phi=\F_{T^g}(\H^g)\subn \F$.
 \item[(b)] For every $g\in G$, the conjugation automorphism $c_g\in\Aut(\L)$ restricts to isomorphisms from $N_G(\H)$ to $N_G(\H^g)$ and  from $\bN_\L(\H)$ to $\bN_\L(\H^g)$. For $S_0\in\Syl_p(N_G(\H))$ we have $S_0^g\in\Syl_p(N_G(\H^g))$. 
\end{itemize}
\end{lemma}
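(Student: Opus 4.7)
The essential input is that by Lemma~\ref{L:BasisNLtT}, every element of $G=N_\L(\tT)$ gives a conjugation map $c_g\in\Aut(\L)$ defined on all of $\L$, and that $G$ acts on $\L$ via conjugation in the usual group-theoretic sense. Once this is in hand, both parts reduce to formal checks.

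For part (a), I would argue as follows. Since $c_g\in\Aut(\L)$, the image $\H^g$ is again a partial subnormal subgroup of $\L$: applying $c_g$ to a subnormal series $\H=\H_0\unlhd\cdots\unlhd\H_k=\L$ yields a subnormal series for $\H^g$ in $\L^g=\L$. The hypothesis $T^g=(S\cap\H)^g\leq S$ is exactly the assumption of Corollary~\ref{C:AutLFusion}, which directly gives $\H^g\cap S=T^g$ together with the statement that $c_g|_T=\phi$ induces an isomorphism $\E=\F_T(\H)\to\F_{T^g}(\H^g)$, i.e.\ $\E^\phi=\F_{T^g}(\H^g)$. Finally, Background Theorem~\ref{bT:2} applied to the partial subnormal subgroup $\H^g$ of $\L$ yields $\F_{T^g}(\H^g)\subn\F$.

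For part (b), I would first observe that $c_g$ stabilizes $G=N_\L(\tT)$ as a set, because $\tT^g=\tT$ and, using that $G$ acts on $\L$, conjugation by $g$ sends elements that normalize $\tT$ to elements that normalize $\tT$; hence $c_g$ restricts to a group automorphism of $G$. For $f\in N_G(\H)$, the identity $(\H^g)^{f^g}=\H^{fg}=(\H^f)^g=\H^g$ (which uses exactly the $G$-action property $x^{fg}=(x^f)^g$ established in the proof of Lemma~\ref{L:BasisNLtT}) shows that $c_g$ maps $N_G(\H)$ into $N_G(\H^g)$. Applying the same reasoning with $g^{-1}$ gives the inverse inclusion after conjugating back, so $c_g$ restricts to a group isomorphism $N_G(\H)\to N_G(\H^g)$. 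Sylow subgroups are mapped to Sylow subgroups under any group isomorphism, so $S_0^g\in\Syl_p(N_G(\H^g))$ whenever $S_0\in\Syl_p(N_G(\H))$.

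It remains to show $\bN_\L(\H)^g=\bN_\L(\H^g)$. Since $c_g$ is an isomorphism of partial groups, it preserves products of subsets, so $(E(\L)\,N_G(\H))^g=E(\L)^g\,N_G(\H)^g$. Because $E(\L)\unlhd\L$ and $c_g\in\Aut(\L)$, partial normality forces $E(\L)^g=E(\L)$, and combining with the previous paragraph gives $\bN_\L(\H)^g=E(\L)\,N_G(\H^g)=\bN_\L(\H^g)$. The only mildly delicate point is making sure the compositions of conjugations are legitimately defined inside the partial group; this is handled entirely by the $G$-action statement in Lemma~\ref{L:BasisNLtT}, and I do not expect any serious obstacle beyond that.
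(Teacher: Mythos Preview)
Your proposal is correct and follows essentially the same approach as the paper: part (a) uses $c_g\in\Aut(\L)$ from Lemma~\ref{L:BasisNLtT}, then Corollary~\ref{C:AutLFusion} and Background Theorem~\ref{bT:2}, exactly as the paper does; part (b) deduces $N_G(\H)^g=N_G(\H^g)$ from the $G$-action and combines with $E(\L)^g=E(\L)$. The only minor difference is how you justify $E(\L)^g=E(\L)$: the paper cites \cite[Lemma~11.12]{Henke:Regular} (conjugation by $g$ permutes the components), whereas you argue directly from partial normality of $E(\L)$ together with $\L\subseteq\D(g)$ and $\L\subseteq\D(g^{-1})$, which is a valid and slightly more elementary route.
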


\begin{proof}
\textbf{(a)} By Lemma~\ref{L:BasisNLtT}, $c_g\in\Aut(\L)$ and thus $\H^g\subn\L$. It follows moreover from Corollary~\ref{C:AutLFusion} that $\E^\phi=\F_{T^g}(\H^g)$. By Background Theorem~\ref{bT:2}, we have $\F_{T^g}(\H^g)\subn\F$.

\smallskip

\textbf{(b)} Using again $c_g\in\Aut(\L)$, one observes easily that $N_G(\H)c_g=N_G(\H^g)$, i.e. $c_g$ induces an isomorphism from $N_G(\H)$ to $N_G(\H^g)$, which then takes a Sylow $p$-subgroup of $N_G(\H)$ to a Sylow $p$-subgroup of $N_G(\H^g)$. By \cite[Lemma~11.12]{Henke:Regular}, we have also $E(\L)c_g=E(\L)$. As $\bN_\L(\H)=E(\L)N_G(\H)$ and $\bN_\L(\H^g)=E(\L)N_G(\H^g)$ the assertion follows.
\end{proof}

\begin{lemma}\label{L:ConjugatesSubnormal}
Every $\F$-conjugate of $\E$ is a subnormal subsystem of $\F$.
\end{lemma}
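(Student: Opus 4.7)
The plan is to combine Lemma~\ref{L:ConjEandT} with Lemma~\ref{L:ConjH}(a), which together essentially give the result for free. Let me describe the steps in order.

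First, given an arbitrary $\F$-conjugate $\E^\phi$ of $\E$ for some $\phi \in \Hom_\F(T, S)$, I would apply Lemma~\ref{L:ConjEandT} to reduce to the case that $\phi$ is a morphism in $N_\F(\tT)$. More precisely, the lemma shows $\E^\F = \E^{N_\F(\tT)}$, so without loss of generality we may replace $\phi$ by a morphism in $N_\F(\tT)$ realizing the same $\F$-conjugate of $\E$.

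Next, I would use Lemma~\ref{L:NinNLtTofH}(b), which says that $G = N_\L(\tT)$ is a model for $N_\F(\tT)$, equivalently $N_\F(\tT) = \F_S(G)$. Consequently, the morphism $\phi$ in $N_\F(\tT)$ is induced by conjugation with some element $g \in G$, i.e.\ $\phi = c_g|_T$.

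Finally, I would invoke Lemma~\ref{L:ConjH}(a), which asserts precisely that for such a $g \in G$ with $T^g \leq S$, we have $\E^\phi = \F_{T^g}(\H^g)$ and that this fusion system is a subnormal subsystem of $\F$. (The underlying point is that $c_g \in \Aut(\L)$ by Lemma~\ref{L:BasisNLtT}, so $\H^g \subn \L$, and then Background Theorem~\ref{bT:2} transports this subnormality to the fusion-theoretic side.) This completes the argument.

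There is no real obstacle here; the lemma is essentially a corollary of the preceding two lemmas, with the only point requiring care being the initial reduction to a morphism inside $N_\F(\tT)$, which is handled cleanly by Lemma~\ref{L:ConjEandT}.
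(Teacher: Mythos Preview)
Your proposal is correct and follows essentially the same approach as the paper: the paper's proof is a one-liner citing $N_\F(\tT)=\F_S(G)$ together with Lemma~\ref{L:ConjEandT} and Lemma~\ref{L:ConjH}(a), which is exactly the chain of reductions you outline.
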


\begin{proof}
As $N_\F(\tT)=\F_S(G)$, this follows from Lemma~\ref{L:ConjEandT} and Lemma~\ref{L:ConjH}(a).
\end{proof}

Recall that we call $\E$ \emph{fully $\F$-normalized}, if 
\[|N_S(\E)|\geq |N_S(\tE)|\mbox{ for all }\tE\in\E^\F.\]
Notice that $\E^\F=(\E')^\F$ for every $\E'\in\E^\F$. Hence, we can always choose a fully $\F$-normalized $\F$-conjugate of $\E$. 

\begin{definition}
The subsystem $\E$ is called \emph{fully $\F$-automized} if
\[\Aut_S(T)\cap \Aut(\E)\in\Syl_p(\Aut_\F(T)\cap \Aut(\E)).\]
\end{definition}

We will prove below that $\E$ is fully $\F$-normalized if and only if $T$ is fully $\F$-centralized and $\E$ is fully $\F$-automized. 

\begin{lemma}\label{L:EFullyAutomized} Suppose $N_S(\E)\leq S_0$. Then the following hold.
\begin{itemize}
\item [(a)] $\Aut_{S_0}(T)$ is a Sylow $p$-subgroup of $\Aut_{\F_0}(T)$ and of 
\[\Aut_{\F_0}(T)\cap \Aut_\F(T)=\Aut_{N_\F(\E)}(T)=\Aut_\F(T)\cap\Aut(\E).\]
\item [(b)] The following properties are equivalent.  
\begin{itemize}
 \item [(i)] $\E$ is fully $\F$-automized;
 \item [(ii)] $T$ is fully $N_\F(\E)$-automized. 
\item [(iii)] $\Aut_{N_S(\E)}(T)=\Aut_{S_0}(T)$.
\end{itemize}
\end{itemize}
\end{lemma}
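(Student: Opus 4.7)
\medskip

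\noindent\textbf{Proof plan for Lemma~\ref{L:EFullyAutomized}.}

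My plan is first to establish part (a) by showing $T$ is fully $\F_0$-normalized, and then to deduce (b) essentially by unwinding definitions. Since $T\leq S_0$, Theorem~\ref{T:bNLH}(b) gives $\E\unlhd \F_0$, so $T$ is strongly closed in $\F_0$. In particular every $S_0$-conjugate of $T$ lies in $T$, forcing $T\unlhd S_0$, i.e.\ $N_{S_0}(T)=S_0$. The only $\F_0$-conjugate of $T$ is $T$ itself (again by strong closure), so $T$ is fully $\F_0$-normalized and, by the Sylow axiom applied to the saturated fusion system $\F_0$, $\Aut_{S_0}(T)\in\Syl_p(\Aut_{\F_0}(T))$.

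Next I would invoke Lemma~\ref{L:NFEbNLH}(a), which under the hypothesis $N_S(\E)\leq S_0$ gives $N_\F(\E)=\F\cap \F_0=\F_0|_{N_S(\E)}$. Since $T\leq N_S(\E)$, every $\F_0$-automorphism of $T$ is an automorphism in $\F_0|_{N_S(\E)}=N_\F(\E)$, so $\Aut_{\F_0}(T)=\Aut_{N_\F(\E)}(T)$; and because $N_\F(\E)\subseteq\F$ this group is contained in $\Aut_\F(T)$, yielding $\Aut_{\F_0}(T)\cap \Aut_\F(T)=\Aut_{\F_0}(T)=\Aut_{N_\F(\E)}(T)$. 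For the final equality in (a), the inclusion $\Aut_{N_\F(\E)}(T)\subseteq \Aut_\F(T)\cap\Aut(\E)$ is immediate from $N_\F(\E)$-invariance of $\E$ (Lemma~\ref{L:NFEbNLH}(c)); conversely, any $\phi\in\Aut_\F(T)\cap\Aut(\E)$ is a generator of $N_\F(T,\E)$ by the definition in Notation~\ref{N:FusionBasic}, and Lemma~\ref{L:inAutEinNFE}(a) identifies $N_\F(T,\E)=N_{N_\F(\E)}(T)$. Combining these yields the three Sylow statements of (a).

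For (b) I would first observe the identification $\Aut_S(T)\cap\Aut(\E)=\Aut_{N_S(\E)}(T)$, which is immediate from the definition $N_S(\E)=\{s\in N_S(T)\co c_s|_T\in\Aut(\E)\}$. Combined with the equality $\Aut_\F(T)\cap\Aut(\E)=\Aut_{N_\F(\E)}(T)$ from (a), condition (i) becomes $\Aut_{N_S(\E)}(T)\in\Syl_p(\Aut_{N_\F(\E)}(T))$, which is precisely (ii). The equivalence of (ii) and (iii) then follows because $N_S(\E)\leq S_0$ forces $\Aut_{N_S(\E)}(T)\leq \Aut_{S_0}(T)$; since by (a) the latter is already a Sylow $p$-subgroup of $\Aut_{N_\F(\E)}(T)$, the $p$-subgroup $\Aut_{N_S(\E)}(T)$ is Sylow if and only if equality $\Aut_{N_S(\E)}(T)=\Aut_{S_0}(T)$ holds. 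There is no real obstacle here; the whole argument is a bookkeeping exercise, and the only mildly subtle point is remembering that $\F_0$ need not be a subsystem of $\F$, so one must use Lemma~\ref{L:NFEbNLH}(a) to compare $\Aut_{\F_0}(T)$ with $\Aut_\F(T)$.
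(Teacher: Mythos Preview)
Your proof is correct and follows essentially the same approach as the paper's own proof: both use the saturation of $\F_0$ with $T\unlhd S_0$ for the Sylow statement, Lemma~\ref{L:NFEbNLH}(a) to identify $\Aut_{N_\F(\E)}(T)$, and the pair Lemma~\ref{L:NFEbNLH}(c)/Lemma~\ref{L:inAutEinNFE}(a) for the equality with $\Aut_\F(T)\cap\Aut(\E)$; part~(b) is handled identically. The only cosmetic difference is that you use the identification $N_\F(\E)=\F_0|_{N_S(\E)}$ from Lemma~\ref{L:NFEbNLH}(a) to get the stronger equality $\Aut_{\F_0}(T)=\Aut_{N_\F(\E)}(T)$ directly, whereas the paper uses $N_\F(\E)=\F\cap\F_0$ together with the separate observation $\Aut_{S_0}(T)\leq\Aut_\F(T)$ (from $\F=\F_S(\L)$) to reach the intersection form; your route is slightly cleaner but not materially different.
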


\begin{proof}
\textbf{(a)} As $\F_0:=\F_{S_0}(\bN_\L(\H))$ is saturated and $T\unlhd S_0$, we have $\Aut_{S_0}(T)\in\Syl_p(\Aut_{\F_0}(T))$. Notice moreover that $\Aut_{S_0}(T)\leq \Aut_\F(T)$ since $\F=\F_S(\L)$. Hence, $\Aut_{S_0}(T)$ is also a Sylow $p$-subgroup of $\Aut_{\F_0}(T)\cap \Aut_\F(T)$. It follows from Lemma~\ref{L:NFEbNLH}(a) that
\[\Aut_\F(T)\cap \Aut_{\F_0}(T)=\Aut_{\F\cap\F_0}(T)=\Aut_{N_\F(\E)}(T).\]
By Lemma~\ref{L:NFEbNLH}(c), $\E$ is $N_\F(\E)$ invariant and so $\Aut_{N_\F(\E)}(T)\leq \Aut_\F(T)\cap \Aut(\E)$. The converse inclusion follows from Lemma~\ref{L:inAutEinNFE}(a). So (a) holds.

\smallskip

\textbf{(b)} Notice that $\Aut_S(T)\cap \Aut(\E)=\Aut_{N_S(\E)}(T)$. Hence, by (a), property (i) is equivalent to $\Aut_{N_S(\E)}(T)\in\Syl_p(\Aut_{N_\F(\E)}(T))$, i.e. to (ii). As $N_S(\E)\leq S_0$ by assumption, we have $\Aut_{N_S(\E)}(T)\leq \Aut_{S_0}(T)$. So (ii) is by (a) equivalent to $\Aut_{N_S(\E)}(T)=\Aut_{S_0}(T)$.
\end{proof}

\begin{lemma}\label{L:Tfullycentralized}~
\begin{itemize}
 \item [(a)] Suppose $C_S(T)\leq S_0$. Then $T$ is fully $\F$-centralized if and only if $C_S(T)=C_{S_0}(T)$.
 \item [(b)] If $T$ is fully $\F$-centralized and $N_S(\E)\leq S_0$, then $C_\F(T)=C_{\F_0}(T)$.
\end{itemize}
\end{lemma}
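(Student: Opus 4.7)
The plan for (a) is to reduce everything to Sylow's theorem inside the finite group $G$ and its subgroups, exploiting $T^\F = \{T^g : g\in G,\; T^g\leq S\}$ (a consequence of Lemma~\ref{L:ConjEandT} combined with $N_\F(\tT)=\F_S(G)$ from Lemma~\ref{L:NinNLtTofH}(b)). The key step is to prove
\[\max_{T'\in T^\F}|C_S(T')| \;=\; |C_{S_0}(T)|.\]
Combined with the inclusion $C_S(T)\leq C_{S_0}(T)$ coming from the hypothesis $C_S(T)\leq S_0$, this immediately yields the equivalence in (a): $T$ is fully $\F$-centralized iff $|C_S(T)|=|C_{S_0}(T)|$ iff $C_S(T)=C_{S_0}(T)$.

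For the upper bound $|C_S(T^g)|\leq |C_{S_0}(T)|$, fix $g\in G$ with $T^g\leq S$ and set $K:=N_G(\H^g)\cap N_G(T^g)$. Any $x\in C_S(T^g)$ satisfies $c_x|_{T^g}=\id\in\Aut(\E^g)$, so by Lemma~\ref{L:finNLtT} applied to $\H^g$ we have $x\in N_G(\H^g)$; trivially $x\in N_G(T^g)$, so $C_S(T^g)\leq K$. Since $T\unlhd S_0$ (as $\H\unlhd\bN_\L(\H)$ and $T=\H\cap S_0$), it follows that $T^g\unlhd S_0^g\leq K$; combined with $S_0^g\in\Syl_p(N_G(\H^g))$ (Lemma~\ref{L:ConjH}(b)) this forces $S_0^g\in\Syl_p(K)$. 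Sylow's theorem in $K$ then provides $h\in K$ with $C_S(T^g)^h\leq S_0^g$, and since $h\in N_G(T^g)$ and $c_h\in\Aut(\L)$ (Lemma~\ref{L:BasisNLtT}), $C_S(T^g)^h\leq C_{S_0^g}(T^g)=C_{S_0}(T)^g$. For the lower bound, $S\in\Syl_p(G)$ (as $S$ is a maximal $p$-subgroup of $\L$ containing $\tT$); Sylow's theorem in $G$ applied to the $p$-subgroup $C_{S_0}(T)T$ then supplies $g_0\in G$ with $(C_{S_0}(T)T)^{g_0}\leq S$, giving both $T^{g_0}\in T^\F$ and $C_{S_0}(T)^{g_0}\leq C_S(T^{g_0})$.

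Part (b) will follow formally from (a) together with the earlier machinery. Every $x\in C_S(T)$ satisfies $c_x|_T=\id\in\Aut(\E)$ and therefore lies in $N_S(\E)$ by Lemma~\ref{L:finNLtT}; the hypothesis $N_S(\E)\leq S_0$ then gives $C_S(T)\leq S_0$, so part (a) delivers $C_S(T)=C_{S_0}(T)$. Now Lemma~\ref{L:inAutEinNFE}(b) gives $C_\F(T)=C_{N_\F(\E)}(T)$, and Lemma~\ref{L:NFEbNLH}(a) (which requires $N_S(\E)\leq S_0$) gives $N_\F(\E)=\F_0|_{N_S(\E)}$, yielding $C_\F(T)=C_{\F_0|_{N_S(\E)}}(T)$. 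On the other hand, any morphism of $C_{\F_0}(T)$ extends to an $\F_0$-morphism $PT\to QT$ with $PT,QT\leq C_S(T)T\leq N_S(\E)$, hence is already a morphism of $\F_0|_{N_S(\E)}$; this gives $C_{\F_0}(T)=C_{\F_0|_{N_S(\E)}}(T)$ and closes the chain of equalities.

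The main obstacle I anticipate is the Sylow bookkeeping in the upper bound of (a): to conjugate $C_S(T^g)$ into $S_0^g$ one must use an element that \emph{also} normalizes $T^g$, so that $T^{gh}=T^g$ and the centralizer relation is preserved under conjugation. This forces the argument into $K=N_G(\H^g)\cap N_G(T^g)$ rather than just $N_G(\H^g)$; the point that makes everything fit together is that $T^g\unlhd S_0^g$ automatically places $S_0^g$ inside $K$ and preserves its Sylow property.
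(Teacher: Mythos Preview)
Your proof is correct, and for part (b) it is essentially the paper's argument unpacked (the paper simply cites Lemma~\ref{L:inAutEinNFE}(c), whose content is exactly the chain $C_\F(T)=C_{N_\F(\E)}(T)=C_{\F_0|_{N_S(\E)}}(T)=C_{\F_0}(T)$ that you spell out).

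For part (a), however, your route is genuinely different from the paper's. You prove the numerical identity $\max_{T'\in T^\F}|C_S(T')|=|C_{S_0}(T)|$ by conjugating explicitly: an upper bound via Sylow in $K=N_G(\H^g)\cap N_G(T^g)$ (using that $S_0^g$ lands inside $K$ because $T^g\unlhd S_0^g$), and a lower bound via Sylow in $G$ applied to $TC_{S_0}(T)$. The paper instead observes once and for all that $C_G(T)\leq N_G(\H)=:X$ (from Lemma~\ref{L:finNLtT}), that $S_0\in\Syl_p(N_X(T))$ since $S_0\in\Syl_p(X)$ with $T\unlhd S_0$, and hence that $C_{S_0}(T)=S_0\cap C_G(T)\in\Syl_p(C_G(T))$ because $C_G(T)\unlhd N_X(T)$. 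The equivalence then drops out of the standard fact (\cite[Proposition~I.5.4]{Aschbacher/Kessar/Oliver:2011}) that $T$ is fully $\F_S(G)$-centralized iff $C_S(T)\in\Syl_p(C_G(T))$, combined with $T^\F=T^{N_\F(\tT)}$. The paper's argument is shorter and avoids the careful choice of the group $K$; your argument is more self-contained in that it does not invoke the external characterization of fully centralized subgroups in group fusion systems.
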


\begin{proof}
\textbf{(a)} It follows from Lemma~\ref{L:finNLtT} that $C_G(T)\leq X:=N_G(\H)$ and thus $C_G(T)=C_X(T)\unlhd N_X(T)$. Notice that $S_0$ is a Sylow $p$-subgroup of $X$ which is contained in $N_X(T)$. Thus, $S_0$ is also a Sylow $p$-subgroup of $N_X(T)$. As $C_G(T)\unlhd N_X(T)$, it follows that $C_{S_0}(T)=S_0\cap C_G(T)\in\Syl_p(C_G(T))$. Since $C_S(T)\leq C_{S_0}(T)$ by assumption, we can conclude that $C_S(T)\in\Syl_p(C_G(T))$ if and only if $C_S(T)=C_{S_0}(T)$. Since, $N_\F(\tT)=\F_S(G)$, the condition $C_S(T)\in\Syl_p(C_G(T))$ is by \cite[Proposition~I.5.4]{Aschbacher/Kessar/Oliver:2011} equivalent to $T$ being fully $N_\F(\tT)$-centralized. It follows from Lemma~\ref{L:ConjEandT} that this is in turn equivalent to $T$ being fully $\F$-centralized.

\smallskip

\textbf{(b)} Note that $C_S(T)\leq N_S(\E)\leq S_0$. Hence, (b) follows from part (a) and Lemma~\ref{L:inAutEinNFE}(c).
\end{proof}

\begin{lemma}\label{L:EfullyNormalized}
The following properties are equivalent:
\begin{itemize}
 \item [(i)] $\E$ is fully $\F$-normalized;
 \item [(ii)] $N_S(\H)\in\Syl_p(N_{G}(\H))$;
 \item [(iii)] $\E$ is fully $\F$-automized and $T$ is fully $\F$-centralized.
\end{itemize}
If so, then $N_\F(\E)$ is saturated, $\E\unlhd N_\F(\E)$, $E(N_\F(\E))=E(\F)$ and $(\bN_\L(\H),\delta(N_\F(\E)),N_S(\E))$ is a regular locality over $N_\F(\E)$.
\end{lemma}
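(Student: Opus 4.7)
The strategy is to establish the chain (i) $\Leftrightarrow$ (ii) $\Leftrightarrow$ (iii) and then derive the concluding statements directly from Theorem~\ref{T:bNLH} by the fortunate coincidence that, under these conditions, $N_S(\E)$ is itself a legitimate choice of Sylow $p$-subgroup $S_0$ in Hypothesis~\ref{H:S0F0}.

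For (i) $\Leftrightarrow$ (ii), I would first identify the $\F$-conjugates of $\E$ via conjugation in $G$. By Lemma~\ref{L:ConjEandT}, $\E^\F = \E^{N_\F(\tT)}$, and by Lemma~\ref{L:NinNLtTofH}(b), $N_\F(\tT) = \F_S(G)$. So every $\F$-conjugate $\tilde{\E}$ of $\E$ has the form $\E^{c_g|_T}$ for some $g\in G$ with $T^g\leq S$, which by Lemma~\ref{L:ConjH}(a) equals $\F_{T^g}(\H^g)$; applying Lemma~\ref{L:finNLtT} to $\H^g$ then yields $N_S(\tilde{\E}) = N_S(\H^g) = S\cap N_G(\H)^g$. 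A standard argument using $S\in\Syl_p(G)$ shows that the maximum of $|S\cap N_G(\H)^g|$ over $g\in G$ equals $|N_G(\H)|_p$: each such intersection is a $p$-subgroup of $N_G(\H)^g$, and equality is realised by conjugating a Sylow $p$-subgroup of $N_G(\H)$ into $S$ by Sylow's theorem in $G$. Hence $\E$ is fully $\F$-normalized iff $|N_S(\H)| = |N_G(\H)|_p$, i.e.\ iff $N_S(\H) \in \Syl_p(N_G(\H))$.

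For (ii) $\Leftrightarrow$ (iii), I would combine Lemma~\ref{L:Tfullycentralized}(a) and Lemma~\ref{L:EFullyAutomized}(b), after noting that $C_S(T) \leq C_G(T) \cap S \leq N_S(\H) = N_S(\E)$ by Lemma~\ref{L:finNLtT}. Assuming (ii), set $S_0 := N_S(\E) = N_S(\H)$; then $S_0\in\Syl_p(N_G(\H))$, Hypothesis~\ref{H:S0F0} is satisfied (recall $T\leq N_S(\E)$), and both $C_S(T) = C_{S_0}(T)$ and $\Aut_{N_S(\E)}(T) = \Aut_{S_0}(T)$ hold trivially, so the two lemmas immediately give (iii). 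Conversely, assuming (iii), choose $S_0\in\Syl_p(N_G(\H))$ with $N_S(\E)\leq S_0$. The same two lemmas now produce $C_S(T) = C_{S_0}(T)$ and $\Aut_{N_S(\E)}(T) = \Aut_{S_0}(T)$; identifying these with $N_S(\E)/C_S(T)$ and $S_0/C_{S_0}(T)$ respectively, I conclude $|N_S(\E)| = |S_0|$, whence $N_S(\E) = S_0 \in \Syl_p(N_G(\H))$.

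For the concluding assertions, under the equivalent conditions I take $S_0 := N_S(\E) = N_S(\H)$, so that Hypothesis~\ref{H:S0F0} holds and $\F_0 = \F_{S_0}(\bN_\L(\H)) = N_\F(\E)$ by definition. Then Theorem~\ref{T:bNLH}(a),(b),(c) yield at once that $N_\F(\E)$ is saturated, that $(\bN_\L(\H),\delta(N_\F(\E)),N_S(\E))$ is a regular locality over $N_\F(\E)$, that $\E \unlhd N_\F(\E)$, and that $E(N_\F(\E)) = E(\F_0) = E(\F)$. The only genuinely new ingredient is the Sylow-theoretic computation in (i) $\Leftrightarrow$ (ii); the rest is a clean assembly of previously established lemmas, so I anticipate no substantial obstacle.
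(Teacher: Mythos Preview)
Your proposal is correct and follows essentially the same route as the paper's proof: the equivalence (i)\,$\Leftrightarrow$\,(ii) via Lemma~\ref{L:ConjEandT}, Lemma~\ref{L:ConjH}, Lemma~\ref{L:finNLtT} and Sylow's theorem in $G$; the equivalence (ii)\,$\Leftrightarrow$\,(iii) via Lemmas~\ref{L:Tfullycentralized}(a) and~\ref{L:EFullyAutomized}(b) together with the order factorization $|N_S(\E)|=|\Aut_{N_S(\E)}(T)|\cdot|C_S(T)|$; and the concluding assertions by taking $S_0=N_S(\H)$ in Theorem~\ref{T:bNLH}. The only cosmetic difference is that the paper argues (i)\,$\Rightarrow$\,(ii) by contraposition and (ii)\,$\Rightarrow$\,(i) directly, whereas you package both directions into a single computation of $\max_g|S\cap N_G(\H)^g|$; your formulation is arguably cleaner, and the one point worth making explicit is that when you conjugate a Sylow $S_0$ of $N_G(\H)$ into $S$ you should choose $S_0$ with $T\leq S_0$ so that $T^g\leq S$ and $c_g|_T$ really is an $\F$-morphism.
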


\begin{proof}
Recall that $N_S(\E)=N_S(\H)$ and $N_\F(\E)=\F_{N_S(\H)}(\bN_\L(\H))$. In particular, if (ii) holds, then Theorem~\ref{T:bNLH}(a),(b),(c) applied with $S_0=N_S(\H)$ gives that $N_\F(\E)$ is saturated, $\E\unlhd N_\F(\E)$, $E(N_\F(\E))=E(\F)$ and $(\bN_\L(\H),\delta(N_\F(\E)),N_S(\E))$ is a regular locality over $N_\F(\E)$. Thus, it remains only to prove the equivalence of properties (i)-(iii). For the proof choose $S_0\in\Syl_p(N_G(\H))$ such that $N_S(\E)=N_S(\H)\leq S_0$.

\smallskip

\textbf{(ii)$\Longleftrightarrow$(iii)} Observe that
\begin{equation}\label{E:Inequality}
|\Aut_{N_S(\E)}(T)|\cdot |C_S(T)|=|N_S(\E)|\leq |S_0|=|\Aut_{S_0}(T)|\cdot |C_{S_0}(T)|.
\end{equation}
The inequality here is an equality if and only if $N_S(\E)=S_0$, i.e. if and only if (ii) holds. We have moreover $\Aut_{N_S(\E)}(T)\leq \Aut_{S_0}(T)$ with equality if and only if $\E$ is fully automized (see Lemma~\ref{L:EFullyAutomized}(b)). As $C_S(T)\leq N_S(\E)\leq S_0$, we have moreover $C_S(T)\leq C_{S_0}(T)$. Lemma~\ref{L:Tfullycentralized}(a) gives that $C_S(T)=C_{S_0}(T)$ if and only if $T$ is fully centralized. Hence, the inequality in \eqref{E:Inequality} is also an equality if and only if (iii) holds.

\smallskip

\textbf{(i)$\Longrightarrow$(ii)} To prove this implication by contraposition, assume (ii) is false, i.e. $N_S(\E)<S_0$. As $S\in\Syl_p(G)$ and $S_0$ is a $p$-subgroup of $G$, it follows from Sylow's Theorem that there exists $g\in G$ with $S_0^g\leq S$. Notice that $T\leq N_S(\E)\leq S_0$. So in particular, $T^g\leq S$ and $\phi=c_g|_T$ is a morphism in $\F$. By Lemma~\ref{L:ConjH}(a), we have $\E^\phi=\F_{T^g}(\H^g)$. By Lemma~\ref{L:ConjH}(b), $S_0^g\leq N_G(\H^g)$ and thus $S_0^g\leq N_S(\H^g)=N_S(\E^\phi)$ by Lemma~\ref{L:finNLtT}. Hence, $|N_S(\E)|<|S_0|=|S_0^g|\leq |N_S(\E^\phi)|$ and therefore $\E$ is not fully $\F$-normalized. 

\smallskip

\textbf{(ii)$\Longrightarrow$(i)} Assume (ii) and fix $\E'\in \E^\F$ such that $\E'$ is fully $\F$-normalized. It is sufficient to show that $|N_S(\E')|\leq |N_S(\E)|$. By Lemma~\ref{L:ConjEandT} we have $\E'\in\E^{N_\F(\tT)}$. As $N_\F(\tT)=\F_S(G)$, there exists thus $g\in G$ with $T^g\leq S$ such that, setting $\phi=c_g|_T$, we have $\E'=\E^\phi$. Then Lemma~\ref{L:ConjH}(a) gives $\E'=\F_{T^g}(\H^g)$. By Lemma~\ref{L:ConjH}(b), $c_g$ induces an isomorphism from $N_G(\H)$ to $N_G(\H^g)$. By Lemma~\ref{L:finNLtT}, $N_S(\E')=N_S(\H^g)$ is a $p$-subgroup of $N_G(\H^g)$. So (ii) implies $|N_S(\E')|\leq |N_S(\H)|=|N_S(\E)|$ as required, 
\end{proof}

\begin{proof}[\textbf{Proof of Theorem~\ref{T:FusionNormalizerNew}}]
By Lemma~\ref{L:finNLtT}, $N_\F(\E)$ is a fusion system over $N_S(\E)$. Part (a) follows thus from Lemma~\ref{L:GenNFE}.
Part (b) is a restatement of Lemma~\ref{L:NFEbNLH}(c), part (c) was shown as Proposition~\ref{P:DinNFE}, part (d) follows from Lemma~\ref{L:EfullyNormalized}, and part (e) was proved as Lemma~\ref{L:inAutEinNFE}(a),(b). If $\F$ is constrained, then by Remark~\ref{R:Constrained}, $\L$ is a model for $\F$, $\H$ is a subnormal subgroup of $\L$ realizing $\E$, $\bN_\L(\H)=N_\L(\H)$ and thus $N_\F(\E)=\F_{N_S(\H)}(N_\L(\H))$. Part (f) follows thus from the uniqueness of models (cf. \cite[Theorem~III.5.10]{Aschbacher/Kessar/Oliver:2011}) and  Lemma~\ref{L:NormalizerConstrained}.
\end{proof}

We conclude this section with two lemmas which make it easier to move to fully $\F$-normalized conjugates of $\E$.

\begin{lemma}\label{L:MapNSE}
 Let $\beta\in\Hom_\F(N_S(\E),S)$. Then $N_S(\E)\beta\leq N_S(\E^\beta)$. 
\end{lemma}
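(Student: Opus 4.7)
The plan is to verify directly that for each $s\in N_S(\E)$ the image $s\beta$ satisfies the two conditions defining $N_S(\E^\beta)$, namely that $s\beta\in N_S(T\beta)$ and that $c_{s\beta}|_{T\beta}\in\Aut(\E^\beta)$. Recall that $\E^\beta$ is by construction a fusion system on $T\beta$, and observe that $T\leq N_S(\E)$ because inner automorphisms of $T$ certainly lie in $\Aut(\E)$; thus $\beta$ restricts to an injective group homomorphism $T\to T\beta$, and in particular $T\beta$ is a group, not merely a set.

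First I would exploit that the restriction $\beta|_{N_S(\E)}$ is an injective group homomorphism. For $s\in N_S(\E)\leq N_S(T)$ and $t\in T$, the multiplicativity of $\beta$ yields the identity
\[(t\beta)^{s\beta}=(s^{-1}ts)\beta=(t^s)\beta,\]
and since $t^s\in T$ this shows at once that $(T\beta)^{s\beta}\subseteq T\beta$, hence $s\beta\in N_S(T\beta)$. Reading the same identity as an equality of maps $T\beta\to T\beta$, it says precisely that
\[c_{s\beta}|_{T\beta}=\beta^{-1}(c_s|_T)\beta,\]
where $\beta$ on the right denotes its restriction to $T$.

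Finally, by the very definition of $\E^\beta$ given in Notation~\ref{N:FusionBasic}, conjugation of a morphism of $\E$ by $\beta$ produces a morphism of $\E^\beta$ on the corresponding subgroup. Since $s\in N_S(\E)$ exactly means $c_s|_T\in\Aut(\E)$, the displayed formula shows $c_{s\beta}|_{T\beta}\in\Aut(\E^\beta)$, completing the verification that $s\beta\in N_S(\E^\beta)$. There is no real obstacle: this lemma is essentially a bookkeeping observation that the formation of $N_S(-)$ is natural with respect to $\F$-conjugation, and the proof is a one-line computation in the group $S$ after unwinding definitions.
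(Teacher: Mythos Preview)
Your proof is correct and follows essentially the same approach as the paper: both arguments reduce to the identity $c_{s\beta}|_{T\beta}=\beta^{-1}(c_s|_T)\beta$ and then invoke the definition of $\E^\beta$. The only difference is that the paper cites \cite[Remark~2.1]{Henke:2013} for this conjugation identity, whereas you derive it directly from the multiplicativity of $\beta$.
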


\begin{proof}
If $s\in N_S(\E)$, then $c_s|_T\in\Aut(\E)$. So using \cite[Remark~2.1]{Henke:2013}, one can observe that 
\[c_{s\beta}|_{T\beta}=\beta^{-1}(c_s|_T)\beta\in\Aut(\E^\beta)\]
and thus $s\beta\in N_S(\E^\beta)$.
\end{proof}

\begin{lemma}
 \begin{itemize}
 \item [(a)] Let $\hat{\E}\in\E^\F$ be such that $\hat{\E}$ is fully $\F$-normalized. Then there exists $\alpha\in\Hom_\F(N_S(\E),S)$ such that  $\E^\alpha=\hat{\E}$.
 \item [(b)] Fix $\alpha\in\Hom_\F(N_S(\E),S)$ such that $\E^\alpha$ is fully normalized. Then for all $P,Q\leq N_S(\E)$,  
\[\Hom_{N_\F(\E)}(P,Q)=\{\phi\in\Hom_\F(P,Q)\colon \alpha^{-1}\phi\alpha\in\Hom_{N_\F(\E^\alpha)}(P\alpha,Q\alpha)\},\]
i.e. $\alpha$ induces an isomorphism from $N_\F(\E)$ to $N_\F(\E^\alpha)|_{N_S(\E)\alpha}$. In particular, if $\E$ is also fully normalized, then $N_S(\E)\alpha=N_S(\E^\alpha)$ and $\alpha$ induces an isomorphism from $N_\F(\E)$ to $N_\F(\E^\alpha)$.
 \end{itemize}
\end{lemma}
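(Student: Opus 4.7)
The plan is to prove (a) by an explicit Sylow construction inside the model $G$ of $N_\F(\tT)$, and to prove (b) by first establishing the characterization for a distinguished map $\alpha_0$ coming from (a) and then reducing the general case via Theorem~\ref{T:FusionNormalizerNew}(a),(e).

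For (a), Lemma~\ref{L:ConjEandT} together with the fact that $N_\F(\tT)=\F_S(G)$ produces $g\in G$ with $T^g\le S$ and $\hat\E=\E^{c_g|_T}$; by Lemma~\ref{L:ConjH}(a) the partial subnormal subgroup corresponding to $\hat\E$ is $\hat\H:=\H^g$ with Sylow $\hat T:=S\cap\hat\H=T^g$. Lemma~\ref{L:finNLtT} gives $N_S(\hat\E)=N_S(\hat\H)$, and Lemma~\ref{L:EfullyNormalized} translates full $\F$-normalization of $\hat\E$ into $N_S(\hat\H)\in\Syl_p(N_G(\hat\H))$. Since $N_G(\H)^g=N_G(\hat\H)$ by Lemma~\ref{L:ConjH}(b), $N_S(\E)^g$ is a $p$-subgroup of $N_G(\hat\H)$, so Sylow's theorem provides $x\in N_G(\hat\H)$ with $(N_S(\E)^g)^x\le N_S(\hat\H)$. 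Setting $f:=gx\in G$ and $\alpha:=c_f|_{N_S(\E)}$, one verifies using $x\in N_G(\hat\H)$, Lemma~\ref{L:ConjH}(a) and the cardinality count $T^f\le S\cap\hat\H=\hat T$ with $|T^f|=|\hat T|$ that $\H^f=\hat\H$, $N_S(\E)^f\le N_S(\hat\E)\le S$, and $\E^\alpha=\hat\E$.

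For (b), fix $\alpha_0:=c_f|_{N_S(\E)}$ as produced in (a). By Lemma~\ref{L:BasisNLtT} we have $c_f\in\Aut(\L)$, and by Lemma~\ref{L:ConjH}(b) this automorphism restricts to an isomorphism $\bN_\L(\H)\to\bN_\L(\hat\H)$ sending $N_S(\H)$ into $N_S(\hat\H)$. Since $N_\F(\E)=\F_{N_S(\H)}(\bN_\L(\H))$ and $N_\F(\hat\E)=\F_{N_S(\hat\H)}(\bN_\L(\hat\H))$, conjugation by $\alpha_0$ carries each generating conjugation $c_\ell|_Q$ of $N_\F(\E)$ (with $\ell\in\bN_\L(\H)$, $Q\le N_S(\H)$) to the generator $c_{\ell^f}|_{Q^f}$ of $N_\F(\hat\E)$ (with $\ell^f\in\bN_\L(\hat\H)$, $Q^f\le N_S(\hat\H)$), and this bijection is inverted by $c_{f^{-1}}$; this proves the characterization for $\alpha_0$. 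For an arbitrary $\alpha$ as in (b), note $T\alpha=\hat T=T\alpha_0$ since $\hat T=S\cap\hat\H$ is intrinsic to $\hat\E$, and $N_S(\E)\alpha\le N_S(\hat\E)$ by Lemma~\ref{L:MapNSE}. Then $\beta:=\alpha_0^{-1}\alpha$ is an $\F$-morphism from $N_S(\E)\alpha_0$ to $N_S(\hat\E)$ whose restriction to $\hat T$ lies in $\Aut(\hat\E)$ (because $\hat\E^{\beta|_{\hat T}}=(\E^{\alpha_0})^{\alpha_0^{-1}\alpha}=\E^\alpha=\hat\E$), so $\beta\in N_\F(\hat T,\hat\E)\subseteq N_\F(\hat\E)$ by Theorem~\ref{T:FusionNormalizerNew}(a),(e); its inverse lies in $N_\F(\hat\E)$ as well. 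The factorization $\alpha^{-1}\phi\alpha=\beta^{-1}(\alpha_0^{-1}\phi\alpha_0)\beta$ then transports the characterization from $\alpha_0$ to $\alpha$. The ``in particular'' clause is immediate, since full $\F$-normalization of $\E$ forces $|N_S(\E)|=|N_S(\hat\E)|$, making $N_S(\E)\alpha=N_S(\hat\E)$ and turning the characterization into an isomorphism $N_\F(\E)\to N_\F(\hat\E)$.

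The main obstacle is the bridge in (b): showing that an arbitrary $\alpha$ agrees, up to an $N_\F(\hat\E)$-internal morphism $\beta$, with a group-theoretic map $\alpha_0$ coming from an element of $G$. Verifying that $\beta\in N_\F(\hat T,\hat\E)$ requires both that $\beta$ has domain and codomain inside $N_S(\hat\E)$ (which needs Lemma~\ref{L:MapNSE} and the choice of $\alpha_0$ from (a)) and that it preserves $\hat\E$ on $\hat T$. Once this is in place, the $G$-action on $\L$ from Lemma~\ref{L:BasisNLtT} combined with the locality isomorphism from Lemma~\ref{L:ConjH}(b) does all the remaining work.
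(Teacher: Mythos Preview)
Your argument for part~(a) is correct and is a legitimate alternative to the paper's proof. The paper works purely in fusion-system language: it uses that $\hat\E$ is fully $\F$-automized (via Lemma~\ref{L:EfullyNormalized}) to conjugate $\Aut_{N_S(\E)}(T)^\phi$ into $\Aut_S(\hat T)$ by some $\gamma\in\Aut_\F(\hat T)\cap\Aut(\hat\E)$, and then applies the extension axiom. Your approach instead stays inside the group $G$ and uses Sylow's theorem directly in $N_G(\hat\H)$, which is arguably more transparent given the locality framework.

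For part~(b), however, there is a genuine gap in your treatment of the base case $\alpha_0$. Your generator argument shows that conjugation by $c_f$ carries each generator $c_\ell|_Q$ of $N_\F(\E)$ to a generator of $N_\F(\hat\E)$; that establishes only the forward inclusion $\phi\in N_\F(\E)\Rightarrow \alpha_0^{-1}\phi\alpha_0\in N_\F(\hat\E)$. Saying that ``this bijection is inverted by $c_{f^{-1}}$'' does not give the reverse implication: a morphism in $N_\F(\hat\E)$ between subgroups of $N_S(\H)^f$ is a composite of conjugations $c_{\ell'}|_{R'}$ with $R'\leq N_S(\hat\H)$, and pulling back by $c_{f^{-1}}$ lands $R'^{f^{-1}}$ in the Sylow $N_S(\hat\H)^{f^{-1}}$ of $N_G(\H)$, which in general properly contains $N_S(\H)$; so the pullback need not lie in $N_\F(\E)=\F_{N_S(\H)}(\bN_\L(\H))$. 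What is missing is precisely Lemma~\ref{L:NFEbNLH}(a): setting $S_0:=N_S(\hat\H)^{f^{-1}}$ and $\F_0:=\F_{S_0}(\bN_\L(\H))$, the map $c_f$ induces an isomorphism $\F_0\to N_\F(\hat\E)$, and then the identity $N_\F(\E)=\F\cap\F_0$ converts ``$\alpha_0^{-1}\phi\alpha_0\in N_\F(\hat\E)$'' into ``$\phi\in\F_0$'', which together with $\phi\in\F$ gives $\phi\in N_\F(\E)$.

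Once this gap is closed, your $\beta$-reduction is fine; but note that the paper bypasses the entire $\alpha_0/\beta$ decomposition by observing that $\tT=T_0T_0^\perp\leq TC_S(\H)\leq N_S(\E)$ is strongly closed, so \emph{any} $\alpha\in\Hom_\F(N_S(\E),S)$ already lies in $N_\F(\tT)=\F_S(G)$ and is therefore realized by some $g\in G$. This makes the detour through a special $\alpha_0$ unnecessary.
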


\begin{proof}
\textbf{(a)} Let $\hat{\E}$ be a subsystem over $\hat{T}$. As $\hat{\E}\in\E^\F$, there exists $\phi\in\Hom_\F(T,S)$ such that $\hat{T}=T\phi$ and $\hat{\E}=\E^\phi$. Notice that $(\Aut_\F(T)\cap \Aut(\E))^\phi=\Aut_\F(\hat{T})\cap \Aut(\hat{\E})$. In particular, $\Aut_{N_S(\E)}(T)^\phi$ is a $p$-subgroup of $\Aut_\F(\hat{T})\cap \Aut(\hat{\E})$. As $\hat{\E}$ is fully normalized, Lemma~\ref{L:EfullyNormalized} gives that $\hat{\E}$ is fully automized, i.e. $\Aut_S(\hat{T})\cap \Aut(\hat{\E})\in\Syl_p(\Aut_\F(\hat{T})\cap \Aut(\hat{\E}))$. Hence, by Sylow's Theorem there exists $\gamma\in\Aut_\F(\hat{T})\cap\Aut(\hat{\E})$ such that $\Aut_{N_S(\E)}(T)^{\phi\gamma}\leq \Aut_S(\hat{T})$. This means that $N_S(\E)\leq N_{\phi\gamma}$. Hence, by the extension axiom \cite[Proposition~I.2.5]{Aschbacher/Kessar/Oliver:2011}, $\phi\gamma$ extends to $\alpha\in\Hom_\F(N_S(\E),S)$. As $\gamma\in\Aut(\hat{\E})$, we have $\E^\alpha=\E^{\phi\gamma}=\hat{\E}^\gamma=\hat{\E}$. This proves (a).

\smallskip

\textbf{(b)} Let $\alpha\in\Hom_\F(N_S(\E),S)$ be such that $\E^\alpha$ is fully normalized. By Lemma~\ref{L:MapNSE}, $N_S(\E)\alpha\leq N_S(\E^\alpha)$. If $\E$ is fully normalized, then $|N_S(\E)\alpha|=|N_S(\E)|\geq |N_S(\E^\alpha)|$ and so $N_S(\E)\alpha=N_S(\E^\alpha)$.

\smallskip

By Lemma~\ref{L:BasicT0perp}(a), we have $\tT=T_0T_0^\perp\subseteq TC_S(\H)\leq N_S(\H)=N_S(\E)$. As $\tT=E(\L)\cap S$ is strongly $\F$-closed, it follows that $\tT\alpha=\tT$ and $\alpha$ is a morphism in $N_\F(\tT)=\F_S(G)$. Thus, we may pick $g\in G$ such that $\alpha=c_g|_{N_S(\E)}$. 

\smallskip

Recall that $N_S(\E)=N_S(\H)$. By Lemma~\ref{L:ConjH}(a), we have $\E^\alpha=\F_{T^g}(\H^g)$ and $T^g=\H^g\cap S$. In particular, $N_S(\E^\alpha)=N_S(\H^g)$ by Lemma~\ref{L:finNLtT}. As stated in Lemma~\ref{L:ConjH}(b), $c_g$ induces an isomorphism from $N_G(\H)$ to $N_G(\H^g)$. Since $\E^\alpha$ is fully normalized, it follows from Lemma~\ref{L:EfullyNormalized} that $N_S(\E^\alpha)=N_S(\H^g)\in\Syl_p(N_G(\H^g))$. Hence, $N_S(\E^\alpha)^{g^{-1}}$ is a Sylow $p$-subgroup of $N_G(\H)$. This allows us to choose $S_0$ such that
\[S_0=N_S(\E^\alpha)^{g^{-1}}.\]
By Lemma~\ref{L:MapNSE}, we have $N_S(\E)^g=N_S(\E)\alpha\leq N_S(\E^\alpha)$ and thus $N_S(\E)\leq S_0$. Now by Lemma~\ref{L:ConjH}(b), $c_g$ induces an isomorphism from $\bN_\L(\H)$ to $\bN_\L(\H^g)$ with $S_0c_g=N_S(\E^\alpha)=N_S(\H^g)$. Hence, using Theorem~\ref{T:bNLH}(a), it follows from Lemma~\ref{L:AutLFusion} that $c_g$ induces an isomorphism from $\F_0:=\F_{S_0}(\bN_\L(\H))$ to $N_\F(\E^\alpha)=\F_{N_S(\H^g)}(\bN_\L(\H^g))$.

\smallskip

If $\E$ is fully normalized, then Lemma~\ref{L:EfullyNormalized} implies $N_S(\E)=S_0$ and thus $\F_0=N_\F(\E)$, i.e. $\alpha$ induces an isomorphism from $N_\F(\E)$ to $N_\F(\E^\alpha)$. In the general case, Lemma~\ref{L:NFEbNLH}(a) gives $N_\F(\E)=\F_0|_{N_S(\E)}$ and thus $\Hom_{N_\F(\E)}(P,Q)=\Hom_{\F_0}(P,Q)$ for $P,Q\leq N_S(\E)$. The assertion follows thus, as $c_g$ induces an isomorphism $\F_0\rightarrow N_\F(\E^\alpha)$ with $c_g|_{N_S(\E)}=\alpha$.
\end{proof}

\subsection{The centralizer of $\E$ and fully centralized conjugates of $\E$}\label{SS:Centralizer}

We continue to assume Hypotheses~\ref{H:main} and \ref{H:S0F0}. In particular, $S_0$ is a Sylow $p$-subgroup of $N_G(\H)$ containing $T$.

\smallskip

If $\E$ is normal in $\F$, then it was first shown by Aschbacher that there is a unique maximal element of the set 
\[\{R\leq S\colon \E\subseteq C_\F(R)\},\]
which we denote by $C_S(\E)=C_S(\E)_\F$ (cf. \cite[Chapter~6]{Aschbacher:2011} or \cite[Theorem~1(a)]{Henke:2018}). We extend this result now to the case that $\E$ is subnormal in $\F$. As $\F_0$ is saturated and $\E\unlhd \F_0$, we can use that the subgroup $C_{S_0}(\E)_{\F_0}$ is defined as the unique maximal element of the set $\{R\leq S_0\colon \E\subseteq C_{\F_0}(R)\}$. Notice that the following proposition implies Theorem~\ref{T:CentralizerEinS}.

\begin{prop}\label{P:CSE}
The subgroup $C_S(\H)$ is with respect to inclusion the unique maximal element of the set $\m{X}:=\{R\leq S\colon \E\subseteq C_\F(R)\}$. If $C_S(\H)\leq S_0$, then $C_S(\H)=S\cap C_{S_0}(\E)_{\F_0}$.
\end{prop}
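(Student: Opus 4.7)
The strategy is to reduce the problem to the normal-subsystem situation $\E \unlhd \F_0$, where the centralizer $C_{S_0}(\E)_{\F_0}$ admits a locality-theoretic description (via \cite{Henke:2018}) through the partial normal subgroup $C_\L(\H) \unlhd \bN_\L(\H)$ supplied by Lemma~\ref{L:CLTinbNLH}(b). First observe that $C_S(\H) = S \cap C_\L(\H) \leq S \cap \bN_\L(\H) = N_S(\H)$, using $C_\L(\H) \subseteq C_\L(T) \subseteq \bN_\L(\H)$ from Lemma~\ref{L:CLTinbNLH}(a) together with Theorem~\ref{T:bNLH}(e). Thus $C_S(\H)$ is a $p$-subgroup of $N_G(\H)$, and we may choose $S_0 \in \Syl_p(N_G(\H))$ containing both $T$ and $C_S(\H)$. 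Applying the centralizer theory for normal subsystems of a regular locality to $\E \unlhd \F_0$ (realized inside the regular locality $(\bN_\L(\H),\delta(\F_0),S_0)$ by $\H \unlhd \bN_\L(\H)$), one obtains
\[C_{S_0}(\E)_{\F_0} \;=\; C_{S_0}(\H) \;=\; S_0 \cap C_\L(\H),\]
so that under $C_S(\H) \leq S_0$ the asserted formula follows from $S \cap C_{S_0}(\E)_{\F_0} = S_0 \cap (S \cap C_\L(\H)) = S_0 \cap C_S(\H) = C_S(\H)$.

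For the membership $C_S(\H) \in \m{X}$: the identity above yields $C_S(\H) \leq C_{S_0}(\E)_{\F_0}$, hence $\E \subseteq C_{\F_0}(C_S(\H))$. The $\F_0$-extensions witnessing this have domain and codomain inside $T\cdot C_S(\H) \leq N_S(\H) = N_S(\E)$, and so by Lemma~\ref{L:NFEbNLH}(a) they lie in $\F_0|_{N_S(\E)} = N_\F(\E) \subseteq \F$. This gives $\E \subseteq C_\F(C_S(\H))$, i.e., $C_S(\H) \in \m{X}$.

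For maximality, suppose $R \in \m{X}$ with $C_S(\H) \leq R$. Since $T$ is an object of $\E$, the inclusion $\E \subseteq C_\F(R)$ forces $T \leq C_S(R)$, so by Lemma~\ref{L:CLTinbNLH}(a) and Theorem~\ref{T:bNLH}(e) we obtain $R \leq C_S(T) \subseteq \bN_\L(\H) \cap S = N_S(\H)$. Now choose $S_0$ to contain $N_S(\H)$ (and hence $R$); the goal reduces to establishing $\E \subseteq C_{\F_0}(R)$, since then $R \leq C_{S_0}(\E)_{\F_0} = C_{S_0}(\H)$ forces $R = S \cap R \leq S \cap C_{S_0}(\H) = C_S(\H)$.

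The main obstacle is precisely this upgrade from $\F$-centralizer to $\F_0$-centralizer: although $\F_0|_{N_S(\E)} \subseteq \F$, a generic $\F$-morphism between subgroups of $N_S(\E)$ need not lie in $\F_0$, so $\F$-extensions fixing $R$ need not a priori be $\F_0$-extensions. The resolution is to exploit the equality $N_{N_\F(\E)}(T) = N_\F(T,\E)$ of Lemma~\ref{L:inAutEinNFE}(a): for the generating automorphisms $c_h|_T$ of $\E$ with $h \in N_\H(T)$, any $\F$-extension $\tilde{c}_h \in \Aut_\F(TR)$ fixing $R$ automatically normalizes $T$ and restricts on $T$ to an $\E$-automorphism, hence lies in $N_\F(T,\E) \subseteq N_\F(\E) = \F_0|_{N_S(\E)} \subseteq \F_0$ (using again Lemma~\ref{L:NFEbNLH}(a)). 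A reduction via Alperin's Fusion Theorem applied to the saturated subsystem $\E$, together with the extension axiom applied inside the saturated fusion system $\F_0$, then bootstraps this to arbitrary $\E$-morphisms and completes the proof.
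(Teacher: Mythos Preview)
The formula $C_S(\H)=S\cap C_{S_0}(\E)_{\F_0}$ and the membership $C_S(\H)\in\m{X}$ are handled correctly (modulo choosing $S_0\supseteq N_S(\H)$ from the outset so that Lemma~\ref{L:NFEbNLH}(a) actually applies in your second paragraph). The paper treats membership more directly via \cite[Lemma~3.5]{Henke:Regular}, which gives $\H\subseteq C_\L(C_S(\H))$ and hence $\E\subseteq C_\F(C_S(\H))$ without passing through $\F_0$.

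The maximality argument has a genuine gap at the ``bootstrap'' step. You correctly identify that the difficulty is upgrading $\E\subseteq C_\F(R)$ to $\E\subseteq C_{\F_0}(R)$, and your argument via $N_\F(T,\E)$ does work for extensions of elements of $\Aut_\E(T)$ to $TR$: such extensions normalize $T$ and restrict to $\Aut(\E)$ on $T$, so they land in $N_\F(T,\E)\subseteq N_\F(\E)=\F_0|_{N_S(\E)}$. But for $\phi\in\Aut_\E(Q)$ with $Q\in\E^{cr}$ and $Q\neq T$, the $\F$-extension of $\phi$ to $QR$ fixing $R$ need not normalize $T$, so there is no reason it lies in $N_\F(T,\E)$ or even in $N_\F(\E)$. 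The extension axiom in $\F_0$ does produce \emph{some} $\F_0$-extension of $\phi$ to $QR$ (indeed $R\leq C_S(T)\leq C_S(Q)$ forces $QR\leq N_\phi^{\F_0}$), but nothing forces that extension to fix $R$ pointwise. So the Alperin-plus-extension bootstrap, as sketched, does not close.

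The paper avoids this entirely: given $R\in\m{X}$, it forms the central product $\mD:=\E*\F_R(R)$, which by Lemma~\ref{L:CentralProduct} is a saturated subsystem of $\F$ with $\E\unlhd\mD$ and $\E\subseteq C_\mD(R)$. Proposition~\ref{P:DinNFE} then gives $\mD\subseteq N_\F(\E)\subseteq\F_0$, whence $\E\subseteq C_\mD(R)\subseteq C_{\F_0}(R)$ immediately, and $R\leq S\cap C_{S_0}(\E)_{\F_0}=C_S(\H)$. The point is that the central product packages all the $\F$-extensions fixing $R$ into a single saturated subsystem containing $\E$ as a normal subsystem, and Proposition~\ref{P:DinNFE} is exactly the tool that transports any such subsystem wholesale into $\F_0$.
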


\begin{proof}
By \cite[Lemma~3.5]{Henke:Regular}, we have $\H\subseteq C_\L(C_S(\H))$, which implies easily that $\E=\F_T(\H)\subseteq C_\F(C_S(\H))$. Hence, $C_S(\H)\in\X$. 

\smallskip

Choose now $S_0$ such that $C_S(\H)\leq S_0$. Recall from Theorem~\ref{T:bNLH}(a),(b) that $\F_0$ is saturated and $(\bN_\L(\H),\delta(\F_0),S_0)$ is a regular locality over $\F_0$ with $\H\unlhd\bN_\L(\H)$, $T=S_0\cap \H$ and $\E\unlhd\F_0$. Hence, it follows from \cite[Proposition~4]{Henke:2015} that $C_{S_0}(\E)_{\F_0}=C_{S_0}(\H)$. As $C_S(\H) \leq S_0$ by assumption, we can conclude that   $C_S(\H)=S\cap C_{S_0}(\H)=S\cap  C_{S_0}(\E)_{\F_0}$. 

\smallskip

Choose now $S_0$ such that $N_S(\E)\leq S_0$. We have then $N_\F(\E)\subseteq \F_0$ by Lemma~\ref{L:NFEbNLH}(a). Let $R\in\X$. Then $\E$ and $\F_R(R)$ commute in the sense of Definition~\ref{D:CommuteSubsystems}. Hence, by Lemma~\ref{L:CentralProduct}, $\mD:=\E*\F_R(R)$ is a saturated subsystem of $\F$ over $T R$ with $\E\unlhd\mD$ and $\E\subseteq C_{\mD}(R)$. Proposition~\ref{P:DinNFE} yields now $\mD\subseteq N_\F(\E)\subseteq \F_0$. In particular, $R\leq N_S(\E)\leq S_0$. As $\E\subseteq C_\mD(R)\subseteq C_{\F_0}(R)$, it follows $R\leq C_{S_0}(\E)_{\F_0}$. Hence, $R\leq S\cap C_{S_0}(\E)_{\F_0}=C_S(\H)$. This proves the assertion.
\end{proof}

\begin{definition}
 We write $C_S(\E)$ or $C_S(\E)_\F$ for $C_S(\H)$, i.e. for the unique maximal element of the set 
\[\{R\leq S\colon \E\subseteq C_\F(R)\}.\] 
We call $C_S(\E)=C_S(\E)_\F$ the \emph{centralizer of $\E$ in $S$}.
\end{definition}

The reader might want to observe that 
\[C_S(\E)=C_S(\H)\leq N_S(\H)=N_S(\E)\] 
by Lemma~\ref{L:finNLtT}. 

\smallskip

Recall from the introduction that $C_\F(\E)$ is defined as 
\[C_\F(\E):=\F_{C_S(\H)}(C_\L(\H)).\]
We call $C_\F(\E)$ the \emph{centralizer of $\E$ in $\F$}.

\smallskip

Note that $C_\F(\E)$ depends a priori not only on $\F$ and $\E$, but also on $\L$. However, the properties we show below imply that $C_\F(\E)$ is indeed independent of the choice of $\L$. Alternatively, this can be seen more directly using Background Theorems~\ref{bT:0} and \ref{bT:2} combined with Lemma~\ref{L:AutLFusion}. 

\smallskip

The reader is referred to \cite[Definition~I.7.1]{Aschbacher/Kessar/Oliver:2011} for the definition of the focal subgroup $\foc(\F)$ and the hyperfocal subgroup $\hyp(\F)$. We extend the definition of the focal subgroup to fusion systems which are not saturated. Namely, for every fusion system $\G$ over a $p$-group $X$, we set  
\[\foc(\G):=\<x^{-1}(x\phi)\colon x\in X,\;\phi\in\Hom_\F(\<x\>,X)\>.\]
It is always true that $\hyp(\F)\leq \foc(\F)$. For every subgroup $R\leq S$ with $\hyp(\F)\leq R$, it is shown in \cite[Theorem~I.7.4]{Aschbacher/Kessar/Oliver:2011} that
\begin{equation}\label{E:DefineFR}
 \F_R:=\<O^p(\Aut_\F(P))\colon P\leq R\>_R
\end{equation}
is the unique saturated subsystem of $\F$ over $R$ of $p$-power index. This concept will play a role in obtaining a description of $C_\F(\E)$ if $\E$ is fully centralized. 

\smallskip

If $\E$ is normal in $\F$, then a subsystem $C_\F(\E)$ was first defined by Aschbacher \cite[Chapter~6]{Aschbacher:2011}, and a new construction of $C_\F(\E)$ was given in \cite{Henke:2018}. However, if $\E$ is normal, then by Background Theorem~\ref{bT:1}, we have $\H\unlhd\L$. Hence, \cite[Proposition~6.7]{Chermak/Henke} states in this case that the subsystem $C_\F(\E)$ defined in \cite{Aschbacher:2011,Henke:2018} equals $\F_{C_S(\H)}(C_\L(\H))$ and coincides thus with the centralizer $C_\F(\E)$ defined in this paper. Using this, the following theorem follows from \cite[Proposition~1, Theorem~2]{Henke:2018} and the construction of $C_\F(\E)$ given in \cite{Henke:2018}.

\begin{theorem}\label{T:Henke2018}
Suppose $\E\unlhd\F$. Then $\hyp(C_\F(T))\leq \foc(C_\F(T))\leq C_S(\E)$. Moreover, 
\[C_\F(\E)=C_\F(T)_{C_S(\E)}\]
is normal in $\F$. Every saturated subsystem of $\F$ which commutes with $\E$ is contained in $C_\F(\E)$. \qed
\end{theorem}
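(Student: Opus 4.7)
The plan is almost entirely a matter of matching definitions and invoking results from \cite{Henke:2018}. The hypothesis $\E\unlhd\F$ is crucial: by Background Theorem~\ref{bT:2}, the bijection $\hat\Psi$ restricts to a bijection between partial normal subgroups of $\L$ and normal subsystems of $\F$, so the partial subnormal subgroup $\H$ corresponding to $\E$ is in fact a partial normal subgroup of $\L$. This puts us precisely in the setting where a centralizer of a normal subsystem was previously constructed, both by Aschbacher \cite[Chapter~6]{Aschbacher:2011} and, by an alternative route, in \cite{Henke:2018}.

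The first step is to verify that our notation for $C_S(\E)$ agrees with the one used in \cite{Henke:2018}. Proposition~\ref{P:CSE} already does this: it identifies $C_S(\E)=C_S(\H)$ as the unique maximal element of $\{R\leq S:\E\subseteq C_\F(R)\}$, which is exactly Aschbacher's defining property and the one used in \cite{Henke:2018}. The second step is to match the two centralizer subsystems: by \cite[Proposition~6.7]{Chermak/Henke}, the Aschbacher--Henke centralizer of a normal subsystem coincides with $\F_{C_S(\H)}(C_\L(\H))$, and this is precisely $C_\F(\E)$ as defined in this paper.

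Once these identifications are in place, each of the three conclusions of the theorem is then a direct transcription of a result from \cite{Henke:2018}. The inclusion chain $\hyp(C_\F(T))\leq\foc(C_\F(T))\leq C_S(\E)$ is \cite[Proposition~1]{Henke:2018}; it is exactly the ingredient needed so that the construction in \eqref{E:DefineFR} produces a well-defined saturated subsystem $C_\F(T)_{C_S(\E)}$ of $C_\F(T)$ over $C_S(\E)$ of $p$-power index (using \cite[Theorem~I.7.4]{Aschbacher/Kessar/Oliver:2011}). The equality $C_\F(\E)=C_\F(T)_{C_S(\E)}$ together with the normality of $C_\F(\E)$ in $\F$ is the content of \cite[Theorem~2]{Henke:2018}. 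Finally, the maximality statement — that every saturated subsystem of $\F$ commuting with $\E$ is contained in $C_\F(\E)$ — is the characterization of this centralizer established in \cite{Henke:2018}.

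The only mild obstacle is checking that the notion of ``commutes with $\E$'' from Definition~\ref{D:CommuteSubsystems} matches the one used in \cite{Henke:2018}. But this is handled immediately by the remark following that definition: \cite[Lemma~3.2]{Henke:2018} ensures that, for saturated $\F_1,\F_2$, our condition $\F_i\subseteq C_\F(S_{3-i})$ for $i=1,2$ is equivalent to the full Definition~\ref{D:CommuteSubsystems}, so no ambiguity remains and the cited maximality statement applies verbatim.
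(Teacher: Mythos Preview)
Your proposal is correct and follows essentially the same approach as the paper, which simply records (in the paragraph preceding the statement) that once \cite[Proposition~6.7]{Chermak/Henke} identifies the present $C_\F(\E)$ with the one constructed in \cite{Henke:2018}, the conclusions are direct citations of \cite[Proposition~1, Theorem~2]{Henke:2018} and the construction there. Two minor remarks: the paper cites Background Theorem~\ref{bT:1} to get $\H\unlhd\L$ from $\E\unlhd\F$, whereas your use of Background Theorem~\ref{bT:2} is the more accurate reference; and your appeal to Proposition~\ref{P:CSE} is harmless but unnecessary in this normal case, since the identification of $C_S(\E)$ is already the content of \cite[Theorem~1(a)]{Henke:2018} (as the paper notes just before Proposition~\ref{P:CSE}).
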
 

We use here the notation introduced in \eqref{E:DefineFR} for $C_\F(T)$ in place of $\F$. Notice that this is possible, as $T$ is in the situation above fully centralized and thus $C_\F(T)$ is saturated.

\begin{lemma}\label{L:CF0ECollect}
The centralizer $C_{\F_0}(\E)$ is normal in $\F_0$, $\foc(C_{\F_0}(T))\leq C_{S_0}(\E):=C_{S_0}(\E)_{\F_0}$ and
\[C_{\F_0}(\E)=\F_{C_{S_0}(\H)}(C_\L(\H))=C_{\F_0}(T)_{C_{S_0}(\E)}.\]
Moreover, $(C_\L(\H),\delta(C_{\F_0}(\E)),C_{S_0}(\H))$ is a regular locality over $C_{\F_0}(\E)$. In particular, $C_{S_0}(\H)=C_{S_0}(\E)_{\F_0}$ is a maximal $p$-subgroup of $C_\L(\H)$.
\end{lemma}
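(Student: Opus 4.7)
The plan is to observe that the situation reduces to that of a normal subsystem in a regular locality, to which the existing results Theorem~\ref{T:Henke2018} and \cite[Proposition~6.7]{Chermak/Henke} apply directly. By Theorem~\ref{T:bNLH}(a),(b), $\F_0$ is saturated, $(\bN_\L(\H),\delta(\F_0),S_0)$ is a regular locality over $\F_0$ with $T=S_0\cap\H$, and $\E=\F_T(\H)\unlhd\F_0$. This is the crucial input which allows us to apply results previously proved only in the normal case.

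First, as in the proof of Proposition~\ref{P:CSE}, \cite[Proposition~4]{Henke:2015} applied to the regular locality $(\bN_\L(\H),\delta(\F_0),S_0)$ and its partial normal subgroup $\H$ gives $C_{S_0}(\E)_{\F_0}=C_{S_0}(\H)$. Then, since $\E\unlhd\F_0$, Theorem~\ref{T:Henke2018} applied with $\F_0$ in place of $\F$ yields $\foc(C_{\F_0}(T))\leq C_{S_0}(\E)_{\F_0}=C_{S_0}(\H)$ and shows that $C_{\F_0}(\E)=C_{\F_0}(T)_{C_{S_0}(\E)}$ is a normal subsystem of $\F_0$. This handles the focal subgroup bound, the normality, and the second description.

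For the identification $C_{\F_0}(\E)=\F_{C_{S_0}(\H)}(C_\L(\H))$, I would invoke \cite[Proposition~6.7]{Chermak/Henke}, cited in the text preceding Theorem~\ref{T:Henke2018}, applied with $(\bN_\L(\H),\F_0,S_0)$ in place of $(\L,\F,S)$. Since $\H\unlhd\bN_\L(\H)$, that result identifies the centralizer of $\E$ in $\F_0$ with $\F_{C_{S_0}(\H)}(C_{\bN_\L(\H)}(\H))$. By Lemma~\ref{L:CLTinbNLH}(b) we have $C_{\bN_\L(\H)}(\H)=C_\L(\H)$, giving the required equality.

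Finally, for the regular locality statement, Lemma~\ref{L:CLTinbNLH}(b) also gives $C_\L(\H)\unlhd\bN_\L(\H)$. Hence, applying Background Theorem~\ref{bT:1} to the regular locality $(\bN_\L(\H),\delta(\F_0),S_0)$ and its partial normal subgroup $C_\L(\H)$, we obtain a regular locality structure on $C_\L(\H)$ with Sylow subgroup $S_0\cap C_\L(\H)=C_{S_0}(\H)$ realizing the fusion system $\F_{C_{S_0}(\H)}(C_\L(\H))=C_{\F_0}(\E)$. In particular, $C_{S_0}(\H)=C_{S_0}(\E)_{\F_0}$ is then a maximal $p$-subgroup of $C_\L(\H)$. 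The main point requiring care is verifying that the hypotheses of these earlier results apply in our situation; Theorem~\ref{T:bNLH} supplies the essential fact that $\E\unlhd\F_0$, so no genuinely new argument is needed.
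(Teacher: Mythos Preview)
Your argument is correct and follows essentially the same route as the paper's proof: reduce to the normal case via Theorem~\ref{T:bNLH}, apply Theorem~\ref{T:Henke2018} with $\F_0$ in place of $\F$ for the normality and the description $C_{\F_0}(T)_{C_{S_0}(\E)}$, and then use $C_\L(\H)=C_{\bN_\L(\H)}(\H)\unlhd\bN_\L(\H)$ together with Background Theorem~\ref{bT:1} for the regular locality structure. Your proof is in fact slightly more explicit than the paper's, which bundles the identification $C_{\F_0}(\E)=\F_{C_{S_0}(\H)}(C_\L(\H))$ and the regular locality claim under ``Background Theorem~\ref{bT:2} and the definition of $C_{\F_0}(\E)$''; your separate invocations of \cite[Proposition~4]{Henke:2015}, \cite[Proposition~6.7]{Chermak/Henke}, and Background Theorem~\ref{bT:1} make the logic more transparent.
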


\begin{proof}
As $\F_0$ is saturated and $\E\unlhd\F_0$, it follows from Theorem~\ref{T:Henke2018} that $C_{\F_0}(\E)=C_{\F_0}(T)_{C_{S_0}(\E)}$ is normal in $\F_0$. Recall that $(\bN_\L(\H),\delta(\F_0),S_0)$ is a regular locality over $\F_0$ and $C_\L(\H)=C_{\bN_\L(\H)}(\H)$ by Lemma~\ref{L:CLTinbNLH}. Hence, the remaining part of the assertion follows from Background Theorem~\ref{bT:2} and the definition $C_{\F_0}(\E)$. 
\end{proof}

Recall that $\E$ is called \emph{fully $\F$-centralized} if
\[|C_S(\E)|\geq |C_S(\tE)|\mbox{ for all }\tE\in\E^\F.\]

\begin{lemma}\label{L:EFullyCentralized}~
\begin{itemize}
\item [(a)] Choose $S_0$ such that $C_S(\H)=C_S(\E)\leq S_0$. Then the following are equivalent:
\begin{itemize}
 \item [(i)] $\E$ is fully $\F$-centralized;
 \item [(ii)] $C_S(\H)=C_{S_0}(\H)$;
 \item [(iii)] $C_S(\H)$ is a maximal $p$-subgroup of $C_\L(\H)$.
\end{itemize}
\item [(b)] Let $\hat{\E}\in\E^\F$ be such that $\hat{\E}$ is fully $\F$-centralized. Then there exists $\alpha\in\Hom_\F(TC_S(\E),S)$ such that $\E^\alpha=\hat{\E}$.
\item [(c)] If $\alpha\in\Hom_\F(TC_S(\E),S)$, then $\alpha$ is a morphism in $N_\F(\tT)$ and $C_S(\E)\alpha\leq C_S(\E^\alpha)$.
\end{itemize}
\end{lemma}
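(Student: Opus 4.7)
The plan is to handle (c) first, since it is essentially immediate and supplies the Sylow-style setup used elsewhere, then to establish (a) via a Sylow analysis in the finite group $G$, and finally to deduce (b) by decomposing the required conjugator in a way that uses (a) applied to the fully centralized conjugate.

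For (c), I would observe that $\tT = T_0 T_0^\perp \leq T \cdot C_S(\H) = TC_S(\E)$ by Lemma~\ref{L:BasicT0perp}(a) (since $T_0^\perp \subseteq \M\cap S \subseteq C_S(\H)$). Because $\tT$ is strongly $\F$-closed, any $\alpha \in \Hom_\F(TC_S(\E), S)$ must satisfy $\tT\alpha = \tT$, hence is a morphism in $N_\F(\tT) = \F_S(G)$; write $\alpha = c_m|_{TC_S(\E)}$ for some $m \in G$. Then $C_S(\E)^m = C_S(\H)^m \leq C_G(\H^m)\cap S = C_S(\H^m) = C_S(\E^\alpha)$, where the last equality uses Lemma~\ref{L:ConjH}(a).

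For (a), let $M$ be the order of a maximal $p$-subgroup of $C_\L(\H)$; by Lemma~\ref{L:CF0ECollect}, $C_{S_0}(\H)$ is such a maximal $p$-subgroup, so $M = |C_{S_0}(\H)|$ (and this value is independent of the admissible choice of $S_0$, since Sylow $p$-subgroups of $C_G(\H) \unlhd N_G(\H)$ all have the same order). The equivalence (ii)$\Leftrightarrow$(iii) is then immediate from $C_S(\H) \leq C_{S_0}(\H)$ together with the fact that all maximal $p$-subgroups of the regular locality $C_\L(\H)$ share the order $M$. For (i)$\Leftrightarrow$(iii), I would show $|C_S(\tE)| \leq M$ for every $\tE \in \E^\F$, with equality attained for some $\tE$. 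The upper bound follows because Lemmas~\ref{L:ConjEandT} and~\ref{L:ConjH}(a) give $\tE = \F_{T^g}(\H^g)$ for some $g \in G$ with $T^g \leq S$, and $c_g \in \Aut(\L)$ transports $C_\L(\H)$ isomorphically to $C_\L(\H^g)$. To attain the bound, pick $U \in \Syl_p(C_G(\H))$ (so $|U| = M$); then $TU$ is a $p$-subgroup of $G$ because $U$ centralizes $T \leq \H$, and Sylow's theorem in $G$ yields $h \in G$ with $(TU)^h \leq S$, which gives $U^h \leq C_S(\H^h) = C_S(\F_{T^h}(\H^h))$ of order $M$.

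For (b), fix $\hat{\E} \in \E^\F$ fully centralized and, using Lemma~\ref{L:ConjEandT} and Lemma~\ref{L:ConjH}(a), choose $g \in G$ with $\hat{\H} := \H^g$ and $\hat{T} := T^g = \hat{\H}\cap S$. The plan is to adjust $g$ to $m := gc$ with $c \in C_G(\hat{\H})$ so that $(TC_S(\E))^m \leq S$: since $c$ normalizes $\hat{\H}$, we automatically obtain $\H^m = \hat{\H}$, and since $c$ centralizes $\hat{T} \leq \hat{\H}$, we also get $T^m = \hat{T}^c = \hat{T} \leq S$ for free; only $C_S(\E)^g \leq C_G(\hat{\H})$ needs to be moved into $S$. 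The crucial input is (a) applied to $\hat{\E}$: with $\hat{S}_0 \in \Syl_p(N_G(\hat{\H}))$ chosen to contain $C_S(\hat{\H})$, full centralization of $\hat{\E}$ forces $C_S(\hat{\H}) = C_{\hat{S}_0}(\hat{\H}) \in \Syl_p(C_G(\hat{\H}))$. Sylow inside $C_G(\hat{\H})$ then supplies the required $c$ with $(C_S(\E)^g)^c \leq C_S(\hat{\H}) \leq S$, and $\alpha := c_m|_{TC_S(\E)}$ is the desired morphism, with $\E^\alpha = \F_{T^m}(\H^m) = \hat{\E}$ by Lemma~\ref{L:ConjH}(a). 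The main technical point is precisely this simultaneous transport of $T$ and $C_S(\E)$ into $S$ in (b); the decomposition $m = gc$ with $c \in C_G(\hat{\H})$ reduces it to two independent subproblems, the first free and the second solved by Sylow in the centralizer exactly when $\hat{\E}$ is fully centralized.
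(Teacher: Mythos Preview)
Your proposal is correct and follows essentially the same approach as the paper: all three parts reduce to Sylow arguments in the group $G$ via Lemmas~\ref{L:ConjEandT}, \ref{L:ConjH}, and \ref{L:CF0ECollect}. The only notable difference is in (b), where you adjust the conjugator by an element of $C_G(\hat{\H})$ (using Sylow in the centralizer) rather than the paper's adjustment by an element of $N_G(\hat{\H})$ to align $S_0^g$ with $\hat{S}_0$; your variant is slightly cleaner since it avoids the extra check that $T^{gx}=T^g$, but the two are essentially the same maneuver.
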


\begin{proof}
\textbf{(a,b)} Choose $S_0$ such that $C_S(\H)\leq S_0$. By Lemma~\ref{L:CF0ECollect}, $C_{S_0}(\H)\geq C_S(\H)$ is a maximal $p$-subgroup of $C_\L(\H)$. Hence, (ii) and (iii) are equivalent. Thus, for this part of the proof, it remains only to show that (i) and (ii) are equivalent and that (b) holds.

\smallskip

\textit{Step~1:} We show that (i) implies (ii). Notice that $TC_{S_0}(\H)$ is a $p$-subgroup of $N_G(\H)$. As $S\in\Syl_p(G)$, there exists thus $g\in G$ with $(TC_{S_0}(\H))^g\leq S$. As $T^g\leq S$, $\phi=c_g|_T$ is an $\F$-morphism and $T\phi=T^g\in T^\F$. By Lemma~\ref{L:ConjH}(a), we have $T^g=\H^g\cap S$ and $\E^\phi=\F_{T^g}(\H^g)$. In particular, $C_S(\E^\phi)=C_S(\H^g)$. Using Lemma~\ref{L:ConjH}(b), one observes that $C_{S_0}(\H)^g\leq C_S(\H^g)$. Thus, if $C_S(\H)<C_{S_0}(\H)$, then $|C_S(\E)|=|C_S(\H)|<|C_S(\H^g)|=|C_S(\E^\phi)|$ and $\E$ is thus not fully centralized. So Step~1 is complete.

\smallskip

\emph{Step~2:} We show that (b) holds and (ii) implies (i).  For the proof fix $\hat{\E}\in \E^\F$ fully centralized (and notice that such $\hat{\E}$ always exists). By Lemma~\ref{L:ConjEandT}, there is $\phi\in\Hom_{N_\F(\tT)}(T,S)$ such that $\E^\phi=\hat{\E}$. As $N_\F(\tT)=\F_S(G)$, there exists $g\in G$ with $\phi=c_g|_T$. By Lemma~\ref{L:ConjH}(a), we have $\F_{T^g}(\H^g)=\hat{\E}$ and thus $C_S(\hat{\E})=C_S(\H^g)$.

\smallskip

Fix $\hat{S}_0\in \Syl_p(N_G(\H^g))$ such that 
\[T^gC_S(\H^g)\leq \hat{S}_0.\]
As $T$ is a maximal $p$-subgroup of $\H$, the conjugate $T^g=T\phi$ is a maximal $p$-subgroup of $\H^g$ and thus equal to $S\cap \H^g$. By Theorem~\ref{T:bNLH}(b) applied with $\H^g$ in place of $\H$, we have moreover $\H^g\cap \hat{S}_0=T^g=S\cap \H^g$.

Since $\hat{\E}$ is fully $\F$-centralized and we already showed in  Step~1 that (i) implies (ii), we can apply this implication for $(\hat{\E},\H^g)$ in place of $(\E,\H)$ to obtain
\[C_S(\H^g)=C_{\hat{S}_0}(\H^g).\]
Notice that $S_0^g$ is a Sylow $p$-subgroup of $N_G(\H^g)$ as $c_g$ induces an isomorphism from $N_G(\H)$ to $N_G(\H^g)$ by Lemma~\ref{L:ConjH}(b). Hence, there exists $x\in N_G(\H^g)$ such that $S_0^{gx}=\hat{S}_0$. Observe that $\H^g=\H^{gx}$. Moreover, $T^{gx}\leq S_0^{gx}\cap \H^g=\hat{S}_0\cap \H^g=T^g$ and thus $T^{gx}=T^g\leq S$. Thus, replacing $\phi$ and $g$ by $c_{gx}$ and $gx$, we may assume that \[S_0^g=\hat{S}_0.\] 
Hence, using $c_g\in\Aut(\L)$, we may conclude that 
\begin{equation}\label{E:CSHg}
C_S(\hat{\E})=C_S(\H^g)=C_{\hat{S}_0}(\H^g)=C_{S_0}(\H)^g\geq C_S(\H)^g=C_S(\E)^g.
\end{equation}
So $\alpha:=c_g|_{TC_S(\E)}$ is an element of $\Hom_{N_\F(\tT)}(TC_S(\E),S)$ with $\E^\alpha=\hat{\E}$. This proves (b).

\smallskip

If we assume in addition that (ii) holds (i.e. $C_S(\H)=C_{S_0}(\H)$), then we obtain from \eqref{E:CSHg} that $C_S(\E)^g=C_S(\H)^g=C_{S_0}(\H)^g=C_S(\hat{\E})$. As $\hat{\E}$ was chosen to be fully centralized, it follows then that $\E$ is fully centralized, i.e. (i) holds. This completes Step~2 and thus the proof of (a) and (b).

\smallskip

\textbf{(c)} By Lemma~\ref{L:BasicT0perp}(a), we have $\tT=T_0T_0^\perp\subseteq TC_S(\H)=TC_S(\E)$. As $\tT$ is strongly $\F$-closed, it follows that a morphism $\alpha\in\Hom_\F(TC_S(\E),S)$ is a morphism in $N_\F(\tT)$. As $N_\F(\tT)=\F_S(G)$, there exists thus $g\in G$ with $\alpha=c_g|_{TC_S(\E)}$. By Lemma~\ref{L:ConjH}(a), we have $S\cap \H^g=T^g$ and $\E^\alpha=\F_{T^g}(\H^g)$. In particular, $C_S(\E^\alpha)=C_S(\H^g)$. Using Lemma~\ref{L:ConjH}(b), we can conclude that $C_S(\E)\alpha=C_S(\E)^g=C_S(\H)^g\leq S\cap C_G(\H^g)=C_S(\H^g)=C_S(\E^\alpha)$.
\end{proof}

\begin{prop}\label{P:CFEDescribe}
If $\E$ is fully centralized, then $C_\F(\E)$ is saturated, $\foc(C_\F(T))\leq C_S(\E)$ and 
\[C_\F(\E)=\<O^p(\Aut_{C_\F(T)}(P))\colon P\leq C_S(\E)\>_{C_S(\E)}.\]
If in addition $S_0$ is chosen such that $C_S(\E)\leq S_0$, then $C_\F(\E)=C_{\F_0}(\E)$ is normal in $\F_0$.
\end{prop}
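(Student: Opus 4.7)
The plan is to transfer everything from the behaviour of $\F_0$ recorded in Lemmas~\ref{L:EFullyCentralized}, \ref{L:CF0ECollect}, and \ref{L:inAutEinNFE}(c). First I will fix the Sylow $p$-subgroup $S_0$ in Hypothesis~\ref{H:S0F0} so that $N_S(\E) \leq S_0$, which is always possible since $N_S(\E) = N_S(\H)$ is a $p$-subgroup of $N_G(\H)$; in particular $C_S(\E) = C_S(\H) \leq N_S(\H) \leq S_0$, so Lemma~\ref{L:EFullyCentralized}(a) applies and the hypothesis that $\E$ is fully $\F$-centralized gives $C_S(\H) = C_{S_0}(\H)$, i.e.\ $C_S(\E) = C_{S_0}(\E)$. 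Comparing the definition $C_\F(\E) = \F_{C_S(\H)}(C_\L(\H))$ with the identity $C_{\F_0}(\E) = \F_{C_{S_0}(\H)}(C_\L(\H))$ from Lemma~\ref{L:CF0ECollect}, I conclude that $C_\F(\E) = C_{\F_0}(\E)$, and Lemma~\ref{L:CF0ECollect} then says that this subsystem is saturated and normal in $\F_0$. The final sentence of the proposition, for an arbitrary $S_0 \in \Syl_p(N_G(\H))$ with $C_S(\E) \leq S_0$, follows by the same calculation since Lemma~\ref{L:EFullyCentralized}(a) only requires $C_S(\E) \leq S_0$.

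To establish the focal subgroup bound and the generating expression, I will apply Lemma~\ref{L:inAutEinNFE}(c), which yields $C_\F(T) = C_{\F_0}(T)|_{C_S(T)}$. Every generator $x^{-1}(x\phi)$ of $\foc(C_\F(T))$ comes from a morphism $\phi$ in $C_\F(T)$ that is simultaneously a morphism in $C_{\F_0}(T)$, so $\foc(C_\F(T)) \leq \foc(C_{\F_0}(T)) \leq C_{S_0}(\E) = C_S(\E)$, where the second inequality is part of Lemma~\ref{L:CF0ECollect}. For the description of $C_\F(\E)$, I will observe that any $P \leq C_S(\E)$ is contained in $C_S(T)$, so the restriction identity gives $\Aut_{C_\F(T)}(P) = \Aut_{C_{\F_0}(T)}(P)$ and hence $O^p(\Aut_{C_\F(T)}(P)) = O^p(\Aut_{C_{\F_0}(T)}(P))$. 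Combined with $C_S(\E) = C_{S_0}(\E)$ and the equality $C_{\F_0}(\E) = C_{\F_0}(T)_{C_{S_0}(\E)}$ from Lemma~\ref{L:CF0ECollect}, which by definition \eqref{E:DefineFR} unfolds as $\langle O^p(\Aut_{C_{\F_0}(T)}(P)) \colon P \leq C_{S_0}(\E)\rangle_{C_{S_0}(\E)}$, this yields
\[
\langle O^p(\Aut_{C_\F(T)}(P)) \colon P \leq C_S(\E)\rangle_{C_S(\E)} \;=\; C_{\F_0}(\E) \;=\; C_\F(\E),
\]
which is the asserted description.

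The main subtlety is that $T$ need not itself be fully $\F$-centralized, so I cannot directly appeal to Lemma~\ref{L:Tfullycentralized}(b) to identify $C_\F(T)$ with $C_{\F_0}(T)$. The weaker restriction identity of Lemma~\ref{L:inAutEinNFE}(c) is however exactly sufficient for this proof, since the automorphism groups appearing in the generating expression live over subgroups $P \leq C_S(\E) \leq C_S(T)$, and the focal subgroup involves only morphisms between subgroups of $C_S(T)$. Beyond this point the argument is a routine transfer through Lemma~\ref{L:CF0ECollect}.
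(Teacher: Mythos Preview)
Your proof is correct and follows essentially the same route as the paper's own argument: both use Lemma~\ref{L:EFullyCentralized}(a) to identify $C_S(\H)=C_{S_0}(\H)$, Lemma~\ref{L:CF0ECollect} to obtain $C_\F(\E)=C_{\F_0}(\E)\unlhd\F_0$, and Lemma~\ref{L:inAutEinNFE}(c) to transfer the focal and generating descriptions from $C_{\F_0}(T)$ to $C_\F(T)$. The only cosmetic difference is that the paper first handles the last sentence with the weaker hypothesis $C_S(\E)\leq S_0$ and then switches to $N_S(\E)\leq S_0$ for the remaining statements, whereas you fix $N_S(\E)\leq S_0$ once and observe afterwards that the final assertion needs only $C_S(\E)\leq S_0$; your ordering is arguably cleaner.
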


\begin{proof}
Choose first $S_0$ such that $C_S(\E)\leq S_0$. Assuming that $\E$ is fully $\F$-centralized, part (a) gives $C_{S_0}(\E)_{\F_0}=C_{S_0}(\H)=C_S(\H)=C_S(\E)$. Hence, using the definition of $C_\F(\E)$ and Lemma~\ref{L:CF0ECollect}, we can conclude that 
\begin{equation}\label{E:Foc}
\foc(C_{\F_0}(T))\leq C_S(\E),
\end{equation}
and that
\begin{equation}\label{E:CFE}
C_\F(\E)=\F_{C_S(\H)}(C_\L(\H))=\F_{C_{S_0}(\H)}(C_\L(\H))=C_{\F_0}(\E)=C_{\F_0}(T)_{C_S(\E)} 
\end{equation}
is saturated and normal in $\F_0$. It remains to show that $\foc(C_\F(T))\leq C_S(\E)$ and $C_\F(\E)=\<O^p(\Aut_{C_\F(T)}(P))\colon P\leq C_S(\E)\>_{C_S(\E)}$. For the proof we may choose $S_0$ such that $N_S(\E)\leq S_0$. By Lemma~\ref{L:inAutEinNFE}(c), we have then
\[C_\F(T)=C_{\F_0}(T)|_{C_S(T)}.\]
In particular, if $x\leq C_S(T)$ and $\phi\in\Hom_{C_\F(T)}(x,C_S(T))$, then $\phi$ is a morphism in $C_{\F_0}(T)$ and $x^{-1}(x\phi)\in\foc(C_{\F_0}(T))\leq C_S(\E)$ by \eqref{E:Foc}. This shows $\foc(C_\F(T))\leq C_S(\E)$.

\smallskip

As $C_S(\E)\leq C_S(T)$, it follows moreover that $\Aut_{C_{\F_0}(T)}(P)=\Aut_{C_\F(T)}(P)$ for every $P\leq C_S(\E)$. Hence, using \eqref{E:DefineFR} and \eqref{E:CFE}, we can conclude that  
\begin{eqnarray*}
C_\F(\E)&=&C_{\F_0}(T)_{C_S(\E)}=\<O^p(\Aut_{C_{\F_0}(T)}(P))\colon P\leq C_S(\E)\>_{C_S(\E)}\\
&=&\<O^p(\Aut_{C_\F(T)}(P))\colon P\leq C_S(\E)\>_{C_S(\E)}.
\end{eqnarray*}
This proves the assertion.
\end{proof}

\begin{lemma}\label{L:CentralizerTFullyCentralized}~
\begin{itemize}
\item [(a)] Suppose $T$ is fully $\F$-centralized. Then $\E$ is fully $\F$-centralized, $\foc(C_\F(T))\leq C_S(\E)$ and $C_\F(\E)=C_\F(T)_{C_S(\E)}$.
\item [(b)] If $\E$ is fully $\F$-normalized, then $C_\F(\E)=C_{N_\F(\E)}(\E)\unlhd N_\F(\E)$. 
\end{itemize}
\end{lemma}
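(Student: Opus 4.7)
The strategy is to derive part (a) directly from Proposition~\ref{P:CFEDescribe} once $\E$ is shown to be fully $\F$-centralized, and then to obtain part (b) from part (a) together with the characterization of fully normalized $\E$ in Lemma~\ref{L:EfullyNormalized}.

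For part (a), suppose $T$ is fully $\F$-centralized and fix $S_0$ with $N_S(\E)\leq S_0$ (an allowed choice in Hypothesis~\ref{H:S0F0}). By Proposition~\ref{P:CSE} and Lemma~\ref{L:finNLtT} we have $C_S(\E)=C_S(\H)\leq C_S(T)\leq N_S(\E)\leq S_0$, so Lemma~\ref{L:Tfullycentralized}(a) applies and gives $C_S(T)=C_{S_0}(T)$. The chain
\[C_{S_0}(\H)\leq C_{S_0}(T)=C_S(T)\leq S\]
yields $C_{S_0}(\H)\leq S\cap C_{S_0}(\H)=C_S(\H)$, and the reverse inclusion is trivial, so $C_S(\H)=C_{S_0}(\H)$. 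By the equivalence of (i) and (ii) in Lemma~\ref{L:EFullyCentralized}(a), $\E$ is therefore fully $\F$-centralized. Moreover $C_\F(T)$ is saturated, since $T$ is fully $\F$-centralized, so the hyperfocal subsystem $C_\F(T)_{C_S(\E)}$ makes sense once we know that $\hyp(C_\F(T))\leq C_S(\E)$. Proposition~\ref{P:CFEDescribe} supplies precisely this (in fact with $\foc$ in place of $\hyp$) together with the identification
\[C_\F(\E)=\<O^p(\Aut_{C_\F(T)}(P))\colon P\leq C_S(\E)\>_{C_S(\E)},\]
which by definition \eqref{E:DefineFR} is $C_\F(T)_{C_S(\E)}$.

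For part (b), assume $\E$ is fully $\F$-normalized. By the equivalence of (i) and (iii) in Lemma~\ref{L:EfullyNormalized}, the subgroup $T$ is then fully $\F$-centralized, and by (i)$\Leftrightarrow$(ii) of the same lemma we have $N_S(\E)=N_S(\H)\in\Syl_p(N_G(\H))$. Part (a) already gives that $\E$ is fully $\F$-centralized. Choosing $S_0:=N_S(\E)$, which contains $T$ and is a Sylow $p$-subgroup of $N_G(\H)$, we have
\[\F_0=\F_{S_0}(\bN_\L(\H))=\F_{N_S(\E)}(\bN_\L(\H))=N_\F(\E)\]
by Definition~\ref{D:NFECFE}. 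Since $C_S(\E)\leq N_S(\E)=S_0$, the second part of Proposition~\ref{P:CFEDescribe} applies and yields
\[C_\F(\E)=C_{\F_0}(\E)=C_{N_\F(\E)}(\E)\unlhd \F_0=N_\F(\E),\]
which is the statement of (b).

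I do not anticipate a real obstacle: every step is either an unpacking of definitions or a citation of a previously established result. The only tactical point is the choice of $S_0$: in part (a) the condition $N_S(\E)\leq S_0$ is needed so that both Lemma~\ref{L:Tfullycentralized}(a) and Proposition~\ref{P:CFEDescribe} can be applied with the same $S_0$, whereas in part (b) the hypothesis of full normalization is used precisely to allow the optimal choice $S_0=N_S(\E)$, which is exactly what identifies $\F_0$ with $N_\F(\E)$ and converts the conclusion of Proposition~\ref{P:CFEDescribe} into the desired form.
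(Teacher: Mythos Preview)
Your proof is correct and follows essentially the same route as the paper. The only minor divergence is in part~(b): the paper appeals directly to Lemma~\ref{L:CF0ECollect} (which does not require $\E$ to be fully centralized) once $S_0=N_S(\E)$ is chosen, whereas you first pass through part~(a) to establish that $\E$ is fully centralized and then invoke Proposition~\ref{P:CFEDescribe}; both arguments arrive at the same conclusion.
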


\begin{proof}
\textbf{(a)} Assume $T$ is fully $\F$-centralized and choose $S_0$ such that $C_S(T)\leq S_0$. Lemma~\ref{L:Tfullycentralized}(a) gives then that $C_S(T)=C_{S_0}(T)$. In particular, $C_S(\H)=C_{C_S(T)}(\H)=C_{C_{S_0}(T)}(\H)=C_{S_0}(\H)$. Hence, by Lemma~\ref{L:EFullyCentralized}(a), $\E$ is fully centralized. Using that $C_\F(T)$ is saturated, part (a) follows now from Proposition~\ref{P:CFEDescribe} and \eqref{E:DefineFR}.

\smallskip

\textbf{(b)} If $\E$ is fully normalized, then Lemma~\ref{L:EfullyNormalized} yields $N_S(\E)=N_S(\H)\in\Syl_p(N_G(\H))$. Hence, we may assume that $S_0=N_S(\E)$ and thus $\F_0=N_\F(\E)$. The assertion follows then from Lemma~\ref{L:CF0ECollect}.
\end{proof}

Note that Lemma~\ref{L:EfullyNormalized} and Lemma~\ref{L:CentralizerTFullyCentralized}(a) show in particular the implications stated in \eqref{E:FullyNormFullyCent} in the introduction.

\smallskip

We now want to consider the properties of $C_\F(\E)$ if $\E$ is not necessarily fully centralized. The reader might want to recall Definition~\ref{D:WeaklyInvariant}.

\begin{lemma}\label{L:CFEWeaklyInvariant}
Suppose $S_0$ is chosen such that $C_S(\H)=C_S(\E)\leq S_0$. Then 
\[C_\F(\E)=\F\cap C_{\F_0}(\E)=C_{\F_0}(\E)|_{C_S(\E)}.\]
In particular, $C_\F(\E)$ is weakly $N_\F(\E)$-invariant.
\end{lemma}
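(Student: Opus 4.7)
The plan is to prove the lemma in two stages: first the chain of equalities $C_\F(\E)=\F\cap C_{\F_0}(\E)=C_{\F_0}(\E)|_{C_S(\E)}$, and then the weakly $N_\F(\E)$-invariant property. Throughout I would use Lemma~\ref{L:CF0ECollect}, which identifies $C_{\F_0}(\E)=\F_{C_{S_0}(\H)}(C_\L(\H))$ and tells us that $(C_\L(\H),\delta(C_{\F_0}(\E)),C_{S_0}(\H))$ is a regular locality over $C_{\F_0}(\E)$. The hypothesis $C_S(\H)\leq S_0$ together with $C_S(\H)\subseteq C_\L(\H)$ gives the convenient identity $C_S(\H)=S\cap C_{S_0}(\H)$, which identifies the underlying $p$-group of $\F\cap C_{\F_0}(\E)$ as $C_S(\E)$.

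For the inclusion $C_\F(\E)\subseteq \F\cap C_{\F_0}(\E)\subseteq C_{\F_0}(\E)|_{C_S(\E)}$, I would note that the generating conjugation maps $c_f|_R$ of $C_\F(\E)=\F_{C_S(\H)}(C_\L(\H))$ (with $f\in C_\L(\H)$ and $R,R^f\leq C_S(\H)$) are simultaneously morphisms of $\F=\F_S(\L)$ (since $f\in\L$ and $R,R^f\leq S$) and of $C_{\F_0}(\E)=\F_{C_{S_0}(\H)}(C_\L(\H))$ (since $R,R^f\leq C_{S_0}(\H)$); the second inclusion is then immediate. For the reverse inclusion, I would take $\phi\in\Hom_{C_{\F_0}(\E)}(P,Q)$ with $P,Q\leq C_S(\E)$ and realize $\phi$ as a single conjugation using the regular locality structure on $C_\L(\H)$: after extending $P$ to $PO_p(C_\L(\H))\in\delta(C_{\F_0}(\E))$ and combining compositions via the partial group product, there exists $f\in C_\L(\H)$ with $P\leq (C_{S_0}(\H))_f$ and $\phi=c_f|_P$. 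Then $P\leq C_S(\H)\leq S$, $P^f=Q\leq C_S(\H)\leq S$ and $f\in C_\L(\H)$, so $c_f|_P$ is a generating morphism of $C_\F(\E)$, as required.

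For the weakly invariant property, I would first show that $C_S(\E)$ is strongly closed in $N_\F(\E)$: since $C_\L(\H)\unlhd\bN_\L(\H)$ by Theorem~\ref{T:MainbNLH}(f), any generating conjugation $c_f|_R$ of $N_\F(\E)=\F_{N_S(\H)}(\bN_\L(\H))$ with $R\leq C_S(\H)$ sends $R$ into $C_\L(\H)\cap N_S(\H)=C_S(\H)=C_S(\E)$, so by induction on the length of a composition every morphism in $N_\F(\E)$ preserves $C_S(\E)$. For the Frattini-like condition, I would select an auxiliary Sylow $S_0'\in\Syl_p(N_G(\H))$ with $N_S(\E)\leq S_0'$ (which forces $C_S(\E)\leq S_0'$) and set $\F_0':=\F_{S_0'}(\bN_\L(\H))$. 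Lemma~\ref{L:NFEbNLH}(a) gives $N_\F(\E)\subseteq \F_0'$, and the first part of the present lemma applied to $S_0'$ gives $C_\F(\E)=C_{\F_0'}(\E)|_{C_S(\E)}$. Since $C_{\F_0'}(\E)\unlhd\F_0'$ by Lemma~\ref{L:CF0ECollect} (and hence is weakly $\F_0'$-invariant by Lemma~\ref{L:Finvariant}(a)), the Frattini-like condition for morphisms of $N_\F(\E)$ acting on $C_\F(\E)$-morphisms between subgroups of $C_S(\E)$ follows by transferring the corresponding property in the larger system and using strong closure to stay inside $C_S(\E)$.

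The main obstacle is the single-conjugation realization in the second paragraph: given $\phi\in\Hom_{C_{\F_0}(\E)}(P,Q)$ with $P,Q\leq C_S(\E)$, one must produce $f\in C_\L(\H)$ with $\phi=c_f|_P$, which amounts to showing that a composition of conjugations passing through intermediate subgroups in $C_{S_0}(\H)\setminus C_S(\H)$ collapses to one conjugation whose source and image lie in $C_S(\H)$. This uses the regular locality structure on $C_\L(\H)$ in an essential way, invoking the existence of an element of $\delta(C_{\F_0}(\E))$ containing $P$ so that the partial group product combines the intermediate conjugations into a single one.
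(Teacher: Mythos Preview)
Your proof of the easy inclusions and of the weakly-invariant consequence is fine, but the heart of the argument --- the inclusion $C_{\F_0}(\E)|_{C_S(\E)}\subseteq C_\F(\E)$ --- has a genuine gap. You propose to realize an arbitrary $\phi\in\Hom_{C_{\F_0}(\E)}(P,Q)$ with $P,Q\leq C_S(\E)$ as a single conjugation $c_f|_P$ with $f\in C_\L(\H)$, and you try to justify this by enlarging $P$ to $PO_p(C_\L(\H))$ and claiming this lies in $\delta(C_{\F_0}(\E))$. That claim is false in general: in a regular locality the object set is $\delta$ of the fusion system, and $O_p$ of the locality is in $\delta$ only when the fusion system is constrained, i.e.\ when $E(C_\L(\H))=\M=1$. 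Even granting an object of $\delta(C_{\F_0}(\E))$ containing $P$, this does not collapse a composition $c_{f_1}\cdots c_{f_n}$ into a single conjugation: for $(f_1,\dots,f_n)\in\D$ you need a chain of objects $P_0,\dots,P_n\in\delta(C_{\F_0}(\E))$ with $P_{i-1}^{f_i}=P_i$, and nothing forces the intermediate targets (which lie only in $C_{S_0}(\H)$) to sit inside a single orbit of objects. So the ``single $f$'' step does not go through.

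The paper circumvents this entirely by a Frattini decomposition inside $\C:=C_{\F_0}(\E)$. It first identifies $E(C_\L(\H))=\M$ and hence $E(\C)=\F_{T_0^\perp}(\M)$, then applies Lemma~\ref{L:FusionSystemFrattini} to write $\phi=\psi\alpha$ with $\psi\in\Hom_{(E(\C)P)_{\C}}(P,PT_0^\perp)$ and $\alpha$ a morphism in $N_\C(T_0^\perp)$. The point is that $(E(\C)P)_{\C}=\F_{T_0^\perp P}(\M P)$ is already a subsystem of $C_\F(\E)$ because $T_0^\perp P\leq C_S(\H)$ and $\M P\subseteq C_\L(\H)$; and $N_\C(T_0^\perp)$ is realized by the \emph{group} $N_{C_\L(\H)}(T_0^\perp)$, so $\alpha$ is a restriction of a single conjugation by an element of $C_\L(\H)$ with source $P\psi\leq C_S(\H)$ and target $Q\leq C_S(\H)$. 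Thus both factors lie in $C_\F(\E)$. This is exactly the kind of two-step splitting that avoids having to collapse an arbitrary word into one element, and it is what your proposal is missing.
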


\begin{proof}
By Lemma~\ref{L:CF0ECollect}, $\C:=C_{\F_0}(\E)=\F_{C_{S_0}(\H)}(C_\L(\H))$ is a normal subsystem of $\F_0$. In particular, $\C$ is weakly $\F_0$-invariant, which implies that $\F\cap \C$ is weakly $(\F\cap \F_0)$-invariant. By Lemma~\ref{L:NFEbNLH}(a), we have $N_\F(\E)=\F\cap \F_0$ if $S_0$ is chosen such that $N_S(\E)\leq S_0$. Hence, it is sufficient to prove that $C_\F(\E)=\F\cap \C=\C|_{C_S(\E)}$. 

\smallskip

By Proposition~\ref{P:CSE}, both $C_\F(\E)$ and $\F\cap \C$ are fusion systems over 
\[C_S(\H)=C_S(\E)=S\cap C_{S_0}(\E)_{\F_0}=S\cap C_{S_0}(\H).\]
Notice that $C_\F(\E)=\F_{C_S(\H)}(C_\L(\H))$ is contained in $\C=\F_{C_{S_0}(\H)}(C_\L(\H))$ and thus in $\F\cap \C$. Hence
\[C_\F(\E)\subseteq \C\cap \F\subseteq \C|_{C_S(\E)}\] 
and it remains to show that $\C|_{C_S(\E)}\subseteq C_\F(\E)$.

\smallskip

Recall from Lemma~\ref{T:bNLH}(a),(b),(c) that $(\bN_\L(\H),\delta(\F_0),S_0)$ is a regular locality over $\F_0$ with $\H\unlhd\bN_\L(\H)$ and $\Comp(\L)=\Comp(\bN_\L(\H))$. Since $C_\L(\H)=C_{\bN_\L(\H)}(\H)$ by Lemma~\ref{L:CLTinbNLH}, it follows from  \cite[Lemma~11.16, Theorem~10.16(e)]{Henke:Regular} (applied with $(\bN_\L(\H),\H)$ in place of $(\L,\N)$) that 
\[\Comp(C_\L(\H))=\Comp(\bN_\L(\H))\backslash\Comp(\H)=\Comp(\L)\backslash\Comp(\H).\]
Thus, $E(C_\L(\H))=\M$ by definition of $\M$. Here $\M$ is subnormal in $\L$ as $\M\unlhd E(\L)\unlhd\L$ by Lemma~\ref{L:BasicT0perp}(a). Thus, it follows from Background Theorem~\ref{bT:1} and the definition of a locality that $T_0^\perp=\M\cap S$ is a maximal $p$-subgroup of $\M$. As $T_0^\perp\leq C_S(\H)\leq C_{S_0}(\H)$ by Lemma~\ref{L:BasicT0perp}(a), it follows that $T_0^\perp=C_{S_0}(\H)\cap \M$. Hence, $E(\C)=\F_{T_0^\perp}(\M)$ by Background Theorem~\ref{bT:2} applied with $(C_\L(\H),C_{S_0}(\H),\C)$ in place of $(\L,S,\F)$. 

\smallskip

Let now $P,Q\leq C_S(\E)=C_S(\H)$ and $\phi\in\Hom_{\C}(P,Q)$. We need to show that $\phi$ is a morphism in $C_\F(\E)$. By Lemma~\ref{L:FusionSystemFrattini} (applied with $\C$ and $E(\C)$ in place of $\F$ and $\E$), there exist $\psi\in\Hom_{E(\C)P}(P,PT_0^\perp)$ and a morphism $\alpha\colon P\psi\rightarrow Q$ in $N_{\C}(T_0^\perp)$ such that $\phi=\psi\alpha$. By \cite[Theorem~D(a)]{Chermak/Henke}, we have $E(\C)P=\F_{T_0^\perp P}(\M P)$. Notice that $T_0^\perp P\leq C_S(\H)$ and $\M P\subseteq C_\L(\H)$. Hence, $E(\C)P$ is contained in $C_\F(\E)=\F_{C_S(\H)}(C_\L(\H))$ and $\psi$ is a morphism in $C_\F(\E)$. In particular, $P\psi\leq C_S(\H)$.

\smallskip

By Lemma~\ref{L:BasisNLtT} applied with $C_\L(\H)$ in place of $\L$, we have $T_0^\perp=\M\cap S_0=E(C_\L(\H))\cap S_0\in\delta(\C)$. Hence, by \cite[Lemma~3.10(b)]{Grazian/Henke}, $N_{\C}(T_0^\perp)$ is realized by the group $N_{C_\L(\H)}(T_0^\perp)$. In particular, $\alpha=c_g|_{P\psi}$ for some $g\in N_{C_\L(\H)}(T_0^\perp)\subseteq C_\L(\H)$. As $P\psi$ and $Q$ are contained in $C_S(\H)=C_S(\E)$, it follows that $\alpha$ is a morphism in $C_\F(\E)$. Hence, $\phi=\psi\alpha$ is a morphism in $C_\F(\E)$. This shows $\C|_{C_S(\E)}\subseteq C_\F(\E)$ and thus $C_\F(\E)=\F\cap \C=\C|_{C_S(\E)}$ as required.
\end{proof}

\begin{lemma}\label{L:CommuteWithE}
Every saturated subsystem of $\F$ which commutes with $\E$ is contained in $C_\F(\E)$.
\end{lemma}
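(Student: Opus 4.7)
The plan is to reduce to the normal-subsystem case handled by Theorem~\ref{T:Henke2018}, by passing through the auxiliary saturated fusion system $\F_0$ in which $\E$ sits as a normal subsystem. Concretely, let $\mD$ be a saturated subsystem of $\F$ over $R\le S$ which commutes with $\E$. First I would observe that since $\E\subseteq C_\F(R)$ the definition of $C_S(\E)$ forces $R\le C_S(\E)\le N_S(\E)$, so I can choose $S_0\in\Syl_p(N_G(\H))$ with $N_S(\E)\le S_0$, and then in particular $R\le S_0$ and $C_S(\E)\le S_0$.

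Next I would combine Lemma~\ref{L:CentralProduct} with Proposition~\ref{P:DinNFE}: the central product $\E*\mD$ is a saturated subsystem of $\F$ with $\E\unlhd\E*\mD$, hence $\E*\mD\subseteq N_\F(\E)$. By Lemma~\ref{L:NFEbNLH}(a) the choice of $S_0$ guarantees $N_\F(\E)=\F\cap\F_0$, so both $\E$ and $\mD$ lie inside the saturated fusion system $\F_0$, in which $\E\unlhd\F_0$ by Theorem~\ref{T:bNLH}(b).

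Now I would verify that $\mD$ and $\E$ commute not just in $\F$ but also in $\F_0$: the conditions $\mD\subseteq C_\F(T)$ and $\E\subseteq C_\F(R)$ mean that the relevant morphisms of $\mD$ (resp.\ $\E$) fix $T$ (resp.\ $R$) pointwise, and since these morphisms also belong to $\F_0$, they lie in $C_{\F_0}(T)$ (resp.\ $C_{\F_0}(R)$); the condition on $Z(\E)\cap Z(\mD)$ is intrinsic to the subsystems. Having established this, Theorem~\ref{T:Henke2018} applied with $\F_0$ in place of $\F$ yields $\mD\subseteq C_{\F_0}(\E)$.

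Finally I would appeal to Lemma~\ref{L:CFEWeaklyInvariant}, whose hypothesis $C_S(\E)\le S_0$ is satisfied, to obtain $C_\F(\E)=C_{\F_0}(\E)|_{C_S(\E)}$. Since $\mD$ is a fusion system over $R\le C_S(\E)$, this inclusion descends to $\mD\subseteq C_\F(\E)$, completing the argument. The only delicate point I anticipate is the bookkeeping check that commuting in $\F$ transfers to commuting in $\F_0$; it is routine but must be done carefully because $\F_0$ is not literally a subsystem of $\F$ (its Sylow group $S_0$ need not lie in $S$), so one has to use precisely the inclusion $N_\F(\E)\subseteq\F_0$ to argue that morphisms of $\mD$ are morphisms of $\F_0$.
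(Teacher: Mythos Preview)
Your proposal is correct and follows essentially the same route as the paper's proof: form the central product $\E*\mD$ via Lemma~\ref{L:CentralProduct}, use Proposition~\ref{P:DinNFE} to land inside $N_\F(\E)\subseteq\F_0$, apply Theorem~\ref{T:Henke2018} in $\F_0$, and finish with Lemma~\ref{L:CFEWeaklyInvariant}. The only cosmetic differences are that the paper handles the ``commuting transfers to $\F_0$'' step by noting that $\E$ and $\mD$ already commute in the subsystem $\E*\mD\subseteq\F_0$ (Lemma~\ref{L:CentralProduct} gives $\F_i\subseteq C_{\E*\mD}(S_{3-i})$ directly), and it cites the equality $C_\F(\E)=\F\cap C_{\F_0}(\E)$ rather than $C_\F(\E)=C_{\F_0}(\E)|_{C_S(\E)}$ from Lemma~\ref{L:CFEWeaklyInvariant}; both are stated there and either works.
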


\begin{proof}
 Let $\G$ be a saturated subsystem of $\F$ which commutes with $\E$. Then by Lemma~\ref{L:CentralProduct}, $\mD:=\E *\G$ is a saturated subsystem of $\F$ such that $\E\unlhd\mD$, and moreover $\E$ and $\G$ commute in $\mD$. Hence, by Proposition~\ref{P:DinNFE}, $\mD\subseteq N_\F(\E)$. Suppose $S_0$ is chosen such that $N_S(\E)\leq S_0$. Then $\mD\subseteq N_\F(\E)\subseteq \F_0$. As $\E$ and $\G$ commute in $\mD$, they commute also in $\F_0$. Since $\E\unlhd \F_0$, it follows now from Theorem~\ref{T:Henke2018} that $\G\subseteq C_{\F_0}(\E)$. Using Lemma~\ref{L:CFEWeaklyInvariant}, we can conclude that $\G\subseteq \F\cap C_{\F_0}(\E)=C_\F(\E)$.
\end{proof}

\begin{proof}[\textbf{Proof of Theorem~\ref{T:FusionCentralizer}}]
We have $C_\L(\H)\subseteq C_\L(T)$ and, by \cite[Lemma~3.5]{Henke:Regular}, $\H\subseteq C_\L(C_\L(\H))\subseteq C_\L(C_S(\H))$. In particular, $T\cap C_S(\H)\leq Z(\H)\cap Z(C_\L(\H))$. As $\E=\F_T(\H)$ and $C_\F(\E)=\F_{C_S(\H)}(C_\L(\H))$, it follows that $T\cap C_S(\H)\leq Z(\E)\cap Z(\C_\F(\E))$, $C_\F(\E)\subseteq C_\F(T)$ and $\E\subseteq C_\F(C_S(\H))$. This means that $\E$ and $C_\F(\E)$ commute in $\F$. Lemma~\ref{L:CLTinbNLH} implies that $C_\F(\E)$ is contained in $N_\F(\E)$, and clearly $C_\F(\E)$ is a fusion system over $C_S(\H)=C_S(\E)$.
Now part (a) follows from Lemma~\ref{L:CommuteWithE}.

\smallskip

Part (b) is a consequence of (a) and Proposition~\ref{P:CFEDescribe}. Part (c) follows from Lemma~\ref{L:CentralizerTFullyCentralized}(b), \eqref{E:FullyNormFullyCent} and Lemma~\ref{L:CFEWeaklyInvariant}. 
\end{proof}

The following lemma together with Lemma~\ref{L:EFullyCentralized}(b) and Proposition~\ref{P:CFEDescribe} yields a description of $C_\F(\E)$ in terms of the fusion systems $\E$ and $\F$.

\begin{lemma}\label{L:MoveToFullyCentralized}
Fix $\alpha\in\Hom_\F(TC_S(\E),S)$ such that $\E^\alpha$ is fully $\F$-centralized. Then for all $P,Q\leq C_S(\E)$,  
\[\Hom_{C_\F(\E)}(P,Q)=\{\phi\in\Hom_\F(P,Q)\colon \alpha^{-1}\phi\alpha\in\Hom_{C_\F(\E^\alpha)}(P\alpha,Q\alpha)\},\]
i.e. $\alpha$ induces an isomorphism from $C_\F(\E)$ to $C_\F(\E^\alpha)|_{C_S(\E)\alpha}$. 
In particular, if $\E$ is also fully $\F$-centralized, then $C_S(\E)\alpha=C_S(\E^\alpha)$ and $\alpha$ induces an isomorphism from $C_\F(\E)$ to $C_\F(\E^\alpha)$. 
\end{lemma}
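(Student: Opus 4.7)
The plan is to adapt the strategy used in part (b) of the unlabeled lemma just above (which established the analogous characterization for $\Hom_{N_\F(\E)}$). By Lemma~\ref{L:EFullyCentralized}(c), $\alpha$ is a morphism in $N_\F(\tT)=\F_S(G)$, so I fix $g\in G$ with $\alpha=c_g|_{TC_S(\E)}$. Lemma~\ref{L:ConjH}(a) then yields $\E^\alpha=\F_{T^g}(\H^g)$ with $T^g=S\cap\H^g$, and hence $C_S(\E^\alpha)=C_S(\H^g)$.

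The heart of the argument is a careful choice of the Sylow subgroup $S_0$. Since $\E^\alpha$ is fully $\F$-centralized, Lemma~\ref{L:EFullyCentralized}(a), applied to $\H^g$ with a Sylow $\hat{S}_0\in\Syl_p(N_G(\H^g))$ containing the $p$-subgroup $T^g C_S(\H^g)$ of $N_G(\H^g)$, gives $C_S(\H^g)=C_{\hat{S}_0}(\H^g)$, a maximal $p$-subgroup of $C_\L(\H^g)$. Setting $S_0:=\hat{S}_0^{g^{-1}}$, Lemma~\ref{L:ConjH}(b) shows $S_0\in\Syl_p(N_G(\H))$; conjugating by $g^{-1}$ also gives $T\leq S_0$ and $C_S(\H)\leq C_{S_0}(\H)\leq S_0$, so Hypothesis~\ref{H:S0F0} applies to $S_0$ and $\F_0:=\F_{S_0}(\bN_\L(\H))$. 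Moreover, Lemma~\ref{L:ConjH}(b) presents $c_g$ as an isomorphism $\bN_\L(\H)\to\bN_\L(\H^g)$ of localities that takes $S_0$ to $\hat{S}_0$; restricting to the partial subgroup $C_\L(\H)\unlhd\bN_\L(\H)$ (which is mapped to $C_\L(\H^g)$), the equality $(S_0\cap C_\L(\H))^g=C_{S_0}(\H)^g=C_{\hat{S}_0}(\H^g)=\hat{S}_0\cap C_\L(\H^g)$ puts Lemma~\ref{L:AutLFusion}(b) in play. Combined with Lemma~\ref{L:CF0ECollect} on each side (and using the fully centralized property of $\E^\alpha$ to identify the right-hand fusion system with $C_\F(\E^\alpha)$), this produces an isomorphism $c_g\colon C_{\F_0}(\E)\to C_\F(\E^\alpha)$.

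Since $C_S(\E)\leq S_0$, Lemma~\ref{L:CFEWeaklyInvariant} yields $C_\F(\E)=C_{\F_0}(\E)|_{C_S(\E)}$. For $P,Q\leq C_S(\E)$ and $\phi\in\Hom_\F(P,Q)$ this delivers the equivalence $\phi\in C_\F(\E)\Leftrightarrow \phi\in C_{\F_0}(\E)\Leftrightarrow \alpha^{-1}\phi\alpha\in C_\F(\E^\alpha)$, proving the main equality. For the ``in particular'' statement, if $\E$ is also fully $\F$-centralized then Lemma~\ref{L:EFullyCentralized}(a) forces $C_S(\H)=C_{S_0}(\H)$, whence $C_{\F_0}(\E)=C_\F(\E)$ and $C_S(\E)\alpha=C_S(\E)^g=C_S(\H^g)=C_S(\E^\alpha)$, and $\alpha$ itself realizes the isomorphism $C_\F(\E)\cong C_\F(\E^\alpha)$. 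I do not anticipate any substantive obstacle; the main craftsmanship lies in the Sylow choice $S_0:=\hat{S}_0^{g^{-1}}$, which synchronizes the two sides so that Lemma~\ref{L:AutLFusion}(b) applies cleanly.
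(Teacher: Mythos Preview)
Your proposal is correct and follows essentially the same approach as the paper's proof: realize $\alpha$ by conjugation with some $g\in G$, choose $\hat{S}_0\in\Syl_p(N_G(\H^g))$ containing $T^gC_S(\H^g)$, set $S_0:=\hat{S}_0^{g^{-1}}$, and then use Lemma~\ref{L:AutLFusion}(b) together with Lemma~\ref{L:CF0ECollect} to identify $C_{\F_0}(\E)^{c_g}=C_\F(\E^\alpha)$, after which Lemma~\ref{L:CFEWeaklyInvariant} finishes the main equality. The only cosmetic difference is that the paper invokes Proposition~\ref{P:CFEDescribe} to identify $C_{\hat{\F}_0}(\E^\alpha)$ with $C_\F(\E^\alpha)$, whereas you obtain this directly from $C_{\hat{S}_0}(\H^g)=C_S(\H^g)$ and the definition of $C_\F(\E^\alpha)$; both are fine.
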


\begin{proof}
Let $\alpha\in\Hom_\F(TC_S(\E),S)$ be such that $\E^\alpha$ is fully centralized. By Lemma~\ref{L:EFullyCentralized}(c), $\alpha$ is a morphism in $N_\F(\tT)=\F_S(G)$ and $C_S(\E)\alpha\leq C_S(\E^\alpha)$. In particular, there exists $g\in G$ such that $\alpha=c_g|_{TC_S(\E)}$.

\smallskip

By Lemma~\ref{L:ConjH}(a), we have $\E^\alpha=\F_{T^g}(\H^g)$ and $T^g=\H^g\cap S$. Fix $\hat{S}_0\in\Syl_p(N_G(\H^g))$ such that $T^gC_S(\H^g)\leq N_S(\H^g)=N_S(\E^\alpha)\leq \hat{S}_0$. As $\E^\alpha$ is fully centralized, it follows from Lemma~\ref{L:EFullyCentralized}(a) that $C_S(\H^g)=C_{\hat{S}_0}(\H^g)$. As $G$ acts on $\L$, one can observe that $c_g$ induces an isomorphism from $N_G(\H)$ to $N_G(\H^g)$. Thus, $\hat{S}_0^{g^{-1}}$ is a Sylow $p$-subgroup of $N_G(\H)$ containing $T$. Hence, we may choose $S_0$ such that
\[S_0:=\hat{S}_0^{g^{-1}}.\]
Notice that $C_S(\H)^g=C_S(\E)\alpha\leq S\cap N_G(\H^g)=N_S(\H^g)\leq \hat{S}_0$ and thus $C_S(\H)\leq S_0$. Moreover, 
\[C_{S_0}(\H)^g=C_{S_0^g}(\H^g)=C_{\hat{S}_0}(\H^g)=C_S(\H^g).\]
Set $\hat{\F}_0:=\F_{\hat{S}_0}(\bN_\L(\H^g))$. By Proposition~\ref{P:CFEDescribe}, $C_{\hat{\F}_0}(\E^\alpha)=C_\F(\E^\alpha)$.  Lemma~\ref{L:CF0ECollect} gives now that $(C_\L(\H),\delta(C_{\F_0}(\E)),C_{S_0}(\H))$ is a regular locality over $C_{\F_0}(\E)$, and $(C_\L(\H^g),\delta(C_\F(\E^\alpha)),C_S(\H^g))$ is a regular locality over $C_\F(\E^\alpha)$. Moreover, $\tilde{\alpha}:=c_g\in\Aut(\L)$ induces an isomorphism from $C_\L(\H)$ to $C_\L(\H^g)$ which takes $C_{S_0}(\H)$ to $C_S(\H^g)$. Thus, it follows from Lemma~\ref{L:AutLFusion} that 
\begin{equation}\label{E:ConjCF0E}
C_{\F_0}(\E)^{\tilde{\alpha}}=\F_{C_{S_0}(\H)}(C_\L(\H))^{\tilde{\alpha}}=\F_{C_S(\H^g)}(C_\L(\H^g))=C_\F(\E^\alpha).
\end{equation}
Let now $P,Q\leq C_S(\H)=C_S(\E)$ and $\phi\in\Hom_\F(P,Q)$. As $\phi$ is a morphism in $\F$, Lemma~\ref{L:CFEWeaklyInvariant} yields that  $\phi$ is a morphism in $C_\F(\E)$ if and only if it is a morphism in $C_{\F_0}(\E)$. By \eqref{E:ConjCF0E}, this is the case if and only if $\alpha^{-1}\phi\alpha=\tilde{\alpha}^{-1}\phi\tilde{\alpha}\in \Hom_{C_\F(\E^\alpha)}(P\alpha,Q\alpha)$. This yields the first part of the  assertion.

\smallskip

If $\E$ is fully centralized, then $|C_S(\E)|\geq |C_S(\E^\alpha)|$ and hence $C_S(\E)\alpha=C_S(\E^\alpha)$. Moreover, $C_{\F_0}(\E)=C_\F(\E)$ by Proposition~\ref{P:CFEDescribe}. Thus, by \eqref{E:ConjCF0E}, $\alpha$ induces an isomorphisms from $C_\F(\E)$ to $C_\F(\E^\alpha)$. 
\end{proof}

\section{Examples and final remarks}\label{S:Examples}

Throughout this section let $\F$ be a saturated fusion system over $S$ and let $\E$ be a subnormal subsystem of $\F$ over $T$. The purpose of this subsection is to discuss further the relationship between the properties of $\E$ being fully normalized, fully centralized and fully automized as well as $T$ being fully normalized and fully centralized in $\F$. The main results we have shown so far in this context are the two implications in \eqref{E:FullyNormFullyCent} as well as the fact proved in Lemma~\ref{L:EfullyNormalized} that $\E$ is fully $\F$-normalized if and only if $T$ is fully $\F$-centralized and $\E$ is fully $\F$-automized.

\smallskip

We firstly show in our examples below that the converses of the two implications in \eqref{E:FullyNormFullyCent} are not true in general. Indeed, the subnormal subsystems we consider in our examples are simple and thus components of $\F$. Hence, the converses of the two implications in \eqref{E:FullyNormFullyCent} even fail to hold in general if $\E$ is a component of $\F$. 

\smallskip

We consider also the statement shown in Lemma~\ref{L:EfullyNormalized} that $\E$ is fully normalized if and only if $\E$ is fully automized and $T$ is fully centralized. As our first example shows, in this statement the condition that $T$ is fully centralized cannot be replaced by the slightly weaker condition that $\E$ is fully centralized. More precisely, we give an example where $\E$ is fully $\F$-centralized and fully $\F$-automized, but $\E$ is not fully $\F$-normalized.

\begin{example}
Set $T:=\<(13)(24),(34)(56)\>\leq A_6$, $\tH:=A_6\times A_6\times A_6$ and $\tT:=T\times T\times T$. Note that $T\in\Syl_2(A_6)$ and so $\tT\in\Syl_2(\tH)$. Let $\sigma\in \Aut(A_6)$ be an involution normalizing $T$ (which we will choose concretely later on). Consider the following two automorphisms of $\tH$ of order $2$.
\begin{eqnarray*}
 \tau_1&\colon& \tH\rightarrow \tH,\;(x_1,x_2,x_3)\mapsto (x_2,x_1,x_3^\sigma)\\
 \tau_2&\colon& \tH\rightarrow \tH,\;(x_1,x_2,x_3)\mapsto (x_1^\sigma,x_3,x_2).
\end{eqnarray*}
Set 
\[A:=\<\tau_1,\tau_2\>,\;G:=\tH\rtimes A,\;S:=\tT\rtimes\<\tau_1\>\mbox{ and }\F:=\F_S(G).\]
One calculates that $\tau_1\tau_2\colon (x_1,x_2,x_3)\mapsto (x_2^\sigma,x_3^\sigma,x_1)$ is of order $3$. Hence, $A$ is isomorphic to $S_3$. In particular, $\<\tau_1\>$ is a Sylow $2$-subgroup of $A$, and so $S$ is a Sylow $2$-subgroup of $G$, which implies that $\F$ is saturated. 

\smallskip

For $i=1,2,3$ let $H_i\cong A_6$ be the $i$th component of the direct product $\tH=A_6\times A_6\times A_6$, and similar $T_i\cong T$ the $i$th component of $\tT=T\times T\times T$. Set 
\[\E_i:=\F_{T_i}(H_i)\mbox{ for }i=1,2,3.
\]
Note that $H_i$ is subnormal in $G$ and $T_i=H_i\cap S$. Thus, $\E_i$ is a subnormal subsystem of $\F$ for $i=1,2,3$. Moreover, $\{H_1,H_2,H_3\}$ is a $G$-conjugacy class of subgroups of $G$, and so $\{\E_1,\E_2,\E_3\}$ is an $\F$-conjugacy class of subnormal subsystems of $\F$. Similarly, $\{T_1,T_2,T_3\}$ is an $\F$-conjugacy class of subgroups of $S$. 

\smallskip

Observe that
\[N_S(\E_i)=\tT \mbox{ for }i=1,2\mbox{ and }N_S(\E_3)=S.\]
Hence, $\E_3$ is fully normalized, whereas $\E_1$ and $\E_2$ are not. 

\smallskip

We consider now two cases for the automorphism $\sigma\in\Aut(A_6)$ which will lead to different properties of the $\E_i$ and $T_i$ in relation to $\F$.

\smallskip

\textbf{Case 1:} $\sigma=c_{(56)}$ is the automorphism of $A_6$ induced by conjugation by the transposition $(56)\in S_6$. Note that $\sigma$ centralizes $T$. 

\smallskip

One sees now that 
\begin{eqnarray*}
 C_S(T_1)&=&C_{\tT}(T_1)=Z(T_1)\times T_2\times T_3,\\
 C_S(T_2)&=&C_{\tT}(T_2)= T_1\times Z(T_2)\times T_3,\\
 C_S(T_3)&=&C_{\tT}(T_3)\<\tau_1\>= (T_1\times T_2\times Z(T_3))\<\tau_1\>.
\end{eqnarray*}
Thus, $T_3$ is fully centralized, whereas $T_1$ and $T_2$ are not. Moreover, we show in the proof below that
\begin{equation}\label{E:Eifullycent}
C_S(\E_i)=T_jT_k\mbox{ for }\{i,j,k\}=\{1,2,3\}.
\end{equation}
So $\E_i$ is fully $\F$-centralized for $i=1,2,3$. In particular, we have shown: 

\smallskip

\textit{
The subnormal subsystems $\E_1$ and $\E_2$ are fully $\F$-centralized while their Sylow subgroups $T_1$ and $T_2$ are not fully $\F$-centralized.}

\smallskip

Observe now that $N_G(T_3)=(H_1 H_2 T_3)\<\tau_1\>=C_G(T_3)T_3$ and thus $\Aut_\F(T_3)=\Inn(T_3)$. As $T_1,T_2,T_3$ are $\F$-conjugate, this implies that $\Aut_\F(T_i)=\Inn(T_i)$ for $i=1,2,3$. In particular, $\Aut_S(T_i)=\Aut_\F(T_i)$ is a $p$-group and so $\E_i$ is fully $\F$-automized for $i=1,2,3$. To summarize, we have thus shown:

\textit{For each $i=1,2$, the subnormal subsystem $\E_i$ is fully $\F$-centralized and fully $\F$-automized, but not fully $\F$-normalized.}

\smallskip

\textbf{Case 2:} The involution $\sigma\in\Aut(A_6)$ is chosen such that $A_6\rtimes \<\sigma\>\cong PGL_2(9)$. (Note that such $\sigma$ exists, as $A_6\cong L_2(9)$ and since $PGL_2(9)$ has dihedral Sylow $2$-subgroups.) We have then $\sigma|_T\not\in\Inn(T)$, as $T$ and $T\<\sigma\>$ are both dihedral and $\sigma$ is an involution.

\smallskip

Note that 
\[C_S(T_i)=Z(T_i)\times T_j\times T_k\mbox{ for }\{i,j,k\}=\{1,2,3\}.\]
(This is easy to see for $i=1,2$, whereas for $i=3$ one needs to use that $\sigma|_T\not\in\Inn(T)$ and thus $\tau_1|_{T_3}\not\in\Inn(T_3)$.) We can conclude that $T_1$, $T_2$ and $T_3$ are all fully $\F$-centralized. In particular, we have seen:

\smallskip

\textit{For $i=1,2$, the subnormal subsystem $\E_i$ has a fully $\F$-centralized Sylow subgroup $T_i$, but $\E_i$ is not fully $\F$-normalized.} 

\smallskip

Using Lemma~\ref{L:EfullyNormalized} or a direct argument, one can moreover observe that $\E_1$ and $\E_2$ are not fully $\F$-automized. 
\end{example}

\begin{proof}[Proof of \eqref{E:Eifullycent}]
Let $\sigma$ be as in Case~1 and $\{i,j,k\}=\{1,2,3\}$. One observes easily that $T_jT_k\leq C_S(\E_i)$ and $C_S(\E_i)\cap T_i=Z(T_i)=1$. Hence, it is sufficient to prove that $C_S(\E_i)\leq \tT=T_1T_2T_3$. For $i=1,2$, this follows from $C_S(\E_i)\leq N_S(\E_i)\leq \tT$. Thus, it remains only to prove that $C_S(\E_3)\leq \tT$.

\smallskip

Assume that $C_S(\E_3)\not\leq \tT$. Then $G_0:=\tH S=\tH C_S(\E_3)$. Note that $S\in\Syl_2(G_0)$ and set $\F_0:=\F_S(G_0)$. As $\tH\unlhd G$, an $\F$-automorphism of $C_S(\E_3)$ must be realized by an element of $N_G(G_0)=G_0$. Hence, $C_S(\E_3):=C_S(\E_3)_\F=C_S(\E_3)_{\F_0}$. As $H_3\unlhd G_0$, it follows from \cite[Theorem~B]{Henke/Semeraro:2015} that $C_S(\E_3)=C_S(H_3)\leq \tT$, contradicting our assumption. This proves $C_S(\E_3)\leq \tT$, and as argued above this shows \eqref{E:Eifullycent}. 
\end{proof}

Suppose now for a moment that $\E$ is a product of components. Generalizing work of Aschbacher \cite[Sections~2.1 and 2.2]{AschbacherFSCT} on normalizers of components, Oliver \cite{Oliver:NormalizerComponents} constructed a normalizer of $\E$  if $T$ is fully $\F$-normalized. He showed furthermore that this normalizer is a saturated subsystem. Recall that we showed that our normalizer of $\E$ is saturated provided $\E$ is fully $\F$-normalized. Comparing these results, the following lemma seems to be of interest.

\begin{lemma}
Suppose $\E$ is a product of components of $\F$. Then $N_S(T)=N_S(\E)$. In particular, $T$ is fully $\F$-normalized if and only if $\E$ is fully $\F$-normalized.
\end{lemma}

\begin{proof}
Clearly $N_S(\E)\leq N_S(T)$. Let $\fC$ be a set of components of $\F$ such that $\E$ is the central product of the components in $\fC$ and pick $s\in N_S(T)$. Then $\fC^s=\{\K^s\colon\K\in\fC\}$ is a set of components of $\F$, and $\E^s$ is the product of the components in $\fC^s$. Assume there exists $\C\in\fC^s\backslash\fC$ and pick $R\leq S$ such that $\C$ is a subsystem over $R$. Then $[R,T]=1$ by \cite[(9.6)]{Aschbacher:2011} (or alternatively by \cite[Lemma~7.15(b)]{Chermak/Henke}). Since $s\in N_S(T)$, we have $R\leq T^s=T$, so $R$ is abelian contradicting \cite[(9.1)]{Aschbacher:2011}. Hence, $\fC^s=\fC$ and $\E^s=\E$ proving $s\in N_S(\E)$. This shows $N_S(\E)=N_S(T)$. As $\E$ was an arbitrary product of components, the same property holds for every $\F$-conjugate of $\E$. This implies the last part of the assertion.
\end{proof}

If $\E$ is an arbitrary subnormal subsystem, then the following example shows that the condition that $T$ is fully $\F$-normalized is neither necessary nor sufficient for $\E$ being fully $\F$-normalized.

\begin{example}
Set $G:=A_4\wr S_3$, $\tT:=O_2(A_4\times A_4\times A_4)$, $S:=\tT\<(12)\>$ and $\F:=\F_S(G)$. Note that $S$ is a Sylow $2$-subgroup of $G$ and so $\F$ is saturated.

\smallskip

Write $L_i\cong A_4$ for the $i$th component of the direct product $A_4\times A_4\times A_4$ inside of $G$ and set $T_i:=O_2(L_i)$. We will consider now two $G$-conjugacy classes of subnormal subgroups of $G$ leading to two different $\F$-conjugacy classes of subnormal subsystems of $\F$. Namely, set
\[H_i:=\tT L_i\mbox{ and }\F_{\tT}(H_i)\mbox{ for all }i=1,2,3\
\]
as well as
\[H_{ij}=L_iT_j\mbox{ and }\E_{ij}:=\F_{T_iT_j}(H_{ij})\mbox{ for all }i,j\in\{1,2,3\}\mbox{ with }i\neq j.\]

\smallskip

\textbf{Properties of the $\E_i$ for $i=1,2,3$:} For $i=1,2,3$ note that $H_i\unlhd A_4\times A_4\times A_4\unlhd G$ so that $H_i$ is subnormal in $G$ with $\tT=H_i\cap S$. Hence, $\E_i$ is a subnormal subsystem of $\F$ over $\tT$ for $i=1,2,3$. Note furthermore that $\{H_1,H_2,H_3\}$ is a $G$-conjugacy class of subgroups of $G$ so that $\{\E_1,\E_2,\E_3\}$ is an $\F$-conjugacy class of subnormal subsystems of $\F$. Clearly $\tT$ is normal in $S$ and thus fully $\F$-normalized. Hence, each $\E_i$ is a subsystem of $\F$ on a fully $\F$-normalized Sylow subgroup. Moreover, 
\[N_S(\E_1)=N_S(\E_2)=\tT\mbox{ and }N_S(\E_3)=S.\]
Hence, $\E_3$ is fully $\F$-normalized, whereas $\E_1$ and $\E_2$ are not. So we have seen:

\smallskip

\textit{The subnormal subsystems $\E_1$ and $\E_2$ are not fully $\F$-normalized, but their Sylow subgroup $\tT$ is fully $\F$-normalized.}

\smallskip

\textbf{Properties of the $\E_{ij}$ for $i\neq j\in\{1,2,3\}$:}
Observe that  $H_{ij}\unlhd L_iL_j\unlhd A_4\times A_4\times A_4\unlhd G$ is a subnormal chain for $H_{ij}$ in $G$ and $T_iT_j=H_{ij}\cap S$ for all $i\neq j$. Hence, $\E_{ij}$ is subnormal subsystem of $\F$ over $T_iT_j$. Note moreover that 
\[N_S(\E_{ij})=\tT\mbox{ for all }i\neq j.\]
Hence, each of the $\E_{ij}$ is fully $\F$-normalized. Observe also that $T_1T_2\unlhd S$ and $N_S(T_1T_3)=\tT$. This gives us the following example:

\smallskip

\textit{The subnormal subsystem $\E_{13}$ is fully $\F$-normalized, but its Sylow subgroup $T_1T_3$ is not fully $\F$-normalized.}
\end{example}

\bibliographystyle{amsplain}
\bibliography{repcoh}

\end{document}